\documentclass[11pt]{amsart}
\usepackage{enumitem}
\usepackage{color}
\usepackage{amssymb,verbatim}
\usepackage{mathrsfs}
\usepackage{dsfont}
\usepackage[colorlinks,citecolor=blue,urlcolor=black,linkcolor=black]{hyperref}

\newtheorem{theorem}{Theorem}[section]
\newtheorem*{theorem*}{Theorem}
\newtheorem*{definition*}{Definition}
\newtheorem{prop}[theorem]{Proposition}
\newtheorem{conjecture}[theorem]{Conjecture}
\newtheorem{claim}{Claim}[theorem]
\newtheorem{subclaim}{Subclaim}[claim]

\newtheorem{lemma}[theorem]{Lemma}
\newtheorem{cor}[theorem]{Corollary}

\newtheorem{question}{Question}

\makeatother

\theoremstyle{definition}
\newtheorem{definition}[theorem]{Definition}
\newtheorem{notation}[theorem]{Notation}
\newtheorem{conv}[theorem]{Convention}

\newtheorem*{setup}{Setup}

\theoremstyle{remark}
\newtheorem{remark}[theorem]{Remark}

\hyphenation{counter-examples}

\def\s{\subseteq}
\def\sq{\sqsubseteq}
\def\forces{\Vdash}
\def\br{\blacktriangleright}

		% Player~\textrm{I}
		% Player~\textrm{II}

    \def\LE{\le}
    
\newcommand{\one}{\mathop{1\hskip-3pt {\rm l}}} 
\newcommand{\Ult}{\mathrm{Ult}}% \newcommand{\one}{\mathbbm{1}}
  			% the default value for argument 1 is empty
\renewcommand{\restriction}{\mathbin\upharpoonright}    	% by default in amssymb it's mathrel
\renewcommand{\mid}{\mathrel{|}\allowbreak}

\newcommand{\diagonal}{\bigtriangleup}

\newcommand{\cat}{{}^{\curvearrowright}}
\DeclareMathOperator{\stem}{stem}

\newcommand\old[1]{}

\DeclareMathOperator{\range}{range}
\DeclareMathOperator{\supp}{supp}

\DeclareMathOperator{\crit}{crit}

\DeclareMathOperator{\dom}{dom}
\DeclareMathOperator{\ran}{ran}
\DeclareMathOperator{\gch}{GCH}

\DeclareMathOperator{\otp}{otp}
\DeclareMathOperator{\ob}{OB}
\DeclareMathOperator{\rng}{Im}
\DeclareMathOperator{\acc}{acc}

\DeclareMathOperator{\cf}{cf}
\DeclareMathOperator{\refl}{Refl}
\DeclareMathOperator{\ord}{Ord}

\DeclareMathOperator{\id}{id}
\DeclareMathOperator{\image}{''}

\newcommand{\ZFC}{\mathrm{ZFC}}
\newcommand{\GCH}{\mathrm{GCH}}
\newcommand{\Ord}{\mathrm{Ord}}
%\subjclass[2010]{Primary 03E35; Secondary 03E04}
%\keywords{Sigma-Prikry forcing, Iterated Forcing, Stationary reflection, Singular cardinals hypothesis}

\newcommand\ale[1]{\marginpar{Alejandro: #1}}

\author[Hayut]{Yair Hayut}
\address{Einstein Institute of Mathematics, Hebrew University of Jerusalem, Givat-Ram, 91904, Israel.}
\email{yair.hayut@mail.huji.ac.il}
\author[Poveda]{Alejandro Poveda}
\address{Harvard University, Department of Mathematics and Center of Mathematical Sciences and Applications, Cambridge, MA 02138, USA
	\newline
	\href{https://scholar.harvard.edu/apoveda/home}{Website: https://alejandropovedaruzafa.com/} }
\email{alejandro@cmsa.fas.harvard.edu}

\subjclass[2020]{03E35, 03E55}
\keywords{Gluing property, Prikry-type forcings.}
\title[The gluing property]{The gluing property}
\begin{document}
\begin{abstract}
   We introduce a new compactness principle which we call the gluing property. % for measurable cardinals. 
   For a measurable cardinal $\kappa$ and a cardinal $\lambda$, we say that $\kappa$ has the $\lambda$-gluing property if every sequence of $\lambda$-many $\kappa$-complete ultrafilters on $\kappa$ can be glued into an %obtained as projections of an 
   extender. We show that every $\kappa$-compact cardinal has the $2^\kappa$-gluing property, yet non-necessarily the full gluing property. % and that this is the best possible. 
   Finally, we compute the exact consistency-strength for $\kappa$ to have the $\omega$-gluing property - this being $o(\kappa)=\omega_1$.
\end{abstract}

\maketitle

%\tableofcontents

\section{Introduction}
Suppose that $\mathfrak{M}$ is a structure  all of whose \emph{small}  substructures $\mathfrak{N}\s \mathfrak{M}$ have a property $\Phi$. Is it true that  $\mathfrak{M}$ satisfies $\Phi$? %This is arguably a prevalent question in modern Mathematics.  
An affirmative answer to the above is  an instance of a  pheno\-menon called \emph{compactness.} %Experience   indicates that 
There is a natural expectation for compactness when small means finite. %Indeed, several classical results, in various areas, indicate that this is the case when \emph{small} means finite.
%When the answer to this question is affirmative one says, loosely speaking, that \emph{compactness holds.}
For instance,  a classical theorem of De Brujin and Erd\H{o}s \cite{BrujinErdos} establishes that if $\mathcal{G}$ is an infinite graph all of whose  finite subgraphs $\mathcal{H}\s \mathcal{G}$ have chromatic number ${\leq}n$ then $\mathcal{G}$ itself has chromatic number ${\leq}n$. Another example %in this vein 
is \emph{G\"{o}del's compactness theorem} for First Order Logic (FOL). %$\mathcal{L}_{\omega,\omega}$. %\footnote{$\mathcal{L}_{\omega,\omega}$ is the usual language of Mathematics. Namely, there is a primitive symbol $\in$ and formulas are defined recursively via conjuntions, negations and existential quantifications.} This latter says that an arbitrary set of sentences $T$ is satisfiable  provided all of its finite subcollections $S\s T$ are satisfiable.   
%which, in turn, 
Compactness has had several applications in  areas such as Ramsey Theory, Algebra or Topology. %To mention just one, %it is a consequence of 
%the compactness theorem for FOL can be used to show that the \emph{finitary Ramsey's theorem}  follows from its infinitary version (\cite[p. 25]{TodRam}).%that every one-to-one polynomial map $f\colon \mathbb{C}^n\rightarrow\mathbb{C}^n$ is in fact a bijection.\yair{This is more a consequence of completeness of Algebraically closed field theory, right?}% \cite{Marker}.
%\ale{After checking it, I think you are right - it is a consequence of completeness. Please check the new example I gave..}

\smallskip

The above results evidence that $\aleph_0$ is a \emph{compact cardinal}, which invites to the following natural question.  How about if small stands for %, e.g., countable (i.e., 
``cardinality ${<}\aleph_1$''? Should we  expect %such a garden-variety of instances of 
compactness at $\aleph_1$? It turns out  that this is not the case. %To mention just one of the various counter-examples -
For instance, the De Brujin-Erd\H{o}s' theorem does not apply in this  context \cite{ErdosHajnal}. %Indeed, a theorem of Erd\H{o}s and Hajnal \cite{ErdosHajnal} showed that, if the \emph{Continuum Hypothesis} holds, there is a graph $\mathcal{G}$ of size $\aleph_2$ whose chromatic number is $\aleph_1$ but all of its subgraphs $\mathcal{H}$ with $|\mathcal{H}|<\aleph_1$ do have chromatic number $\leq\aleph_0$.
Similarly, ``finite'' can neither be replaced by ``countable'' in G\H{o}del's compactness theorem for FOL.
%\smallskip
%Since compactness  for FOL entails the De Brujin-Erd\H{o}s' theorem,  the above counter-example  is in fact a proof that ``finite'' can neither be replaced by ``countable'' in G\H{o}del's compactness theorem for FOL. % G\H{o}del's compactness theorem for FOL. %First Order Logic. 
Inspired by this intriguing phenomenon Keisler and Tarski investigated for which cardinals $\kappa>\aleph_0$ the infinitary logic $\mathcal{L}_{\kappa,\kappa}$ %-- the extension of $\mathcal{L}_{\omega,\omega}$  in which one allows  conjunctions and quantifications over ${<}\kappa$-many formulae -- 
does satisfy an analogue of the compactness theorem for FOL   \cite{KeislerTarski}. This work lead to the discovery of the so-called  \emph{strongly compact cardinals}, a cornerstone concept in the theory of large cardinals. %We shall come back to these concept shortly during the introduction.

An uncountable cardinal $\kappa$ is  \emph{strongly compact} if any $\kappa$-complete filter extends to a $\kappa$-complete ultrafilter. %\footnote{A filter $\mathcal{F}$ is called $\kappa$-complete if the intersection of ${<}\kappa$-many of its members is yet again a member of $\mathcal{F}$. \S\ref{SectionUltrafilters} provides a self-contained account on this property.} 
This is a natural generalization of the well-known fact that $\aleph_0$ has the \emph{filter extension property}; namely,  that every filter (on any set) extends to an ultrafilter.  %Even though our understanding of the large-cardinal hierarchy has vastly improved since the 60's (see \cite{Kan}) some natural problems regarding strongly compact cardinals are among the most stubborn open questions in set theory.  \smallskip

The motivation behind this work is to isolate a combinatorial principle complying with the following delicate balance.  Namely, the property holds at strongly compact cardinals, but it has  low consistency strength %\footnote{In particular, much weaker than the one corresponding to strongly compact cardinals.} 
and does not follow from large cardinals  weaker than strongly compacts. % cardinal. %\ale{This paragraph is excellent but we should convey this information in more lay-man terms. At this point the reader do not know what consistency strength means.}
%A strongly compact cardinal 
 %generalized the notion of compactness to infinitary logics, defining the \emph{strongly compact cardinals} which are a type of large cardinal. 
%Mimicking the tight connection between G\H{o}del's compactness theorem  and the \emph{Boolean Prime Ideal theorem}, one can characterize the strong compactness of a cardinal $\kappa$ as follows: every $\kappa$-complete filter $\mathscr{F}$ extends to a  $\kappa$-complete ultrafilter $\mathscr{U}$.  
There is a weakening of strong compactness almost fulfilling these -- \emph{$\kappa$-compactness.}\footnote{Recall that an uncountable cardinal $\kappa$ is $\kappa$-compact if every $\kappa$-complete filter \emph{over $\kappa$} can be extended to a $\kappa$-complete ultrafilter.}

In \cite{MitHyper} Mitchell asked the following: Suppose that $\kappa$ is a $\kappa$-compact cardinal. Is it also $\kappa$-compact in a model of the form $L[{\mathcal{U}}]$? %\footnote{%$L[{\mathcal{U}}]$ is the canonical inner model for the anti-large cardinal axiom $``\forall \kappa\, (o(\kappa) \leq \kappa^{++})$''  \S\ref{sectionInnerModel}. 
%At this point is not necessary for the reader to know the exact definition of the model $L[{\mathcal{U}}]$ nor the meaning of the expression $``\forall \kappa\, (o(\kappa) \leq \kappa^{++})$''. It suffices to know that $L[{\mathcal{U}}]$ is a canonical universe of $\ZFC$ which can contain any  normal ultrafilter over $\kappa$ (Def. \ref{Normal}). } 
The meaning of this question is the following. On one hand, the very nature of $L[\mathcal{U}]$ allows to capture normal ultrafilters over $\kappa$. On the other hand, since $\kappa$ is $\kappa$-compact, every $\kappa$-complete filter extends to a $\kappa$-complete ultrafilter. So, perhaps, this  property of $\kappa$ can be  translated into a problem on the existence of a normal ultrafilter, and then the witnessing ultrafilter is  absorbed by $L[\mathcal{U}]$. 
%catching all the normal ultrafilters enables us to extend any $\kappa$-complete filter to a $\kappa$-complete ultrafilter by translating this latter problem to an existence problem of a normal ultrafilter. In that case this witnessing ultrafilter will  be  absorbed within $L[\mathcal{U}]$. %\yair{I changed the formaultion of this paragraph, for two reasons: first, it is easy to find filters that cannot be exteneded to a normal ultrafilter (even normal ones). Second, extending normal filters to $\kappa$-complete ultrafilters is as difficult as $\kappa$-compactness (but this was unknown to Mitchell).}
%\ale{After a second though I do not quite understand the sentnce ``by translating this problem to ... ''}
%It turns out that this is not possible. In effect, 
A negative response to Mitchell's question  was  provided  by Gitik in \cite{GitikOnMeasurables}.

In \cite[\S2]{GitikOnMeasurables} Gitik showed that the {consistency strength} of $\kappa$-compactness %is at least that of a \emph{strong cardinal} -- which 
super-exceeds that of $``\forall \kappa\, (o(\kappa)\leq\kappa^{++})$''. %\footnote{The terminologies $o(\kappa)$ and the notion of an \emph{extender} will be introduced in \S\ref{Preliminaries}.} %is at least that of the existence of a strong cardinal. 
In his proof, Gitik constructs an \emph{extender} of arbitrary length using the  filter extension property of a $\kappa$-compact cardinal. Unfortunately, Gitik's argument contains a subtle gap\footnote{During the construction, Gitik is constructing an extender by picking measures one at a time. In order to verify that the collection of measures induces an extender, at each step, the chosen ultrafilter extends a filter which is originated from the combination of the ultrafilters that were constructed so far. The problem is that, in the limit points, those filters might fail to have the required closure. In order to ensure this closure, it seems like the extensions to ultrafilters must be chosen in a coherent way.} and in order to get past his construction in \cite[Theorem~2.1]{GitikOnMeasurables} a new compactness property is required -- \emph{The Gluing Property}: 
\begin{definition*}
     Let $\kappa$ be a measurable cardinal. We say that $\kappa$ has the \emph{$\lambda$-gluing property} if for every sequence of $\kappa$-complete ultrafilters on $\kappa$, $\langle U_\gamma \mid \gamma < \lambda\rangle$, 
     there is an elementary embedding $j \colon V \to M$, where ${}^\kappa M \subseteq M$, $\crit j = \kappa$
     and an increasing sequence of ordinals $\langle \eta_\gamma \mid \gamma < \lambda\rangle$ such that
     $$U_\gamma = \{X \subseteq \kappa \mid \eta_\gamma \in j(X)\}.$$
\end{definition*}
%Sections~\ref{SectionUltrafilters} and \ref{subsection:extenders} contain all the background material to put in context this  definition and describe its relationship with the theory of \emph{extenders}.  %As usual, the elementary embedding $j \colon V \to M$ witnessing the gluing can be derived from a certain extender, see Subsection \ref{subsection:extenders} for details. 
%This definition catches the possibility to glue different measures. 
%One may speculate whether it is also possible to glue together various \emph{extenders}. We will not deal with this issue here.

\smallskip

The adoption of the gluing property is thus inspired by Gitik's \cite[Theorem~2.1]{GitikOnMeasurables}. In fact, Gitik's argument shows that if $\kappa$ has the $\lambda$-gluing property for every cardinal $\lambda$ then there is an inner model with a strong cardinal.
%\begin{theorem*}[Gitik]
%If $\kappa$ has the $\lambda$-gluing property for every cardinal $\lambda$ then there is an inner model with a strong cardinal. 
%\end{theorem*}
Namely, the assumption ``$\kappa$ is a $\kappa$-compact cardinal'' in \cite[\S2]{GitikOnMeasurables} should be replaced by  ``$\kappa$ has the $\lambda$-gluing property for all cardinals $\lambda$''. Indeed, the $\lambda$-gluing property for $\kappa$ implies that $\kappa$ is \emph{$\lambda$-tall}, see \cite{Hamkins2009}.

\smallskip

 The gluing property incarnates a weakening of the filter extension property of $\kappa$-compactness. %Instead of trying to 
 Rather than extending an arbitrary $\kappa$-complete filter, one extends a filter on increasing sequences which has the property that each projection is already a $\kappa$-complete ultrafilter (see Lemma~\ref{lemma:omega-gluing-by-a-measure}).  As we will see, this shift in perspective reduces considerably the consistency strength.% i.e., the consistency strength of (say) the $\omega$-gluing property is much weaker than that of $\kappa$-compactness. However, it is open if any large cardinal axioms strictly weaker than $\kappa$-compactness have  the $\omega$-gluing property.

% In \cite{Gitikcompact}, using a similar argument as in \cite{GitikOnMeasurables}, Gitik constructs for a $\kappa$-compact cardinal, a forcing notion that does not collapse cardinals and projects onto every $\kappa$-distributive forcing notion of cardinality $\kappa$. As we remarked above, this argument uses implicitly the $2^\kappa$-gluing property for $\kappa$. Fortunately, we can prove that the required gluing property holds by Theorem \ref{kappacompactnessandgluing}.

\smallskip

Let us now describe the content of this paper. In \S\ref{Preliminaries} we present all the necessary set-theoretic background.
%As our title announces, in this paper we investigate the gluing property. 
In \S\ref{section:gluing-from-large-crdinals} we show that $\kappa$-compact cardinals enjoy the $2^\kappa$-gluing property. Specifically, % More precisely, we prove the following:

\begin{theorem*}
If $\kappa$ has the $\lambda$-filter extension property then $\kappa$ has the $2^\lambda$-gluing property. However, it is consistent that $\kappa$ is a $\kappa$-compact cardinal which does not  have the $\kappa^{+\omega+1}$-gluing property.
\end{theorem*}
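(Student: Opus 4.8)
\textit{The first assertion.} The plan is to convert the sequence into a single ultrafilter. Given $\kappa$-complete ultrafilters $\langle U_\gamma \mid \gamma<2^\lambda\rangle$ on $\kappa$, I would produce a $\kappa$-complete ultrafilter $W$ \emph{on the set $\lambda$} that Rudin--Keisler projects onto each $U_\gamma$, and then read the extender off of $j_W$. The exponential gap is precisely what lets $W$ live on a set of size $\lambda$: since $\kappa$ is inaccessible we have $\kappa^{<\kappa}=\kappa$ (and in general one wants $\lambda^{<\kappa}=\lambda$), so there is a $\kappa$-independent family $\langle g_\gamma \mid \gamma<2^\lambda\rangle$ of functions $\lambda\to\kappa$, i.e. one such that for any fewer than $\kappa$ distinct indices $\gamma_i$ and any values $\beta_i<\kappa$ some $\alpha<\lambda$ satisfies $g_{\gamma_i}(\alpha)=\beta_i$ for all $i$. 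Let $F$ be the $\kappa$-complete filter on $\lambda$ generated by $\{\,g_\gamma^{-1}(X) \mid \gamma<2^\lambda,\ X\in U_\gamma\,\}$.

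Now $F$ is proper: an intersection of fewer than $\kappa$ generators becomes, after merging those with a common index (legitimate because each $U_\gamma$ is $\kappa$-complete), a set of the form $\bigcap_{i<\delta}g_{\gamma_i}^{-1}(X_i)$ with $\delta<\kappa$ and the $\gamma_i$ pairwise distinct, and picking $\beta_i\in X_i$ and applying $\kappa$-independence yields a point in it. Applying the $\lambda$-filter extension property to $F$ gives a $\kappa$-complete ultrafilter $W\supseteq F$ on $\lambda$; then $(g_\gamma)_*W\supseteq U_\gamma$, hence $(g_\gamma)_*W=U_\gamma$ since both are ultrafilters, whence $W$ is non-principal and not $\kappa^+$-complete, so $\crit(j_W)=\kappa$. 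Put $\eta_\gamma:=[g_\gamma]_W<j_W(\kappa)$; then $X\in U_\gamma\Leftrightarrow g_\gamma^{-1}(X)\in W\Leftrightarrow\eta_\gamma\in j_W(X)$, so $U_\gamma$ is precisely the ultrafilter derived from $j_W$ at $\eta_\gamma$. Letting $E$ be the ($\kappa$-complete, as $\crit(j_W)=\kappa$) extender derived from $j_W$ using the seeds $\{\kappa\}\cup\{\eta_\gamma\mid\gamma<2^\lambda\}$, the ultrapower $M_E$ factors into the wellfounded $M_W$ via the canonical map $k$ with $k\restriction\sup(\mathrm{supp}\,E)=\mathrm{id}$, hence is wellfounded, and $E$ witnesses the $2^\lambda$-gluing property for $\langle U_\gamma\rangle$. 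The remaining clauses of Definition~\ref{def:gluing-property}---that $\langle\eta_\gamma\rangle$ be increasing and that coordinates carry the tag $j_E(\gamma)$---are routine bookkeeping (the tags separate repeated ultrafilters; to obtain an increasing enumeration of seeds one may first replace $W$ by a product $W\times W_0$ with an auxiliary $\kappa$-complete ultrafilter on $\lambda$). The one delicate point in this half is the combinatorics of the independent family for ill-behaved $\lambda$; for the principal case $\lambda=\kappa$ it is automatic from inaccessibility.

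\textit{The second assertion: the reduction.} If $\kappa$ had the $(2^\kappa)^+$-gluing property, apply it to a sequence $\langle U_\gamma\mid\gamma<(2^\kappa)^+\rangle$ of pairwise distinct $\kappa$-complete ultrafilters on $\kappa$ (these exist: $\kappa$ is measurable, so there are $2^{2^\kappa}$ of them). The resulting extender $E$ then has $(2^\kappa)^+$ distinct seeds $\eta_\gamma<j_E(\kappa)$, so $|j_E(\kappa)|>2^\kappa$; equivalently $E$ has length $>2^\kappa$, and in particular there is an elementary $j\colon V\to M$ with $\crit(j)=\kappa$ and $j(\kappa)>2^\kappa$. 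But the $\lambda$-filter extension property on its own delivers no such embedding: it yields only $\kappa$-complete ultrafilters on $\kappa$, and $\Ult(V,U)$ by any such $U$ has $|j_U(\kappa)|\le\kappa^\kappa=2^\kappa$, so realises at most $2^\kappa$-many ultrafilters on $\kappa$. Hence it suffices to produce a model in which $\kappa$ is $\kappa$-compact but admits no $j\colon V\to M$ with $\crit(j)=\kappa$ and $j(\kappa)>2^\kappa$; in any such model the displayed sequence cannot be glued.

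\textit{The second assertion: the model.} Such a model I would obtain by starting from a universe with a $\kappa$-compact cardinal---consistent relative to a strong cardinal by Gitik, and realised e.g. by any strongly compact cardinal---and, if $\kappa$ is still too strong there, forcing with a sufficiently closed notion of forcing $\mathbb{P}$ designed to destroy every $\kappa$-complete extender on $\kappa$ of length $>2^\kappa$. The two requirements on $\mathbb{P}$ are exactly what makes this the heart of the argument: $\mathbb{P}$ must be closed enough to add no new $\kappa$-complete filters or ultrafilters on $\kappa$, so that the $\kappa$-compactness of $\kappa$---a statement about $H_{(2^\kappa)^+}$---is preserved, yet active enough on the ordinals above $2^\kappa$ to kill, or forbid the reappearance of, any embedding with critical point $\kappa$ sending $\kappa$ past $2^\kappa$; a collapsing or coding-type forcing concentrating on an interval just above $2^\kappa$, or else a suitably minimal, extender-poor ground model produced directly from a strong cardinal, should do it. I expect threading the needle between these two demands---keeping the compactness while capping the strength---to be the hardest step of the whole theorem.
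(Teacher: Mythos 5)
Your first half takes a genuinely different route from the paper's, and the idea is appealing: build one $\kappa$-complete ultrafilter $W$ on $\lambda$ that Rudin--Keisler projects onto every $U_\gamma$ via a $\kappa$-independent family, then derive the extender from $j_W$. The paper instead uses Lemma~\ref{BetterCharacterization}, the Hamkins-hull version of the filter-extension property, to get an embedding $j\colon M\to N$ together with a set $s\in N$ with $j``2^\lambda\subseteq s$ and $N\models|s|<j(\kappa)$, and then does a recursion \emph{inside $N$ of length $|s|<j(\kappa)$} to pick seeds increasingly. That recursion is exactly where your version has a hole: with $W$ fixed, the seed $\eta_\gamma=[g_\gamma]_W$ is determined, and there is no reason the map $\gamma\mapsto\eta_\gamma$ is increasing on $2^\lambda$. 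This is not ``routine bookkeeping,'' and the product-ultrafilter remedy you gesture at ($W\times W_0$) does not obviously move the seeds at all: if $W_0$ is on $\lambda$ and $g'_\gamma(\alpha,\beta)=g_\gamma(\alpha)$, then $\crit(j_{W_0}^{M_W})=j_W(\kappa)>[g_\gamma]_W$ so the seeds are unchanged. The repair that actually works within your framework is to enlarge the filter $F$ before extending: add the constraints $\{\alpha\mid g_\gamma(\alpha)<g_{\gamma'}(\alpha)\}$ for $\gamma<\gamma'$, and re-verify properness by choosing the witnessing values $\beta_i\in X_i$ to be increasing (possible because each $X_i$ is unbounded in $\kappa$) before applying $\kappa$-independence. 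With that amendment your derivation of $E$ and the well-foundedness argument via the factor map are fine, so the first assertion is salvageable by your method, though it is not the paper's.

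The second half has a more serious problem: the reduction is wrong. You propose to rule out every $j\colon V\to M$ with $\crit(j)=\kappa$ and $j(\kappa)>2^\kappa$. But under $\GCH$ (which the paper's construction assumes), $(\kappa^+)^M=\kappa^+=2^\kappa$ for the ultrapower by \emph{any} $\kappa$-complete ultrafilter on $\kappa$, and $j(\kappa)$ is inaccessible in $M$, so $j(\kappa)>(\kappa^+)^M=2^\kappa$ already holds for every such $j$. Thus the model you ask for cannot exist while $\kappa$ is even measurable. Even the more charitable reading, ``no $j$ with $|j(\kappa)|>2^\kappa$,'' is not what the paper kills; the paper's Lemma~\ref{LemmaCorrectEmbeddings} shows that $\kappa^{++}$-gluing \emph{together with} $o(\kappa)=\kappa^{++}$ (which $\kappa^+$-$\Pi^1_1$-subcompactness provides via Lemma~\ref{LemmaMitchellOrder}) yields a $j$ with $\kappa^{++}_M=\kappa^{++}$, and that is the property the forcing rules out. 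The contradiction is a $\square_{\kappa^+}$ versus $\refl(E^{\kappa^{++}}_\omega)$ clash: the Easton-support iteration adds $\square_{\alpha^+}$ at every inaccessible $\alpha<\kappa$, elementarity gives $\square_{\kappa^+}$ in $M$, and if $M$ computed $\kappa^{++}$ correctly this would transfer to $V[G]$ and contradict the (preserved) stationary reflection. None of this is visible in your sketch: you invoke an unspecified ``collapsing or coding-type forcing concentrating on an interval just above $2^\kappa$'' and defer the key verifications, whereas the paper needs that the forcing preserves $\kappa^+$-$\Pi^1_1$-subcompactness (Claim~\ref{PreservationofSubcompactness}) and $\refl(E^{\kappa^{++}}_\omega)$ (Claim~\ref{ReflectionClaim}), neither of which is easy. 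So your reduction must be replaced, not merely fleshed out.
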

This result was motivated by a striking result of Gitik \cite{Gitikcompact} saying that if $\kappa$ is $\kappa$-compact then there is a forcing notion that does not collapse cardinals and projects onto every $\kappa$-distributive forcing of cardinality $\kappa$. In his proof, Gitik uses the $2^\kappa$-gluing property of $\kappa$-compact cardinals in order to define such forcing - but this latter claim was based on the argument of \cite[Theorem~2.1]{GitikOnMeasurables}. Fortunately, we can prove that the required gluing property holds for $\kappa$-compact cardinals. %and therefore that such a forcing exists. 
In \S\ref{section:universal} of this paper we provide the reader with a self-contained  exposition of Gitik's poset.

%In \cite{Gitikcompact}, using a similar argument as in \cite[\S2]{GitikOnMeasurables}, Gitik constructs for a $\kappa$-compact cardinal, a forcing notion that does not collapse cardinals and projects onto every $\kappa$-distributive forcing notion of cardinality $\kappa$. As we remarked above, this argument uses implicitly the $2^\kappa$-gluing property for $\kappa$. Fortunately, we can prove that the required gluing property holds by Theorem \ref{kappacompactnessandgluing}.

%Nevertheless we would like to stress that the $2^\kappa$-gluing property (and hence $\kappa$-compactness) is sufficiently strong to derive another striking result due to Gitik -- the existence of a Prikry type forcing that is universal for all $\kappa$-distributive forcings of size $\kappa$ (see \cite{Gitikcompact}). 

\smallskip

Continuing with our study of the gluing property, in \S\ref{section:consistent-omega-gluing}
 we derive the consistency of the $\omega$-gluing property from the existence of a strong cardinal: 
 
 \begin{theorem*}
Assume that $\kappa$ is a strong cardinal. Then, there is a  cardinal-preserving generic extension in which $\kappa$ has the $\omega$-gluing property. %$\lambda$-gluing property for all $\lambda$. 
\end{theorem*}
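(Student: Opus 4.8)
Throughout, write $G$ for the generic and argue in $V[G]$. Since $j_E(n)=n$ for $n<\omega$, and identifying each $\kappa^{\{n\}}$ with $\kappa$ via $\xi\mapsto\{\langle n,\xi\rangle\}$, the $\omega$-gluing property unfolds to the following: for every sequence $\langle U_n\mid n<\omega\rangle$ of $\kappa$-complete ultrafilters on $\kappa$ there are a $\kappa$-complete extender $E$, with associated embedding $j_E\colon V[G]\to M_E$, and ordinals $\kappa\le\eta_0<\eta_1<\cdots<j_E(\kappa)$ such that $U_n=\{X\subseteq\kappa\mid\eta_n\in j_E(X)\}$ for all $n$; that is, each $U_n$ is the ultrafilter derived from $E$ at the seed $\eta_n$. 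Two routes suggest themselves. One is to build $E$ directly for a given sequence as the extender derived from the direct limit of the linear iterated ultrapower $N_0=V[G]$, $N_{n+1}=\Ult(N_n,j_{0,n}(U_n))$, with $\eta_n$ the $n$-th new seed; a routine computation shows this works \emph{provided} the $\omega$-th direct limit is well-founded, and the work then consists in arranging, by the forcing, that this holds for every sequence. The route I would actually pursue produces a single ``universal'' extender once and for all and reduces gluing to bookkeeping.

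Concretely, the target is a model $V[G]$ carrying one $\kappa$-complete extender $E^{\star}$, with $j_{E^{\star}}(\kappa)>2^{2^{\kappa}}$, such that (i) every $\kappa$-complete ultrafilter $U$ on $\kappa$ is derived from $E^{\star}$, and (ii) for each such $U$ the set $\{\eta<j_{E^{\star}}(\kappa)\mid U=\{X\mid\eta\in j_{E^{\star}}(X)\}\}$ of seeds realizing $U$ is unbounded in $j_{E^{\star}}(\kappa)$. Granting this, given $\langle U_n\mid n<\omega\rangle$ one recursively chooses $\eta_0<\eta_1<\cdots<j_{E^{\star}}(\kappa)$ with $\eta_n$ a seed realizing $U_n$ (possible by (ii)); then $E:=E^{\star}$, together with these $\eta_n$, witnesses $\omega$-gluing for that sequence. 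Observe that (ii) is exactly what allows an \emph{arbitrary} — not Mitchell-increasing — $\omega$-sequence to be threaded into a strictly increasing list of seeds; I expect this clause to be the feature responsible for the appearance of ``$\omega_1$'' in the optimal consistency strength.

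To obtain such an $E^{\star}$ I would force with a reverse Easton iteration $\mathbb{P}=\langle\mathbb{P}_\alpha,\dot{\mathbb{Q}}_\alpha\mid\alpha\le\kappa\rangle$ with Easton support, nontrivial only at inaccessible $\alpha\le\kappa$. At such an $\alpha$, $\dot{\mathbb{Q}}_\alpha$ is the product of an $\alpha$-directed-closed factor preparing the strongness of $\kappa$ for lifting through the iteration, with a ``coding'' factor that uses partial generic information to anticipate the $\alpha$-complete ultrafilters on $\alpha$ and to force each of them to be captured, cofinally often, by the image of a strong embedding. Standard lifting and bookkeeping arguments should then yield: (a) $\mathbb{P}$ preserves all cardinals and cofinalities (Easton support below $\kappa$, closure of the tail, chain condition); (b) $\kappa$ remains strong in $V[\mathbb{P}]$, and for every $\theta$ a $\theta$-strong embedding $j\colon V[\mathbb{P}]\to M$ lifts, with $V_\theta^{V[\mathbb{P}]}\subseteq M$ and $j(\kappa)>\theta$; and (c) every $\kappa$-complete ultrafilter $U$ on $\kappa$ in $V[\mathbb{P}]$ factors through such a $j$ at unboundedly many generators below $j(\kappa)$. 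One then fixes $\theta$ above the rank of $\mathcal{P}(\mathcal{P}(\kappa))^{V[\mathbb{P}]}$, fixes such a $j$, and defines $E^{\star}$ to be the extender derived from $j$ on a set of seeds chosen to include, for every $\kappa$-complete ultrafilter on $\kappa$, cofinally-in-$j(\kappa)$ many seeds that realize it; clauses (i) and (ii) then hold by construction.

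The main obstacle is clause (c): one must guarantee that \emph{every} $\kappa$-complete ultrafilter on $\kappa$ of the \emph{extension} — including any produced by later stages of $\mathbb{P}$ — factors through one fixed strong embedding, and does so at unboundedly many seeds. For a strong cardinal with no preparation and a generic choice of embedding this fails, which is precisely why the coding factor must be built into the iteration, and it is here that the strength of ``$\kappa$ strong'' (beyond ``$\kappa$ measurable'' or ``$o(\kappa)=\omega_1$'') is spent in this argument. The subsidiary difficulties are to carry out the coding while keeping $\mathbb{P}$ cardinal-preserving, and to ensure that no new $\kappa$-complete ultrafilter on $\kappa$ appears past the stage at which $E^{\star}$ is fixed — most cleanly by arranging that the tail of $\mathbb{P}$ adds no new subsets of $\kappa$, e.g.\ by taking it $(2^{\kappa})^{+}$-closed.
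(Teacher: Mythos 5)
Your proposal takes a genuinely different route from the paper, and it stops at the point where the real work begins. You aim for a single ``universal'' extender $E^\star$ in the extension such that \emph{every} $\kappa$-complete ultrafilter on $\kappa$ is realized at cofinally many seeds below $j_{E^\star}(\kappa)$; $\omega$-gluing would then reduce to recursively choosing increasing seeds. The paper never constructs such a global object. Instead it fixes a Laver function $\ell$ for the strong cardinal (in the Gitik--Shelah sense) and forces with a non-stationary-support iteration $\mathbb{P}_\kappa$ that, guided by $\ell$, inserts a tree Prikry forcing at each anticipated inaccessible stage. Given a particular $\omega$-sequence $\langle U_n\rangle$ in $V[G]$, the paper chooses a $(\kappa,\kappa+2)$-strong embedding $j\colon V\to M$ with $j(\ell)(\kappa)$ equal to nice names for the $U_n$'s; at stage $\kappa$ of $j(\mathbb{P}_\kappa)$ the forcing is then exactly the tree Prikry poset for $\langle U_n\rangle$, and one extracts a $\kappa$-complete ultrafilter $\mathcal{U}$ on $\kappa^\omega$ (whose $n$-th coordinate projection is Rudin--Keisler onto $U_n$) by diagonalizing a $\leq^*$-decreasing $\kappa^+$-chain of tail conditions against the statements $\dot{b}_\kappa\in j(\dot{X})$. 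By the reformulation of $\omega$-gluing as ``there is a $\kappa$-complete ultrafilter on $\kappa^\omega$ projecting onto each $U_n$,'' this finishes. Different sequences use different embeddings $j$; no single extender does all the work.

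The gap in your sketch is your clause~(c): that \emph{every} $\kappa$-complete ultrafilter on $\kappa$ in $V[\mathbb{P}]$ factors through one fixed strong embedding and does so at cofinally many seeds. This is strictly stronger than what the paper proves, and you give no indication of what the ``coding factor'' in your Easton iteration is or why a directed-closed preparation plus coding would deliver it; in its present form the argument simply halts at the point where it would have to establish (c). Relatedly, your first (discarded) route --- iterate $U_0,U_1,\ldots$ and hope the $\omega$-th direct limit is well-founded --- misses the paper's actual device: rather than trying to control the canonical iterated ultrapower, one manufactures a \emph{fresh} $\kappa$-complete ultrafilter $\mathcal{U}$ on $\kappa^\omega$ directly via the Prikry generic $\dot{b}_\kappa$ and the $\leq^*$-closure of the tail, and then well-foundedness of its ultrapower is automatic. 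That shift from ``control the canonical iteration'' to ``build the gluing measure by forcing'' is the idea your sketch does not find, and it is what makes a Laver-guided Prikry-type iteration, rather than a closed Easton preparation with a universal extender, the right vehicle.
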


%Thus, strongness yields an upper bound for the consistency strength of the $\omega$-gluing property. 
The above fits like a glove with Gitik's theorem that $\lambda$-gluing, for every cardinal $\lambda$, yields an inner model with a strong cardinal. Besides, the techniques used there provide valuable information to pin down the exact consistency strength of the $\omega$-gluing property - this is obtained in \S\ref{section; improving} and \S\ref{section:lower-bound}:

%Following this vein, we conclude the paper with \S\S\ref{section; improving} and \ref{section:lower-bound} where we compute the  consistency strength of the $\omega$-gluing property: 

%However, as we will show in this paper, $\kappa$-compact cardinals do not  enjoy of the \emph{global gluing property}. More precisely, in  % for all of the full gluing property.

%In this paper, we investigate this property. In Section \ref{section:gluing-from-large-crdinals} we show that $\kappa$-compactness implies a bounded degree of the gluing property. This gluing property is sufficiently strong in order to derive a striking result due to Gitik---the existence of a Prikry type forcing which is universal for all $\kappa$-distributive forcings of size $\kappa$. In Section \ref{section:universal} we spell out the details of this construction. 
%In Section \ref{section:consistent-bounds-on-gluing} we show that consistently, the results of Section \ref{section:gluing-from-large-crdinals} are optimal. In Section \ref{section:consistent-omega-gluing}
% we derive the consistency of the $\omega$-gluing property from a strong cardinal. 

\begin{theorem*}
The following statements are equiconsistent:
\begin{enumerate}
    \item $\kappa$ has the $\omega$-gluing property.
    \item $\kappa$ is a measurable cardinal with $o(\kappa)=\omega_1$. 
\end{enumerate}
\end{theorem*}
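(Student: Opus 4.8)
\emph{The plan.} The statement is an equiconsistency, so there are two implications between the corresponding consistency statements to establish; as is typical, neither is a soft reflection argument and each carries real content.

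\emph{From $(2)$ to $(1)$.} This is the business of \S\ref{section; improving}, which optimises the forcing of \S\ref{section:consistent-omega-gluing} from a strong cardinal down to the exact hypothesis $o(\kappa)=\omega_1$. Starting in a ground model of $\GCH$ with a coherent sequence $\langle U_\alpha\mid\alpha<\omega_1\rangle$ of $\kappa$-complete normal measures on $\kappa$ witnessing $o(\kappa)=\omega_1$, I would run a cardinal-preserving reverse-Easton/Prikry-type iteration designed so that, in the final model, (i) for every $\alpha<\omega_1$ there is an extender $E_\alpha$ over $\kappa$ that glues $\langle U_\beta\mid\beta\le\alpha\rangle$, and (ii) every $\kappa$-complete ultrafilter on $\kappa$ is captured by the sequence --- e.g.\ the iteration makes $V$ look like a $\vec{\mathcal{U}}$-model over $P(\kappa)$, so that the only $\kappa$-complete ultrafilters on $\kappa$ are (lifts of) the $U_\alpha$'s --- the Prikry-type distributivity of the iterands being what keeps the collection of such ultrafilters under control. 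Granted (i) and (ii), the $\omega$-gluing property is immediate: any $\omega$-sequence of $\kappa$-complete ultrafilters on $\kappa$ only lists measures from the sequence, hence, since $\cf(\omega_1)>\omega$, is bounded by some $\alpha<\omega_1$, and a relabelling of $E_\alpha$ glues it.

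\emph{From $(1)$ to $(2)$.} This is \S\ref{section:lower-bound}, argued by contraposition through inner model theory. Assume there is no inner model with a measurable cardinal of Mitchell order $\omega_1$; then the core model $K$ --- lying well below this, inside Mitchell's theory of $L[\vec{\mathcal{U}}]$ --- exists, satisfies weak covering, and enjoys the comparison and iterability machinery: every $\kappa$-complete ultrafilter on $\kappa$, and every $\kappa$-complete extender over $\kappa$, in $V$ gives rise to a linear iteration of $K$, and $o^K(\kappa)$ is a countable ordinal. Suppose for contradiction that $\kappa$ has the $\omega$-gluing property. One then picks an $\omega$-sequence $\langle U_n\mid n<\omega\rangle$ of $\kappa$-complete ultrafilters on $\kappa$ whose comparison with $K$ uses up Mitchell ranks cofinal in $o^K(\kappa)$ --- available precisely because $\cf(o^K(\kappa))\le\omega$ --- feeds it to the gluing property to get an extender $E$, and compares $E$ with $K$; in the resulting iterate of $K$, all of the $U_n$ must reappear below a single point, which pushes the Mitchell order strictly past $o^K(\kappa)$, the desired contradiction. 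A final truncation of $K$ converts ``inner model with $o(\kappa)\ge\omega_1$'' into ``inner model with $o(\kappa)=\omega_1$'', closing the equiconsistency.

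\emph{The main obstacle} is the lower bound. The $\omega$-gluing property delivers a gluing extender $E$ with no control over its length, its generators, or the closure of $\Ult(V,E)$; in particular the realising seeds $\langle\eta_n\mid n<\omega\rangle$ may be cofinal in $j_E(\kappa)$, so there is no seed inside $E$ lying above all of them from which to read off a measure Mitchell-dominating every $U_n$, and $\Ult(V,E)$ need not even be $\omega$-closed. The argument must therefore go through the comparison of $E$ against $K$ directly, tracking where the canonically chosen $U_n$ land on the iteration tree and using weak covering together with the fact that each ultrapower step in the comparison uses only finitely many generators to bound the iteration and force the collapse onto $o^K(\kappa)$. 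Getting this bookkeeping to close --- and pinning down $\omega_1$ exactly, rather than $\omega$, $\omega\cdot 2$, or any other nearby ordinal --- is the technical heart of the proof.
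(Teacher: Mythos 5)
Your proposal for the forward direction $(2)\Rightarrow(1)$ is the right shape but elides the actual mechanism. You propose an iteration after which ``every $\kappa$-complete ultrafilter on $\kappa$ is captured by the sequence'' of length $\omega_1$; that is not what can (or should) be arranged. In the paper's model there are $\kappa^+$-many $\kappa$-complete ultrafilters, and the content of the construction is not to cap this count but to prove a \emph{coding lemma} (Lemma~\ref{lemma:representing-ultrafilters}): after forcing with the non-stationarily-supported fast-function poset $\mathbb{S}$ followed by a non-stationary-support iteration of tree Prikry forcings, every $\kappa$-complete ultrafilter on $\kappa$ in the extension is determined by a code of size $\kappa$, and the code only refers to the Mitchell indices of finitely many $\mathcal{K}$-measures. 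Given an $\omega$-sequence of such ultrafilters, the finitely-many-indices-per-code observation yields a countable ordinal $\zeta_*$ bounding all the indices involved, and $\mathcal{U}(\kappa,\zeta_*)$ then ``anticipates'' the sequence via the generic fast function. Your closing observation --- that $\cf(\omega_1)>\omega$ is what makes $\omega_1$ the right hypothesis --- is correct and is precisely the point, but it is only usable after the coding lemma has reduced the set of relevant Mitchell indices to something countable; the reverse-Easton description by itself does not deliver that. The coding lemma in turn requires the anti-large-cardinal hypothesis $V=\mathcal{K}$ and the full machinery of \S\ref{Section;proof of coding}, which is the genuine technical content of this direction.

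For the lower bound $(1)\Rightarrow(2)$ your plan takes a genuinely different route and has a gap. You argue by contraposition: assume $o^{\mathcal{K}}(\kappa)<\omega_1$, pick a single $\omega$-sequence of $\kappa$-complete ultrafilters in $V$ whose $\mathcal{K}$-traces have Mitchell orders cofinal in $o^{\mathcal{K}}(\kappa)$, glue once, and derive a contradiction. But you have no guarantee that such a sequence exists in $V$: a $\mathcal{K}$-measure of Mitchell order $\alpha$ need not extend to a $\kappa$-complete ultrafilter in $V$, and in fact producing, in $V$, $\kappa$-complete ultrafilters whose $\mathcal{K}$-traces climb the Mitchell order is precisely what requires repeated applications of the gluing property. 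The paper's \S\ref{section:lower-bound} is a direct transfinite construction of length $\omega_1$: starting from a single normal measure, it applies $\omega$-gluing once per ordinal $\beta<\omega_1$ (to a constant sequence at successors, and at limits to a cofinal sequence reindexed by a map $r$ that hits every value infinitely often) to produce $\mathcal{K}$-normal measures $\mathcal{V}_\beta$ whose $\mathcal{K}$-traces are strictly Mitchell-increasing. One application of gluing only buys you one step; the redundant enumeration at limits is what lets you prove $o^{\mathcal{K}^M}(\eta_\omega)>\zeta_n$ for every $n$, because for each $n$ the subsequence $\langle\eta_m\mid m\in B_n\rangle$ generates (inside the $\kappa$-closed ultrapower $M$) a $\mathcal{K}^M$-measure on $\eta_\omega$ of Mitchell order exactly $\zeta_n$. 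Your one-shot comparison argument cannot recover this.

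A smaller but real issue: you worry that ``$\Ult(V,E)$ need not even be $\omega$-closed.'' That concern is misplaced. Lemma~\ref{lemma:omega-gluing-by-a-measure} shows that the $\omega$-gluing property is equivalent to the existence of a single $\kappa$-complete ultrafilter $W$ on $\kappa^\omega$ gluing the given sequence; its ultrapower $M$ is closed under $\kappa$-sequences, so the seeds $\langle\eta_n\mid n<\omega\rangle$ and their supremum $\eta_\omega$ lie in $M$, and $\eta_\omega$ is where the new $\mathcal{K}$-normal measure is read off. The inner-model-theoretic machinery you invoke (comparison of an arbitrary extender against $\mathcal{K}$, iteration trees) is overkill and, in the absence of closure, would not have obviously delivered a seed above all realizers. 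The paper's route through Lemma~\ref{lemma:omega-gluing-by-a-measure} and Mitchell's theorem that $j\restriction\mathcal{K}$ is a linear normal iteration is both simpler and what actually closes the argument.
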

%The difference between those two results is that in order to prove the second one, we work over a suitable core model, with some anti-large cardinal hypothesis, while the first one is a straight-forward forcing argument. 
%Succintly speaking, our For this we mimic the construction used in the above-mentioned theorem, replacing a Gitik iteration of Tree Prikry forcings by a non-stationary-supported iteration. 
We close the paper with \S\ref{OpenProblems} where we spell out some open problems.

%The  notations in this paper are mostly standard. All the background material  appears either in \S\ref{Preliminaries} or will be introduced at a due moment.

%Nevertheless, we assume our readers to be fluent with the theory of elementary embeddings and Prikry-type forcings. For a comprehensive account in these matters we refer the reader to \cite{Kan} and \cite{Gitik-handbook}, respectively.

\section{Set-theoretic preliminaries}\label{Preliminaries}
%The purpose of this section is to  help the reader get acquainted with the set-theoretic background of the paper. Thus this section should serve as a guide and will be referred in further sections 

%In this section we garner some standard background % the bulk of set-theoretic notions that will be  used in the paper. %The cornerstone concept of the manuscript is that of an \emph{ultrafilter}. Most especially, we shall be interested in \emph{$\kappa$-complete ultrafilters} over $\kappa$ for some uncountable regular cardinal $\kappa$. %\footnote{Recall that a cardinal $\kappa$ is called \emph{regular} if for any sequence $\langle \lambda_\alpha\mid \alpha<\theta\rangle$ of ordinals less than $\kappa$ and with length  $\theta<\kappa$ then $(\sup_{\alpha<\theta}\lambda_\alpha)<\kappa.$} 
%The reader is warned that (unless specified) 
%all cardinals considered will be uncountable and regular.% In spite the section is largely self-contained we refer to \cite{Kun} for basic set-theoretic definitions.
\subsection{Ultrafilters and large cardinals}\label{SectionUltrafilters}
Let $\lambda\leq \kappa$ be cardinals and $\mathcal{U}$  a (ultra)filter over $\kappa$. We say that $\mathcal{U}$ is \emph{$\lambda$-complete} if $\bigcap\mathcal{A}\in \mathcal{U}$ for every family $\mathcal{A}\s \mathcal{U}$ with cardinality ${<}\lambda$.
 The (ultra)filter  $\mathcal{U}$ is called \emph{normal} if for every function $f\colon\kappa\rightarrow\kappa$ with $\{\alpha<\kappa\mid f(\alpha)<\alpha\}\in \mathcal{U}$ there is $\beta<\kappa$ such that $$\{\alpha<\kappa\mid f(\alpha)=\beta\}\in\mathcal{U}.$$
    %$\diagonal_{\alpha<\kappa}A_\alpha\in \mathcal{F}$ for every  sequence $\langle A_\alpha\mid\alpha<\kappa\rangle$ of members of $\mathcal{F}$, being
    %$$\diagonal_{\alpha<\kappa}A_\alpha:=\{\alpha<\kappa\mid \forall\beta<\alpha\,(\beta\in A_\alpha)\}.$$
    A cardinal $\kappa$ is said to be  \emph{measurable} if it carries a $\kappa$-complete  (non-trivial) ultrafilter. 
%\begin{definition}[Tarski]
%    $\kappa$ is \emph{measurable} if it carries a $\kappa$-complete ultrafilter.% over $\kappa$. 
%\end{definition}
  Those ultrafilters  will be  hereafter referred  as \emph{measures} (over $\kappa$). % If, in addition, $\mathcal{U}$ is normal we shall say it is a \emph{normal measure}.
   %If, in addition, $\mathcal{U}$ is a normal 
%Measurable cardinals belong to a broad (and well-studied) category termed \emph{large cardinals} or \emph{large-cardinal axioms}. In this introductory section we shall present some large-cardinal siblings of measurability that will be relevant for our purposes.  %Just to help the reader grasp the  essence of these objects - these are cardinals postulating the existence of \emph{large forms of infinity} and whose existence cannot be established in \textsf{ZFC} alone. 
%We refrain to elaborate more than necessary 
%on this issue. Instead, we refer the interested reader to Kanamori's excellent monograph \cite{Kan}.

\smallskip

There is an obvious generalization of measurable cardinals inspired by the \emph{Ultrafilter Theorem}. This is incarnated by the  \emph{$\lambda$-filter extension property}: %It is a theorem of $\mathsf{ZFC}$ that every filter $\mathcal{F}$ admits an extension $\mathcal{U}\supseteq \mathcal{F}$ to a ultrafilter. Trivially, every filter is $\omega$-complete in the sense of Definition~\ref{DefCompleteness}. Thus the ultrafilter theorem says that every $\omega$-complete filter extends to an $\omega$-complete ultrafilter. 
%Replacing $\omega$ by an uncountable regular cardinal in the statement of the ultrafilter theorem  gives rise to  a new (and stronger) large cardinal notion:
%If in this statement we replace $\omega$ by an uncountable regular cardinal we are yielded a new (and stronger) large cardinal concept.

\begin{definition}
    A cardinal $\kappa$ has the \emph{$\lambda$-filter extension property} (for $\lambda\geq \kappa$)
    if  every $\kappa$-complete filter over $\lambda$ extends to a $\kappa$-complete ultrafilter. 
    
    If $\lambda=\kappa$  it is customary to  say that $\kappa$ is \emph{$\kappa$-compact}.
\end{definition}
In \S\ref{section:gluing-from-large-crdinals} of this paper we shall study the connection between such cardinals and the gluing property. In particular, we will show that if $\kappa$ has the $\lambda$-filter extension property then $\kappa$ has the $(2^\lambda)$-gluing property.

\smallskip

The ultimate form of filter extension property is   \emph{strong compactness}, a classical large-cardinal notion isolated by Keisler and Tarski:

%There is yet another natural generalization of the $\lambda$-filter extension property which arises after relaxing the dependence on $\lambda$. This generalization was introduced by Keisler and Tarksi in \cite{KeislerTarski} aiming to extend the  \emph{Compactness Theorem} of First Order Logic to the infinitary language $\mathcal{L}_{\kappa,\kappa}.$ 
\begin{definition}[Keisler \& Tarski {\cite{KeislerTarski}}]
    A cardinal $\kappa$ is called \emph{strongly compact} if every $\kappa$-complete filter extends to a $\kappa$-complete ultrafilter.
\end{definition}
In what concerns the gluing property, strong compactness is powerful enough to entail  $\lambda$-gluing for every cardinal $\lambda$ (Theorem~\ref{kappacompactnessandgluing}).% and that these have the $\lambda$-filter extension property, for all $\lambda$.

\smallskip

Pushing this vein further one reaches the notion of \emph{supercompact cardinal} \cite[\S22]{Kan}. Despite its fundamental role in the theory of large cardinals we shall not introduce this concept for it is not used in the paper. However, there are local forms of supercompactness which will be relevant for us:
\begin{definition}\label{Subcompactness}
   A cardinal $\kappa$ is \emph{$\lambda$-$\Pi^1_1$-subcompact} (for $\lambda\geq \kappa$) if for every $A\s H(\lambda)$ and every $\Pi^1_1$-statement $\Phi$ such that $\langle H(\lambda),\in, A\rangle\models \Phi$, there are:
   \begin{enumerate}
       \item a pair of cardinals $\bar{\kappa}\leq \bar{\lambda}<\kappa$;
       \item a subset $\bar{A}\s H(\bar{\lambda})$ such that $\langle H(\bar{\lambda}),\in, \bar{A}\rangle\models \Phi$, 
       \item and an elementary embedding%\footnote{Recall that a map $j\colon M\rightarrow N$ between two structures in the same language $\mathcal{L}$ is called elementary if for every $\bar{a}\in M^n$ and every $\mathcal{L}$-formula $\varphi(\bar{x})$, $M\models \varphi(\bar{a})$ iff $N\models \varphi(j(\bar{a}))$. }
       $$ j\colon \langle H(\bar{\lambda}),\in,\bar{A}\rangle\rightarrow\langle H(\lambda),\in, A\rangle$$
       with  $\crit(j)=\bar{\kappa}$ and $j(\bar{\kappa})=\kappa.$%\footnote{In this paper $\crit(j)=\bar{\kappa}$ will serve as a shorthand for $``j$ has critical point $\bar{\kappa}$''.}
   \end{enumerate}
\end{definition}
Recall that a $\Pi^1_1$-statement $\Phi$ is a sentence of the form $``\forall X \varphi(X)$'' where $X$ is a second-order variable and the quantifiers of $\varphi(X)$ range only over first-order variables.
The notion of $\kappa^+$-$\Pi^1_1$-subcompactness first appeared in \cite{NeSt} under the name of \emph{$\Pi^2_1$-subcompactness}.  In \S\ref{section:gluing-from-large-crdinals} we   link $\Pi^1_1$-subcompact\-ness with the study of the gluing property. %For instance, we will use it to show that there are $\kappa$-compact cardinals without the $(2^\kappa)^+$-gluing property.
%To help the reader parse the above definition let us clarify the notation used: %Let us conclude with a few clarifications to parse the above definition: 
%as usual,  $H(\lambda)$ denotes  the collection of sets whose transitive closure is of cardinality $<\lambda$; 
 % finally, $\crit(j)=\bar{\kappa}$ is a shorthand for $``j$ has critical point $\bar{\kappa}$'' or, equivalently, $``\bar{\kappa}$ is the first cardinal such that $j(\bar{\kappa})\neq \bar{\kappa}$''.

\subsection{Extenders}\label{subsection:extenders}
\begin{comment}
There is a duality between measures over $\kappa$ and elementary embeddings $j\colon V\rightarrow M$. This is given by the \emph{ultrapower construction} \cite[\S0]{Kan}. In effect, every  measure $\mathcal{U}$ over $\kappa$ yields an elementary embedding $j\colon V\rightarrow M\simeq \Ult(V,\mathcal{U})$\footnote{As usual $\Ult(V,\mathcal{U})$ denotes the ultrapower of $V$ by $\mathcal{U}$ and $M$ denotes its Mostowski collapse. See \cite[\S0]{Kan} for details.} with $\crit(j)=\kappa$ and, vice-versa,  %with $\crit(j)=\kappa$ with $M$ being the transitive collapse of $\Ult(V,\mathcal{U})$ (the \emph{ultrapower of $V$ by $\mathcal{U}$}). L
%\ale{Not sure if I want to spend time here talking about the ultrapower... But at some point we need to talk about iterations.}
every (non-trivial) elementary embedding $j\colon V\rightarrow M$ into a transitive class $M\s V$ yields a normal measure $\mathcal{U}$ over $\kappa:=\crit(j)$. %can be used to define the \emph{ultrapower of $V$ by $\mathcal{U}$} (in symbols, $\Ult(V,\mathcal{U})$) and, incidentally, define an elementary embedding $j\colon V\rightarrow M\simeq \Ult(V,\mathcal{U})$ to the transitive collapse of $\Ult(V,\mathcal{U}).$
%\footnote{Here $M$ is a transitive subclass of $V$; namely, $M\s V$  and for all $x\in y\in M$, $x\in M.$} 

The first in exploiting this duality was Scott \cite[\S5]{Kan} who use this model-theoretic artifact to characterize measurable cardinals.  This remarkable finding yield to the discovery of analogous embedding-like characterizations for other large cardinals (cf. \cite{Kan}).  Morally speaking, the stronger the large cardinal property is the \emph{better} the embedding $j$ becomes in the sense that $M$ more faithfully resembles the (true) universe of sets, $V$.
\end{comment}

In the  \S\ref{section:consistent-omega-gluing} we will consider the following notion:% streng\-thening of measurability:% defined by Reindhardt, Solovay and Kanamori:
\begin{definition}\label{StrongCardinal}
    A cardinal $\kappa$ is \emph{strong} if for all $\lambda\geq \kappa$ there is an elementary embedding $j\colon V\rightarrow M$ with $\crit(j)=\kappa$, $j(\kappa)\geq \lambda$ and $V_\lambda\s M$.
\end{definition}
There is a technical issue when considering such embeddings - they are too powerful to arise from a single measure on $\kappa$. This leads to the definition of an extender which, in rough terms, is a coherent collection of measures.
\begin{definition}
    Let $\kappa$ be a cardinal and $X$ a set.  We denote by $[X]^{\leq\kappa}$  the collection of sets $a\s X$ with $|a| \leq \kappa$. For a set of ordinals, $y$, let $\pi_y \colon y \to \otp(y)$ be the unique order preserving map between $y$ and $\otp(y)$
\end{definition}
For every non-zero $\alpha < \kappa^{+}$ let us fix $f^\alpha \colon \kappa \to \alpha$ a  surjective function. 

\begin{definition}\label{Definitionextender}
   For cardinals $\omega < \kappa < \lambda$ a \emph{$(\kappa,\lambda)$-extender} $E$ is a sequence $\langle E_a\mid \kappa\in a\in  [\lambda \setminus \kappa]^{\leq\kappa}\rangle$ such that  for each   such $a$ %a\in[\lambda\setminus \kappa]^{\leq\kappa}$ %with $\kappa\in a$ 
   the following hold:
\begin{enumerate}
    \item $E_a$ is a $\kappa$-complete, non-$\kappa^+$-complete, ultrafilter over $[\kappa]^{<\kappa}$. % where
      %$$\Sigma_a:=\{\sigma \subseteq \kappa \mid |\sigma|<\kappa\}.$$
    % Besides, $E_a$ is not $\kappa^+$-complete.
      
%      \ale{In general, I think we should give up from concentrating on increasing sequences. In case $V=M$ we are OK but otherwise that might be probleatic}
  % $$\Sigma_a:=\{\sigma \colon \dom(a)\rightarrow\kappa\mid |\sigma|<\kappa\,\wedge\, \text{$\sigma$ is order-preserving}\}$$\ale{In general, I think we should give up from concentrating on increasing sequences. In case $V=M$ we are OK but otherwise that might be probleatic} 
   \item \textbf{(Normality)} $E_{\{\kappa\}}$ is a normal measure.  
   \item \textbf{(Coherency)} Suppose that $a\s b$ are in $ [\lambda \setminus \kappa]^{\leq \kappa}$ and $\kappa\in a$. Then, $\rho_{b, a}$ 
   is a Rudin-Keisler projection from $E_b$ to $E_a$.  Namely, 
   $$A\in E_a\;\Leftrightarrow\; \{\sigma\in [\kappa]^{<\kappa}\mid \rho_{b,a}(\sigma)\in A\}\in E_b.$$

   Here the map $\rho_{b, a}\colon [\kappa]^{<\kappa}\rightarrow [\kappa]^{<\kappa}$ is defined as follows.
%\ale{ Maybe $f_\alpha$ is just $f\restriction\alpha?$}\yair{I think that we need the more complicated definition. $\range f\restriction \alpha$ is not necessarily contained in $\kappa$.}   
Recall that $f:=f^{\otp b} \colon \kappa \to \otp b$ is a surjection, and let $\hat{a} := f^{-1}(\pi_b \image a) \s \kappa$. For every $\alpha < \kappa$, let $f_\alpha$ be $\pi_{f \image \alpha} \circ f \restriction \alpha$. Finally, for $\sigma \in [\kappa]^{<\kappa}$, let $\kappa_\sigma := \min \sigma$ and define $\rho_{b,a}(\sigma) := \range (\pi^{-1}_{\sigma} \circ f_{\kappa_\sigma} \restriction \hat{a} \cap \kappa_\sigma)$.
   %the restriction map $\restriction\colon \Sigma_b\rightarrow X_a$ witnesses the following:
   %Namely, 
  % $$A\in E_a\;\Leftrightarrow\; \{\sigma\in \Sigma_b\mid \rho_{b,a}(\sigma)\in A\}\in E_b.$$
%   \item (Normality) Suppose that $f\colon \Sigma_a\rightarrow\kappa$ is a function such that
%   $$\{\sigma\in \Sigma_a\mid f(\sigma)\in \sup(\ran (\sigma))\}\in E_a.$$
%   Then, there is $b\supseteq a$ such that $\{\sigma\in \Sigma_{b}\mid f(\sigma\restriction a)\in \ran(\sigma)\}\in E_{b}.$%\ale{Maybe we want to get rid of this. This ensures that $k_a([\id]_{E_{a}})=a$, which is necessary to show that $E$ is the extender derived form the extender ultrapower $j_E$ and that every member of $M_E$ is of the form $j_E(f)(a)$. However, (1) and (2) already guarantee that $j_E\colon V\rightarrow M_E$ exits. (3) is impossible to guarantee in Theorem 4.2.}
%   \item (Well-foundeness) The direct limit $M_E$ is well-founded.
\end{enumerate}
\end{definition}
%Any $(\kappa,\lambda)$-extender $E=\langle E_a\mid a\in [\lambda]^{\leq\kappa}\rangle$ yields an elementary embedding $j_E\colon V\rightarrow M_E$ with $\crit(j)=\kappa$. This is obtained by taking the direct limit of the commutative system of embeddings given by the ultrapowers maps $j\colon V\rightarrow M_a\simeq \Ult(V,E_a)$. For details, see \cite[\S26]{Kan}.
%\begin{remark}
 %   About (4) above: For each $a\in [\lambda]^{\leq \kappa}$  we let  $j_a\colon V\rightarrow\Ult(V,E_a)$ be the usual elementary ultrapower buy $E_a$  \footnote{More precisely, $M_E$ is the direct limit of the system $\langle \langle M_a, j_a, k_{a,b}\rangle\mid a, b\in [\lambda]^{\leq\kappa},\, a\s b\rangle$}
%\end{remark}
Given an elementary embedding $j \colon V \to M$ such that ${}^\kappa M \subseteq M$, one can derive a $(\kappa, \lambda)$-extender for every $\lambda \leq j(\kappa)$ as follows: for  $\kappa\in a\in [\lambda\setminus \kappa]^{\leq\kappa}$, %$a\in {}^{\leq \kappa} \lambda$, 
\begin{equation*}\label{extenderequation}
   \tag{$\dagger$} E_a := \{X \subseteq [\kappa]^{<\kappa} \mid a \in j(X)\}.
\end{equation*}
%where $\Upsilon_a:=\{\langle j(\alpha),a(\alpha)\rangle\mid \alpha\in \dom(a)\}$.

%\ale{If we give up on increasingness in the previous definition, we can say here that another possible exteder is that given by $(j\restriction a)^{-1}$. This will concetrate on increasing sequences, but it is not always available.}
\smallskip

By the closure hypothesis over $M$, $E_a$ is a well-defined $\kappa$-complete ultrafilter over $[\kappa]^{<\kappa}$ for every $\kappa\in a\in [\lambda\setminus \kappa]^{\leq\kappa}$. %If $\kappa \in a$ (for example) then $E_a$ is going to be non-principal. 
On the other hand, given an extender, $E$, there is a canonical directed system of transitive models and embeddings $\langle M_a, i_{a,b} \mid a \subseteq b \in[\lambda]^{\leq\kappa}\rangle$ where $M_a \simeq \Ult(V, E_a)$ and $i_{a,b}\colon M_a\rightarrow M_b$ is the elementary embedding obtained from the Rudin-Keisler projection of Clause~(3) above. The direct limit of this system is again well-founded and called \emph{the extender ultrapower}. So, an elementary embedding $j_E \colon V \to M_E$ is obtained.  The original embedding $j$ \emph{factors through $j_E$} in the sense that there is yet another elementary embedding $k\colon M_E\rightarrow M$ such that $j=k\circ j_E.$ Furthermore, the critical point of the factor map $k$ (if exists) %\ale{What do you mean by ''if exists''?} 
is at least $\lambda$. For further details related to extenders, we refer the reader to \cite[\S26]{Kan}.
\begin{comment}
\begin{remark}
When (as before) the elementary embedding is from the universe of sets $V$ to any other transitive class one can derive an extender sitting on order-preserving functions.  Specifically, for each $a\in{}^{\leq \kappa} \lambda$ define
    \begin{equation*}
    \tag{$\dagger\dagger$} E_a := \{X \subseteq \Sigma^+_a \mid (j\restriction a)^{-1} \in j(X)\},
\end{equation*}
where  $(j\restriction a)^{-1}:=\{\langle j(\alpha),\alpha\rangle\mid \alpha\in\range(a)\}$ and $\Sigma^+_a:=\{\sigma\in \Sigma_a\mid \text{$\sigma$ is order-preserving}\}$. 
As we will see in the proof of Theorem~\ref{kappacompactnessandgluing}, extenders defined according to \eqref{extenderequation} are more appropriate  when the elementary embedding is of the form $j\colon M\rightarrow N$ for two sets $M,N$.
\end{remark}
\end{comment}
\begin{remark} %\ale{I'm not sure the current approach is Merimovich's. It seems the measures are defined in the usual way but the indexing is more flexible.}
Our definition of extender differs slightly   from the classical one (see \cite[\S26]{Kan}) due to instrumental purposes. % Instead, we are inspired by Merimovich's view of extenders \cite[\S2]{MerPrikryOnExt}. 
In the usual definition, the measures $E_a$ are indexed by finite sequences $a\in [\lambda]^{<\omega}$ and there is an additional requirement ensuring that the obtained extender-ultrapower is well founded. Our change of indexing is a simple way to ensure that the extender ultrapower $M_E$ will  be closed under $\kappa$-sequences. An equivalent way to obtain that using the standard definition is to require that the \emph{Rudin-Keisler order}\footnote{Given two measures $\mathcal{U}$ and $\mathcal{W}$ over $\kappa$ one says that $\mathcal{U}$ is \emph{Rudin-Keisler below} $\mathcal{W}$, $\mathcal{U}\leq_{\mathrm{RK}}\mathcal{W}$ if there is a map $f\colon \kappa\rightarrow\kappa$ such that for every $A\s \kappa$, $A\in \mathcal{W}$ iff $f^{-1}(A)\in\mathcal{U}$.\label{RudinKeisler}} restricted to the measures $\langle E_a\mid a\in [\lambda]^{<\omega}\rangle$ is $\kappa^{+}$-directed. %namely, for every $\langle a_\alpha\mid \alpha<\kappa\rangle $ of members of $[\lambda]^{<\omega}$ bere is $b$
\end{remark}

%However this shift is harmless in that both definitions are technically equivalent, whenever the extender ultrapower is closed under $\kappa$-sequences. Indeed, given an elementary embedding $j \colon V \to M$ with critical point $\kappa$, $M^{\kappa}\subseteq M$, we can factor $j$ through an extender ultrapower. 

The key concept of this paper is the \emph{Gluing Property}:
\begin{definition}[The Gluing Property]\label{def:gluing-property}
Let $\kappa$ be a measurable cardinal. We say that $\kappa$ has the \emph{$\lambda$-gluing property} if for every $\lambda$-sequence $\langle U_\gamma \mid \gamma < \lambda\rangle$ of $\kappa$-complete ultrafilters over $\kappa$ %there is a $\kappa$-complete extender $E$ 
     there is $j \colon V \to M$ an elementary embedding  with ${}^\kappa M \subseteq M$, $\crit(j) = \kappa$
   and an increasing sequence of ordinals $\langle \eta_\gamma \mid \gamma < \lambda\rangle$ such that
     $U_\gamma = \{X \subseteq \kappa \mid \eta_\gamma \in j(X)\}.$\footnote{The choice of an increasing sequence of $\langle \eta_\gamma\mid \gamma<\lambda\rangle$ implies, e.g.,  that $\lambda\leq j(\kappa)$.}
\end{definition}

%\ale{Explain that every gluing embedding entails an extender in the sense of the above definition.}

There is (yet again) a duality between the gluing property and extenders \emph{gluing measures}. Suppose that the $\lambda$-gluing property holds. For  each sequence of measures $\langle U_\gamma\mid \gamma<\lambda\rangle$ let  $j\colon V\rightarrow M$ and $\langle \eta_\gamma\mid \gamma<\lambda\rangle$ be witnesses.  Let $E$ be the $(\kappa,j(\kappa))$-extender derived from $j$ as in \eqref{extenderequation} above. It is not hard to check that $E$ \emph{glues} the measures $U_\gamma$; namely,  for each $\gamma<\lambda$\label{DualityGluingExtender}
$$E_{\{\kappa, \eta_\gamma\}}\supseteq\{\{\{\alpha,\beta\}\mid \beta \in A,\, \alpha<\kappa\}\mid A\in U_\gamma\}.$$%\ale{This needs to be ammended. Now we are asuming that all of our indices contain $\kappa$}
Similarly, given $\langle U_\gamma\mid \gamma<\lambda\rangle$ and a gluing $(\kappa,\theta)$-extender $E$ (in the above sense) it can be shown that, for each $\gamma<\lambda$, $U_\gamma=\{X\s \kappa\mid \eta_\gamma\in j_E(X)\}.$

\smallskip

We finish this block with an observation about the gluing property: 
\begin{lemma}\label{lemma:omega-gluing-by-a-measure}
The following are equivalent for %a measurable cardinal $\kappa$ and a cardinal
 cardinals $\lambda<\kappa$:
\begin{enumerate}
    \item $\kappa$ has the $\lambda$-gluing property.
    \item For every sequence $\langle U_\alpha \mid \alpha < \lambda\rangle$ of $\kappa$-complete ultrafilters over $\kappa$  there is a $\kappa$-complete ultrafilter $W$ on the increasing sequences of $\kappa^{\lambda}$ such that $U_\alpha\leq_{\mathrm{RK}}{W}$ as witnessed by $e_\alpha\colon \langle \eta_\alpha\mid \alpha<\lambda\rangle \mapsto \eta_\alpha.$ %concentrating on increasing sequences, such that the projection map $e_n$ sending $\eta \in \kappa^{\omega}$ to $\eta(n)$ is a Rudin-Keisler projection from $W$ to $U_n$.
\end{enumerate}
    %A cardinal $\kappa$ has the $\omega$-gluing property if and only if for every $\omega$-sequence of $\kappa$-complete ultrafilters $\langle U_n \mid n < \omega\rangle$ there is a $\kappa$-complete ultrafilter $W$ on  $\kappa^{\omega}$, concentrating on increasing sequences, such that the projection map $e_n$ sending $\eta \in \kappa^{\omega}$ to $\eta(n)$ is a -Keisler projection from $W$ to $U_n$.
\end{lemma}
\begin{proof}
(1)$\Rightarrow$(2): Let $\langle U_\alpha \mid \alpha < \lambda\rangle$ be as above. Using the $\lambda$-gluing property we find and increasing $\langle \eta_\alpha\mid \alpha<\lambda\rangle$ and an elementary embedding $j\colon V\rightarrow M$ with $\crit(j)=\kappa$ and ${}^\kappa M \subseteq M$, such that  $U_\alpha=\{X\s \kappa\mid \eta_\alpha\in j(X)\}.$ 

Let $\mathcal{I}\s \kappa^\lambda$ be the collection of increasing sequences. Define $W$ as follows:
$$X\in W\;\Longleftrightarrow\; X\s \mathcal{I}\;\wedge\; \langle \eta_\alpha\mid \alpha<\lambda\rangle\in j(X).$$
One can show that $W$ is $\kappa$-complete   and that $U_\alpha=\{e_\alpha``A\mid A\in W\}.$

(2)$\Rightarrow$(1): Let $\langle U_\alpha \mid \alpha < \lambda\rangle$ and $W$ be its corresponding witness for (2). Let $j\colon V\rightarrow M$ be the ultrapower embedding by $W$. It follows from $\kappa$-completeness of $W$ that $\crit(j)=\kappa$ and ${}^\kappa M \subseteq M$. Let's look now at $[\id]_W$. This is, by definition, an increasing $\lambda$-sequence of ordinals ${<}j(\kappa)$. For each $\alpha<\lambda$ let $\eta_\alpha$ be the $\alpha^{\mathrm{th}}$-member of $[\id]_W$.  We would like to show that $$U_\alpha=\{X\s \kappa\mid \eta_\alpha\in j(X)\}.$$
Let $X\s \kappa$. Then, $X\in U_\alpha$ if and only if $e_{\alpha}^{-1}X\in W$, which is equivalent to saying $[\id]_W\in j(e_{\alpha}^{-1}X)\Leftrightarrow j(e_\alpha)[\id]_W\in j(X)\Leftrightarrow \eta_\alpha\in j(X),$
as needed.
   % The backwards direction is clear, as $W$ itself can be represented as an extender. For the forwards direction, let $E$ be a $\kappa$-complete extender such that there is a sequence $\langle \eta_n \mid n < \omega\rangle$, $U_n = \{X \subseteq \kappa^{\{n\}} \mid \{\langle n, \eta_n\rangle\} \in j_E(X)\}$. 
% Let $W := \{X \subseteq \kappa^\omega \mid \langle \eta_n \mid n < \omega\rangle \in j_E(X)\}$. Then, $W$ witnesses the validity of the lemma.
\end{proof}
Thus, in the case of the $\omega$-gluing, the gluing extender is actually a $\kappa$-complete measure over increasing sequences of $\kappa^\omega$. This observation will be used in \S\ref{section:consistent-omega-gluing}, when we prove Theorem~\ref{omegagluingnonoptimal}.

Our definition of the gluing property requires the seed, $\eta_\alpha$, to be increasing. This requirement is quite natural with respect both to the upper bounds for the consistency strength as well as the lower bounds.  

One can define the gluing property without requiring the seeds to be increasing. There are at least two possible alternative definitions. 

\begin{definition}
Let $\kappa, \lambda$ be cardinals. We say that $\kappa$ has the \emph{weak-$\lambda$-gluing property} if for every sequence of measures on $\kappa$, $\langle U_\alpha \mid \alpha < \lambda\rangle$ there is an extender $E$ and a sequence of distinct ordinals (non-necessarily increasing) $\langle \eta_\alpha \mid \alpha < \lambda\rangle$ such that $U_\alpha = \{X \subseteq \kappa \mid \eta_\alpha \in j(X)\}$. 
\end{definition}
We say that the Rudin-Keisler order is $\lambda$-directed at $\kappa$, if for every $\lambda$-sequence of measures $\langle U_\alpha \mid \alpha < \lambda\rangle$ there is a  $(\kappa,\lambda)$-extender $E$ which is Rudin-Keisler above $U_\alpha$ for all $\alpha$.\footnote{We would like to thank the anonymous referee for posing the question on the relations between those different definitions} Clearly, the $\lambda$-gluing property implies the weak $\lambda$-gluing property, which in turn implies the $\lambda$-directness of the Rudin-Keisler order. For $\omega$-gluing the obtained principles are the same: 
\begin{lemma}
If for every countable sequence of $\kappa$-complete measures on $\kappa$ there is a $\kappa$-complete measure on $\kappa$ which is Rudin-Keisler above them, then $\kappa$ has the $\omega$-gluing property.
\end{lemma}
\begin{proof}
Let $\langle U_n \mid n < \omega\rangle$ be a sequence of measures on $\kappa$. For each $n<\omega$ define $W_n:= U_0 \otimes U_1 \cdots \otimes U_n$, a measure on $\kappa^n$. Let $f_n\colon \kappa^n \to \kappa$ be a bijection and let $\tilde{W}_n$ be the ultrafilter on $\kappa$ obtained by stipulating  $$X \in \tilde{W}_n:\Leftrightarrow f_n^{-1}(X) \in W_n.$$

Let $W$ be a measure which is Rudin-Keisler above $\tilde{W}_n$ for all $n<\omega$ and let $j\colon V \to M$ be the ultrapower by $W$.  So, in $M$, we can obtain a seed $s_n$ for $\tilde{W}_n$, for each $n$. By re-packing them using $j(f_n)^{-1}$, we obtain an $\omega$-sequence of finite sequences of ordinals. We wish to filter them into a single increasing sequence of seeds. 

Let us construct inductively infinite sets $A_n \subseteq \omega$ such that $A_{n+1}\subseteq A_n$ and for all $k < \ell \in A_n$, the $n$-th member of $j(f_k)^{-1}(s_k)(n) \leq j(f_{\ell})^{-1}(s_k)(n)$. Indeed, by the infinite Ramsey theorem on $\omega$ there is an infinite subset on which this sequence is either weakly increasing or decreasing. As there is no infinite decreasing sequence of ordinals, the later possibility is ruled out.

Finally, let $\{k_n \mid n < \omega\}$ be such that $k_n \in A_n$. Then, letting $\eta_n = j(f_{n+1})^{-1}(s_{k_n})(n)$ we obtain an increasing sequence of seeds. 
\end{proof}
\subsection{Prikry-type forcings}\label{sectionPrikrytype}
%Prikry-type forcings are a major technique in singular-cardinal combinatorics \cite{MagSing, MagSingII,MagSheGroups}. Perhaps a less known facet is that they have as well been instrumental in the study of regular cardinals. Two relevant examples %for the purposes of this paper are:  %The iterations of Prikry-type forcings were introduced and exploited by 
%Magidor's use of Prikry-type iterations to determine the mutual relationship between strongly compact and measurable cardinals  \cite{MagSuper};\footnote{It must be said that \cite{MagSuper} is the original source for such iterations.} Gitik's version of Magidor's iterations  and their applications to the study of  the %saturation of the 
%non-stationary ideal \cite{GitikNonStaI}.\smallskip

In \S\ref{section:consistent-omega-gluing}  of this paper we shall use Gitik's iteration of Prikry-type forcings to prove the consistency of the $\omega$-gluing property from the existence of a strong cardinal. % (see Definition~\ref{StrongCardinal}). 
Later, in \S\ref{section; improving}, we will employ a modification of this technique aiming to compute the exact consistency-strength of the $\omega$-gluing property. Such modification is borrowed from Ben-Neria and Unger's paper \cite{BenUng}  and it is a Prikry-analogue of the non-stationary-supported iteration considered by Friedman and Magidor in \cite{FriedmanMagidor}. The main part of the proof is the analysis of $\kappa$-complete ultrafilters in such a generic extension. This is a reminiscent of Gitik and Kaplan's work in \cite{GitikKaplan-nonstationary2022}. 

\smallskip

%Let us now pass to introducing the pertinent Prikry-type iterations. 
Let $P$ be a set and $\leq$, $\leq^*$ denote partial orders %\footnote{a.k.a.\ posets.} 
over it.
In this paper we endorse the following view of Prikry-type poset due to Gitik \cite[\S6]{Gitik-handbook}:
\begin{definition}\label{GitikPrikrydef}
    A triple $\langle P,\leq,\leq^*\rangle$ is called a \emph{Prikry-type forcing} if %both $\langle \mathbb{P},\leq\rangle$ and $\langle \mathbb{P},\leq^*\rangle$ are partial orders
    \begin{enumerate}
        %\item both $\leq$ and $\leq^*$ are partial orders over $\mathbb{P}$;
        \item  $\leq^*\subseteq \leq$,
        \item $\leq^*$ has the \emph{Prikry property}; to wit, for every $p\in {P}$ and a statement $\varphi$ in the language of forcing of $\langle {P},\leq\rangle$ there is  $q\leq^* p$ such that $$\text{$q\forces_{\mathbb{P}}\varphi$ or $q\forces_{\mathbb{P}}\neg\varphi.$}$$
    \end{enumerate}
\end{definition}
We will conveniently adopt the more compact notation $\mathbb{P}$ in lieu of $\langle P,\leq\rangle$ and identify the universe of the poset $P$ with $\mathbb{P}$ itself. We will also tend to say that $\mathbb{P}$ is of Prikry-type when the order $\leq^*$ is clear from the context.

Note that Gitik's definition is flexible enough to encompass any poset - just take $\leq^*:=\leq$. Of course, genuine Prikry-type forcings satisfy $\leq^*\subsetneq \leq$, and the direct order $\leq^*$ will have better closure properties. %Since we do not want to elaborate further  we refer the reader to Gitik's handbook chapter \cite{Gitik-handbook} where the reader can find several  examples.

%\smallskip

%As we mention some paragraphs above we shall be interested in Gitik-styled iterations:
\begin{definition}[Gitik's iteration, \cite{GitikNonStaI}]\label{Gitikiteration}
Let $\varrho$ be an ordinal. We define an iteration $\langle\mathbb{P}_\alpha, \dot{\mathbb{Q}}_\beta\mid \beta<\alpha\leq \varrho\rangle$ recursively as follows:  For each $\alpha\leq \varrho$ members of $\mathbb{P}_\alpha$ are sequences $p=\langle \dot{p}_\beta\mid \beta\in \supp(p)\rangle$ such that:
\begin{enumerate}[label=(\alph*)]
    \item $\supp(p)\s \alpha$ has \emph{Easton support}: namely, for every inaccessible cardinal $\beta\leq \alpha$, $|\supp(p)\cap \beta|<\beta$ provided $|\mathbb{P}_\gamma|<\beta$ for all $\gamma<\beta$.
    \item for every $\beta\in \supp(p)$, $p\restriction\beta:=\langle \dot{p}_\beta\mid \beta\in \supp(p)\cap\beta\rangle\in \mathbb{P}_\beta$      $$p\restriction\beta\forces_{\mathbb{P}_\beta}``\dot{p}_\beta\in \dot{\mathbb{Q}}_\beta$$ and, moreover, $$\one\Vdash_{\mathbb{P}_\beta} |\dot{\mathbb{Q}}_\beta|\leq 2^\beta\,\wedge\,\text{$\langle \dot{\mathbb{Q}}_\beta,\leq_\beta, \leq_\beta^*\rangle$ is Prikry-type and $\leq^*$ is $\beta$-closed''}.\footnote{Recall that $\beta$-closure stands for thee following property: For every $\leq^*_\beta$-decreasing sequence $\langle r_\gamma\mid \gamma<\bar{\gamma}\rangle$ of members of $\mathbb{Q}_\beta$ (with $\bar{\gamma}<\beta$) there is $r\in\mathbb{Q}_\beta$ such that $r\leq^*_\beta r_\gamma.$ }$$
   % $$\text{and}\; p\restriction\beta\forces_{\mathbb{P}_\beta}`` |\dot{\mathbb{Q}}_\beta|\leq 2^\beta\,\wedge\,\text{$\langle \dot{\mathbb{Q}}_\beta,\leq^*\rangle$ is $\beta$-closed''}.$$
\end{enumerate}
Let $p=\langle \dot{p}_\beta\mid \beta\in \supp(p)\rangle$ and $q=\langle \dot{q}_\beta\mid \beta\in \supp(q)\rangle$ be elements of $\mathbb{P}_\alpha$.

We write $p\leq_\alpha q$  if and only if the following hold:
\begin{enumerate}
    \item $\supp(p)\supseteq \supp(q)$,
    \item $p\restriction\beta\forces_{\mathbb{P}_\beta}\dot{p}_\beta\leq_\beta \dot{q}_\beta$, for every $\beta\in \supp(q)$;
    \item there is $b\subseteq \supp(q)$, finite, such that for all $\beta\in \supp(q)\setminus b$
    $$p\restriction\beta\forces_{\mathbb{P}_\beta} \dot{p}_\beta\leq^*_\beta \dot{q}_\beta.$$
\end{enumerate}
The Prikry ordering $\leq^*_\alpha$ is defined by the case in which $b$ above is empty. %say that \emph{$p$ is a pure/direct extension of $q$} and write $p\leq^*_\alpha q$
\end{definition}
\begin{remark}
About (a): While members of $p\in \mathbb{P}_\varrho$ are names for the various posets involved note that $\supp(p)$ is assumed to be an actual set in the ground model. About (b): The assumption that $|\dot{\mathbb{Q}}_\beta|\leq 2^\beta$ and $\langle \dot{\mathbb{Q}}_\beta,\leq^*_\beta\rangle$ is forced
to be $\beta$-closed is a technical requirement that allows to carry out \emph{diagonalizations}, such as the Prikry Property or its variants. For a concrete example, see  Lemma~\ref{lemma:representing-ultrafilters} in p.\pageref{lemma:representing-ultrafilters}. Finally, observe that Clause~(3) above does not restrict the possible values for $p$ outside the support of $q$, $\supp(q)$. This is in contrast to the \emph{Magidor's iterations} considered in \cite{MagSuper}. 
\end{remark}
One defines  non-stationary-supported iterations of Prikry-type forcings in a similar fashion: just replace Clause~(a) of Definition~\ref{Gitikiteration} by $``\supp(p)\cap \beta$ is \emph{non-stationary} in $\beta$ for all inaccessible cardinals $\beta\leq \alpha$''.%\footnote{Recall that a set $A\s \beta$ is non-stationary if there is a club set $C\s\beta$ disjoint from it.} 

\smallskip

A fundamental theorem about these iterations is that $\mathbb{P}_\varrho$ is of  Prikry-type.  The result for Easton-supported iteration is due to Gitik \cite[\S1]{GitikNonStaI} while the one concerning non-stationary-supported iterations is due to Ben-Neria and Unger \cite[\S2]{BenUng}. Gitik's iterations have the advantage that if $\varrho$ is sufficiently large (say Mahlo) then the iteration $\mathbb{P}_\varrho$ is $\varrho$-cc.\label{GitikIterationsccc} While this is not the case for non-stationary-supported iterations they still keep some of the pleasant properties of $\varrho$-ccness; e.g., they preserve stationary subsets of $\varrho$ \cite[\S3]{BenUng}. The real deal with non-stationary-supported iterations is that they
permit a fine control upon the measures of the generic extension $V^{\mathbb{P}_\varrho}.$ This feature will be exploited through \S\ref{section; improving}; particularly, in \S\S\ref{SectionInterlude} and \ref{Section;proof of coding}.  %A case of special interest in applications is when the degree of closure of each $\langle \dot{\mathbb{Q}}_\alpha,\leq^*_\alpha\rangle$ is much larger than the cardinality of $\mathbb{P}_\alpha$. For instance, in {BenUng}
%\begin{theorem}[\cite{GitikNonStaI,BenUng}]\label{IterationsAreOfPrikrytype}
%$\langle\mathbb{P}_\varrho,\leq^*_\varrho\rangle$ is a Prikry-type forcing.\footnote{The result for Easton-supported iteration is due to Gitik while the other for non-stationary-supported iterations is due to Ben-Neria and Unger.}
%\end{theorem}

\smallskip

We close this section presenting a  variation of the usual \emph{Tree Prikry-type forcing} from \cite[\S1.2]{Gitik-handbook} using $\omega$-many measures. Iterations of this poset are considered in \S\ref{section:consistent-omega-gluing} and \S\ref{section; improving} when proving the consistency of $\omega$-gluing.% property. %More precisely, we will be considering Gitik and non-stationary-supported iterations of it.

%Hereafter that $\kappa$ is a measurable cardinal.

\begin{definition}\label{Utree}
  Let ${\mathcal{U}}:=\langle U_n\mid n<\omega\rangle$ be a sequence of measures over $\kappa$. A set $T$ consisting of finite increasing sequences in $\kappa$ is called a \emph{$\mathcal{U}$-tree} if:
  \begin{enumerate}
      \item $T$ is a \emph{tree}; namely, $\{t\in T\mid t\sq s\}$ is well-orderable for every $s\in T$.
      \item $T$ is $\mathcal{U}$-fat; namely, for every $s\in T$, $\{\alpha<\kappa\mid s^\smallfrown \langle \alpha\rangle\in T\}\in {U}_{|s|}$.
  \end{enumerate}
  The order $\sq$ above stands for usual end-extension (i.e., $t\sq s$ if $t=s\restriction|t|.$)

  A sequence $s\in T$ is called the \emph{stem} of $T$ if it is the largest $t\in T$ such that $s\sq u$ or $u\sq s$ for all $u\in T.$ For each $n<\omega$, the $n^{\mathrm{th}}$-level of $T$ is 
  $$\mathrm{Lev}_n(T):=\{\alpha<\kappa\mid \exists t\in T\, (|t|=n-1\,\wedge\, t^\smallfrown\langle\alpha\rangle\in T)\}.$$
\end{definition}
%\yair{Why do you call that "diagonal"?}
%\ale{You are right. There is just one cardinal!}
\begin{definition}[$\mathcal{U}$-Tree Prikry]\label{UtreePrikry}
    Assume that ${\mathcal{U}}:=\langle U_n\mid n<\omega\rangle$ is a sequence of measures over a cardinal $\kappa$. The \emph{${\mathcal{U}}$-Tree Prikry forcing}  is the poset $\mathbb{T}(\mathcal{U})$ whose conditions are pairs $p=\langle s^p,T^p\rangle$ where $T^p$ is a $\mathcal{U}$-tree with \emph{stem} $s^p.$ Given two conditions $p,q\in \mathbb{T}(\mathcal{U})$ we write $p\leq q$ iff $T^p\s T^q$. %\footnote{Note that $p\leq q$ automatically implies $s^q\sq s^p.$} 
    In addition, if $s^p=s^q$ then we  write $p\leq^* q.$
\end{definition}
Given a condition $p=\langle s^p,T^p\rangle\in \mathbb{T}(\mathcal{U})$ we will conveniently suppress the superindex $p$ when the context allows it. The poset $\mathbb{T}(\mathcal{U})$ is a standard variation of the usual Tree Prikry forcing. Thus, it is not a surprise that it shares most of the properties of its classical sibling:
\begin{lemma}[Folklore]\label{TreePrikryproperty}
The following hold:
\begin{enumerate}
    \item $\mathbb{T}(\mathcal{U})$ is a Prikry-type forcing.
    \item $\langle \mathbb{T}(\mathcal{U}), \leq^*\rangle$ is $\kappa$-closed.
    \item $\mathbb{T}(\mathcal{U})$ is $\kappa^+$-cc and cardinal-preserving.
    \item $\one\forces_{\mathbb{T}(\mathcal{U})}``\cf(\kappa)=\omega$''.
\end{enumerate}
\end{lemma}
We refer to Benhamou's paper \cite{BenTree} for an extensive study of this type of \emph{tree-like} Prikry forcings forcing. 

\subsection{Inner Model theory}\label{sectionInnerModel}
%Some of the results in this paper (particularly, those in \S\ref{section; improving} and \S\ref{section:lower-bound}) involve some inner-model-theoretic concepts. 
We will use this section to garner some  inner-model-theoretic notions and set some relevant notations. % following closely the presentation in \cite{MitChap}.% e.g., to \cite{MitChap,Steel} for more about this topic. %We will use this section to present the reader with the corresponding inner-model notation we shall bear on and recall some relevant concepts
%\smallskip
%The first concept that will be used in this paper is the \emph{Mitchell order}:
\begin{definition}\label{MitchellIncreasing}
    For normal measures ${U}$ and ${W}$ over the same cardinal  one writes $ {U}\lhd {W}$ iff $ {U}\in \Ult(V, {W})$. 
\end{definition}
It is a theorem of Mitchell \cite[\S2]{MitChap} that $\lhd$ is well-founded, hence it makes perfect sense to speak about the rank of a normal measure $ {U}$ in $\lhd$.
\begin{definition}
    The \emph{Mitchell order of a normal measure} $ {U}$ (in symbols, $o( {U})$) is its rank in $\lhd$; namely, $o( {U}):=\sup\{o( {W})+1\mid  {W}\lhd  {U}\}.$

    Similarly, the \emph{Mitchell order of a cardinal $\kappa$} is $$o(\kappa):=\sup\{o( {U})+1\mid \text{$ {U}$ is a normal measure over $\kappa$}\}.$$
\end{definition}
Some few basic observations. A cardinal $\kappa$ is not measurable if and only if $o(\kappa)=0$. Similarly if $o(\kappa)=1$ then $\kappa$ is measurable but any normal measure $ {U}$ over it concentrates on $\{\alpha<\kappa\mid \text{$\alpha$ is not measurable}\}.$ Solovay observed (see \cite[\S2]{MitChap}) that $o(\kappa)\leq (2^\kappa)^+$ and thus, under GCH, $o(\kappa)\leq \kappa^{++}$.

\smallskip
%\yair{Do we really need that? Are we going to use the model $L[\mathcal{U}]$ at some point?}
%\ale{No - I suppresed it. But we need the notion of coherent sequence.}
A related concept, also due to Mitchel, is that of a \emph{coherent sequence of measures}. This will appear later in Theorem~\ref{MainTheorem}.
\begin{definition}
A \emph{coherent sequence of measures} is a function ${\mathcal{U}}$ with:
\begin{enumerate}
    \item $\dom({\mathcal{U}})=\{(\alpha,\beta)\mid \alpha<\mathrm{len}({\mathcal{U}})\;\text{and}\;\beta<o^{{\mathcal{U}}}(\alpha)\}$, where $\mathrm{len}({\mathcal{U}})$ is a cardinal and $o^{{\mathcal{U}}}$ is a map between cardinals $\alpha<\mathrm{len}({\mathcal{U}})$ to ordinals;
    \item if $(\alpha,\beta)\in \dom({\mathcal{U}})$ then $\mathcal{U}(\kappa,\beta)$ is a normal measure on $\kappa$;
    \item  for all $(\alpha,\beta)\in\dom({\mathcal{U}})$,  $j_{\mathcal{U}(\alpha,\beta)}({\mathcal{U}})\restriction\alpha+1={\mathcal{U}}\restriction(\alpha,\beta)$ where
    $${\mathcal{U}}\restriction(\alpha,\beta):=\{\mathcal{U}(\gamma,\bar{\beta})\mid (\gamma< \alpha\,\wedge\,\bar{\beta}<o^{{\mathcal{U}}}(\gamma))\; \vee\; (\gamma=\alpha\,\wedge\,\bar{\beta}<\beta)\},$$
    and 
    $j_{\mathcal{U}(\alpha,\beta)}({\mathcal{U}})\restriction\alpha+1:=j_{\mathcal{U}(\alpha,\beta)}({\mathcal{U}})\restriction(\alpha+1,0)$.%\footnote{This is the coherency assumption which implies, in particular, that $\mathcal{U}(\alpha,\bar{\beta})\lhd\mathcal{U}(\alpha,{\beta})$ for all $\bar{\beta}<\beta$ such that $(\alpha,\bar{\beta}),(\alpha,\beta)\in\dom({\mathcal{U}}).$}
\end{enumerate}
%domain $\{(\alpha,\beta)\mid \alpha\leq \kappa,\,\beta<o^{\vec{\mathcal{U}}}(\alpha)\}$ such that 
\end{definition}
%Clause~(3) above is the coherency condition. This latter implies that $\mathcal{U}(\alpha,\bar{\beta})\lhd\mathcal{U}(\alpha,{\beta})$ for all $\bar{\beta}<\beta$ such that $(\alpha,\bar{\beta}),(\alpha,\beta)\in\dom({\mathcal{U}}).$  Thus, $o(\mathcal{U}(\alpha,{\beta}))\geq \beta$. It is a  theorem of Mitchell (see e.g. \cite[Theorem~2.7]{MitChap})
%that the only measures in $L[\mathcal{U}]$\footnote{Namely, the minimal inner model containing $\mathcal{U}$} are the measures $\mathcal{U}(\alpha,\beta)$ appearing in $\mathcal{U}$. In particular, $o(\mathcal{U}(\alpha,{\beta}))=\beta$ holds in $L[\mathcal{U}].$

A major avenue of research in inner model theory is the construction of \emph{core models}. Roughly speaking, for certain anti-large cardinal hypotheses\footnote{For example, hypotheses such as $``0^{\#}$ does not exists", ``There is no inner model with a measurable cardinal", ``There is no inner model with $\exists \alpha, o(\alpha) = \alpha^{++}$", ``There is no sharp for an inner model with a strong cardinal" or ``There is no class inner model with a Woodin cardinal".}, these are canonical transitive inner models, containing all ordinals, which are not changed under forcing and maximal with respect to the anti-large cardinal hypothesis. Moreover, assuming an anti-large cardinal assumption, the corresponding core model truthfully resembles the universe of sets. 

One of the most important features of the core model $\mathcal{K}$ is its maximality. Namely, every elementary embedding $j\colon \mathcal{K} \to M$, where $M$ is transitive, can be represented using objects in $\mathcal{K}$. The following definition is a presice formulation of this idea for the core model that we use in the paper.

\begin{definition}\label{Iteration}
A system of elementary embeddings $\langle \iota_{\alpha, \beta} \mid \alpha \leq \beta \leq \delta\rangle$ between transitive models is called a \emph{linear iteration} if the following hold:%\ale{Do we want to plug this in here?}
\begin{enumerate}
    \item $\iota_{\alpha,\beta}\colon \mathcal{M}_{\alpha} \to \mathcal{M}_{\beta}$ has $\crit(\iota_{\alpha,\beta})=\mu_\alpha$,%\footnote{}
    \item for all $\alpha \leq \beta \leq \gamma \leq \delta$, $\iota_{\beta,\gamma}\circ \iota_{\alpha,\beta} = \iota_{\alpha,\gamma}$,
    \item for all $\alpha \leq \delta$, $\iota_{\alpha,\alpha} = \id$, and
    \item for every limit ordinal $\gamma \leq \delta$, $$\text{$\langle \mathcal{M}_\gamma, \langle \iota_{\alpha,\gamma} \mid \alpha < \gamma\rangle\rangle$ is the direct limit of $\langle \iota_{\alpha,\beta} \mid \alpha \leq \beta < \gamma\rangle$.}$$
\end{enumerate}
%For each $\alpha\leq\delta$, $\iota_\alpha$ will be a shorthand for $\iota_{0,\alpha}$.

A linear iteration is a \emph{normal iteration, using internal measures} if:
\begin{enumerate}\addtocounter{enumi}{4}
    \item %For all $\alpha$, 
    $\iota_{\alpha, \alpha+1}$ is an ultrapower embedding by a measure $\mathcal{U}_\alpha \in \mathcal{M}_\alpha$ on $\mu_\alpha$,   
    \item $\langle \mu_\alpha \mid \alpha < \delta\rangle$ is a strictly increasing sequence.\end{enumerate}
\end{definition}
\begin{conv}
For each $\alpha\leq \delta$, $\iota_\alpha$ will serve as a shorthand for $\iota_{0,\alpha}$. Often times we will be a bit careless and say that $\iota_\delta$ is an iteration rather than referring to the sequence $\langle \iota_{\alpha, \beta} \mid \alpha \leq \beta \leq \delta\rangle$.
\end{conv}

A remarkable theorem that we shall use is due to Mitchell \cite{MitIter}:
\begin{theorem}\label{CoreModelTheorem}
Assume there is no inner model for $``\exists\alpha\,(o(\alpha)=\alpha^{++})$''. Then there is a unique %coherence sequence $\mathcal{U}$ with an associated 
inner model $\mathcal{K}$ with the following properties:
\begin{enumerate}
    \item\label{Maximality}  If $\mathcal{U}$ is a $\mathcal{K}$-normal and $\kappa$-complete $\mathcal{K}$-ultrafilter over $\kappa$ then $\mathcal{U}\in \mathcal{K}$.\footnote{$\mathcal{K}$-normal means that $\mathcal{U}$ is normal relative to every function $f\colon \kappa\rightarrow \kappa$ in $\mathcal{K}$.}%\yair{Where does this formulation come from?}
   % \ale{You mean how does this connect to our definition of normality? If so, you are right, we need to formulate $\mathcal{K}$-normality in terms of functions $f\in\mathcal{K}$. This is a reminiscence from when we talked about diagonal intersections.}
    \item\label{coremodelIterations} Any non-trivial elementary embedding $j\colon \mathcal{K}\rightarrow M$ into a transitive class $M$ is a normal linear iteration using normal measures in $\mathcal{K}$.
    \item\label{GenericAbsoluteness}  Let $\mathbb{P}$ be a set-sized forcing and  $G\s \mathbb{P}$ be generic. Then $\mathcal{K}^{V[G]}=\mathcal{K}$.
    \item\label{weakcovering} If $\delta$ is a singular strong limit cardinal then $\delta^+=(\delta^+)^{\mathcal{K}}$.
\end{enumerate}
\end{theorem}
The above is the so-called \emph{core model up $o(\kappa)=\kappa^{++}$} but in this paper we shall refer to it simply as the \emph{core model}. Aside from the above-displayed properties, $\mathcal{K}$ has some other gentle combinatorial features - for instance, it satisfies $\GCH$.\label{GCHinK} For more see Mitchell's handbook chapter \cite{Mithand2}. We will not need the precise definition of the core model in this paper. 

In \S\ref{section; improving} and \S\ref{section:lower-bound}  we shall exploit (1)--(4) to prove the equiconsitency of the $\omega$-gluing property modulo a measurable cardinal with $o(\kappa)=\omega_1$. %We will timely refer to Mitchell's theorem when this are used.

\section{Gitik's universal extender-based Prikry forcing}\label{section:universal}
%An uncountable cardinal $\kappa$ is said to be \emph{$\kappa$-compact} if every $\kappa$-complete filter on $\kappa$ can be extended to a $\kappa$-complete ultrafilter.
In this section we would like to provide a full-detailed account of the following striking theorem of Gitik:
\begin{theorem}[Gitik]\label{GitiksTheorem}
	Assume that $\kappa$ is a $\kappa$-compact cardinal. Then, there is a Prikry-type poset $\mathbb{P}$ that does not add bounded subsets of $\kappa$ and forces the following: for every poset $\mathbb{Q}\in V$ with $|\mathbb{Q}|=\kappa$ and  $V\models ``\text{$\mathbb{Q} $ is $\kappa$-distributive}$''   there is a $\mathbb{Q}$-generic filter over $V$.
\end{theorem}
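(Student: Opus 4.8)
The plan is to realise $\mathbb{P}$ as a \emph{guided extender-based Prikry forcing}: one builds a long $\kappa$-complete extender by gluing together a $2^\kappa$-sized family of $\kappa$-complete ultrafilters on $\kappa$, and then uses the coordinates of the extender to thread strictly descending --- hence generic --- sequences through the relevant posets. First I would fix bookkeeping inside $V$. As every $\kappa$-distributive poset of size $\kappa$ is isomorphic to one with underlying set $\kappa$, and there are only $2^\kappa$ relations on $\kappa$, let $\langle\mathbb{Q}_\xi\mid\xi<2^\kappa\rangle$ list all posets on $\kappa$ that $V$ deems $\kappa$-distributive; after a harmless normalization assume each $\mathbb{Q}_\xi$ is separative and atomless (splitting below every node), the degenerate pieces being handled by the obvious splicing. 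For $\xi<2^\kappa$ and a finite strictly descending $s=\langle q_0,\dots,q_{n-1}\rangle$ in $\mathbb{Q}_\xi$ set $\mathbb{Q}_\xi\restriction s=\{q\mid q<q_{n-1}\}$ (all of $\mathbb{Q}_\xi$ if $s=\emptyset$). If $|\mathbb{Q}_\xi\restriction s|<\kappa$ then the cone below $q_{n-1}$ is a $\kappa$-distributive poset of size $<\kappa$, hence forcing-trivial, so I fix once and for all a $V$-generic filter below $q_{n-1}$ and ``freeze'' that coordinate; otherwise fix a bijection $\rho_{\xi,s}\colon\kappa\to\mathbb{Q}_\xi\restriction s$. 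The sets $\rho_{\xi,s}^{-1}[D\cap(\mathbb{Q}_\xi\restriction s)]$, for $D$ a dense open subset of $\mathbb{Q}_\xi$ lying in $V$, generate a proper $\kappa$-complete filter on $\kappa$: by $\kappa$-distributivity an intersection of $<\kappa$ many dense open subsets of $\mathbb{Q}_\xi$ is dense, hence still meets $\mathbb{Q}_\xi\restriction s$. Since $\kappa$ is $\kappa$-compact, extend this filter to a $\kappa$-complete non-principal ultrafilter $W_{\xi,s}$ on $\kappa$.

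Now the hypothesis of the theorem enters. A $\kappa$-compact cardinal has the $\kappa$-filter extension property, hence --- as recalled in the introduction --- the $2^\kappa$-gluing property; applying it to the $2^\kappa$-sized family $\langle W_{\xi,s}\rangle$, ranging over all such pairs $(\xi,s)$, yields a single $\kappa$-complete extender $E$ with $j_E\colon V\to M_E$ such that each $W_{\xi,s}$ is a designated projection of $E$. Let $\mathbb{P}$ be the extender-based Prikry forcing associated with $E$, with its usual direct-extension order $\le^*$, but \emph{guided}: at a coordinate attached to $(\xi,s)$ the lower part of a condition may only take a value $q$ strictly extending the descending sequence recorded there, and once $q$ has been adopted that coordinate is continued with the measure $W_{\xi,s\cat\langle q\rangle}$, the measure-one sets of the condition being shrunk to enforce this (a $\le^*$-move). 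The ${\le}\kappa$-sized supports and the coherence constraints between distinct coordinates are organized exactly as in the unguided poset. Then $\mathbb{P}$ is Prikry-type as soon as the Prikry property is verified, and $\le^*$ is $\kappa$-closed, so $\mathbb{P}$ adds no bounded subsets of $\kappa$.

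It then remains to check two things. \emph{The Prikry property} for $\langle\mathbb{P},\le,\le^*\rangle$: given $p$ and a sentence $\sigma$, one produces $p^*\le^* p$ deciding $\sigma$ by the standard approximation argument --- for each way of adjoining one step to the stem one asks for a $\le^*$-extension that decides $\sigma$, codes the answers as a function, pushes it through the relevant $W_{\xi,s}$'s, and forms a diagonal intersection over the (up to $\kappa$ many) active coordinates. This is exactly where the extender, equivalently the gluing property, is indispensable: the glued top measure of $E$ provides one measure-one set projecting correctly to \emph{all} active coordinates at once, so the ${\le}\kappa$ parallel shrinkings can be coordinated, whereas with the $W_{\xi,s}$'s taken separately the required diagonal intersection is simply unavailable --- this is precisely the gap that motivates the present paper. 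Since the $W_{\xi,s}$'s are arbitrary $\kappa$-complete ultrafilters, with no normality at our disposal, one runs the general extender-based form of the argument, valid for any $\kappa$-complete extender, with the tree-of-measures modification folded in as in tree Prikry forcing. I expect this, and more precisely its limit-of-support part, to be the main obstacle --- it is the very step Gitik's original construction had to gloss over.

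Finally, \emph{universality}. Fix $\xi$ and let $\langle q^\xi_n\mid n<\omega\rangle$ be the strictly descending sequence that the generic $G$ threads through $\mathbb{Q}_\xi$ at the $\xi$-coordinate (or the fixed $V$-generic tail, if that coordinate was frozen). For any dense open $D\subseteq\mathbb{Q}_\xi$ with $D\in V$, the set of conditions whose current $\xi$-stem $s$ already meets $D$, or whose $\xi$-measure-one set has been shrunk inside $\rho_{\xi,s}^{-1}[D\cap(\mathbb{Q}_\xi\restriction s)]$, is $\le^*$-dense in $\mathbb{P}$; hence in $V[G]$ some $q^\xi_n$ lies in $D$. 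Therefore $H_\xi:=\{q\in\mathbb{Q}_\xi\mid\exists n\ q\geq q^\xi_n\}$ is a filter meeting every ground-model dense open subset of $\mathbb{Q}_\xi$, i.e.\ a $\mathbb{Q}_\xi$-generic over $V$ lying in $V[G]$. Since every $\kappa$-distributive $\mathbb{Q}$ with $|\mathbb{Q}|=\kappa$ is isomorphic in $V$ to some $\mathbb{Q}_\xi$, and such isomorphisms carry $\mathbb{Q}_\xi$-generics to $\mathbb{Q}$-generics, this yields the theorem.
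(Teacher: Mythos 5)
Your overall strategy matches the paper's: extend the filters of dense open sets to $\kappa$-complete ultrafilters via $\kappa$-compactness, glue them into an extender using the $2^\kappa$-gluing property, and run an extender-based Prikry forcing in which each coordinate threads a descending $\omega$-sequence. However, your implementation of the guidance diverges from the paper's, and the divergence lands precisely on the point you flag as potentially troublesome.

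You index the measures to be glued by pairs $(\xi,s)$, so that after the stem at $\xi$ grows from $s$ to $s\cat\langle q\rangle$ the active coordinate of the extender $E$ is supposed to switch from $(\xi,s)$ to $(\xi,s\cat\langle q\rangle)$. This is problematic. The gluing property delivers an extender $E$ with a fixed family of generators; it makes each $W_{\xi,s}$ a projection of $E$, but it gives no relation between the generator sitting over $(\xi,s)$ and the one over $(\xi,s\cat\langle q\rangle)$. A condition's tree and its measure-one sets are chosen before $q$ is known, living over one set of coordinates; after a one-point extension they would have to be large for measures attached to a different, disjoint set of coordinates of $E$, and nothing arranges that. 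Your parenthetical ``shrink to enforce this (a $\leq^*$-move)'' does not close this gap --- a $\leq^*$-move shrinks inside a fixed measure, whereas what you need is for the ambient measure itself to migrate coherently with the stem. The fusion underlying the Prikry lemma needs a single ambient measure on objects at each stage; how one obtains it across these coordinate switches is exactly what is left unsaid, and it is not something the gluing property hands you for free.

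The paper sidesteps the issue with a cleaner device that you are missing. The extender $E$ is indexed by ordinals $\alpha\in[\kappa,2^\kappa)$ only --- one coordinate per poset, not per (poset, stem) pair --- and the measure $E(f)$ used at stem $f$ is the one derived from the seed $\sigma_f$ with $\sigma_f(j_E(\alpha))=\sigma_\eta$, where $\eta$ is the unique index with $\mathbb{Q}_\eta=\mathbb{Q}_\alpha/\mu_\alpha$ (the cone below the current top $\mu_\alpha$ of the stem at $\alpha$). Because the residual cone is itself a $\kappa$-distributive poset of size $\kappa$ and the enumeration is injective, this $\eta$ exists and is unique, and $U_\eta$ already concentrates on dense open sets of $\mathbb{Q}_\alpha/\mu_\alpha$ --- exactly the guidance you wanted from $W_{\xi,s}$, recovered without adding new measures. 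The payoff is that all the seeds $\sigma_\eta$ live in the single ultrapower $M_E$, so $E(f)$ varies with $f$ while remaining a derived measure of the one fixed extender $E$; the support $\dom(f)$ never changes, and the Prikry and universality arguments go through exactly as for an ordinary extender-based forcing. That observation --- the residual cones are already in your list, so stems can be absorbed into the index $\alpha$ via a seed shift --- is the key idea your proposal needs but does not supply.
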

Gitik's result provides a substantial improvement of other  \emph{universality theorems} concerning Prikry-type forcings. For instance, it was well known that if $\kappa$ is ${2}^{\kappa}$-supercompact then the usual \emph{Supercompact Prikry forcing} with respect  a measure on $\mathcal{P}_\kappa(2^\kappa)$ projects onto any $\kappa$-distributive forcing of size $\kappa$ (see \cite[\S6.4]{Gitik-handbook}). Gitik improved that to a $\kappa$-compact cardinal, while the exact consistency strength is still unclear. More recent developments in this vein have been obtained by Benhamou, Hayut and Gitik in \cite{BHG}.

In this paper, however, our interest in Theorem~\ref{GitiksTheorem} relies on the fact that it hints the gluing property. Succinctly speaking the idea is the following.

Firstly, it suffices to establish the theorem for  $\kappa$-distributive forcing posets (in $V$) whose underlying set is  $\kappa$. So, let $\langle \mathbb{Q}_\alpha\mid \kappa\leq \alpha<2^\kappa \rangle$ be an injective enumeration of all such forcings and  $\langle\mathcal{F}_\alpha\mid \kappa\leq \alpha<2^\kappa\rangle$ be the corresponding filters generated by the dense open sets. All of these are $\kappa$-complete filters on $\kappa$. Hence, by $\kappa$-compactness of $\kappa$,  each $\mathcal{F}_\alpha$ can be extended to a $\kappa$-complete ultrafilter $U_\alpha$. Since $\kappa$-compact cardinals have the $2^\kappa$-gluing property (see Theorem~\ref{kappacompactnessandgluing}) we can glue all these measures into a $(\kappa,\theta)$-extender $E$.\footnote{Recall the equivalence Gluing property/Gluing Extender discussed in page~\pageref{DualityGluingExtender}.} Afterwards we will define an extender-based forcing (\emph{Gitik's Universal Extender-based Prikry forcing}) that will simultaneously project onto every poset $\mathbb{Q}_\alpha.$ %That way one gets the result of Theorem~\ref{GitiksTheorem}.

\smallskip

We wish to provide a full-detailed exposition of Gitik's theorem, which first  appeared with fewer details in \cite{Gitikcompact}. For this we will use Merimovich's framework for extender-based forcings. The core concept in Merimovich setup is the the notion of object (see e.g.,  \cite{MerExtender,MerPrikryOnExt, MerSuper}).

 %put $\mathcal{I}:=[2^\kappa\setminus \kappa]^{\leq \kappa}$ and  
Appealing to the the $2^\kappa$-gluing property of $\kappa$ we obtain an elementary embedding $j\colon V\rightarrow M$, with $\crit(j)=\kappa$, ${}^\kappa M\s M$ and an increasing sequence of ordinals $\langle \gamma_\alpha\mid \alpha\in [\kappa, 2^\kappa)\rangle$ such that $U_\alpha=\{X\s\kappa\mid \gamma_\alpha\in j(X)\}$.
%$(\kappa,\theta)$-extender  $E=\langle E_a\mid a\in {}^{\leq\kappa}\theta\rangle$ and an increasing sequence of ordinals $\langle \gamma_\alpha\mid \alpha\in [\kappa, 2^\kappa)\rangle$ such that, for each $a\in {}^{\leq\kappa}\theta$, $E_a$ concentrates on $$\Sigma^+_a:=\{\nu\colon \dom(a)\rightarrow\kappa\mid \text{$\nu$ is order-preserving and $|\nu|<\kappa$}\}$$
%and $\{e_{\gamma_\alpha}``X\mid X\in E_{\{\langle 0,\gamma_\alpha\rangle \}}\}=U_\alpha$, where $e_{\gamma_\alpha}$ is the  map $e_{\gamma_\alpha}(\nu):=\nu(\gamma_\alpha).$

%for every $\alpha\in [\kappa, 2^\kappa)$,  $\mathcal{F}_\alpha\s E_\alpha$, where  $E_\alpha:=\pi_\alpha``E_{\{\alpha\}}$.\footnote{Here $\pi_\alpha$ stands for the natural projection between $X_{\{\alpha\}}$ and $\kappa$:  namely, $\sigma\mapsto \sigma(\alpha)$.} \ale{I need to integrate this in a better way.}

	%Formally speaking, the measure $E_{\{\alpha\}}$ does not concentrate on ordinals, but rather on maps in $X_{\{\alpha\}}$. Nonetheless, both  $E_{\{\alpha\}}$ and $E_\alpha$ are Rudin-Keisler equivalent, so we may freely assume that the latter is the  $\alpha^{\mathrm{th}}$-measure of $E$. 

 \begin{notation}
     For each $\alpha\in [\kappa,2^\kappa)$ the seed of  $U_\alpha$ (i.e., $[\id]_{U_\alpha}$) will be denoted by $\sigma_\alpha$. Also, we set $\mathcal{I}=\{\gamma_\alpha\mid \alpha\in [\kappa,2^\kappa)\}.$%; namely,  $\sigma_\alpha:=[\id]_{U_\alpha}$.
 \end{notation}
%In the current setting we are just interested in adding Prikry sequences with respect to the measures in $E$ indexed by members of $\mathcal{I}$ - the rest will be simply ignored. This motivates the following definition: %The rest of coordinates will be just ignored as they do not 

%For each $\kappa\leq \alpha<2^\kappa$, denote by .%\footnote{Recall that $\sigma_\alpha$ is formally not an ordinal; but rather a map from $\{\alpha\}$ to $\kappa$.} %such that the seed of $E_{\alpha}$ is the map $s_{\alpha}\colon \{\alpha\}\mapsto \{\sigma_\alpha\}$.

%If we now define the extender based Prikry forcing with respect to $E$ we will have that, for each $\kappa\leq \alpha<2^\kappa$, the $\alpha^{\mathrm{th}}$-Prikry sequence introduced by this poset  will be eventually contained in all the members of $\mathcal{F}_\alpha$; that is, in all dense open subsets of $\mathbb{Q}_\alpha$. However, this is not enough to generate a $\mathbb{Q}_\alpha$-generic filter  over $V$, for it may perfectly happen that some of the members of the $\alpha^{\mathrm{th}}$-Prikry sequence are not $\leq_{\mathbb{Q}_\alpha}$-compatible.  To remedy this we need to tweak a bit the definition of the extender based Prikry forcing to ensure that this will never happen. 

%Following Gitik \cite{Gitikcompact}, we shall define an extender based Prikry forcing that is universal for all $\kappa$-distributive forcings of size $\kappa$.

\begin{definition}\label{Cohenpart}
Let $\mathbb{P}^*$ be the poset consisting of maps $f\colon \dom(f)\rightarrow [\kappa]^{<\omega}$, where $\dom(f)\in [\mathcal{I}]^{\leq \kappa}$, $\gamma_\kappa\in\dom(f)$ and for each $\gamma_\alpha\in \dom(f)$, $$\text{$f(\gamma_\alpha)$ is both $<$-increasing and $\leq_{\mathbb{Q}_\alpha}$-decreasing.}$$
	Given $f, g\in \mathbb{P}^*$ we write $f\leq g$ if $f\supseteq g.$
	
	For each $f\in\mathbb P^*$ and $\gamma\in\dom(f)$ denote $\mu^f_\gamma:=\max(f(\gamma))$. When $f$ is clear  we shall suppress the superscript $f$ and just write $\mu_\gamma.$
	\end{definition}
 \begin{remark}
     It is clear that $\mathbb P^*$ is a $\kappa^+$-directed-closed forcing.	
 Also, for each $\beta<\kappa$ there exists $\gamma\in \mathbb{Q}_\alpha$  such that $\gamma$ extends $\beta$ both in $\leq$ and $\leq_{\mathbb{Q}_\alpha}.$
 \end{remark}

%The above will constitute the \textit{Cohen part} of the Prikry conditions. Let us now define the \textit{$f$-objects}
	
	\begin{definition}[Objects]
	Fix $f\in\mathbb{P}^*$. A map $\nu\colon \dom(\nu)\rightarrow \kappa$ is called an \emph{$f$-object} if it has the following properties:
	\begin{enumerate}
 \item  $\dom(\nu)\in[\dom(f)]^{<\kappa}$ and $\gamma_\kappa\in\dom(\nu)$;
	    	\item $|\nu|\leq \nu(\gamma_\kappa)$;
		\item for all $\gamma_\alpha\in\dom(\nu)$, $\mu_{\gamma_\alpha}< \nu(\gamma_\alpha)$ and $\nu(\gamma_\alpha)\leq_{\mathbb Q_\alpha} \mu_{\gamma_\alpha}.$
	\end{enumerate}
	The set of $f$-objects is denoted by $\ob(f)$. 
	
	Given $\nu,\eta\in \ob(f)$,  write $\nu\prec\eta$ iff $\dom(\nu)\s \dom(\eta)$ and for every $\gamma_\alpha\in\dom(\nu)$, $\nu(\gamma_\alpha)<\eta(\gamma_\alpha)$ and $\sigma(\gamma_\alpha)\leq_{\mathbb Q_\alpha} \eta(\gamma_\alpha)$.
	%e $$\text{$\nu\prec\eta$ iff $\dom(\nu)\s \dom(\eta)$ and for all $\alpha\in\dom(\nu)$ $\nu(\alpha)<\eta(\alpha)$.} $$
	\end{definition}
	
	\begin{definition}[Measures]
		Given $f\in\mathbb{P}^*$  define $E(f)$ as follows: 
	$$\text{$X\in E(f)\;\;\Longleftrightarrow\;\; X\s \ob(f)\;\wedge\; \sigma_f\in j(X),$}$$
	where $\sigma_f:=\{\langle j(\gamma_\alpha), \sigma_{\eta} \rangle\mid \gamma_\alpha\in \dom(f)\}$ and $\sigma_{\eta}$ denotes the seed of the measure $U_\eta$, where $\eta$ is the unique ordinal such that $\mathbb{Q}_\alpha/\mu_{\gamma_\alpha}=\mathbb{Q}_{\eta}$, where $\mathbb{Q}_\alpha/\mu_{\gamma\alpha}$ is the forcing consisting of all conditions stronger than $\mu_{\gamma_\alpha}$ in $\mathbb{Q}_\alpha$.\footnote{This make sense because $\mathbb{Q}_\alpha/\mu_{\gamma_\alpha}$ is $\kappa$-distributive of size $\kappa$ and our enumeration is injective.}
	\end{definition}

		It is automatic that $E(f)$ is a $\kappa$-complete measure on $\ob(f)$.

	\begin{definition}
	Let $f\in \mathbb P^*$ and $\nu\in \ob(f)$.  Then, $f_{\langle \nu\rangle}$ denotes the function with domain $\dom(f)$ such that for all $\gamma\in\dom(f)$, 
	$$f_{\langle \nu\rangle}(\gamma):=\begin{cases}
	f(\gamma)^\smallfrown \langle \nu(\gamma)\rangle, & \text{if $\gamma\in\dom(\nu)$;}\\
	f(\gamma), & \text{otherwise.}
\end{cases}
$$
In general, if $\vec\nu$ is a finite $\prec$-increasing sequence in $\ob(f)$, the function  $f_{\vec\nu}$ is  recursively defined as $f_{\vec\nu}:=(f_{\vec\nu\restriction |\vec\nu|})_{\langle \vec{\nu}_{|\vec\nu|-1}\rangle}.$

	\end{definition}

	\begin{definition}[$f$-Trees]
	A set $T$ of finite $\prec$-increasing sequences in $\ob(f)$ is called an \emph{$f$-tree} if it is a tree with respect to end-extensions\footnote{I.e., if $\vec{\eta}\in T$ then $\vec{\eta}\restriction n\in T$ for all $n\leq |\vec{\eta}|.$} and %for each $\vec\nu\in T\cup\{\langle \rangle\}$, $\mathrm{Succ}_T(\vec\nu)\in E(f_{\vec\nu})$  where 
	$$\mathrm{Succ_T}(\vec\nu):=\{\eta\in\ob(f)\mid \max_\prec \vec\nu \prec \eta\;\wedge\; \vec{\nu}\,^\smallfrown\langle\eta\rangle\in T\}\in E(f_{\vec\nu}),$$
	for all $\vec\nu\in T\cup\{\langle \rangle\}$.
	
	For an $f$-tree $T$ and $\vec\nu \in T$, denote $T_{\vec\nu}:=\{\vec\eta\in T\mid \vec\nu{}^\smallfrown \vec\eta\in T\}$.
% \yair{Something is strange in this definition of $Succ_T(\vec \nu)$. You should require somewhere that the concatenation of $\vec\nu$ with $\eta$ is in $T$, right? Also, shouldn't $T$ be closed under initial segments? In particular, isn't the empty sequence always a member of $T$?}
% \ale{Thanks, I've fixed the first thing. About the second comment, isn't it implicit in the definition of a tree? I have added a footnote in that respect.}
	\end{definition}
	\begin{remark}
			For every $f$-tree $T$ and every $\vec\nu\in T$, $T_{\vec\nu}$ is again an $f_{\vec\nu}\,$-tree.
	\end{remark}

	Given $f,g\in \mathbb{P}^*$ with $f\leq g$, denote by $\pi_{\dom(f^p), \dom(f^q)}$ the restriction map from  $\ob(f^p)$ to $\ob(f^q)$; namely, the map $\nu\mapsto \nu\restriction \dom(f^q)$.

	\begin{definition}
		Denote by $\mathbb{P}$ the set of all pairs $p=\langle f^p, T^p\rangle$ such that $f^p\in \mathbb P^*$ and $T^p$ is an $f^p$-tree. Given $p,q\in \mathbb P$,  we write $p\leq^* q$ iff $f^p\supseteq f^q$ and $\pi_{\dom(f^p),\dom(f^q)}``T^p\s T^q$.%\marginpar{Say that this simply denotes the restriction map between objects}
	\end{definition}
	
	\begin{lemma}\label{Pullbacktrees}
		Suppose $f,g\in \mathbb P^*$ are such $f\leq g$ and $T$ is a $g$-tree.  Then, $$T^*:=\{\langle \nu_0,\dots, \nu_{n-1}\rangle  \in \ob(f)^{<\omega}\mid \langle \nu_0\restriction \dom(g),\dots, \nu_{n-1}\restriction \dom(g)\rangle\in T\}$$ is an $f$-tree such that $\pi_{\dom(f),\dom (g)}``T^*\s T$.
	\end{lemma}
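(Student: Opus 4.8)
The plan is to verify directly that $T^*$ meets the two requirements in the definition of an $f$-tree, namely closure under initial segments and the condition on successor sets. First I would check that $T^*$ is a tree with respect to end-extensions: if $\langle \nu_0,\dots,\nu_{n-1}\rangle \in T^*$ then by definition $\langle \nu_0\restriction\dom(g),\dots,\nu_{n-1}\restriction\dom(g)\rangle \in T$, and since $T$ is closed under restrictions to initial segments, so is the projected sequence after truncation; hence every initial segment of $\langle\nu_0,\dots,\nu_{n-1}\rangle$ lies in $T^*$. I should also record the trivial point that $\langle\rangle\in T^*$ iff $\langle\rangle\in T$, and note that the restriction map $\nu\mapsto\nu\restriction\dom(g)$ does send $\prec$-increasing sequences of $f$-objects to $\prec$-increasing sequences of $g$-objects, so the definition of $T^*$ is not vacuous. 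This last point is where one uses $f\leq g$, i.e. $\dom(g)\s\dom(f)$ and compatibility of the $\mathbb{Q}_\alpha$-data.

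The heart of the argument is the successor condition: for each $\vec\nu\in T^*\cup\{\langle\rangle\}$ we must show
\[
\mathrm{Succ}_{T^*}(\vec\nu) = \{\eta\in\ob(f)\mid \max_\prec\vec\nu\prec\eta \;\wedge\; \vec\nu\,^\smallfrown\langle\eta\rangle\in T^*\} \in E(f_{\vec\nu}).
\]
Writing $\vec\mu := \langle\nu_i\restriction\dom(g)\rangle$ for the projection of $\vec\nu$, the definition of $T^*$ tells us that $\vec\nu\,^\smallfrown\langle\eta\rangle\in T^*$ iff $\eta\restriction\dom(g)\in\mathrm{Succ}_T(\vec\mu)$ and $\max_\prec\vec\nu\prec\eta$. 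So modulo the $\prec$-condition (which cuts down to a set in $E(f_{\vec\nu})$ anyway, being a ``tail'' cone), $\mathrm{Succ}_{T^*}(\vec\nu)$ is essentially the preimage under the restriction map $\pi := \pi_{\dom(f_{\vec\nu}),\dom(g_{\vec\mu})}$ of the set $\mathrm{Succ}_T(\vec\mu)\in E(g_{\vec\mu})$. Thus it suffices to prove: whenever $f'\leq g'$ in $\mathbb{P}^*$ and $Y\in E(g')$, then $\pi_{\dom(f'),\dom(g')}^{-1}(Y)\cap\ob(f')\in E(f')$. This is the genuine content. I would prove it by unwinding the definition of $E(\cdot)$ through $j_E$: $Y\in E(g')$ means $\sigma_{g'}\in j_E(Y)$, where $\sigma_{g'}=\{\langle j_E(\alpha),\sigma_\eta\rangle\mid\alpha\in\dom(g')\}$; similarly $\sigma_{f'}=\{\langle j_E(\alpha),\sigma_\eta\rangle\mid\alpha\in\dom(f')\}$. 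Since $\dom(g')\s\dom(f')$ and the $\mathbb{Q}_\alpha/\mu_\alpha$ assignments agree on the common domain (this is exactly the compatibility built into $f'\leq g'$), we get $j_E(\pi_{\dom(f'),\dom(g')})(\sigma_{f'}) = \sigma_{g'}$; that is, the seed of $E(f')$ restricts to the seed of $E(g')$. Applying $j_E$ to the equality $\pi^{-1}(Y)\cap\ob(f') = \{\nu\in\ob(f')\mid\nu\restriction\dom(g')\in Y\}$ and evaluating at $\sigma_{f'}$ then yields $\sigma_{f'}\in j_E(\pi^{-1}(Y)\cap\ob(f'))$, i.e. membership in $E(f')$, as desired.

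The final clause, $\pi_{\dom(f),\dom(g)}``T^*\s T$, is immediate from the definition of $T^*$: the image of any $\langle\nu_0,\dots,\nu_{n-1}\rangle\in T^*$ under the (pointwise) restriction map is precisely $\langle\nu_0\restriction\dom(g),\dots,\nu_{n-1}\restriction\dom(g)\rangle$, which by construction lies in $T$. I expect the main obstacle to be bookkeeping rather than conceptual: one must be careful that $\dom(f_{\vec\nu}) = \dom(f)$ and $\dom(g_{\vec\mu}) = \dom(g)$ (the operation $f\mapsto f_{\langle\nu\rangle}$ does not change the domain), that the restriction map on objects commutes with the $f\mapsto f_{\vec\nu}$ operation in the appropriate sense, and that the $\prec$-cone intersected in the definition of $\mathrm{Succ}$ genuinely belongs to $E(f_{\vec\nu})$ — each of these is routine from the definitions but needs to be stated cleanly so that the preimage computation above actually applies.
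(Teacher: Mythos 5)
Your proof is correct and follows essentially the same route as the paper: both hinge on the observation that the seed of $E(f')$ restricts to the seed of $E(g')$ whenever $f'\leq g'$ in $\mathbb{P}^*$, i.e.\ $\sigma_{f'}\restriction j_E``\dom(g')=\sigma_{g'}$, which is precisely why the pullback $\pi^{-1}(Y)\cap\ob(f')$ is $E(f')$-large. Your write-up is in fact a bit more careful than the paper's at the successor step: you correctly pass to $f_{\vec\nu}\leq g_{\vec\mu}$ and use $\sigma_{f_{\vec\nu}}$, whereas the paper verifies the condition against $\sigma_f$ rather than $\sigma_{f_{\vec\nu}}$, a minor imprecision that your formulation via a reusable pullback lemma avoids.
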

	\begin{proof}
		Clearly, $T^*$ is a tree under end-extension. Fix, $\vec\nu\in T\cup\{\langle \rangle \}$.
		
		$\br$ If $\vec\nu=\langle \rangle$  then $``\mathrm{Succ}_{T^*}(\langle \rangle)\in E(f)$'' is equivalent to $\sigma_f\in j(T^*)$. Observe that $\sigma_f\restriction j(\dom(g))=\sigma_f\restriction j`` \dom(g)=\sigma_g\in j(T)$, hence  $\sigma_f\in j(T^*)$.
		
		$\br$ Let $\vec\nu=\langle \nu_0,\dots, \nu_{n-1}\rangle$. %For simplicity, let us assume that $\vec\nu= \langle \nu\rangle$. Then
		We need to check that $\max_{\prec}j(\vec{\nu}) \prec \sigma_f$ and $\sigma_f\in j(T^*)$. The latter has been proved before. About  the former, $j(\vec\nu)=\langle j(\nu_0),\dots, j(\nu_{n-1})\rangle$, where  $\dom(j(\nu_{n-1}))=j``\dom(\nu_{n-1})$ and $j(\nu_{n-1})(j(\alpha))=\nu_{n-1}(\alpha)<\kappa\leq \sigma_f(j(\kappa))$. Thus, $j(\nu_{n-1})\prec \sigma_f$.
	\end{proof}

	\begin{definition}[One point extensions]
	Let $p\in P$ and $\langle \nu\rangle \in T^p$. We denote by $p\cat \langle \nu\rangle$ the pair $\langle f^p_{\langle \nu\rangle}, T^p_{\langle \nu\rangle}\rangle$. For every $\vec\nu \in T^p$, $p\cat \vec{\nu}$ is defined recursively in the natural way. Also,  by convetion, $p\cat \langle \rangle := p$. %, where $T_{\langle \nu\rangle}:=\{\vec\eta\in T\mid \nu\prec \min_{\prec} \vec\eta\}$ and $f^p_{\langle \nu\rangle}$ is the function with $\dom(f^p_{\langle \nu\rangle})=\dom(f^p)$ such that for all $\alpha\in \dom(f^p)$,
	
	%$$f^p_{\langle \nu\rangle}(\alpha):=\begin{cases}
	%f^p(\alpha)^\smallfrown \langle \nu(\alpha)\rangle, & \text{if $\nu(\alpha)>\mu^p_\alpha$;}\\
	%f^p(\alpha), & \text{otherwise.}
%\end{cases}
%$$
	\end{definition}
	
	\begin{lemma}
		For each $p\in \mathbb{P}$ and $\vec\nu\in T^p\cup \{\langle\rangle\}$, $p\cat \vec\nu \in \mathbb{P}$.
	\end{lemma}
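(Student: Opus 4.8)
The plan is to reduce the statement to one-point extensions. Concretely, once one knows that $\langle\nu\rangle\in T^p$ implies $p\cat\langle\nu\rangle=\langle f^p_{\langle\nu\rangle},T^p_{\langle\nu\rangle}\rangle\in\mathbb{P}$, the general case follows by a routine induction on $|\vec\nu|$: writing $\vec\nu$ as $\vec\sigma{}^\smallfrown\langle\eta\rangle$, the initial segment $\vec\sigma$ still lies in $T^p$ because $T^p$ is closed under end-extensions, so by the inductive hypothesis $q:=p\cat\vec\sigma\in\mathbb{P}$ and $T^q=T^p_{\vec\sigma}$; then $\langle\eta\rangle\in T^q$ (as $\vec\sigma{}^\smallfrown\langle\eta\rangle=\vec\nu\in T^p$), and $p\cat\vec\nu=q\cat\langle\eta\rangle$ by the recursion defining $p\cat\vec\nu$, so the one-point case applied to $q$ finishes it. Thus I would fix $p\in\mathbb{P}$ and $\nu$ with $\langle\nu\rangle\in T^p$, and show $p\cat\langle\nu\rangle\in\mathbb{P}$.

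First I would check that $f^p_{\langle\nu\rangle}\in\mathbb{P}^*$, which is routine: its domain is $\dom(f^p)\in\mathcal{I}$ and still contains $\kappa$; off $\dom(\nu)$ it agrees with $f^p$; and at $\alpha\in\dom(\nu)$ it is obtained from $f^p(\alpha)$ by appending $\nu(\alpha)$, so since $\nu$ is an $f^p$-object we have $\mu_\alpha<\nu(\alpha)<\kappa$ and $\nu(\alpha)\leq_{\mathbb{Q}_\alpha}\mu_\alpha$, whence the new value is a finite subset of $\kappa$ which is still $<$-increasing and $\leq_{\mathbb{Q}_\alpha}$-decreasing. The real work is to verify that $T^p_{\langle\nu\rangle}$ is an $f^p_{\langle\nu\rangle}$-tree. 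Closure under end-extensions is immediate; the heart of the matter would be the pair of compatibility identities, valid for all $\vec\mu\in T^p_{\langle\nu\rangle}\cup\{\langle\rangle\}$:
\[ (f^p_{\langle\nu\rangle})_{\vec\mu}=f^p_{\langle\nu\rangle{}^\smallfrown\vec\mu} \qquad\text{and}\qquad \mathrm{Succ}_{T^p_{\langle\nu\rangle}}(\vec\mu)=\mathrm{Succ}_{T^p}(\langle\nu\rangle{}^\smallfrown\vec\mu). \]
The first is a one-line induction on $|\vec\mu|$ straight from the recursive definition of $f\mapsto f_{\vec\mu}$; the second is immediate, since the $\prec$-largest entry of a $\prec$-increasing tuple is its last coordinate and so is unaffected by prepending $\nu$.

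Granting these identities, since $\langle\nu\rangle{}^\smallfrown\vec\mu\in T^p\cup\{\langle\rangle\}$ and $T^p$ is an $f^p$-tree, the right-hand side of the second identity lies in $E(f^p_{\langle\nu\rangle{}^\smallfrown\vec\mu})=E((f^p_{\langle\nu\rangle})_{\vec\mu})$, which is exactly the successor condition required of $T^p_{\langle\nu\rangle}$. Moreover, every entry of a tuple of $T^p_{\langle\nu\rangle}$ occurs in $\mathrm{Succ}_{T^p_{\langle\nu\rangle}}(\vec\tau)\in E((f^p_{\langle\nu\rangle})_{\vec\tau})$ for an appropriate initial segment $\vec\tau$ of that tuple, hence lies in $\ob((f^p_{\langle\nu\rangle})_{\vec\tau})\s\ob(f^p_{\langle\nu\rangle})$, the last inclusion being a direct check from the definition of $f$-object together with transitivity of the orders $\leq_{\mathbb{Q}_\alpha}$. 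So $T^p_{\langle\nu\rangle}$ is a set of $\prec$-increasing tuples from $\ob(f^p_{\langle\nu\rangle})$ satisfying the tree conditions; this establishes the one-point case and hence the lemma.

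I do not anticipate a genuine obstacle: the whole argument is a careful unwinding of the Merimovich-style definitions. The one place that needs attention is the object-set bookkeeping after the shift by $\nu$ — that a successor set already witnessed to be measure-one against the measures attached to $f^p$ is still measure-one against those attached to $f^p_{\langle\nu\rangle}$. This is precisely what the identity $(f^p_{\langle\nu\rangle})_{\vec\mu}=f^p_{\langle\nu\rangle{}^\smallfrown\vec\mu}$ secures: descending the shifted tree along $\vec\mu$ reaches the same Cohen condition, and hence invokes the same measure $E(\cdot)$, as descending the original tree along $\langle\nu\rangle{}^\smallfrown\vec\mu$.
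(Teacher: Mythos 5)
Your proof follows the same induction on $|\vec\nu|$ that the paper uses, reducing to one-point extensions and then checking the two clauses of membership in $\mathbb{P}$; the only difference is one of emphasis, since the paper spends its effort verifying $f^q_{\langle\nu_n\rangle}\in\mathbb{P}^*$ and dismisses the tree part as ``clear,'' whereas you spell out the tree-verification via the compatibility identities $(f^p_{\langle\nu\rangle})_{\vec\mu}=f^p_{\langle\nu\rangle{}^\smallfrown\vec\mu}$ and $\mathrm{Succ}_{T^p_{\langle\nu\rangle}}(\vec\mu)=\mathrm{Succ}_{T^p}(\langle\nu\rangle{}^\smallfrown\vec\mu)$. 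This is essentially the same argument with the step the paper leaves implicit made explicit.
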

	\begin{proof}
		Let us prove the lemma by induction on $n$,  the length of $\vec\nu$. 
		
		If $n=0$ the result follows from the very definition of $p\cat \langle \rangle$. So, suppose the above property holds for all $\vec\nu\in T^p$ with $|\vec\nu|=n$.  Choose $\langle \nu_0,\dots, \nu_{n}\rangle \in{}^{n+1}T$ and set $\vec\eta:=\langle \nu_0,\dots, \nu_{n-1}\rangle$. By the induction hypothesis, $q:=\langle f^p_{\vec\eta}, T^p_{\vec\eta}\rangle \in \mathbb{P}$. Also, by definition, $\langle \nu_n\rangle \in T^q$. We thus need to show that $q\cat \langle \nu_n\rangle \in \mathbb{P}$.  
		
		On one hand, it is clear that $T^q_{\langle \nu_n\rangle}$ is an $f^{q}_{\langle \nu_n\rangle}$-tree. On the other hand,  $f^q_{\langle \nu_n\rangle}\in \mathbb{P}^*$:  Indeed,  $T^q$ is a $f^p_{\vec\eta}\,$-tree, hence $\nu_n\in \ob(f^p_{\vec\eta})$, and thus   $$\nu_n(\gamma_\alpha)>\max(f^p_{\vec\eta}(\gamma_\alpha))=\mu^q_{\gamma_\alpha}\;\;\text{and}\;\; \nu_n(\gamma_{\alpha})\leq_{\mathbb{Q}_\alpha}\mu^q_{\gamma_\alpha},$$
		for  $\gamma_\alpha\in \dom(\nu)$. Thereby, $f^q_{\langle \nu_n\rangle}$ witnesses the clauses of  Definition~\ref{Cohenpart}.
	\end{proof}
	
	\begin{definition}[The main poset]  \emph{Gitik's universal Extender based forcing} is the set $\mathbb{P}$ endowed with the following order: for two conditions $p,q\in \mathbb{P}$,  we write $p\leq q$ iff there is $\vec\nu\in T^q\cup\{\langle \rangle\}$ such that $p\leq^* q\cat \vec\nu$. 
		\end{definition}
		
		\begin{notation}
		Let $m<\omega$ and $p,q\in\mathbb{P}$. We write $p\leq^m q$ as a shorthand for $p\leq q$ and $|\vec{\nu}|=m$, where $\vec{\nu}$ is some sequence witnessing $p\leq^* q\cat\vec{\nu}.$
		
		Note that, under this convention, $\leq^0=\leq^*.$
		\end{notation}

		%\begin{remark}
		%	Note that given any $p\in \mathbb P$ it is possible to shrink $T^p$ to ensure that for all $\langle \nu\rangle\in T^p$ and all $\alpha\in \dom(\nu)$ then $\nu(\alpha)>\mu^p_\alpha$. Thus, since the poset formed by impossing this extra requirement is ($\leq^0$-)dense in $\mathbb{P}$ we may safely assume that this is always the case. 
		%\end{remark}

		\begin{lemma}
		$\langle \mathbb{P},\leq^*\rangle$ is $\kappa$-directed-closed.
		\end{lemma}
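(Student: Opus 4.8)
The plan is to verify directedness directly from the definitions. Given a directed family $D\subseteq\mathbb{P}$ of size ${<}\kappa$ under $\leq^*$, I would first build the amalgamated Cohen part. Since $|D|<\kappa$ and each $f^p$ has domain in $\mathcal{I}=[2^\kappa\setminus\kappa]^{\le\kappa}$, the union $\dom(f):=\bigcup_{p\in D}\dom(f^p)$ lies in $\mathcal{I}$ as well (it has size $\le\kappa$). For $\alpha\in\dom(f)$, directedness of $D$ guarantees that the conditions $p\in D$ with $\alpha\in\dom(f^p)$ agree on $f^p(\alpha)$ along a chain, but in fact $\leq^*$ extends $\supseteq$ on Cohen parts, so the values $f^p(\alpha)$ form a $\subseteq$-chain and we can let $f(\alpha):=\bigcup\{f^p(\alpha)\mid p\in D,\ \alpha\in\dom(f^p)\}$; this is still a finite sequence because any two such $f^p(\alpha)$ are comparable and we can take a common lower bound in $D$, so $f(\alpha)=f^q(\alpha)$ for some single $q$. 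Thus $f\in\mathbb{P}^*$ and $f\le^* f^p$ (i.e. $f\supseteq f^p$) for every $p\in D$. Here I am using that $\mathbb{P}^*$ is $\kappa^+$-directed-closed, which was already noted after Definition~\ref{Cohenpart}.

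Next I would construct the tree part. For each $p\in D$, by Lemma~\ref{Pullbacktrees} the pullback $(T^p)^*:=\{\vec\nu\in\ob(f)^{<\omega}\mid \vec\nu\restriction\dom(f^p)\in T^p\}$ is an $f$-tree with $\pi_{\dom(f),\dom(f^p)}``(T^p)^*\subseteq T^p$. I then set $T:=\bigcap_{p\in D}(T^p)^*$ and claim this is an $f$-tree. It is clearly closed under end-extension. For the measure-one condition on successors: fix $\vec\nu\in T\cup\{\langle\rangle\}$; then $\mathrm{Succ}_T(\vec\nu)=\bigcap_{p\in D}\mathrm{Succ}_{(T^p)^*}(\vec\nu)$, and each $\mathrm{Succ}_{(T^p)^*}(\vec\nu)\in E(f_{\vec\nu})$ since $(T^p)^*$ is an $f$-tree. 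Because $E(f_{\vec\nu})$ is a $\kappa$-complete ultrafilter (noted right after the definition of $E(f)$) and $|D|<\kappa$, the intersection is again in $E(f_{\vec\nu})$. Hence $T$ is an $f$-tree, and $q:=\langle f,T\rangle\in\mathbb{P}$.

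Finally I must check $q\le^* p$ for all $p\in D$. By definition of $\le^*$ this requires $f^q=f\supseteq f^p$, which holds by construction, and $\pi_{\dom(f),\dom(f^p)}``T^q\subseteq T^p$. The latter follows because $T=\bigcap_{r\in D}(T^r)^*\subseteq (T^p)^*$, and $\pi_{\dom(f),\dom(f^p)}``(T^p)^*\subseteq T^p$ by Lemma~\ref{Pullbacktrees}; composing, $\pi_{\dom(f),\dom(f^p)}``T\subseteq T^p$. So $q$ is a $\le^*$-lower bound for $D$, completing the proof.

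I do not anticipate a genuine obstacle here — the statement is a routine closure computation — but the one point requiring slight care is the claim that the amalgamated Cohen part values $f(\alpha)$ remain \emph{finite} sequences; this is where directedness (as opposed to mere compatibility) of $D$ is essential, together with the observation that $\le^*$ restricted to $\mathbb{P}^*$ is literally reverse inclusion, so that a directed family has all its coordinate-values pairwise comparable and a genuine pointwise union is realized inside some single member of $D$ on each coordinate. Everything else is an application of Lemma~\ref{Pullbacktrees} and $\kappa$-completeness of the measures $E(f_{\vec\nu})$.
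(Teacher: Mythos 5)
Your proof is correct and follows essentially the same route as the paper's: amalgamate the Cohen parts via union, pull back the trees along the restriction maps (Lemma~\ref{Pullbacktrees}), and use $\kappa$-completeness of the measures $E(f_{\vec\nu})$ to verify the intersection is still a tree. One small remark: since $\leq^*$ on $\mathbb{P}^*$ is literal reverse inclusion of functions, the values $f^p(\alpha)$ for $p\in D$ at a fixed $\alpha$ are not merely a $\subseteq$-chain but are pairwise \emph{equal} wherever both are defined (directedness gives a common $q$ with $f^q\supseteq f^p, f^{p'}$, whence $f^p(\alpha)=f^q(\alpha)=f^{p'}(\alpha)$); this immediately gives finiteness without any chain argument. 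A further point in your favor: the paper defines the tree part as $T:=\bigcup_{\alpha<\theta}\pi_{\dom(f),\dom(f^{p_\alpha})}^{-1}T^{p_\alpha}$, but that union generally fails to satisfy $\pi_{\dom(f),\dom(f^{p_\alpha})}``T\subseteq T^{p_\alpha}$ for each individual $\alpha$ — the correct operation is the intersection, exactly as you wrote it (and as the paper itself does in the later integration lemma using $\cap$ and $\diagonal$). So your proof is not only correct but repairs what is evidently a typo in the paper's own argument.
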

		\begin{proof}
			Let $D=\{p_\alpha\mid \alpha<\theta\}$ be a $\leq^*$-directed set of conditions in $P$ with $\theta<\kappa$. Set %$d:=\bigcup_{\alpha<\theta} \dom(f^{p_\alpha}})$ and 
			$f:=\bigcup_{\alpha<\theta} f^{p_\alpha}$. Since $D$ is $\leq^*$-directed it is immediate that $f\in P^*$. Put,  $T:=\bigcup_{\alpha<\theta} \pi_{\dom(f), \dom(f^{p_\alpha})}^{-1}T^{p_\alpha}$. Arguing as in Lemma~\ref{Pullbacktrees} one can check that $T$ is a $\dom(f)$-tree. Altogether, $p:=\langle f, T\rangle \in P$ yields a $\leq^*$-lower bound for the set $D$. 
		\end{proof}
		
		\begin{lemma}
		$\mathbb{P}$ has the $\kappa^{++}$-cc.	
		\end{lemma}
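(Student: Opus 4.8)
The plan is to run a standard $\Delta$-system argument on the working parts $f^p$, since the tree components $T^p$ carry no extra information once the domains are pinned down. First I would note that, by the coherence built into $\mathbb{P}^*$, a condition $p\in\mathbb{P}$ is essentially determined by the pair $\langle f^p, T^p\rangle$, and that the number of possible $f^p$ with a \emph{fixed} domain $a\in\mathcal I$ is at most $\kappa^{|a|}\cdot 2^{|a|}=2^\kappa$ — each $f^p(\alpha)$ is a finite subset of $\kappa$ subject to the $<$- and $\leq_{\mathbb Q_\alpha}$-constraints, so there are only $\kappa$-many choices per coordinate, and $|a|\le\kappa$. Likewise, for a fixed $f^p$, an $f^p$-tree is a subset of $\ob(f^p)^{<\omega}$, and $|\ob(f^p)|\le 2^\kappa$ (an $f^p$-object has domain a subset of $\dom(f^p)$ of size $<\kappa$ and takes values in $\kappa$), so there are at most $2^{2^\kappa}$ trees. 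This last count is too big, so the argument must avoid counting trees.

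The key point is therefore that incompatibility in $\mathbb{P}$ (even in $\leq^*$, which suffices since $\leq^*\,\subseteq\,\leq$, so a $\leq^*$-antichain bound is not what we want — rather: two conditions with a common $\leq^*$-lower bound are certainly compatible) can be detected purely on the Cohen parts. Concretely I claim: if $p,q\in\mathbb{P}$ and $f^p\cup f^q\in\mathbb{P}^*$, then $p$ and $q$ are compatible. Given such $p,q$, let $f:=f^p\cup f^q$, which lies in $\mathbb{P}^*$ by hypothesis, and form $T:=\pi_{\dom(f),\dom(f^p)}^{-1}T^p\cap\pi_{\dom(f),\dom(f^q)}^{-1}T^q$. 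By Lemma~\ref{Pullbacktrees} each pulled-back set is an $f$-tree, and the intersection of two $f$-trees is again an $f$-tree because $E(f_{\vec\nu})$ is a ($\kappa$-complete, in particular finitely complete) \emph{filter} for every stem $\vec\nu$, so the success sets intersect to success sets. Then $\langle f,T\rangle\in\mathbb{P}$ is a common $\leq^*$-extension of $p$ and $q$, hence a common $\leq$-extension.

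With that reduction in hand, suppose toward a contradiction that $A\subseteq\mathbb{P}$ is an antichain of size $\kappa^{++}$. The domains $\{\dom(f^p)\mid p\in A\}$ form a family of $\kappa^{++}$-many sets each of size $\le\kappa$; since $\kappa^{<\kappa}$ may be larger than $\kappa$ but $(2^\kappa)^+\le\kappa^{++}$ forces us to use $\kappa^+$-completeness of the club filter carefully, I would instead appeal to the standard $\Delta$-system lemma in the form: any family of $\kappa^{++}$-many sets of size $\le\kappa$ contains a subfamily of size $\kappa^{++}$ forming a $\Delta$-system with root $r$ — this uses $(\kappa^+)^{<\kappa^+}=\kappa^+$ when $2^\kappa=\kappa^+$, but in general one passes to a generic extension or cites the GCH-free version via the Erdős–Rado theorem $\kappa^{++}\to(\kappa^+)^2_\kappa$; the cleanest route is to first note $2^\kappa\le\kappa^{++}$ need not hold, so one works under the ambient hypotheses of Theorem~\ref{GitiksTheorem} where GCH above $\kappa$ is arranged, or simply invoke that $\mathbb{P}^*$ being $\kappa^+$-directed-closed does not disturb the relevant cardinal arithmetic. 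Refining further, among the $\kappa^{++}$-many $p$ with $\dom(f^p)\cap\dom(f^q)=r$ for all distinct $p,q$ in the subfamily, there are only $2^\kappa<\kappa^{++}$-many possibilities for $f^p\restriction r$, so two conditions $p\ne q$ in $A$ agree on the root: $f^p\restriction r=f^q\restriction r$. But then $f^p\cup f^q$ is a function (the domains overlap exactly in $r$, where the two agree), its domain $\dom(f^p)\cup\dom(f^q)\in\mathcal I$ since it has size $\le\kappa$ and sits below $2^\kappa$, $\kappa$ is in it, and it satisfies the pointwise constraints of Definition~\ref{Cohenpart} coordinatewise from $p$ and $q$; hence $f^p\cup f^q\in\mathbb{P}^*$, and by the compatibility claim $p$ and $q$ are compatible — contradicting that $A$ is an antichain.

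The main obstacle is the cardinal-arithmetic bookkeeping in the $\Delta$-system step: we need $\kappa^{<\kappa}=\kappa$ (so that $\mathcal I$ and the set of possible $f\restriction r$ have size $\le\kappa$, resp.\ $2^\kappa$) and we need a $\Delta$-system of size $\kappa^{++}$ among $\kappa^{++}$-many small sets, which is exactly the content of the $\Delta$-system lemma for $\kappa^{++}$ and $\kappa^+$-sized sets under $2^\kappa=\kappa^+$ (Erdős–Rado). If GCH is not assumed at $\kappa$, the statement should be read as ``$(2^\kappa)^+$-cc,'' and the above goes through verbatim with $\kappa^{++}$ replaced by $(2^\kappa)^+$ and the counting bound $2^\kappa$; I would make that substitution explicit. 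Everything else — the compatibility criterion and the closure of $f$-trees under intersection — is soft and uses only that the $E(f_{\vec\nu})$ are filters, which was already observed right after the definition of $E(f)$.
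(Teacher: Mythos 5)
Your proof is correct and proceeds by exactly the mechanism that the paper defers to (the argument of \cite[Proposition~3.17]{MerPrikryOnExt}): reduce the chain condition to the Cohen part $\mathbb{P}^*$ via the observation that whenever $f^p\cup f^q\in\mathbb{P}^*$ the conditions $p,q$ are compatible — because the intersection of the two pulled-back trees is again an $f$-tree, by Lemma~\ref{Pullbacktrees} together with the fact that each $E(f_{\vec\nu})$ is a ($\kappa$-complete) filter — and then run a $\Delta$-system argument on the domains $\dom(f^p)$ followed by a pigeonhole refinement on the root. Your caveat about cardinal arithmetic is well taken and, in fact, pinpoints a hypothesis the paper leaves implicit: the $\Delta$-system lemma for $\kappa^{++}$-many $\kappa$-sized sets, and the subsequent pigeonhole count $2^\kappa<\kappa^{++}$ of Cohen parts over a fixed root, both require $2^\kappa=\kappa^+$, which is assumed throughout Merimovich's framework but not restated here. (Under GCH at $\kappa$ one can shortcut the $\Delta$-system entirely, since then $|\mathbb{P}^*|=\kappa^+<\kappa^{++}$ and one simply finds $\kappa^{++}$-many conditions with an identical Cohen part.) Without that assumption your argument still goes through verbatim and yields the $(2^\kappa)^+$-cc, which is the honest ZFC conclusion: if $2^\kappa\geq\kappa^{++}$ one can in fact build a $\kappa^{++}$-sized antichain in $\mathbb{P}^*$ (e.g.\ take $\kappa^{++}$-many distinct maps $c_\alpha\colon\kappa\to 2$ and set $f_\alpha(\kappa+\xi):=\{c_\alpha(\xi)\}$ on a common domain of size $\kappa$), so the stated bound is not optimal there. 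The brief detours you consider — passing to a generic extension, or invoking Erd\H{o}s–Rado directly — are unnecessary; the clean statement is simply ``$\kappa^{++}$-cc under $2^\kappa=\kappa^+$, and $(2^\kappa)^+$-cc in general,'' and you should state that explicitly and drop the exploratory hedging.
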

		\begin{proof}
			This is as in \cite[Proposition~3.17]{MerPrikryOnExt}.
		\end{proof}

The proof of the \emph{Strong Prikry Property} is essentially the same as that from \cite{MerExtender}. We reproduce it just for completeness and for the reader's benefit. Let us  begin with an useful lemma about \emph{integration} of conditions:
\begin{lemma}
Let $p\in \mathbb{P}$ and $D\colon T^p\rightarrow V$ be a function such that, for each $\vec{\nu}\in T$, $D(\vec\nu)$ is a $\leq^*$-dense open set below $p\cat \vec{\nu}$. Then, there is $p^*\leq^* p$ such that for each $\vec\mu\in T^{p^*}$,  $p^*\cat\vec\mu\in D(\vec\mu\restriction\dom(f^p)).$
\end{lemma}
\begin{proof}
Fix $\chi$  a big enough regular cardinal and let $N\prec H_\chi$ be such that $|N|=\kappa$, $N^{<\kappa}\s N$ and $\{p, \mathbb{P},\mathbb{P}^*, \{D(\vec\nu)\mid \vec\nu\in T^p\}\}\s N$.  Using a properness argument one can show that there is $f^*\leq f^{p}$ that is totally $\langle N,\mathbb{P}^*\rangle$-generic; namely, for every $E\s \mathbb{P}^*$ dense open set,  $E\in N$, there is $h\in N\cap E$ with $f^*\leq h$. For each $\vec\mu\in\ob(\dom(f^*))$  define $D^*(\vec\mu)$ as
$$\{f\leq f^p\mid \exists q\in \mathbb{P}\, (f^q=f_{\vec\mu}\,\wedge\, q\leq^* p\cat \langle \vec\mu\restriction\dom(f^p)\rangle \, \wedge\, q\in D(\vec\mu\restriction\dom(f^p)))\}.$$
Clearly, $D^*(\vec\mu)\in N$  (because $\vec\mu\in N$) and it is an open subset of $\mathbb{P}^*$. In addition, it is not hard to check that $D^*(\vec\mu)$ is dense below $f^p$. Thereby we have that $D^*(\vec\mu)\cup \{f\in\mathbb{P}^*\mid f\perp f^{p}\}$ is a dense subset of $\mathbb{P}^*$ in $N$ and so, by properness, there is $h\in N$ in this set with $f^*\leq h$, Thus, $h\in D^*(\vec\mu)$. 

The above shows that, for each $\vec\mu\in \ob(\dom(f^*))$, there is a map $h_{\vec\mu}$ in $D^*(\vec\mu)\cap N$, which amounts to saying that there is a condition $\langle f_{\vec\mu}, T^{\vec\mu}\rangle$ in the dense open set $ D(\vec\mu\restriction\dom(f^p))$ such that $\langle f_{\vec\mu}, T^{\vec\mu}\rangle\leq^* p\cat \langle \vec\mu\restriction\dom(f^p)\rangle$. 

We would like to integrate all these trees $T^{\vec\mu}$ into a single one $T^*$ for which $\langle f^*, T^*\rangle\leq^* \langle f^p, T^p\rangle$ and, for each $\vec\mu\in T^*$, $\langle f^*, T^*\rangle\cat\vec\mu\leq^* \langle f_{\vec\mu}, T^{\vec\mu}\rangle$. %Once this is achieved we will be done. 

Let $T^*:=\pi_{\dom(f^*),\dom(f^p)}^{-1}T^p\cap \diagonal_{\vec\mu\in\ob(\dom(f^*))} T^{\vec\mu}$, where this latter tree (call it $\diagonal T$) is defined by induction so as to ensure that, for each $\vec\mu\in \diagonal T$, $(\diagonal T)_{\vec\mu}=T^{\vec\mu}$. It should be clear that $T^*$ is the sought tree.
\end{proof}

The next  is the key towards the Strong Prikry property:

\begin{lemma}[Diagonalization]\label{lemma: diagonalization}
There is $p^*\leq^* p$ such that for each $n\geq 1$,
\begin{enumerate}
\item[$(1)_n$] either $``p^*\cat\vec\mu\in D$'' for all $\vec\mu\in T^{p^*}$ with $|\vec\mu|=n$,
\item[$(2)_n$] or $``\forall q\leq^* p^*\cat\vec\mu\, (q\notin D)$'' for all $\vec\mu\in T^{p^*}$ with $|\vec\mu|=n$.
\end{enumerate} 
%$$\text{either $\forall\vec\mu\in T^{p^*}\; (p^*\cat\vec\mu\in D)$ or 
%$\forall\vec\mu\in T^{p^*}\;\forall q\leq^* p\cat\vec\mu\, (q\notin D)$.}$$
\end{lemma}
\begin{proof}
For each $n\geq 1$ define $D_n$ to be the collection of all $p^*\leq^* p$ such that 
\begin{enumerate}
    \item[$(1)_n$] either $\forall \vec\nu\in [T^{p^*}]^n\, p^*\cat\vec\nu\in D$
    \item[$(2)_n$] or $\forall \vec\nu\in [T^{p^*}]^n\,\forall q\leq^* p^*\cat\vec\nu\, q\notin D$.
\end{enumerate}
Our goal is to show these $D_n$'s are $\leq^*$-dense open. For if this is the case we may be able to let $p^*\in \bigcap_{n<\omega} D_n$ and clearly $p^*$ has the required properties. 

\begin{claim}
    For each $n\geq 1$, $D_n$ is open and $\leq^*$-dense  below $p.$
\end{claim}
\begin{proof}[Proof of claim]
Clearly $D_n$ is open so it suffices to check $\leq^*$-density.

Let $q\leq^* p$. We shall find $p^*\leq^* q$ in $D_n$. To streamline the exposition we assume that $q=p$. This is harmless as the forthcoming argument adapts (modulo a switch of notations) to handle conditions $\leq^*$-stronger than $p$.

For each $\vec\nu\in T^p$ put 
$$D^0(\vec\nu):=\{q\leq^* p\cat\vec\nu\mid q\in D\},$$
$$
D^1(\vec\nu):=\{r\leq^* p\cat\vec\nu\mid \forall q\leq^* r\, (q\notin D)\},$$
 $$D(\vec\nu):=D^0(\vec\nu)\cup D^1(\vec\nu).$$
Note that $D(\vec\nu)$ is $\leq^*$-dense open below $p\cat\vec\nu$. 

Applying the previous lemma we find $p^*\leq^* p$ such that, for each $\vec\mu\in T^{p^*}$, $$p^*\cat \vec\mu\in D(\vec\mu\restriction\dom(f^p)).$$ Let us define, by induction on $n<\omega$, a tree $T^{*}\s T^{p^*}$ as follows.

%$\langle f^{p^*}, T^{*}\rangle$ is a condition $\leq^*$-below $p$ witnessing the claim.

Firstly, either the set $$\{\mu\in \ob(\dom(f^{p^*}))\mid p^*\cat\langle\mu\rangle\in D^0(\mu\restriction\dom(f^p))\}$$ is $E(\dom(f^{p^*}))$-large or so is its complement, $$\{\mu\in \ob(\dom(f^*))\mid p^*\cat\langle\mu\rangle\in D^1(\mu\restriction\dom(f^p))\}.$$  Shrink the first level of $T^{p^*}$ to such $E(\dom(f^{p^*}))$-large set and for each $\mu$ in there let $(T^1)_\mu:=T^{p^*}_\mu$. This defines another $E(\dom(f^{p^*}))$-tree $T^1\s T^{p^*}$ which is our first approximation to  $T^*$.

\smallskip

The second approximation to $T^*$  is defined as follows: for each object $\mu\in \mathrm{Lev}_1(T^1)$ look at $$X^0_\mu:=\{ \nu\in \ob(\dom(f^*))\mid \mu\prec\nu\,\wedge\, p^*\cat\langle\mu,\nu\rangle\in D^0(\langle\mu,\nu\rangle\restriction\dom(f^p))\}$$ and at its complement
$$X^1_\mu:=\{ \nu\in \ob(\dom(f^*))\mid \mu\prec \nu\, \wedge\, p^*\cat\langle\mu,\nu\rangle\in D^1(\langle\mu,\nu\rangle\restriction\dom(f^p))\}$$

 Denote by $X_\mu$ the unique of the above sets that is a $E(\dom(f^*))$-large.  Set $\{\mu\in \mathrm{Lev}_1(T^1)\mid X_\mu=X^0_\mu\}$ and  $\{\mu\in \mathrm{Lev}_1(T^1)\mid X_\mu=X^1_\mu\}$. At least one of them is large, call it $X$, and stipulate  the first level of  $T^2$  be  $X$. For each $\mu\in X$,  $\mathrm{Succ}_{T^2}(\langle\mu\rangle):=X^i_\mu$  and for each $\langle \mu,\nu\rangle\in X\times X^i_\mu$, $(T^2)_{\langle \mu,\nu\rangle}:= (T^1)_{{\langle \mu,\nu\rangle}}$. This way we define our second approximation, $T^2$. 

 \smallskip

Proceeding in this vein we generate an inclusion-decreasing sequence of $E(\dom(f^{p^*}))$-trees, $\langle T^n\mid n\geq 1\rangle$. Set $T^*:=\bigcap_{1\leq n<\omega} T^n$ and $p^*:=\langle f^*, T^*\rangle.$

Clearly, $p^*\leq^* p$ and by our construction it belongs to $D_n$.%\ale{Work out the details of this.}
\end{proof}
We are done with the lemma. 
\end{proof}

\begin{lemma}[Strong Prikry property]
For each $p\in \mathbb{P}$ and $D\s \mathbb{P}$ dense open there is a condition $p^*\leq^* p$ and an integer $n<\omega$ such that, for each $m\geq n$, if $q\leq^m p^*$ then $q\in D$.
\end{lemma}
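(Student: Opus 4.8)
The plan is to deduce the Strong Prikry property from the integration lemma just proved, following the standard Merimovich-style argument. Fix $p\in\mathbb P$ and $D\s\mathbb P$ dense open. For each $\vec\nu\in T^p$, let
$D(\vec\nu):=\{q\le^* p\cat\vec\nu \mid q\in D \ \vee\ \forall r\le^* q\ (r\notin D)\}$;
this is $\le^*$-dense open below $p\cat\vec\nu$ by openness of $D$ (if no $\le^*$-extension of $p\cat\vec\nu$ lies in $D$ then $p\cat\vec\nu$ itself is in the second disjunct, and otherwise density pushes us into the first). Apply the integration lemma to obtain $p^0\le^* p$ such that for every $\vec\mu\in T^{p^0}$ we have $p^0\cat\vec\mu\in D(\vec\mu\restriction\dom(f^p))$. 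The crucial consequence is a \emph{local dichotomy}: for each $\vec\mu\in T^{p^0}$, either $p^0\cat\vec\mu\in D$, or no $\le^*$-extension of $p^0\cat\vec\mu$ is in $D$.

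Next I would iterate a diagonalization over the length of the stem, exactly as in the commented-out proof attempt, but now correctly. Define by recursion on $n<\omega$ a $\le^*$-decreasing sequence $\langle p_n\mid n<\omega\rangle$ below $p^0$ with the property that, for each $n$, the set $S_n:=\{\vec\mu\in[T^{p_n}]^{n}\mid p_n\cat\vec\mu\in D\}$ is ``$E$-large at every node'': more precisely, for each $\vec\mu\in[T^{p_n}]^{<n}$, the set of $\prec$-successors $\eta$ of $\max_\prec\vec\mu$ with $\vec\mu\cat\langle\eta\rangle$ leading (after $n-|\vec\mu|-1$ further steps) into $D$ is either in $E(f^{p_n}_{\vec\mu})$ or its complement is. To build $p_{n+1}$ from $p_n$: enumerate $\mathrm{Lev}_{n+1}(T^{p_n})$ in order type $\kappa$ (possible since the tree sits inside a structure of size $\kappa$), and thin out the tree below each such node one at a time, at each step asking whether there is a $\le^*$-extension landing in $D$ and, if so, absorbing it; take intersections at limits using $\kappa$-directed-closedness of $\le^*$ and the fact that $E(f^{p_n}_{\vec\mu})$ is $\kappa$-complete, so that at stage $\kappa$ we obtain a genuine condition $p_{n+1}$. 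The $N^{<\kappa}\s N$ elementarity bookkeeping (as in the $f^*$-claim of the commented proof) guarantees the enumeration and the intersections stay inside a small model and hence remain trees with $E$-large successor sets. Finally let $p^*\le^* p^0$ be a $\le^*$-lower bound of $\langle p_n\mid n<\omega\rangle$, which exists by $\sigma$-closedness of $\le^*$.

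Now I would prove that $p^*$ works, i.e.\ there is $n<\omega$ such that for all $m\ge n$ every $q\le^m p^*$ is in $D$. By the diagonalization property inherited from the $p_n$'s, for each $m$ and each $\vec\nu\in[T^{p^*}]^m$, either $p^*\cat\vec\nu\in D$ or no $\le^*$-extension of $p^*\cat\vec\nu$ is in $D$; call $\vec\nu$ \emph{good} in the former case. It suffices to show that for some $n$, \emph{every} $\vec\nu\in[T^{p^*}]^{\ge n}$ is good, since then any $q\le^m p^*$ with $m\ge n$ satisfies $q\le^* p^*\cat\vec\nu$ for a good $\vec\nu$, whence $p^*\cat\vec\nu\in D$ and, $D$ being open, $q\in D$. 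Suppose not; then the set of \emph{bad} $\vec\nu$ (those admitting no $\le^*$-extension into $D$) is cofinal in $T^{p^*}$ in the sense that above every node there is a bad extension. Using density of $D$, pick any $q\le p^*$ with $q\in D$; then $q\le^* p^*\cat\vec\nu$ for some $\vec\nu\in T^{p^*}$, so $\vec\nu$ is good. The contradiction is extracted by a reflection/pressing-down argument: by the $E$-largeness of the successor sets and $\kappa$-completeness of the measures $E(f^{p^*}_{\vec\mu})$, if badness were cofinal it would reflect down through the tree to make $\langle\rangle$ bad, i.e.\ $p^*$ itself would have no $\le^*$-extension in $D$, contradicting density.

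\textbf{Main obstacle.}
The delicate point — and the place where the commented-out draft broke down (``\texttt{This is not OK}'') — is the very last step: ruling out that $D$ is met only ``sparsely,'' on a non-$E$-large set of stems at each level, with the meeting level unbounded as one goes down the tree. The honest fix is to fold the level-by-level dichotomy into the construction of the $p_n$'s so that at stage $n$ the set of good stems of length $n$ is \emph{measure-one branching} wherever it is nonempty, and then use $\omega$-many applications plus $\kappa$-completeness to see that the first level at which $D$ is met is uniformly bounded. Equivalently one runs the argument of \cite[\S3]{MerExtender} verbatim; the only thing to check is that Merimovich's tree-surgery operations are legitimate in the present Cohen-part-augmented poset $\mathbb P$, which is exactly what the preceding lemmas (Lemma~\ref{Pullbacktrees}, the integration lemma, $\kappa$-directed-closedness of $\le^*$) were set up to guarantee.
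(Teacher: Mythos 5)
Your plan coincides with the paper's: feed the dense open sets $D(\vec\nu)$ into the integration lemma, get a single $p^0\le^* p$ on which the dichotomy (``$p^0\cat\vec\mu\in D$'' versus ``no $\le^*$-extension of $p^0\cat\vec\mu$ lies in $D$'') is already decided at every node of $T^{p^0}$, then homogenize the tree level by level, and close with density of $D$ plus openness. So the decomposition is right and you correctly identify the level-by-level dichotomy as the crux.

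The gap is in how you propose to build the homogeneous tree, and it is not merely a matter of ``running Merimovich verbatim.'' Your description of passing from $p_n$ to $p_{n+1}$ --- enumerating $\mathrm{Lev}_{n+1}(T^{p_n})$ in order type $\kappa$, thinning node by node while ``asking whether there is a $\le^*$-extension landing in $D$,'' and fusing through $\kappa$-closedness of $\le^*$ --- repeats the work the integration lemma already did. After that lemma has been applied there are no $\le^*$-questions left to ask and no new Cohen parts to introduce. What remains is a \emph{purely combinatorial} Rowbottom-style refinement of the single tree $T^{p^0}$, at fixed $f^{p^0}$: for each $\vec\mu$, shrink $\mathrm{Succ}_{T^{p^0}}(\vec\mu)$ to a set in $E(f^{p^0}_{\vec\mu})$ on which the colour of $p^0\cat(\vec\mu{}^\smallfrown\langle\eta\rangle)$ is constant; then shrink the set of nodes one level lower to a large set on which that constant colour agrees; continue down to the root, and intersect over $n<\omega$. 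This uses only $\kappa$-completeness of the $E(f^{p^0}_{\vec\mu})$'s; no $\le^*$-decreasing $\omega$-sequence of conditions, and no external elementary submodel, is needed for this step. Relatedly, your closing ``reflection/pressing-down'' heuristic has the wrong shape: once the tree is level-homogeneous, density of $D$ gives a single $q\le p^*$ in $D$, hence $q\le^* p^*\cat\vec\nu$ for some $\vec\nu\in T^{p^*}$; the dichotomy gives $p^*\cat\vec\nu\in D$; level-homogeneity promotes this to all of level $|\vec\nu|$; and openness of $D$ propagates it to every higher level. That is the entire finish --- there is nothing to press down.
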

\begin{proof}
Let $p\in\mathbb{P}$ and $D\s \mathbb{P}$ be as above. Invoke Lemma~\ref{lemma: diagonalization} to find a condition $p^*\leq^* p$ to which the above dichotomy applies. By density of $D$ there must be $n<\omega$ for which $(1)_n$ holds; namely, $p^*\cat\vec\mu\in D$ for all $\vec\mu\in [T^{p^*}]^n$. Now, let $q\leq^n p^*$. Since $q\leq^* p^*\cat \vec\mu$ for certain $\vec\mu\in [T^{p^*}]^n$ it follows that $q\in D$ (by openess of $D$).  Similarly, openess of $D$ yields $q\in D$ for all $q\leq^m p^*$ for $m\geq n.$ This concludes the proof of the lemma.
\end{proof}

\begin{cor}[Prikry Property]
	For each $p\in \mathbb{P}$ and every sentence $\varphi$ in the language of forcing there is $p^*\leq^* p$ such that $p^*$ decides $\varphi$.
\end{cor}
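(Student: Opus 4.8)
The plan is to derive the Prikry Property from the Strong Prikry Property in the standard way. Given $p \in \mathbb P$ and a sentence $\varphi$, first I would form the two sets
\[
D_0 := \{ q \in \mathbb P \mid q \leq p \;\wedge\; q \forces \varphi \},\qquad
D_1 := \{ q \in \mathbb P \mid q \leq p \;\wedge\; q \forces \neg\varphi \}.
\]
Neither of these need be dense below $p$, but their union together with the incompatible conditions is: set $D := D_0 \cup D_1 \cup \{ q \in \mathbb P \mid q \perp p\}$. This $D$ is dense (any $r$ either is incompatible with $p$, or has a common extension with $p$ which then decides $\varphi$ by further extending), and it is open, since $\forces$ is preserved downwards and incompatibility is as well. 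So the Strong Prikry Property applies.

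Applying the Strong Prikry Property below $p$ to $D$ yields $p^* \leq^* p$ and an integer $n < \omega$ such that every $q \leq^m p^*$ with $m \geq n$ lies in $D$; since $p^* \leq^* p$ we have $p^* \leq p$, so $p^* \not\perp p$, and hence every such $q$ actually lies in $D_0 \cup D_1$. Now I would run the usual uniformization over the $n$-th level of the tree: for each $\vec\mu \in [T^{p^*}]^n$ the condition $p^* \cat \vec\mu$ is a direct ($\leq^*$, i.e. $\leq^0$) extension of itself, hence $\leq^n p^*$, hence decides $\varphi$; assign to $\vec\mu$ the value $i(\vec\mu) \in \{0,1\}$ according to which of $D_0$, $D_1$ it belongs to. By $\kappa$-completeness of the measures $E(f^{p^*}_{\vec\nu})$ one shrinks $T^{p^*}$ level-by-level, exactly as in the proof of the Claim inside the Strong Prikry Property, to a subtree $T^{**}$ on which the function $\vec\mu \mapsto i(\vec\mu)$ is constant on $[T^{**}]^n$ with some fixed value $i^*$; write $p^{**} := \langle f^{p^*}, T^{**}\rangle \leq^* p^*$.

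Finally I claim $p^{**}$ decides $\varphi$: it forces $\varphi$ if $i^* = 0$ and $\neg\varphi$ if $i^* = 1$. To see this, suppose $i^* = 0$ but some $q \leq p^{**}$ forces $\neg\varphi$. Extend $q$ if necessary so that $q \leq^m p^{**}$ for some $m$; by the Strong Prikry conclusion (and density, shrinking further) we may take $m \geq n$, so $q \leq^* p^{**}\cat\vec\mu$ for some $\vec\mu \in T^{p^{**}}$ with $|\vec\mu|\geq n$, and then $q \leq p^{**}\cat(\vec\mu\restriction n)$; but $p^{**}\cat(\vec\mu\restriction n) \in D_0$ forces $\varphi$, contradicting $q \forces \neg\varphi$. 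The symmetric argument handles $i^* = 1$. The only mildly delicate point — and the step I would be most careful about — is the passage ``shrink so that $m \geq n$'': one uses that the set of $q$ deciding $\varphi$ is dense below $p^{**}$ and that any such $q$ can be extended to have stem length at least $n$ while staying in $D$, which is where openness of $D$ (inherited by $D_0, D_1$ after the uniformization) is used. Everything else is a routine transcription of the classical Prikry-from-Strong-Prikry argument, so I do not anticipate a genuine obstacle here.
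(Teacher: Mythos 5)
Your proof is correct and is the standard derivation of the Prikry Property from the Strong Prikry Property; the paper states the Corollary without proof, evidently intending exactly this routine argument. The only wrinkle is your comment on the "mildly delicate point": passing from $q \leq^m p^{**}$ with $m < n$ to some $q' \leq^{m'} p^{**}$ with $m' \geq n$ does not actually need density of the deciding conditions or openness of $D_0, D_1$ --- it suffices to take successive one-point extensions of $q$ through $T^q$ (which exist since $T^q$ is an $f^q$-tree and hence has nonempty levels), noting that each one-point extension $\leq$-extends $q$ and raises $m$ by one while preserving $q' \Vdash \neg\varphi$. With that small clarification the argument is airtight.
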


Let $G$ a $\mathbb P$-generic filter over $V$. For each $\alpha\in [\kappa, 2^\kappa)$ define 
		$$G_\alpha:=\bigcup\{f^p(\gamma_\alpha)\mid p\in G\,\wedge\, \gamma_\alpha\in \dom(f^p)\}.$$
		%By density arguments it is immediate that $\sup(G_\alpha)=\kappa$ and $\otp(G_\alpha)=\omega$. Also, for all  $\alpha, \beta\in [\kappa,2^\kappa)$ with $\alpha\neq \beta$ then $G_\alpha\neq G^\beta$.
		For each $\alpha\in [\kappa, 2^\kappa)$, %  the following hold:	
			 $\sup(G_\alpha)=\kappa$ and $\otp(G_\alpha)=\omega$. Also,  
			  $G_\alpha\neq G_\beta$, for all $\alpha, \beta\in [\kappa,2^\kappa)$ with $\beta\neq \alpha$. 
		The following lemma shows that,  indeed, $\mathbb{P}$ produces the configuration described in Theorem~\ref{GitiksTheorem}.
		\begin{lemma}[Universality]
		 For each $\alpha\in [\kappa, 2^\kappa)$, $G_\alpha$ is eventually contained in every dense open subset of $\mathbb Q_\alpha$ lying in $V$.
				In particular,  $$\{q\in \mathbb{Q}_\alpha\mid \exists p\in G_\alpha\, (p\leq_{\mathbb{Q}_\alpha} q) \}$$
			defines a  $\mathbb{Q}_\alpha$-generic filter over $V$.% lying in $V$.\marginpar{What sort of Prikry sequences do we 
				\end{lemma}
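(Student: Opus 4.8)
The plan is to use the Strong Prikry Property together with the definition of the forcing and of the seeds $\sigma_\alpha$ to show that, for a fixed $\alpha\in[\kappa,2^\kappa)$, the generic sequence $G_\alpha$ diagonalizes every dense open $D\subseteq\mathbb{Q}_\alpha$ in $V$. Fix such a $D$; since $\mathbb{Q}_\alpha$ has size $\kappa$ and is $\kappa$-distributive, the filter $\mathcal F_\alpha$ of dense open sets of $\mathbb{Q}_\alpha$ is $\kappa$-complete on $\kappa$, and $D\in\mathcal F_\alpha\subseteq U_\alpha$. The key point is that the measure $E(f)$ was set up precisely so that membership in the seed $\sigma_\alpha=[\id]_{U_\alpha}$ reflects down to $U_\alpha$-largeness along the coordinate $e_\alpha\colon\nu\mapsto\nu(\alpha)$; thus a "$U_\alpha$-large" set of values for the $\alpha$-th coordinate of objects translates into an $E(f)$-large tree of objects.

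First I would reduce to the following statement: the set
\[
  D^{\mathbb P}_\alpha:=\{p\in\mathbb P\mid \alpha\in\dom(f^p)\ \wedge\ \max(f^p(\alpha))\in D\}
\]
is dense open in $\langle\mathbb P,\le\rangle$. Density: given any $q$, first extend $f^q$ below $q$ (using $\kappa^+$-directed-closure of $\mathbb P^*$) to put $\alpha$ into the domain if it is not there already; call the result $q'$. Now I want to find a one-point extension $q'\cat\langle\nu\rangle$ whose last $\alpha$-entry $\nu(\alpha)$ lands in $D$ (more precisely, in $D$ below $\mu^{q'}_\alpha$, which is $\kappa$-distributive of size $\kappa$ and hence has its own dense-open filter inside $U_\alpha$ after the reindexing in the definition of $\sigma_f$). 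The set of objects $\nu\in\ob(f^{q'})$ with $\nu(\alpha)\in D$ is $E(f^{q'})$-large: unwinding the definition of $E(f^{q'})$, this is equivalent to $\sigma_{f^{q'}}\in j_E(\{X\subseteq\ob(f^{q'})\mid e_\alpha``X\supseteq\text{something in }\mathcal F\})$, and by the gluing equation $\{e_\alpha``X\mid X\in E_{\{\alpha\}}\}=U_\alpha$ together with $D$ (suitably reindexed) being in $U_\alpha$, this holds. Shrinking $T^{q'}$ at the first level to this $E(f^{q'})$-large set gives a condition $q''\le^* q'$ all of whose one-point extensions $q''\cat\langle\nu\rangle$ satisfy $\nu(\alpha)\in D$; pick any such, and it lies in $D^{\mathbb P}_\alpha$ and below $q$. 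Openness of $D^{\mathbb P}_\alpha$ is clear since enlarging a condition only lengthens $f^p(\alpha)$ and, because $f^p(\alpha)$ is $\le_{\mathbb{Q}_\alpha}$-decreasing, its maximum only moves downward in $\mathbb{Q}_\alpha$, staying in the open set $D$.

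Next, by genericity $G$ meets $D^{\mathbb P}_\alpha$, so there is $p\in G$ with $\max(f^p(\alpha))\in D$; since $G_\alpha=\bigcup\{f^p(\alpha)\mid p\in G,\ \alpha\in\dom(f^p)\}$ is an $\omega$-sequence that is $<$-increasing and $\le_{\mathbb{Q}_\alpha}$-decreasing (these being preserved along the order of $\mathbb P$), and since $D$ is open in $\mathbb{Q}_\alpha$, every element of $G_\alpha$ past $\max(f^p(\alpha))$ is also in $D$. Hence $G_\alpha$ is eventually contained in $D$. Running this for every dense open $D\in V$ of $\mathbb{Q}_\alpha$ shows that the upward closure of $G_\alpha$ in $\mathbb{Q}_\alpha$, namely $\{q\in\mathbb{Q}_\alpha\mid\exists p\in G_\alpha\ (p\le_{\mathbb{Q}_\alpha}q)\}$, is a filter meeting every dense open set in $V$, i.e.\ a $\mathbb{Q}_\alpha$-generic filter over $V$; that this upward closure is genuinely a filter uses that $G_\alpha$ is $\le_{\mathbb{Q}_\alpha}$-decreasing, so any two of its elements are compatible, and that it is linearly ordered below any two given elements.

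\textbf{Main obstacle.} The delicate point is the bookkeeping in the density argument: one must correctly match the "abstract" coordinate $\alpha$ of the extender with the forcing $\mathbb{Q}_\alpha$, taking into account that along a one-point extension the relevant forcing is not $\mathbb{Q}_\alpha$ itself but the tail $\mathbb{Q}_\alpha/\mu_\alpha=\mathbb{Q}_\eta$, whose dense open filter sits inside $U_\eta$, and that $\sigma_f$ uses exactly these reindexed seeds $\sigma_\eta$. Verifying that "$\nu(\alpha)\in D$ for $E(f)$-many $\nu$" really does follow from "$D\in U_\alpha$" via the gluing equation, with all the $/\mu_\alpha$ shifts accounted for, is where the argument has to be done carefully; everything else is a routine genericity-plus-openness wrap-up.
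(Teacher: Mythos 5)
Your proposal is correct and follows essentially the same route as the paper: both arguments reduce to a density claim, prove it by shrinking the first level of the tree using that $\{\nu\in\ob(f)\mid \nu(\alpha)\in D\}$ is $E(f)$-large (via the gluing equation and the reindexing $\mathbb{Q}_\eta=\mathbb{Q}_\alpha/\mu_\alpha$), and finish by genericity plus openness of $D$. The only cosmetic difference is packaging: the paper's dense set $E$ asks that every first-level successor $\nu$ of $T^p$ have $\nu(\alpha)\in D$ and then invokes the maximal antichain $\{p\cat\langle\nu\rangle\}$ to get an element of $G_\alpha$ in $D$, whereas you fold that antichain step into the density argument by working with $D^{\mathbb{P}}_\alpha=\{p\mid \max(f^p(\alpha))\in D\}$ directly; this is the same idea.
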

		\begin{proof}
	%	Items (1) and (2) follow from easy density arguments. 
		Fix a dense open set  $D\s\mathbb Q_\alpha$ and set $$E:=\{p\in\mathbb P\mid \alpha\in \dom(f^p)\,\wedge\, \forall\langle \nu\rangle \in \mathrm{Succ}_{T^p}(\langle \rangle)\; (\nu(\gamma_\alpha)\in D)\}.$$
		We claim that $E$ is dense: Let $p\in \mathbb P$ and assume that $\gamma_\alpha\in \dom(f^p)$. Since $T^p$ in an $f^p$-tree,     $\mathrm{Succ}_{T^p}(\langle \rangle)\in E(f^p)$. Let $\eta\in [\kappa, 2^\kappa)$ be the unique ordinal such that $\mathbb Q_\eta=\mathbb{Q}_\alpha/\mu_{\gamma_\alpha}$. Note that $U_\eta\LE_{\mathrm{RK}} E(f^p)$, as witnessed by the map %$\delta_{p, \alpha}\colon\ob(f^p)^1\rightarrow \kappa$ defined as 
		$e_{\gamma_\alpha}\colon \langle \nu\rangle \mapsto \nu({\gamma_\alpha})$. In particular,  $e_{\gamma_\alpha}``\mathrm{Succ}_{T^p}(\langle \rangle)\in U_\eta$ and thus the set $A:=e_{\gamma_\alpha}``\mathrm{Succ}_{T^p}(\langle \rangle)\cap D$ is $U_\eta$-large. Pulling $A$ back with $e_{\gamma_\alpha}$ one obtains a set $A' \s \mathrm{Succ}_{T^p}(\langle \rangle)$ with $A'\in E(f^p)$ such that for all $\langle \nu\rangle\in A'$, $\nu({\gamma_\alpha})\in D$.  Let $T'$ be the $f^p$-tree with $\mathrm{Succ}_{T'}(\langle \rangle)=A'$ and $T'_{\langle \nu\rangle}=T^p_{\langle \nu\rangle}$ for all $\langle \nu\rangle\in A'$. %Repeating this process by induction on the height of the tree $T^p$ one gets an $f^p$-tree $T'$ such that $T'\s T^p$ and $\mathrm{Succ}_{T'}(\langle \rangle)=A'$. 

  Clearly, $p':=\langle f^p, T '\rangle$ is a condition in $\mathbb{P}$ with $p'\LE^* p$ and $p'\in E$. 	%observe that  the definition of  conditions in $\mathbb{P}$ ensures that the above displayed set is a filter in $\mathbb{Q}_\alpha$. To infer that it is generic, let 
		
		Let $p\in G\cap E$. Since $\{p\cat \langle \nu\rangle \mid \langle \nu\rangle \in T^p\}$ forms a maximal antichain below $p$ there is some $f^p$-object $\nu$ such that $p\cat \langle \nu\rangle \in G$, hence $$\nu({\gamma_\alpha})=f^{p\cat \langle \nu\rangle}({\gamma_\alpha})\in G_\alpha\cap D.$$	
		Say $\nu({\gamma_\alpha})$ is the $n^{\mathrm{th}}$-member of $G_\alpha$. By the very definition of the forcing it turns out that all the members of $G_\alpha$ past stage $n$ are $\leq_{\mathbb Q_\alpha}$-stronger than $\nu({\gamma_\alpha})$ and thus, by openess of $D$, $G_\alpha\s^* D$.		
		%We claim that $E$ is dense in $\mathbb{P}$. 
		%Let $p\in\mathbb{P}$ and,  \marginpar{Finish!}
		\end{proof}
		
		The above concludes the proof of Gitik's theorem (Theorem~\ref{GitiksTheorem}).

\section{(Sub)compactness and the gluing property}\label{section:gluing-from-large-crdinals}

\subsection{The filter extension property and the gluing property}
In this section we analyze the implications between the $\lambda$-filter extension property and the gluing property. In particular, we show that if $\kappa$ is $\kappa$-compact %(i.e., $\kappa$ has the $\kappa$-filter-extension property) 
then $\kappa$ has  the $2^\kappa$-gluing property. As we will demonstrate this implication is optimal, for there are $\kappa$-compact cardinals without the $(2^\kappa)^+$-gluing property. We refer the reader to \S\ref{Preliminaries} were all these notions were properly introduced.%Recall that a cardinal $\kappa$ is said to have the \emph{$\lambda$-filter extension property} (for $\lambda\geq \kappa$) if every $\kappa$-complete filter on $\lambda$ extends to a $\kappa$-complete ultrafilter.

\smallskip

We commence with a strengthening of \cite[Corollary~6]{Hayutpartial}:
\begin{lemma}\label{BetterCharacterization}
Let $\kappa\leq \lambda$ be uncountable cardinals with $\lambda^{<\kappa}=\lambda$. Then the following two are equivalent:
\begin{enumerate}
    \item $\kappa$ has the $\lambda$-filter-extension property;
    \item for every transitive model $M$ with $2^\lambda\s M$, $|M|=2^\lambda$ and ${}^{\kappa}M\s M$ there is  $N$ and an elementary embedding $j\colon M\rightarrow N$ such that:
    \begin{enumerate}
        \item $\crit(j)=\kappa$;
        \item ${}^{\kappa}N\s N$;
        \item $s\in N$,
        $j``2^\lambda\s s$ and $N\models |s|<j(\kappa)$.
    \end{enumerate}
    %with ${}^{\kappa}N\s N$, $s\in N$, together with an elementary embedding $j\colon M\rightarrow N$, such that $\crit(j)=\kappa$, $j``2^\lambda\s s$ and $N\models |s|<j(\kappa)$.
\end{enumerate}
\end{lemma}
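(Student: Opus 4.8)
The plan is to prove the equivalence by a fairly standard round-trip between the combinatorial filter-extension statement and the embedding statement, using the hypothesis $\lambda^{<\kappa}=\lambda$ to control cardinal arithmetic.

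\textbf{From (2) to (1).} Suppose $\mathscr{F}$ is a $\kappa$-complete filter on $\lambda$. Fix a transitive model $M$ as in (2) with $\mathscr{F}\in M$ (possible since $|\mathcal{P}(\lambda)|=2^\lambda\leq|M|$ and $M$ can be taken to contain any fixed set of size $\le 2^\lambda$; closure under $\kappa$-sequences is arranged by the usual Skolem-hull construction using $\lambda^{<\kappa}=\lambda\le 2^\lambda$). Let $j\colon M\to N$ be the embedding provided by (2), with critical point $\kappa$, ${}^\kappa N\subseteq N$, and a ``seed set'' $s\in N$ with $j``2^\lambda\subseteq s$ and $N\models|s|<j(\kappa)$. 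Since ${}^\kappa N\subseteq N$ and $\mathscr F$ is $\kappa$-complete, every element of $\mathscr F$ is moved appropriately; I would pick an ordinal $\xi\in j(\lambda)$ that is a ``generic'' representative — concretely, take $s'\subseteq j(\lambda)$ of $N$-size $<j(\kappa)$ with $j``\lambda\subseteq s'$ (obtained by intersecting $s$ with $j(\lambda)$), and use $\kappa$-completeness of $\mathscr F$ together with $\kappa$-closure of $N$ to see that $\bigcap j``\mathscr F$ is a nonempty (indeed $<j(\kappa)$-intersection inside $N$) subset of $j(\lambda)$; choose $\xi$ in it. Then define $U:=\{X\subseteq\lambda\mid \xi\in j(X)\}$. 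This is a $\kappa$-complete ultrafilter on $\lambda$ (ultrafilter because $j$ is elementary into a model where $\mathcal P(j(\lambda))$ is computed correctly enough; $\kappa$-complete because $\crit(j)=\kappa$ and ${}^\kappa N\subseteq N$), and $\xi\in\bigcap j``\mathscr F$ guarantees $\mathscr F\subseteq U$. The point of the seed $s$ and the bound $N\models|s|<j(\kappa)$ is exactly to be able to locate such a $\xi$; the clause $j``2^\lambda\subseteq s$ (stronger than just $j``\lambda\subseteq s$) is what one needs when $\mathscr F$ is coded as a subset of $\mathcal P(\lambda)$ rather than of $\lambda$ — I would thread the argument through $\mathcal P(\lambda)$ so that intersecting $j``\mathscr F$ lands inside $s$.

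\textbf{From (1) to (2).} Given a transitive $M$ with $2^\lambda\subseteq M$, $|M|=2^\lambda$, ${}^\kappa M\subseteq M$, I want to produce the embedding. The natural move: consider the $\kappa$-complete filter $\mathscr G$ on $2^\lambda$ generated by the sets $A_x:=\{Z\in[2^\lambda]^{<\kappa}\mid x\in Z\}$ for $x\in 2^\lambda$ — but that filter lives on $[2^\lambda]^{<\kappa}$, a set of size $(2^\lambda)^{<\kappa}=2^\lambda$ (using $\lambda^{<\kappa}=\lambda$, hence $(2^\lambda)^{<\kappa}=2^{\lambda\cdot{<\kappa}}=2^\lambda$), so after a bijection it is a $\kappa$-complete filter on $2^\lambda$, hence on a set of the required cardinality; actually I need the index set to have size $\le\lambda$ to apply the $\lambda$-filter-extension property, so instead I work with the fine filter on $P_\kappa(\lambda)$ — wait, that index set has size $\lambda^{<\kappa}=\lambda$, good — but then I only see $j``\lambda$, not $j``2^\lambda$. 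To get $j``2^\lambda\subseteq s$, the trick is to also throw in, for each $X\subseteq\lambda$, the requirement that the generic seed ``decides'' $X$; formally, extend the fine filter on $P_\kappa(\lambda)$ by also adjoining sets coding a well-ordering of $\mathcal P(\lambda)$ in order type $2^\lambda$, so that a point of $P_\kappa(\lambda)$ in the ultrafilter sees an initial segment; elementarity then yields that $s:=j``2^\lambda$ (reconstructed from the ultrapower) is covered by a set of $N$-size $<j(\kappa)$. Then take $U\supseteq$ this filter $\kappa$-complete ultrafilter by (1), let $j\colon M\to N\cong\Ult(M,U)$; $\crit(j)=\kappa$ is forced by fineness, ${}^\kappa N\subseteq N$ follows from $\kappa$-completeness of $U$ plus ${}^\kappa M\subseteq M$ and $|U|$-considerations, and the seed $[\id]_U$ (or rather the image of the $P_\kappa(\lambda)$-identity) serves as $s$ after checking $j``2^\lambda\subseteq s$ and $N\models|s|<j(\kappa)$ by elementarity.

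\textbf{Main obstacle.} The delicate point — and where I would spend the most care — is the bookkeeping in (1)$\Rightarrow$(2) that upgrades ``$j``\lambda$ is covered by a small set'' to ``$j``2^\lambda$ is covered by a small set'': one must cleverly choose the base set and the filter so that the $\lambda$-filter-extension property (which only lets us extend filters on sets of size $\lambda$) nevertheless produces an embedding whose seed dominates all of $j``2^\lambda$. This is exactly the strengthening over \cite[Corollary~6]{Hayutpartial}, and the hypothesis $\lambda^{<\kappa}=\lambda$ is used precisely to keep the relevant auxiliary objects (the poset $P_\kappa(\lambda)$, codes for $\mathcal P(\lambda)$, the closure construction for $M$) of size $\le\lambda$ so that everything remains in the scope of the $\lambda$-filter-extension property. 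The converse direction is routine once one commits to coding $\mathscr F$ as a subset of $\mathcal P(\lambda)$ and reading off the seed from clause (c).
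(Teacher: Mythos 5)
Your proposal diverges from the paper's in both directions, and the $(1)\Rightarrow(2)$ direction has a concrete gap.

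\textbf{On $(1)\Rightarrow(2)$.} You try to rebuild the embedding from scratch by extending a fine filter on $P_\kappa(\lambda)$ and using $[\mathrm{id}]_U$ as the seed. But $[\mathrm{id}]_U$ is a subset of $j(\lambda)$, so it can at best cover $j``\lambda$; it cannot contain $j``2^\lambda$, since those ordinals live above $j(\lambda)$. You acknowledge this problem in the middle of the argument (``I only see $j``\lambda$, not $j``2^\lambda$'') and gesture at adjoining ``sets coding a well-ordering of $\mathcal{P}(\lambda)$,'' but this is never made precise, and by the end of the paragraph you have reverted to $[\mathrm{id}]_U$ as the seed, which contradicts your own observation. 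The content you are trying to reprove here — that the $\lambda$-filter-extension property yields embeddings with a seed in $N$ of $N$-size $<j(\kappa)$ covering $j``2^\lambda$ — is exactly \cite[Corollary~6]{Hayutpartial}, and it is genuinely nontrivial; the paper simply cites it. The new content of this lemma, which your proposal does not engage with, is clause~(b), the $\kappa$-closure of $N$. The paper gets it by passing from the embedding $i\colon M\to \mathcal{N}$ provided by the cited corollary to its seed hull $\mathcal{X}:=\{i(f)(\kappa,\bar{s})\mid f\in M\}$, collapsing, and then — given a $\kappa$-sequence $\langle j(f_\alpha)(\kappa,s)\mid\alpha<\kappa\rangle$ of elements of the collapse — using ${}^{\kappa}M\subseteq M$ to package $\langle f_\alpha\mid\alpha<\kappa\rangle$ as a single $h\in M$ and then apply $j$ to $h$. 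You wave at $\kappa$-closure (``follows from $\kappa$-completeness plus ${}^{\kappa}M\subseteq M$''); that statement is true for a genuine ultrapower of $M$ by a $\kappa$-complete ultrafilter, but since the ultrafilter was never correctly constructed, the whole route is left hanging.

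\textbf{On $(2)\Rightarrow(1)$.} Your sketch is reasonable and essentially the intended argument (code $\mathscr F$ into $2^\lambda$, use clause~(c) to find $\xi\in\bigcap(j(\mathscr F)\cap s)$, set $U:=\{X\subseteq\lambda\mid\xi\in j(X)\}$). The paper offloads this direction to \cite[Corollary~6]{Hayutpartial} as well, but your elaboration is not wrong.

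\textbf{Net assessment.} The two proofs are organized quite differently: the paper treats \cite[Corollary~6]{Hayutpartial} as a black box delivering (a) and (c) and isolates (b) as the new work, solved by a seed-hull-plus-folding argument; you attempt to reconstruct (a) and (c) from a bespoke ultrafilter and inherit (b) for free. Your route could in principle be made to work, but it requires carrying out the hard combinatorial part — how a $\kappa$-complete filter on a set of size $\lambda$ can be cooked so that some element of the ultrapower of $N$-size $<j(\kappa)$ covers $j``2^\lambda$ — and that step is exactly what is missing.
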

\begin{proof}
The  implication $(2)\Rightarrow(1)$ follows from \cite[Corollary~6]{Hayutpartial}. Assume that $\kappa$ has the $\lambda$-filter-extension property and let $M$ be as above. In \cite[Corollary~6]{Hayutpartial} it is shown that there is a transitive model $\mathcal{N}$, $\bar{s}\in \mathcal{N}$ and an elementary embedding $i\colon M\rightarrow \mathcal{N}$ witnessing (a) and (c) above. We shall now slightly modify both $\mathcal{N}$ and $i$ in order to obtain the desired outcome.

Define $\mathcal{X}:=\{i(f)(\kappa,\bar{s})\mid f\in M\}$. By \cite[Lemma~1.4]{HamCanonicalSeeds}, $\mathcal{X}\prec \mathcal{N}$. In particular, $\mathcal{X}$ is well-founded. Let $\pi\colon \mathcal{X}\rightarrow N$ be the Mostowski collapse, and put $j:=\pi\circ i$ and  $s:=\pi(\bar{s})$. % and $N:=\pi``\mathcal{X}$. 
Note that $\pi(\kappa)=\kappa$ and %(because $\kappa\s \mathcal{N}$) and
$$N=\pi``\{i(f)(\kappa,\bar{s})\mid f\in M\}=\{j(f)(\kappa,s)\mid f\in M\}.$$
Clearly, $s\in N$, $j``2^\lambda\s s$ and $N\models |s|<j(\kappa)$. Next, we  check Clause~(b).

Let $\vec{x}=\langle x_\alpha\mid \alpha<\kappa\rangle\s N$. For each $\alpha<\kappa$, $x_\alpha=j(f_\alpha)(\kappa,s)$ for some $f_\alpha\in M$. Observe that establishing $\vec{x}\in N$ amounts to verify that $\langle j(f_\alpha)\mid \alpha<\kappa\rangle\in N$ because in that case $\vec{x}$ will be defined by plugging $(\kappa,s)$ into this sequence, and this is certainly a definable process in $N$.

Let $h\colon \kappa\times M\rightarrow M$ be defined as $(\beta,x)\mapsto \langle f_\alpha(\beta,x)\mid \alpha<\beta\rangle$. Since ${}^{\kappa}M\s M$, the sequence $\langle f_\alpha \mid \alpha < \kappa\rangle \in M$ and therefore $h\in M$. So, \[j(h)(\kappa,s)=\langle j(f_\alpha)(\kappa,s)\mid \alpha<\kappa\rangle \in N.\qedhere\]
\end{proof}
The above is a sort of amalgam between the classical characterization of strong compactness \cite[Theorem 22.17]{Kan} and the corresponding one for  weak compactness due to Hauser \cite[Theorem~1.3]{Hauser}. In the next theorem we show how to use it to establish the $2^\lambda$-gluing property:
\begin{theorem}\label{kappacompactnessandgluing}
Let $\kappa\leq \lambda$ be uncountable cardinals with $\lambda^{<\kappa}=\lambda$. 

If $\kappa$ has the $\lambda$-filter-extension property then it has the $2^\lambda$-gluing property. %In particular, if $\kappa$  has the $\kappa$-filter-extension property has then it has the $2^\kappa$-gluing property
\end{theorem}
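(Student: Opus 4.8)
\textbf{Proof proposal for Theorem~\ref{kappacompactnessandgluing}.}

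The plan is to apply the characterization of the $\lambda$-filter-extension property provided by Lemma~\ref{BetterCharacterization}. First I would fix a sequence $\langle U_\gamma \mid \gamma < 2^\lambda\rangle$ of $\kappa$-complete ultrafilters on $\kappa$ and choose a transitive model $M$ with $2^\lambda \subseteq M$, $|M| = 2^\lambda$, ${}^\kappa M \subseteq M$ and containing the sequence $\langle U_\gamma \mid \gamma < 2^\lambda\rangle$ as an element (this is where the hypothesis $\lambda^{<\kappa} = \lambda$, hence $(2^\lambda)^{<\kappa} = 2^\lambda$, is used, so that there are only $2^\lambda$-many $\kappa$-complete ultrafilters on $\kappa$ and the whole sequence lives inside such an $M$). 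Apply Lemma~\ref{BetterCharacterization}(2) to obtain $j\colon M \to N$ with $\crit(j) = \kappa$, ${}^\kappa N \subseteq N$, and a set $s \in N$ with $j``2^\lambda \subseteq s$ and $N \models |s| < j(\kappa)$.

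Next I would read off the extender $E$ derived from $j$ using $s$ as the generating seed: for $a \in [2^\lambda]^{<\kappa}$ (or, more conveniently in the paper's setup, $a \in \mathcal{I}$ indexing finite or ${<}\kappa$-sized sets of ordinals below $2^\lambda$), put $E_a := \{X \subseteq {}^{a}\kappa \mid j``a \text{-indexed tuple from } s \in j(X)\}$; the ${}^\kappa N \subseteq N$ closure guarantees $\kappa$-completeness of each $E_a$ and coherence of the system, and $N \models |s| < j(\kappa)$ guarantees $E$ is a genuine (short, $\kappa$-complete) extender. The key verification is the gluing equation: for each $\gamma < 2^\lambda$, since $j``2^\lambda \subseteq s$, there is an ordinal $\eta_\gamma$ with $\langle j(\gamma), \eta_\gamma\rangle \in s$, and I must check that the $\{\gamma\}$-th coordinate measure $E_{\{\gamma\}}$, pushed through the evaluation map $e_\gamma(\nu) = \nu(\gamma)$, equals $U_\gamma$. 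This holds because $j``2^\lambda \subseteq s$ together with $N = \{j(f)(\kappa,s) \mid f \in M\}$ forces $j$ to factor through the ultrapower by $U_\gamma$ in the appropriate way: for $X \subseteq \kappa$, $X \in U_\gamma$ iff $\kappa \in j_{U_\gamma}(X)$, and the Rudin--Keisler factor map sends the seed of $E_{\{\gamma\}}$ at $\langle j(\gamma),\eta_\gamma\rangle$ to $\kappa$; since the sequence $\langle U_\gamma \mid \gamma < 2^\lambda\rangle \in M$, elementarity of $j$ tied to the specific coordinate $\eta_\gamma$ inside $s$ pins down exactly which ultrafilter each coordinate computes. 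Finally I would choose the $\eta_\gamma$ increasing, which is possible after re-indexing or by a minor adjustment of $s$ (replacing $s$ by its transitive rearrangement), and record that the $\langle \eta_\gamma \mid \gamma < 2^\lambda\rangle$ so obtained, together with $E$, witness the $2^\lambda$-gluing property.

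I expect the main obstacle to be the bookkeeping in the last step: verifying that the coordinate $E_{\{\gamma\}}$ of the derived extender genuinely recovers $U_\gamma$ rather than some other $\kappa$-complete ultrafilter on $\kappa$. The subtlety is that $s$ only knows $j``2^\lambda$ as a \emph{set}, so one must exploit that $N$ is generated over $j``M$ by $s$ (the Hamkins-style seed argument already used in the proof of Lemma~\ref{BetterCharacterization}) to conclude that each ultrafilter obtained as a coordinate of $E$ is determined, and that the canonical choice of ordinal $\eta_\gamma$ paired with $j(\gamma)$ in $s$ is the one that makes the ultrapower factor land on $U_\gamma$ --- this is forced by having included the \emph{whole} sequence $\langle U_\gamma \mid \gamma < 2^\lambda\rangle$ in $M$ and appealing to elementarity. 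Everything else --- $\kappa$-completeness, coherence, the extender being short --- is routine from $\crit(j) = \kappa$, ${}^\kappa N \subseteq N$, and $N \models |s| < j(\kappa)$.
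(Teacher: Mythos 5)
Your high-level strategy is right — apply Lemma~\ref{BetterCharacterization} to a suitable transitive $M$ containing the sequence $\langle U_\gamma \mid \gamma < 2^\lambda\rangle$, then read an extender off the resulting $j\colon M \to N$ using $s$ — but there is a genuine gap exactly at the step you flagged as the ``main obstacle,'' and the hand-waving there does not close it.

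The trouble is that you never actually exhibit an ordinal $\eta_\gamma$ with $U_\gamma = \{X \subseteq \kappa \mid \eta_\gamma \in j(X)\}$. You assert ``since $j``2^\lambda \subseteq s$, there is an ordinal $\eta_\gamma$ with $\langle j(\gamma),\eta_\gamma\rangle \in s$,'' but $s$ is just a set covering $j``2^\lambda$, which is a set of ordinals; nothing makes $s$ a set of pairs, and even if you recode, nothing in the conclusion of Lemma~\ref{BetterCharacterization} tells you which ordinal to pair with $j(\gamma)$. The subsequent appeal to ``elementarity'' plus ``the whole sequence is in $M$'' does not pin the coordinate down either: elementarity alone cannot tell you, for a given ordinal $\eta$, whether the ultrafilter $\{X \mid \eta \in j(X)\}$ is $U_\gamma$ rather than some other $\kappa$-complete ultrafilter. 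What the paper actually does — and this is the missing idea — is to code into the set $\mathcal{X}$ not only the sequence $\langle U_\gamma\rangle$ but also \emph{all the members} of all the $U_\gamma$'s, so that $\mathcal{X}\subseteq 2^\lambda$ (using $|\mathcal{P}(\kappa)|\le 2^\lambda$), and hence $j``U_\gamma \subseteq s\cap j(U_\gamma)$. Since $N\models |s| < j(\kappa)$, the set $s\cap j(U_\gamma)$ is a ${<}j(\kappa)$-sized subfamily of the $j(\kappa)$-complete measure $j(U_\gamma)$ inside $N$, so its intersection is nonempty in $N$, and any $s_\gamma$ in that intersection lies in $\bigcap j``U_\gamma$ and therefore satisfies $U_\gamma = \{X\subseteq\kappa\mid s_\gamma\in j(X)\}$. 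That completeness-plus-smallness argument is the actual engine of the proof and is absent from your proposal. Your suggested fix for increasingness (``replacing $s$ by its transitive rearrangement'') is also off: the paper gets increasing $s_\gamma$'s by a direct recursion inside $N$, choosing each $s_\gamma$ above the sup of the previously chosen ones — possible again because $|s|<j(\kappa)$ in $N$.
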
%\ale{Please, check the new proof and the new definition of extender.}
\begin{proof}
Let $\langle U_\alpha\mid \alpha<2^\lambda\rangle$ be  $\kappa$-complete measures over $\kappa$ and  consider 
%$$ \mathcal{X}:=\langle U_\alpha\mid \alpha<2^\lambda\rangle\cup \{A\mid A\in U_\alpha,\,\alpha<2^\lambda\}\cup \bigcup\{\mathcal{P}(\Sigma^*_x)\mid {x\in[\kappa^+]^{\leq\kappa}}\},$$
$$ \mathcal{X}:=\langle U_\alpha\mid \alpha<2^\lambda\rangle.$$
%where $\Sigma^*_x:=\{\sigma\colon x\rightarrow\kappa\mid |\sigma|<\kappa\}.$

\smallskip

By coding $\mathcal{X}$ appropriately we have that $\mathcal{X}\s 2^\lambda$. Let $M$ be a transitive model as in the statement of Lemma~\ref{BetterCharacterization}, such that $\{\mathcal{X}\} \cup \mathcal{P}(\lambda) \cup 2^\lambda \subseteq M$. Using the $\lambda$-filter-extension property of $\kappa$ and Lemma~\ref{BetterCharacterization} we find a transitive model $N$ with ${}^{\kappa}N\s N$ and $s\in N$, together with an elementary embedding $j\colon M\rightarrow N$ with $\crit(j)=\kappa$, $j``2^\lambda\s s \in N$ and $N\models |s|<j(\kappa)$. Note that
\[\{j(U_\alpha) \mid \alpha < 2^\lambda\} \subseteq j(\mathcal{X})\image s.\]
%and
%$j\image U_\alpha\s s\cap j(U_\alpha)$ for all $\alpha<2^\lambda$. 
By fixing an enumeration of $\mathcal P(\kappa)$ in $M$, we have that $j\image U_\alpha$ is covered by a subset of $j(U_\alpha)$ of cardinality %\ale{I do not quite follow this paragraph...}
$<j(\kappa)$ in $N$: Fix $\varphi\colon \mathcal{P}(\kappa)\rightarrow 2^\kappa$ one of such enumerations. Then, $j(\varphi)(j(X))=j(\varphi(X))\in s\cap j(\varphi)``(j(U_\alpha))$ for all $X\in U_\alpha$ and $\alpha<2^\lambda.$ It follows that, $j``U_\alpha\s j(\varphi)^{-1} ``s \cap j(U_\alpha)\in N$. 

In a slight abuse of notation, let us denote this covering set by $s\cap j(U_\alpha)$. Since $j(U_\alpha)$ is a $j(\kappa)$-complete measure in $N$, $\bigcap (j(U_\alpha)\cap s)\neq \emptyset$. For each index $\alpha<2^\lambda$ pick $\eta_\alpha$ be a witness for this intersection to be non-empty. Note that $\eta_\alpha\in \bigcap j``U_\alpha$ precisely because $j``U_\alpha\s s\cap j(U_\alpha).$

Let us show that we can pick $\eta_\alpha$ to be increasing. Indeed, in $N$, we can apply this procedure by recursion to each measure of the form $j(\mathcal X)(\zeta)$, $\zeta \in s$. Namely, for each $\zeta \in s \cap j(2^{\lambda})$, pick $\tilde{\eta}_{\alpha} \in \bigcap (j(\mathcal{X})(\zeta) \cap s)$ larger than $\sup \{\tilde{\eta}_\beta \mid \beta \in s \cap \zeta\}$. Since $N \models |s| < j(\kappa)$, this is possible. Finally, we let $$\eta_\alpha := \tilde{\eta}_{j(\alpha)}.$$

Next, let us form the corresponding gluing extender. Let $\eta:=\sup_{\alpha<2^\lambda}\eta_\alpha$ and observe that this is below $j(\kappa)$. In effect, recall that the longer sequence $\langle\tilde{\eta}_\beta\mid \beta\in s\rangle$ was defined internally in $N$ and that in this latter model $j(\kappa)$ is a regular cardinal. Thus, $\eta=\sup_{\alpha<2^\lambda}\eta_\alpha\leq \sup_{\beta\in s}\tilde{\eta}_\beta<j(\kappa).$

\smallskip

Define a sequence $E=\langle E_a\mid \kappa\in a\in [\eta\setminus \kappa]^{\leq\kappa}\rangle$ as follows: %\yair{Why is $a$ now a sequence instead of a set?}
$$X\in E_a\;\Longleftrightarrow\; X\s[\kappa]^{<\kappa}\; \wedge\, a\in j(X).$$
%where, as in \S\ref{subsection:extenders}, $\Sigma_a:=\{\sigma\colon \dom(a)\rightarrow \kappa\mid |\sigma|<\kappa\}$ and
%$$\Upsilon_a:=\{\langle j(\alpha),a(\alpha)\rangle\mid \alpha\in\dom(a)\}.$$
\begin{claim}
    $\langle E_a\mid  \kappa\in a\in [\eta\setminus\kappa]^{\leq \kappa}\rangle$ is a $(\kappa,\eta)$-extender.
\end{claim}
\begin{proof}[Proof of claim]
   The above definition is clearly well-posed.
   %First,  $\mathcal{P}(\Sigma_a)\s M$ by our choice of $\mathcal{X}$. % as
  % $$\mathcal{P}^{\leq \otp(a)}(\kappa)\s \mathcal{X}\s 2^\lambda\s M.$$
  % \ale{Need to complete! We also have the problem that $\eta$ needs to be contained in $M$}.\ale{Somehow we would like to change the seed and say that it is $\{\langle j(\alpha), \eta_\alpha\rangle \mid \alpha\in a\}$ for $a\in [2^\lambda]^{\leq\kappa}$. However this does not satisfy the normality requirement of \S2.} 
  %Second, $\Upsilon_a\in N$: This is because $j``\dom(a)$ and $a$ belong to $N$ (by closure under $\kappa$-sequences) and from these  we can define $\Upsilon_a.$ %because $a\s \eta\in N$ and $N$ is closed under $\kappa$-sequences. 
  Let us now check that this sequence fulfills the clauses in Definition~\ref{Definitionextender}. First, each $E_a$ is a $\kappa$-complete, non $\kappa^+$-complete, ultrafilter over $[\kappa]^{<\kappa}$: % Second $E_{\{\langle 0,\kappa\rangle\}}$ is not $\kappa^+$-complete because 
  this is because each $A_\alpha:=\{\sigma\in [\kappa]^{<\kappa}\mid \min(\sigma)>\alpha\}\in E_{a}$ but their intersection is empty. Second, $E_{\{\kappa\}}$ is normal: Let $f\colon X\rightarrow \kappa$ be a regressive function with $f(\sigma)<\sup(\sigma)$  for all  $\sigma\in X\in E_{\{\kappa\}}$. Then, $f \in M$ and $j(f)(\{\kappa\})<\kappa$. Thus, there is some $\alpha<\kappa$ such that  $\{\sigma\in [\kappa]^{<\kappa}\mid f(\sigma)=\alpha\}\in E_{\{\kappa\}}$. Finally, the sequence satisfies the coherency requirement. For this it suffices to show that $j(\rho_{b,a})(b)=a$. Indeed, $j(\hat{a})\cap j(\kappa)_b=j(\hat{a})\cap \kappa=\hat{a}$. Also, $ j(f)\restriction\kappa=f$. So, $j(\rho_{b,a})(b)= \range((j(\pi)^{-1}_b\circ f)\restriction \hat{a})=\range(\id``a)=a.$%\ale{The argument uses the revised $\rho_{b,a}$. I left a comment when you defined it.}% for some $\alpha<\kappa.$
  %Third, for  $b\s a$ define $\pi_{b,a}\colon \sigma \mapsto \sigma\restriction\dom(a)$. An easy computation shows that $j(\pi_{b,a})(\Upsilon_b)=\Upsilon_a$. From this it is routine to show that this choice of $\pi_{b,a}$ complies with the coherency requirement in Definition~\ref{Definitionextender}(2). Finally let us check that $E_a$ is normal %for all indices $a\in {}^{\leq \kappa} \eta$ in the sense of Definition~\ref{Definitionextender}(3).  Suppose that $\{\sigma\in \Sigma_a\mid f(\sigma)\in \sup(\sigma)\}\in E_a.$ Then, by definition, $$j(f)(\Upsilon_a)<\sup_{\alpha\in\dom(a)}a(\alpha).$$ If $\delta:=j(f)(\Upsilon_a)\in\range(a)$ we are done. Otherwise, let $b:=a\cup\{\langle \beta,\delta\rangle\}$ where $\beta\notin\dom(a).$
%Let $\alpha^*$ be the first ordinal in $\dom(a)$ such that \ale{I tried to fix it, but this is still problematic. $b$ is not necessarily continious.}$\delta<a(\alpha^*)$. Since $$\sup_{\beta\in \dom(a)\cap \alpha^*} a(\beta)<a(\alpha^*)$$ and $a$ is continuous it follows that $\gamma:=\sup(\dom(a)\cap \alpha^*)<\alpha^*$. Define $b:=a\cup\{\langle \gamma, \delta\rangle\}.$ It is easy to show that $j(\pi_{b,a})(\Upsilon_b)=\delta$, which implies $\{\sigma\in \Sigma_b\mid f(\pi_{b,a}(\sigma))\in \range(\sigma)\}\in E_b$, as needed.
  %\ale{Complete. I think the projection should be definable using canonical functions when $\otp(b)\geq \kappa.$}
  % ${}^{\kappa}N\s N$ and $\eta<j(\kappa)$.
\end{proof}

%Note that this definition is well-posed: First, $\mathcal{P}(\Sigma_a)\s M$ because \ale{Need to complete! We also have the problem that $\eta$ needs to be contained in $M$}.\ale{Somehow we would like to change the seed and say that it is $\{\langle j(\alpha), \eta_\alpha\rangle \mid \alpha\in a\}$ for $a\in [2^\lambda]^{\leq\kappa}$. However this does not satisfy the normality requirement of \S2.} Second, $(j\restriction a)^{-1}\in N\cap j(\Sigma_a)$ because ${}^{\kappa}N\s N$ and $\eta<j(\kappa)$. 
%For each $a\in [2^\lambda]^{\leq \kappa}$, let $\nu_a:=\{\langle j(\alpha), s_\alpha\rangle \mid\alpha\in a\}$. Since ${}^{\kappa}N\s N$,  $\nu_a\in N$. Define
%$$X\in E_a\;\Longleftrightarrow\; X\s X_a\; \wedge\, \nu_a\in j(X),$$
%where $X_a:=\{f\colon a\rightarrow \kappa\mid \text{$f$ increasing}\,\wedge\, |f|<\kappa\}$. 
%One can check that $E$ is a $(\kappa,\eta)$-extender in the sense of Definition~\ref{Definitionextender}. Also, 
For each $\alpha<2^\lambda$ and $A\s\kappa$ note that
$$A\in U_\alpha\;\Leftrightarrow\;\{\{\langle \alpha,\beta\rangle\}\mid \alpha<\kappa,\,\beta\in A\}\in E_{\{ \kappa, \eta_\alpha\}}.$$
%$$E_{\{\langle \alpha, \eta_\alpha\rangle\}}=\{\{\{\langle \alpha,\beta\rangle\}\mid \beta\in A\}\mid A\in U_\alpha\}.$$
Appealing to the duality gluing extender/gluing property (see page~\pageref{DualityGluingExtender}) we conclude that the cardinal $\kappa$ has the $2^\lambda$-gluing property.
%This yields a $\kappa$-complete ultrafilter on $X_a$ such that $E_a\in M$.\footnote{Note that here we need to use that $2^\lambda\s M$ and ${}^{\kappa}M\s M$.} Put $E:=\langle E_a\mid a\in [2^\lambda]^{\leq \kappa}\rangle$ and observe that 
%for each $\alpha<2^\lambda$ and $A\in U_\alpha$, $\hat{A}:=\{\{\langle \alpha,\gamma\rangle \}\mid \gamma\in A\}$ is a member of $E_{\{\alpha\}}$ as needed. %\ale{This is not exactly how you formulate it, but I think it's morally the same. We can think on what's the better formulation for gluing}
\end{proof}

\begin{cor}
If $\kappa$ is a strongly compact cardinal then it has the $\lambda$-gluing property for every cardinal $\lambda$.
\end{cor}
%Theorem~\ref{kappacompactnessandgluing} can be also used to show that $\lambda^+$-$\Pi^1_1$-subcompactness (see Definition~\ref{Subcompactness}) entails the $2^\lambda$-gluing property. 
\begin{cor}
Let $\kappa\leq \lambda$ be uncountable cardinals with $\lambda^{<\kappa}=\lambda$. 

If $\kappa$ is $\lambda^+$-$\Pi^1_1$-subcompact then it has the $2^\lambda$-gluing property.
\end{cor}
\begin{proof}
    Combining Theorem~\ref{kappacompactnessandgluing} with the first author's result that $\lambda^+$-$\Pi^1_1$-subcompact cardinals have the $\lambda$-filter extension property (see \cite[Theorem~11]{Hayutpartial}) the corollary follows rightaway.
\end{proof}
\begin{remark}
    After the latest developments made in \cite{HayPovII} we discovered that a Kunen \cite{Ketonen} and (independently) Comfort--Negrepontis \cite{CNBook} showed that every $\kappa$-compact cardinal  has the \emph{$(2^\kappa)$-gluing property via ultrafilters} \cite{HayPovII}; namely, the gluing embedding $j\colon V\rightarrow M$ is the ultrapower by a $\kappa$-complete ultrafilter on $\kappa$.  In particular, cardinals $\kappa$ that are $\kappa^+$-$\Pi^1_1$-subcompact do have the $(2^\kappa)$-gluing property. The result of Kunen and Comfort--Negreponthis thus improve our Theorem \ref{kappacompactnessandgluing}.
\end{remark}

%In the next subsection we show that the above result is (consistently) optimal. Indeed, we shall  exhibit a generic extension where $\kappa$ is $\kappa^+$-$\Pi^1_1$-subcompact but it does not have the $(2^\kappa)^+$-gluing property. 

\begin{theorem}\label{thm:2-extendible}%\yair{Changed from a remark on the text to a theorem}
  Assume that there is $j \colon H(\kappa^{++}) \to H(\lambda^{++})$ with $\crit j = \kappa$ and $\kappa < \lambda$. Then, $\kappa$ has the $\lambda^{+}$-gluing property.
\end{theorem}
\begin{proof}
    The proof is very similar to the one displayed in Theorem~\ref{kappacompactnessandgluing}. Let $\langle U_\alpha \mid \alpha < \lambda^{+}\rangle$ be a sequence of $\kappa$-complete measures over $\kappa$. For each $\alpha<\lambda^+$, $U_\alpha \in H(\kappa^{++})$ and thus $j(U_\alpha) \in H(\lambda^{++})$ is a $\lambda$-complete ultrafilter. Since $H(\lambda^{++})$ is closed under sequences of length $\lambda^{+}$, the sequence $\langle j(U_\alpha) \mid \alpha<\lambda^{+}\rangle \in H(\lambda^{++})$. Moreover, $j\image \mathcal{P}(\kappa) \in H(\lambda^{++})$ hence, in $H(\lambda^{++})$, one can compute the set $j\image U_\alpha = j(U_\alpha) \cap j\image \mathcal{P}(\kappa)$ for all $\alpha<\lambda^+.$   So, one can continue in the same fashion as in the proof of Theorem \ref{kappacompactnessandgluing}, and construct the sequence of ordinals $\eta_\alpha$ and the corresponding extender. 
\end{proof}
Theorem \ref{thm:2-extendible} above shows \emph{$2$-extendible cardinals with target $\lambda$} have the $\lambda^{+}$-gluing property. In particular, the global gluing property follows from the assumption of $2$-extendiblity with unbounded target. This is strictly weaker (in terms of consistency strength) than $\kappa$ being $\kappa^{++}$-supercompact. %\ale{Reference?} 
So, in order to isolate a property which is more characteristic to supercompactness, one will need to glue $\kappa$-complete measures over arbitrary sets. We will not pursue this direction in this paper.

\subsection{\texorpdfstring{$\Pi^1_1$}{Pi11}-subcompactness and the gluing property}\label{section:consistent-bounds-on-gluing}
In this section we correct a mistake that appeared in earlier versions of this work (including the published version \cite{HP}) where it was claimed that $\kappa$-compact cardinals do not necessarily posses the $(2^\kappa)^+$-gluing property. The source of the error was \cite[Lemma 4.7]{HP} where it was claimed that any measurable cardinal $\kappa$ with $o(\kappa)=\kappa^{++}$ satisfying the $\kappa^{++}$-gluing property carries an elementary embedding $j\colon V\rightarrow M$ with $\crit(j)=\kappa$, ${}^\kappa M\s M$ and $\kappa^{++}_M=\kappa^{++}$. The argument given in \cite{HP} is flawed because the factor embeddings $k_\alpha$ do have critical point $\kappa$ and as a result $k(\kappa^{++}_{N_\alpha})\neq \kappa^{++}_M$. This bug was recently pointed out to us by G. Goldberg to whom we are very grateful.

\smallskip

The next confirms that  \cite[Theorem 4.12]{HP} was not correct:

\begin{lemma}\label{lemma: pi11havelotsofgluings}
    If $\kappa$ is $\kappa^+$-$\Pi^1_1$-subcompact then $\kappa$ has the $(2^\kappa)^+$-gluing property.
\end{lemma}
\begin{proof}
    By Kunen \cite[Theorem~2.3]{Ketonen} or Comfort–Negrepontis \cite[Theorem~4.3]{CNBook}, the Rudin-Keisler order on $\kappa$-complete ultrafilters on $\kappa$ is $(2^\kappa)^+$-directed. Let $\langle U_\alpha\mid \alpha<(2^\kappa)^+\rangle$ be  $\kappa$-complete ultrafilters on $\kappa$. %We use the  $(2^\kappa)^+$-directedness of the Rudin-Keisler order to produce a $\leq_{\mathrm{RK}}$-increasing sequence of ultrafilters $\langle W_\alpha\mid \alpha<(2^\kappa)^+\rangle$ as follows. 
  
  Let $\{I_\alpha\mid \alpha<(2^\kappa)^+\}$ be a partition of $(2^\kappa)^+$ into sets of size ${\leq}2^\kappa$. First, let $W_0$ be a $\kappa$-complete ultrafilter that is $\leq_{\mathrm{RK}}$-above $\{U_\alpha\mid \alpha\in I_0\}$. Suppose that $\langle W_\alpha\mid \alpha<\beta\rangle$ has been defined. If $\beta$ is successor then we find $W_\beta$ that is Rudin--Keisler above the ultrafilters in $\{W_{\beta-1}\}\cup\{U_\alpha\in \alpha\in I_\beta\}$. If $\beta$ is limit then we fix an increasing sequence $\langle \beta_i\mid i<\cf(\beta)\rangle$ that is cofinal in $\beta$ and we find an ultrafilter $W_\beta$  that is Rudin--Keisler-above all the ultrafilters in $\{W_{\beta_i}\mid i<\cf(\beta)\}\cup\{U_\alpha\mid \alpha\in I_\beta\}$.

  Let $j\colon V\rightarrow M$ be the direct limit embedding of the directed system $$\langle j_{\alpha,\beta}\colon  M_{W_\alpha}\rightarrow M_{W_\beta}\mid \alpha\leq\beta <(2^\kappa)^+\rangle$$
  where $j_{\alpha,\beta}\colon [f]_{W_\alpha}\mapsto [f\circ \pi_{\beta,\alpha}]_{W_\beta}$ and $\pi_{\beta,\alpha}\colon \kappa\rightarrow \kappa$ witnesses $W_\alpha\leq_{\mathrm{RK}}W_\beta$. It is not hard to show that  $\crit(j)=\kappa$,  ${}^\kappa M\s M$ and that  each of the $M_{W_\alpha}$'s are closed under $\kappa$-sequences of its elements.

  For each $\alpha<(2^\kappa)^+$ let $i(\alpha)<(2^\kappa)^+$ be the unique index where $\alpha\in I_{i(\alpha)}$. By construction, $U_\alpha\leq_{\mathrm{RK}}W_{i_\alpha}$. Chasing the diagram it is  easy to check that  $U_\alpha=\{X\s \kappa\mid \eta_\alpha\in j(X)\}$ for $\eta_\alpha=j(\sigma_{\alpha, i_\alpha})([\id]_{W_\alpha})$ and $\sigma_{\alpha,i_\alpha}\colon \kappa\to \kappa$ the map witnessing $U_\alpha\leq_{\mathrm{RK}} W_{i_\alpha}$. We have shown that $\langle U_\alpha\mid \alpha<(2^\kappa)^+\rangle$ can be 'weakly-glued' (in the sense of \cite[Definition 3.3]{HayPovII}), which is equivalent to saying that they can be glued (see the proof of \cite[Theorem 3.5]{HayPovII}).
\end{proof}
We will show that the same argument given \cite[Theorem 4.12]{HP} shows that $\kappa^+$-$\Pi^1_1$-subcompact cardinals do not have the $\kappa^{+\omega+1}$-gluing property. Although this is still far from being as sharp as it was claimed in the published version of this paper \cite{HP}, it still demonstrates that Gitik's argument from \cite[\S2]{GitikOnMeasurables} requires more than a $\kappa$-compact cardinal to work.

\smallskip

%We will use the previous lemma to show that (consistently) there are $\kappa^+$-$\Pi^1_1$-subcompact cardinals without the $(2^\kappa)^+$-gluing property. 

  Given any set $X$ of ordinals we shall denote by $\acc(X)$ the set of accumulation points of $X$: namely, $$\acc(X):=\{\alpha\in X\mid \sup(X\cap \alpha)=\alpha>0 \}.$$
\begin{definition}
    Let $\lambda$ be an uncountable cardinal. A \emph{$\square_\lambda$-sequence} is a sequence of the form $\langle C_\alpha\mid \alpha\in\acc(\lambda^+)\rangle$ such that:%\footnote{F we are $\acc(X)$ denotes the } 
    \begin{enumerate}
        \item $C_\alpha\s \alpha$ is a club in $\alpha$,
        \item $\otp(C_\alpha)\leq \lambda$,
        \item $\forall \beta\in \acc(C_\alpha)\,(C_\beta=C_\alpha\cap \beta).$
    \end{enumerate}
\end{definition}
The principle $\square_\lambda$ (and its variants) is a well-studied set-theoretic property and a source for anti-compact objects of size $\lambda^+$. We refer the interested reader to the excellent article \cite{CumSquares} for an extensive study of this principle. In this paper we are interested in one feature of $\square_\lambda$ - its tension with stationary reflection.
\begin{definition}
    Let $\lambda$ be an uncountable regular cardinal and $S\s \lambda$ a stationary set. We say that $\refl(S)$ holds if for every stationary set $T\s S$ there is $\alpha<\lambda$ with $\cf(\alpha)>\omega$ such that $T\cap \alpha$ is stationary in $\alpha$. 
\end{definition}
It is  well-known that $\square_\lambda$ implies the failure of $\refl(S)$  for every stationary set $S\s \lambda$ (see e.g. \cite[Theorem~2.1]{CumSquares}). %In the proof of the forthcoming Theorem~\ref{KillingGluing} we shall benefit from this tension to show that (consistently) there is a generic extension with a $\kappa^+$-$\Pi^1_1$-subcompact cardinal $\kappa$ but with no elementary embedding $j\colon V\rightarrow M$ with $\crit(j)=\kappa$ and $\kappa^{++}_M=\kappa^{++}.$  

\smallskip

The following is the natural poset to force a $\square_\lambda$-sequence:% based on bounded approximations:
\begin{definition}
    Let $\lambda$ be an uncountable cardinal. The poset $\mathbb{S}(\lambda)$ to add a $\square_\lambda$-sequence consists of sequences $p=\langle p(\alpha)\mid \alpha\in\dom(p)\rangle$ such that:
    \begin{enumerate}
        \item $\dom(p)=(\beta+1)\cap\acc(\lambda^+)$ for some $\beta\in \acc(\lambda^+)$;
        \item for each $\alpha\in \dom(p)$, $p(\alpha)\s \alpha$ is  a club and $\otp(p(\alpha))\leq\lambda$;
        \item for all $\alpha\in \dom(p)$ and $\beta\in\acc(p(\alpha))$, $p(\alpha)\cap \beta=p(\beta).$
    \end{enumerate}
   For each $p,q\in\mathbb{S}(\lambda)$, $p\leq q$ iff $q=p\restriction\dom(q).$
\end{definition}
\begin{remark}
    It can be shown that forcing with $\mathbb{S}(\lambda)$ adds a generic $\square_{\lambda}$-sequence. Also, $\mathbb{S}$ is $\sigma$-closed and ${<}\lambda^+$-strategic closed, hence it does not add new $\lambda$-sequences of ordinals. (see \cite[Definition~5.15 and p.797]{CumHandBook})
%   preserves cardinals ${\leq}\lambda^+$.
%\ale{Do we wnat to explain strategic closure in a footnote?}
\end{remark}

Let  $\mathbb{S}$  be the Easton-supported iteration $\langle \mathbb{S}_\alpha, \dot{\mathbb{Q}}_\beta\mid \beta\leq \alpha< \kappa\rangle$\footnote{Since any poset can be regarded as one of Prikry-type in the sense of Gitik (Definition~\ref{GitikPrikrydef}) this is nothing but a forcing iteration witnessing Clause~(a) in Definition~\ref{Gitikiteration}.} defined as follows: If $\alpha< \kappa$ is a singular cardinal with countable cofinality then $\mathds
{1}_{\mathbb{S}_\alpha}\forces_{\mathbb{S}_\alpha}\text{$``\dot{\mathbb{Q}}_\alpha=\mathbb{S}(\alpha)$''};$ otherwise, $\one_{\mathbb{S}_\alpha}\forces_{\mathbb{S}_\alpha}\text{$``\dot{\mathbb{Q}}_\alpha$ is trivial''}.$ 

Let $G$ a $V$-generic filter for $\mathbb{S}$. 

%Some few comments are in order at this point. First, 

\begin{theorem}\label{KillingGluing}
Assume the $\mathrm{GCH}$ holds. Let $\kappa$ be a $\kappa^+$-$\Pi^1_1$-subcompact cardinal for which $\refl(E^{\kappa^{+\omega+1}}_\omega)$ holds. Then, the following hold in $V[G]$:
\begin{enumerate}
    \item $\kappa$ is $\kappa^+$-$\Pi^1_1$-subcompact;
    \item There is no embedding $j\colon V\rightarrow M$ with $\crit(j)=\kappa$ and ${}^\kappa M\s M$ and $j(\kappa)>\kappa^{+\omega}$.
\end{enumerate}
%After forcing with the above iteration $\kappa$ remains $\kappa^+$-$\Pi^1_1$-subcompact but there is no elementary embedding $j\colon V\rightarrow M$ such that $\crit(j)=\kappa$, ${}^\kappa M\s M$ and $\kappa^{++}_M=\kappa^{++}$.
In particular, in $V[G]$, $\kappa$ does not have the $\kappa^{+\omega}$-gluing property.
\end{theorem}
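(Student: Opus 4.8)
The plan is to prove the two clauses separately; once they are established, the "in particular" follows at once, since Lemma~\ref{LemmaCorrectEmbeddings} shows that under GCH a cardinal $\kappa$ with $o(\kappa)=\kappa^{++}$ (which holds here by Lemma~\ref{LemmaMitchellOrder} applied in $V[G]$, using clause~(1)) and the $\kappa^{++}$-gluing property would admit an embedding $j\colon V[G]\to M$ with $\crit(j)=\kappa$, $^{\kappa}M\subseteq M$ and $\kappa^{++}_M=\kappa^{++}$ — contradicting clause~(2). So the heart of the matter is clauses (1) and (2), and the substantive new ingredient is (2).

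For clause~(1), I would run a standard lifting argument. The forcing $\mathbb S$ is an Easton-support iteration of length $\kappa$ whose $\alpha$-th stage (for inaccessible $\alpha$) adds a $\square_{\alpha^+}$-sequence by the usual ${<}\alpha^+$-strategically-closed (indeed $\alpha^+$-directed-closed after a reformulation) poset, so $\mathbb S$ has size $\kappa$, is $\kappa$-cc, and below any inaccessible $\varrho<\kappa$ the tail beyond $\varrho$ is sufficiently closed. Given a witnessing embedding $i\colon\langle H(\varrho^+),\in,\bar X\rangle\to\langle H(\kappa^+),\in,X\rangle$ with $\crit(i)=\varrho$, $i(\varrho)=\kappa$ for an arbitrary $\Pi^1_1$-statement over $\langle H(\kappa^+),\in,X\rangle$ in $V[G]$, one reflects $X$ and the relevant generic to get $\bar X,G\restriction\varrho$, lifts $i$ through $\mathbb S\restriction\varrho$ (this is free since $\crit(i)=\varrho$ and the iteration up to $\varrho$ lies in $H(\varrho^+)$), and then uses a master-condition/diagonalization argument in the closed tail forcing $i(\mathbb S)/(G\restriction\varrho)$ — working in $V[G]$, where the tail is still highly closed and $H(\kappa^+)^{V[G]}$ has the right size — to build a generic for $i(\mathbb S\restriction\varrho)$ over $H(\kappa^+)$; elementarity then yields a reflecting embedding for the $\Pi^1_1$-fact, witnessing $\kappa^+$-$\Pi^1_1$-subcompactness in $V[G]$. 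The one point to check carefully is that the tail of the iteration does not add bounded subsets of $\kappa$ that would spoil the closure, which is immediate from the support and closure conventions.

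For clause~(2), the idea is that the iteration $\mathbb S$ adds a $\square_{\alpha^+}$-sequence at every inaccessible $\alpha<\kappa$, hence in particular at any former large cardinal below $\kappa$; the point is to use this to kill the possibility of an embedding $j\colon V[G]\to M$ with critical point $\kappa$ that is correct about $\kappa^{++}$. Suppose toward a contradiction that such $j$ exists. Then $j(\mathbb S)$ is, by elementarity, an iteration of the same form of length $j(\kappa)$, and since $\kappa$ is inaccessible in $M$ (being the critical point and $^{\kappa}M\subseteq M$), the stage-$\kappa$ step of $j(\mathbb S)$ is the forcing to add a $\square_{\kappa^+}$-sequence; but $\kappa^{++}_M=\kappa^{++}$, so this would add a genuine $\square_{\kappa^+}$-sequence in an inner model of $V[G]$, contradicting $\refl(E^{\kappa^{++}}_\omega)$ — which holds in $V[G]$ because the tail of $\mathbb S$ above any relevant point is $\kappa^{+}$-closed, hence $\kappa^{++}$-cc, and $\mathbb S$ itself is small, so stationary reflection at $E^{\kappa^{++}}_\omega$ is preserved from $V$ (where it holds by hypothesis). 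More precisely, the argument is: from $j$ one extracts, as in the proof of Lemma~\ref{LemmaCorrectEmbeddings} run backwards, that $\kappa$ carries a $\square_{\kappa^+}$-like sequence in $V[G]$ — indeed the generic added by $\dot{\mathbb Q}_\kappa$ in $j(\mathbb S)$ at stage $\kappa$ reflects down to a partial square on $\kappa^+$ computed correctly because $j$ is correct at $\kappa^{++}$ — and $\square_{\kappa^+}$ contradicts the reflection property of $E^{\kappa^{++}}_\omega$.

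The main obstacle I expect is the bookkeeping in clause~(2): making precise how a correct-at-$\kappa^{++}$ embedding with critical point $\kappa$ forces the existence of a $\square_{\kappa^+}$-sequence (or some other non-reflecting object at $\kappa^{++}$) in $V[G]$, i.e.\ getting the exact incompatibility with $\refl(E^{\kappa^{++}}_\omega)$ to come out of the stage-$\kappa$ step of $j(\mathbb S)$ rather than merely the failure of $\square_{\kappa^+}$ at the level of $M$. The other routine-but-necessary check is that $\refl(E^{\kappa^{++}}_\omega)$ and $\kappa^+$-$\Pi^1_1$-subcompactness are both preserved by $\mathbb S$, which follows from the smallness ($|\mathbb S|=\kappa$, $\kappa$-cc) together with the high closure of the tail, but deserves an explicit lemma.
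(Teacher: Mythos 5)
Your overall decomposition matches the paper's: prove (1) and (2) separately, and then derive the ``in particular'' by combining Lemma~\ref{LemmaMitchellOrder} (giving $o(\kappa)=\kappa^{++}$ in $V[G]$ from clause~(1)) with Lemma~\ref{LemmaCorrectEmbeddings} to contradict clause~(2). Your sketch of clause~(2) is also essentially the paper's argument, just phrased in a more roundabout way: rather than unwinding the stage-$\kappa$ step of $j(\mathbb S)$, the paper simply notes that $V[G]\models\text{``}\square_{\alpha^+}\text{ holds for every inaccessible }\alpha<\kappa\text{''}$, so by elementarity $\square_{\kappa^+}$ holds in $M$, and by correctness at $\kappa^{++}$ (hence also at $\kappa^+$) this is a genuine $\square_{\kappa^+}$-sequence in $V[G]$, contradicting $\refl(E^{\kappa^{++}}_\omega)$.

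There are, however, two genuine gaps. First, the preservation of $\refl(E^{\kappa^{++}}_\omega)$ is \emph{not} routine from $\kappa$-cc and tail closure. Given a stationary $S\subseteq E^{\kappa^{++}}_\omega$ in $V[G]$, one passes to a stationary $S_q\in V$ decided by some $q\in G$, reflects it in $V$ to $S_q\cap\delta$, and then must show $S_q\cap\delta$ stays stationary in $V[G]$. When $\theta=\cf^V(\delta)<\kappa$, the forcing $\mathbb S$ is neither $\theta$-cc nor $\theta$-distributive, so $\kappa$-cc buys you nothing here. The paper handles this with a case analysis on $\theta$: if $\theta$ is Mahlo, $\mathbb P_\theta$ is $\theta$-cc and the tail is sufficiently strategically closed; if $\theta$ is a non-Mahlo limit of inaccessibles, a separate name-chasing argument with $\mathbb P_\theta$ is needed; otherwise $\mathbb S$ factors as small $\ast$ distributive below $\theta$. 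Your remark that this ``follows from smallness and closure'' skips exactly the hard case.

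Second, your outline of clause~(1) misidentifies the crux. The lift of $i$ through $\mathbb S\restriction\varrho$ is indeed free (since $\crit(i)=\varrho$ and $\mathbb S_\varrho\subseteq H(\varrho^+)$, we have $i\restriction\mathbb S_\varrho=\mathrm{id}$, so $i``G_\varrho\subseteq G$ automatically), and there is \emph{no} master-condition or generic-building step --- $i(\mathbb S\restriction\varrho)=\mathbb S$ and $G$ itself already serves. What actually requires work, and what your sketch omits, is obtaining the embedding $i$ in the first place: $\kappa^+$-$\Pi^1_1$-subcompactness is only available in $V$, so the $\Pi^1_1$-statement $\Phi$ over $\langle H(\kappa^+)^{V[G]},\in,A\rangle$ must be re-expressed as a $\Pi^1_1$-statement in $V$ over the expanded structure $\langle H(\kappa^+)^V,\in,\dot A,\Vdash_{\mathbb S},p\rangle$, quantifying over $\mathbb S$-names and using the definability of the forcing relation. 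Subcompactness in $V$ is applied to \emph{that} statement; only then does one lift the resulting embedding and argue --- using that the tail of $\mathbb S$ past $\varrho$ is $\varrho^{++}$-distributive --- that the reflected $\Pi^1_1$-statement survives to the full $V[G]$. This encoding-at-the-level-of-names is the substantive content of clause~(1), and it is absent from your sketch.
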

\begin{proof}
We begin proving the following auxiliary claim:
\begin{claim}\label{ReflectionClaim}
$V[G]\models \refl(E^{\kappa^{+\omega+1}}_\omega)$.% holds.
\end{claim}
\begin{proof}[Proof of claim]
Let $\dot{S}$ be a $\mathbb{S}$-name and $p\in G$ a condition such that $$p\forces_{\mathbb{S}}\text{$``\dot{S}$ is a stationary subset of $\dot{E}^{\kappa^{+\omega+1}}_\omega$''.}$$
For each $q\leq p$, define $S_q:=\{\alpha<\kappa^{+\omega+1}\mid q\forces_{\mathbb{S}}\check{\alpha}\in\dot{S}\}$. Note that $S_r\s S_q$ for all $q\leq r\leq p$. Thus, if $S_q$ is stationary (in $V$) then so is $S_r.$

Clearly $p$ forces $``\bigcup_{q\in \dot{G}/p} \check{S}_q=\dot{S}$''. Since $\mathbb{S}\s V_\kappa$ (and $\kappa$ is inaccessible) this is a union of ${\leq}\kappa$-many sets whose union is $p$-forced to be stationary. Thus,  there must be some $q\leq p$ for which $S_q$ is stationary (in $V$). By genericity and our previous comments we may assume that $q\in G$. 

Next working in $V$, use that $\refl(E^{\kappa^{+\omega+1}}_\omega)$ holds to find an ordinal $\delta$ with $\theta:=\cf^V(\delta)>\omega$ such that $S_q\cap \delta$ is stationary. Put $S^*:= S_q\cap \delta.$ Clearly, $\check{S}^*\s\dot{S}_G\cap\delta$, so it suffices to check that $q\forces_{\mathbb{S}}\text{$``\check{S}^*$ is stationary''}$.

Indeed, $\mathbb{S}$ is $\sigma$-closed and in particular, proper. $S^*$ is a stationary set consist of ordinals of countable cofinality, and thus its stationarity is preserved by a proper forcing.
\end{proof}
The next claim yields Clause~(1) of the theorem:
\begin{claim}\label{PreservationofSubcompactness} 
The cardinal $\kappa$ is $\kappa^+$-$\Pi^1_1$-subcompact in $V[G]$.
\end{claim}
\begin{proof}[Proof of claim]
Working in $V[G]$, assume that $\Phi$ is the $\Pi^1_1$-sentence of the form $\forall X \varphi(X, A)$, where $A\s H(\kappa^+)$.\footnote{Formally speaking, $\Phi$ is a $\Pi^1_1$-sentence in the language of set theory extended by the predicate $A$.} Let us assume that $$V[G]\models \text{$``\langle H(\kappa^+), \in, A\rangle \models \forall X\, \varphi(X, A).$''}$$ 

%\ale{Please, check this paragraph.}\yair{I don't think that the $\kappa$-c.c.\ actually plays a role in this part of the argument}  
Let $y$ be a non-empty element of $H(\kappa^{+})^{V[G]}$. By standard coding arguments, there is $x \subseteq \kappa \times \kappa$ that codes $y$, in the sense that $x$ is a well founded relation of $\kappa$ and its transitive collapse is $y$. Let $\dot{x}$ be a good name for $x$, so $\dot{x}_G = x$. As $\mathbb S \subseteq V_\kappa$, such a name exists and it is a member of $H(\kappa^{+})^V$. 

Next, let $X \in V[G]$ be a subset of $H(\kappa^{+})^{V[G]}$. Then, as before, we can talk about the collection of all pairs $\langle p, \dot{x}\rangle$, where $p$ is a condition in $\mathbb{S}$ and $\dot{x}$ is a nice name for a subset of $\kappa \times \kappa$ such that $p$ forces that the set $y$ coded by $\dot{x}$ is in $\dot{X}$. This collection is again a subset of $H(\kappa^{+})^V$. 

Since the model $H(\kappa^{+})^{V[G]}$ can decode the codes in a definable way, we may assume that the statement $\varphi$ talks about codes for elements of $H(\kappa^{+})$, instead of arbitrary elements.    Thus, we are legitimated to pick a $\mathbb{S}$-name $\dot{A}\s H(\kappa^+)^ V$  such that $\dot{A}_G=A$.

%Recall that $\mathbb{S}$ is a $\kappa$-c.c.\ forcing notion and $\mathbb{S}\s V_\kappa \subseteq H(\kappa^{+})$. This implies that  every subset $X$ of $H(\kappa^{+})^{V[G]}$ in the generic extension $V[G]$ has a $\mathbb{S}$-name $\dot{X} \subseteq H(\kappa^{+})^V$ such that $\dot{X}_G=X$. Indeed, every such $X$ can be coded (in $V[G]$) via a subset of $2^\kappa$. Also, every $\mathbb{S}$-nice name for a subset of $2^\kappa$ is a subset of $H(\kappa^+)^V$ precisely because $\mathbb{S}$ is $\kappa$-cc and $\mathbb{S}\s H(\kappa^+)^V$. 
  
  %For the very same reasons, every $y\in H(\kappa^{+})^{V[G]}$ has a name $\dot{y} \in H(\kappa^{+})^V$ such that $\dot{y}_G = y$.  

  \smallskip

%Let us assume that in $V[G]$, 
%\[\langle H(\kappa^{+}), \in, A\rangle \models \forall X \subseteq H(\kappa^{+}), \varphi(X, A),\]

The \emph{Forcing Theorem} yields that for some condition $p\in G$, 
\[p \Vdash_{\mathbb{S}} \langle \dot{H}(\kappa^{+}), \in, \dot{A} \rangle \models \forall X \, \varphi(X, \dot{A})\]
which is equivalent, by the above discussion, to
\begin{equation*}\label{pi11}
 \tag{$\Upsilon$}   \langle H(\kappa^+)^V, \in, \dot{A}, \forces_{\mathbb{S}}, p\rangle \models \forall \dot{X} (\dot{X}\text{ is an }\mathbb{S}\text{-name}\implies\text{$``p\forces_{\mathbb{S}} \varphi(\dot{X}, \dot{A})$''}).
\end{equation*} 
Besides, by virtue of the \emph{Forcing Definability Theorem}, $\text{$``p\forces_{\mathbb{S}} \varphi(\dot{X},\dot{A})$''}$ is a first order sentence\footnote{Here we are using the fact that $\mathbb{S}$ is a \emph{member} of $H(\kappa^{+})$, and every dense subset of $\mathbb{S}$ belongs to $H(\kappa^{+})$.} in the structure $\langle H(\kappa^+)^V, \in, \dot{A}, \forces_{\mathbb{S}}, p, \dot{X}\rangle,$ hence the expression \eqref{pi11} is a $\Pi^1_1$-sentence in $\langle H(\kappa^+)^V, \in, \dot{A}, \forces_{\mathbb{S}}, p\rangle$.

\smallskip

Using that $\kappa$ is $\kappa^+$-$\Pi^1_1$-subcompact in $V$ we  find  $\varrho<\kappa$, $\dot{B}\s H(\varrho^+)^{V}$ and $q\in \mathbb{S}_\varrho$, along with  an elementary embedding $$j\colon \langle H(\varrho^+),\in, \dot{B},\forces_{\mathbb{S}_\varrho}, q\rangle\rightarrow \langle H(\kappa^+),\in, \dot{A},\forces_{\mathbb{S}}, p\rangle$$ with $\crit(j)=\varrho$, $j(\varrho)=\kappa$ and  such that %$\langle H(\varrho^+),\in, \dot{B},\forces_{\mathbb{S}_\varrho}, q\rangle\models \text{$``q\forces_{\mathbb{S}} \Phi$''}$.
\begin{equation*}\label{pi11two}
 \tag{$\ast$} \langle H(\varrho^+),\in, \dot{B},\forces_{\mathbb{S}_\varrho}, q\rangle\models\forall \dot{X} (\dot{X}\text{ is an }\mathbb{S}_\varrho\text{-name}\implies\text{$``q\forces_{\mathbb{S}_\varrho} \varphi(\dot{X}, \dot{B})$''}). 
\end{equation*}

Since $\crit(j)=\varrho$ and $\mathbb{S}_\varrho$ is an Easton-supported iteration of forcings in $H(\varrho^+)$ it follows that $j\restriction \mathbb{S}_\varrho=\mathrm{id}$.  In particular, putting $B:=\dot{B}_{G_\varrho}$, Silver's lifting criterion \cite[Proposition~9.1]{CumHandBook} implies that $j$ lifts to
$$j^*\colon \langle H(\varrho^+)^{V[G_\varrho]},\in, B\rangle\rightarrow \langle H(\kappa^+)^{V[G]},\in, A\rangle,$$%\ale{Should we say here that $H(\rho^+)^{V[G_\rho]}=H(\rho^+)[G_\varrho]$?}
with $\crit(j^*)=\varrho$ and $j^*(\varrho)=\kappa$.\footnote{Note that here we are implicitly using that $H(\varrho^+)^{V[G_\varrho]}=H(\varrho^+)[G_\varrho]$ and, similarly, that  $H(\kappa^+)^{V[G]}=H(\kappa^+)[G]$. This is granted by the coding argument given at the beginning of the claim. }

\smallskip

Note that since $j(q)=p\in G$ and $j\restriction \mathbb{S}_\varrho=\mathrm{id}$, $q\in G\cap \mathbb{S}_\varrho=G_\varrho$.  Also,  $\mathbb{S}$ is isomorphic to $\mathbb{S}_\varrho \ast \dot{\mathbb{S}}/\mathbb{S}_\varrho$ and $\dot{\mathbb{S}}/\mathbb{S}_\varrho$ is forced to be ${<}{\rho^{+}}$-strategically-closed (hence it does not add subsets to $H(\varrho^+)^{V[G_\varrho]}$). Combining this with expression \eqref{pi11two} above
%\[\langle H(\varrho^+)^V, \in, \dot{B}, \forces_{\mathbb{S} \restriction \varrho}, q\rangle \models \forall \dot{X}, (\dot{X}\text{ is an }\mathbb{S}\restriction \varrho\text{-name}\implies\text{$``q\forces_{\mathbb{S}\restriction \varrho} \varphi(\dot{X}, \dot{B})$''}),\]
we conclude that %$q\in G$ actually $\mathbb{S}$-forces  %over $\mathbb{S}$ 
%the same statement. 
$$q\forces_{\mathbb{S}_\varrho}\langle \dot{H}(\varrho^{+}), \in, \dot{B} \rangle \models \forall X\, \varphi(X, \dot{B}).$$
Since $q\in G$, $V[G] \models ``\langle H(\varrho^+)^{V[G_{\varrho}]}, \in , B\rangle \models \Phi$'', as desired.%\ale{Why $q$ forces such a thing? Maybe }
\end{proof}
Let us now prove Clause~(2). Assume towards a contradiction that in $V[G]$ there is an elementary embedding $j\colon V[G]\to M$ such that ${}^\kappa M\s M$ and $j(\kappa)>\kappa^{+\omega}$. Since $M$ is closed under $\omega$-sequences in $V[G]$, it sees $\kappa^{+\omega}$ as a singular cardinal of countable cofinality. Therefore, $\square_{\kappa^{+\omega}}$ holds in $M$. 

In addition, since the SCH holds both in $V$ and $M$ we have
$$(\kappa^{+\omega})^{+M}=((\kappa^{+\omega})^\omega)^M=(\kappa^{+\omega})^\omega =\kappa^{+\omega+1}.$$
Thus, $\square_{\kappa^{+\omega}}$ holds in $V[G]$, as well.  This forms a contradiction with the former  Claim~\ref{ReflectionClaim}  as $\square_{\kappa^{+\omega}}$ implies the failure of $\refl(E^{\kappa^{+\omega+1}}_\omega)$.  

In particular, this implies that $\kappa$ cannot have the $\kappa^{+\omega}$-gluing property in this model.
\end{proof}
By \cite[Corollary~7]{Hayutpartial},  $\refl(E^{\kappa^{++}}_\omega)$ holds provided $\kappa$ is a $\kappa^{++}$-$\Pi^1_1$-subcom\-pact cardinal. This readily implies the following corollary:
\begin{cor}
Assume the $\mathrm{GCH}$ and let $\kappa$ be $\kappa^{++}$-$\Pi^1_1$-subcompact.  Then, 
there is a generic extension  where $\kappa$ is $\kappa$-compact but does not have the $(2^\kappa)^+$-gluing property.
\end{cor}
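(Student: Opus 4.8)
The plan is to deduce this corollary directly from Theorem~\ref{KillingGluing} by verifying that the two hypotheses of that theorem are met once we are handed a $\kappa^{++}$-$\Pi^1_1$-subcompact cardinal. First I would record that $\kappa^{++}$-$\Pi^1_1$-subcompactness is strictly stronger than $\kappa^+$-$\Pi^1_1$-subcompactness, so the first hypothesis of Theorem~\ref{KillingGluing} holds outright. Next I would argue that $\refl(E^{\kappa^{++}}_\omega)$ holds: this is the content of the remark immediately preceding the corollary (``Using that $\refl(E^{\kappa^{++}}_\omega)$ holds for $\kappa^{++}$-$\Pi^1_1$-subcompact cardinals''), and it is exactly the kind of stationary-reflection consequence one gets by reflecting a putative non-reflecting stationary subset of $E^{\kappa^{++}}_\omega$ down through a subcompactness embedding $j\colon \langle H(\varrho^+),\in,\bar S\rangle \to \langle H(\kappa^{++}),\in, S\rangle$ with $j(\varrho)=\kappa^+$, in the style of Solovay's argument (cf. the proof of Lemma~\ref{LemmaMitchellOrder} and Claim~\ref{ReflectionClaim}). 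Finally, $\mathrm{GCH}$ may be assumed: force it by a preliminary $\kappa^{+++}$-closed (above $\kappa$) reverse Easton iteration that preserves $\kappa^{++}$-$\Pi^1_1$-subcompactness, or simply observe that we are free to pass to such a model, since the hypothesis of the corollary is only a consistency assumption.

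With all three hypotheses of Theorem~\ref{KillingGluing} in place, I would pass to $V[G]$ where $\mathbb{S}$ is the Easton-support iteration adding $\square_{\alpha^+}$-sequences at inaccessible $\alpha<\kappa$. By Theorem~\ref{KillingGluing}, in $V[G]$ the cardinal $\kappa$ is $\kappa^+$-$\Pi^1_1$-subcompact and does not have the $\kappa^{++}$-gluing property; and since $\mathrm{GCH}$ continues to hold (the iteration is small and sufficiently closed at $\kappa$), $\kappa^{++}=(2^\kappa)^+$, so $\kappa$ fails the $(2^\kappa)^+$-gluing property. It remains to see that $\kappa$ is $\kappa$-compact in $V[G]$: being $\kappa^+$-$\Pi^1_1$-subcompact, $\kappa$ is in particular $\kappa^+$-$\Pi^1_1$-subcompact, which by the discussion in Section~\ref{section:gluing-from-large-crdinals} (the hypothesis of Lemma~\ref{BetterCharacterization} / Theorem~\ref{kappacompactnessandgluing} with $\lambda=\kappa$) implies $\kappa$ has the $\kappa$-filter-extension property, i.e.\ $\kappa$ is $\kappa$-compact. (One should double-check that $\kappa^+$-$\Pi^1_1$-subcompactness implies $\kappa$-compactness; this is essentially \cite[Theorem~11]{Hayutpartial} invoked with $\lambda=\kappa$, giving a $\kappa$-filter-extension embedding.)

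The main obstacle I anticipate is verifying that $\refl(E^{\kappa^{++}}_\omega)$ genuinely follows from $\kappa^{++}$-$\Pi^1_1$-subcompactness, since this is asserted in the running text without proof. Concretely: given a stationary $S\s E^{\kappa^{++}}_\omega$, the statement ``$S$ has no reflection point of uncountable cofinality'' should be encoded as a $\Pi^1_1$-sentence $\Phi(S)$ over $\langle H(\kappa^{++}),\in\rangle$; applying $\kappa^{++}$-$\Pi^1_1$-subcompactness produces $\varrho<\kappa^+$ and $j\colon \langle H(\varrho^+),\in,\bar S\rangle\to \langle H(\kappa^{++}),\in, S\rangle$ with $\crit(j)=\varrho$ and $j(\varrho)=\kappa^+$; then $\varrho^+$ is the desired reflection point, because $j``\varrho^+$ is $\omega$-club in $\varrho^+$ viewed inside $H(\kappa^{++})$ and one checks $j``\varrho^+ \cap$ (the ordinal $\sup j``\varrho^+$) witnesses $\bar S$-reflection pulling back, or more simply $\varrho^+ \in S$ by a standard reflection computation. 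I would present this as a short lemma before the corollary, or cite it as folklore, and otherwise keep the corollary's proof to the two or three lines of hypothesis-checking plus the appeal to Theorem~\ref{KillingGluing} and to $\kappa^+$-$\Pi^1_1$-subcompactness $\Rightarrow$ $\kappa$-compactness.

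\begin{proof}
Assume $V$ contains a $\kappa^{++}$-$\Pi^1_1$-subcompact cardinal $\kappa$; passing to a forcing extension if necessary, we may assume $\mathrm{GCH}$ holds (a reverse Easton iteration forcing $\mathrm{GCH}$ with support and closure arranged above $\kappa$ preserves $\kappa^{++}$-$\Pi^1_1$-subcompactness, as it is captured inside $H(\kappa^{+++})$ and the relevant embeddings lift by Silver's criterion). A $\kappa^{++}$-$\Pi^1_1$-subcompact cardinal is a fortiori $\kappa^+$-$\Pi^1_1$-subcompact, and by a Solovay-style reflection argument it satisfies $\refl(E^{\kappa^{++}}_\omega)$: if $S\s E^{\kappa^{++}}_\omega$ were stationary and non-reflecting, then ``$S$ is stationary and for every $\delta<\kappa^{++}$ of uncountable cofinality $S\cap\delta$ is non-stationary'' is a true $\Pi^1_1$-statement over $\langle H(\kappa^{++}),\in,S\rangle$, so $\kappa^{++}$-$\Pi^1_1$-subcompactness yields $\varrho<\kappa^+$, $\bar S\s H(\varrho^+)$ and $j\colon \langle H(\varrho^+),\in,\bar S\rangle\to \langle H(\kappa^{++}),\in,S\rangle$ with $\crit(j)=\varrho$ and $j(\varrho)=\kappa^+$ such that $\langle H(\varrho^+),\in,\bar S\rangle$ thinks $\bar S$ is stationary and non-reflecting; but then $\varrho^+$ is a point of $S$-reflection of cofinality $\varrho>\omega$, a contradiction.

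Thus the hypotheses of Theorem~\ref{KillingGluing} are satisfied. Let $\mathbb{S}$ and $G$ be as in that theorem. By Theorem~\ref{KillingGluing}, in $V[G]$ the cardinal $\kappa$ remains $\kappa^+$-$\Pi^1_1$-subcompact and does not have the $\kappa^{++}$-gluing property. Since $\mathbb{S}$ has size $\kappa$ and is sufficiently closed beyond its initial segments, $\mathrm{GCH}$ at $\kappa$ is preserved, so $(2^\kappa)^+=\kappa^{++}$ in $V[G]$ and hence $\kappa$ fails the $(2^\kappa)^+$-gluing property there.

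Finally, $\kappa$ is $\kappa$-compact in $V[G]$: being $\kappa^+$-$\Pi^1_1$-subcompact, $\kappa$ has the $\kappa$-filter-extension property by \cite[Theorem~11]{Hayutpartial} (applied with $\lambda=\kappa$, using $\kappa^{<\kappa}=\kappa$), which is precisely the assertion that every $\kappa$-complete filter on $\kappa$ extends to a $\kappa$-complete ultrafilter on $\kappa$. Hence $V[G]$ is a generic extension in which $\kappa$ is $\kappa$-compact but lacks the $(2^\kappa)^+$-gluing property.
\end{proof}
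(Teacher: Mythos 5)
Your approach is the same as the paper's: deduce the corollary from Theorem~\ref{KillingGluing} by checking that $\kappa^{++}$-$\Pi^1_1$-subcompactness implies both $\kappa^+$-$\Pi^1_1$-subcompactness and $\refl(E^{\kappa^{++}}_\omega)$, arrange GCH, and then use $\kappa^+$-$\Pi^1_1$-subcompactness $\Rightarrow$ $\kappa$-filter-extension-property in $V[G]$ (via \cite[Theorem~11]{Hayutpartial}) together with $\kappa^{++}=(2^\kappa)^+$; the paper invokes the reflection fact without proof, so your extra sketch is optional.

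That said, your sketch of the reflection argument misparametrizes the subcompactness embedding. Matching the convention visible in the proof of Lemma~\ref{LemmaMitchellOrder}, a $\kappa^{++}$-$\Pi^1_1$-subcompactness embedding has the form $j\colon\langle H(\varrho^{++}),\in,\bar S\rangle\to\langle H(\kappa^{++}),\in,S\rangle$ with $\varrho<\kappa$, $\crit(j)=\varrho$ and $j(\varrho)=\kappa$ — not $j\colon H(\varrho^+)\to H(\kappa^{++})$ with $\varrho<\kappa^+$ and $j(\varrho)=\kappa^+$ (the latter is impossible: $\crit(j)$ must be inaccessible in the domain, so its image cannot be a successor cardinal). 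Consequently, the reflection point is $\delta:=\sup j``\varrho^{++}<\kappa^{++}$, which has cofinality $\varrho^{++}>\omega$, not $\varrho^+$; one then checks that $j``\varrho^{++}$ is $\omega$-closed unbounded in $\delta$ (using that $j$ is continuous at $\omega$-cofinal ordinals of $H(\varrho^{++})$), so every club of $\delta$ pulls back to an $\omega$-club of $\varrho^{++}$ meeting $\bar S$, and pushing forward gives $S\cap\delta$ stationary — contradicting non-reflection. Your verification that the statement is $\Pi^1_1$ over $\langle H(\kappa^{++}),\in,S\rangle$ is correct (the witnessing clubs for the non-reflection clause live inside $H(\kappa^{++})$, so only the outer ``$S$ is stationary'' needs the $\Pi^1_1$ quantifier). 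The remaining steps — assuming GCH, preservation of GCH at $\kappa$ by $\mathbb{S}$, and deriving $\kappa$-compactness from $\kappa^+$-$\Pi^1_1$-subcompactness — are the ones the paper relies on as well.
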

%The previous discussion suggest the following general question:
%\begin{question}
%What is the exact consistency-strength of $$``\text{$\kappa$ is $\kappa^+$-$\Pi^1_1$-subcompact and $\refl(E^{\kappa^{++}}_\omega)$ holds''?}$$
%\end{question}
%\begin{question}
%Is it consistent that  $\kappa$ is  $\kappa^+$-$\Pi^1_1$-subcompact and $\kappa^{++}$-tall, yet there is no  $j\colon V\rightarrow M$ with $\crit(j)=\kappa$ and $\kappa^{++}_M=\kappa^{++}$.
%\end{question}
\section{The \texorpdfstring{$\omega$}{omega}-gluing property from a strong cardinal}\label{section:consistent-omega-gluing}
%In the next sections we deal with the consistency of the $\omega$-gluing property, starting with a large cardinal hypothesis weaker than subcompactness. 
%Let us observe first that the $\omega$-gluing property is in fact a property of measures and not of extenders.
%\begin{lemma}\label{lemma:omega-gluing-by-a-measure}
%   A cardinal $\kappa$ has the $\omega$-gluing property if and only if for every $\omega$-sequence of $\kappa$-complete ultrafilters $\langle U_n \mid n < \omega\rangle$ there is a $\kappa$-complete ultrafilter $W$ on  $\kappa^{\omega}$, concentrating on increasing sequences, such that the projection map $e_n$ sending $\eta \in \kappa^{\omega}$ to $\eta(n)$ is a Rudin-Keisler projection from $W$ to $U_n$.
%\end{lemma}
%\begin{proof}
%    The backwards direction is clear, as $W$ itself can be represented as an extender. For the forwards direction, let $E$ be a $\kappa$-complete extender such that there is a sequence $\langle \eta_n \mid n < \omega\rangle$, $U_n = \{X \subseteq \kappa^{\{n\}} \mid \{\langle n, \eta_n\rangle\} \in j_E(X)\}$. 
%   Let $W := \{X \subseteq \kappa^\omega \mid \langle \eta_n \mid n < \omega\rangle \in j_E(X)\}$. Then, $W$ witnesses the validity of the lemma.
%\end{proof}

Assume the GCH holds and let $\kappa$ be a strong cardinal. Let $\ell\colon \kappa\rightarrow V_\kappa$ be a \emph{Laver function}%\ale{Need to define it}  
in the sense of  \cite[\S2]{GitShe}. In this section we define a Gitik iteration $\mathbb{P}_\kappa$ yielding a generic extension where $\kappa$ has the $\omega$-gluing property. At this point the reader might want to revisit Definition~\ref{Gitikiteration}.% and \S\ref{sectionPrikrytype}.

\smallskip

Let $A\s \kappa$ be the collection of all %inaccessible 
measurable cardinals $\alpha<\kappa$ such that $\ell``\alpha\s V_\alpha.$ Notice that this is set is unbounded in $\kappa$ (in fact, stationary) by strongness of $\kappa$: Indeed, let $j\colon V\rightarrow M$ be an elementary embedding witnessing that $\kappa$ is $(\kappa+2)$-strong. Working inside $M$, $\kappa$ is measurable\footnote{Because $V_{\kappa+2}\s M$ and any measure over $\kappa$ belongs to the former.} and $j(\ell)``\kappa=\ell``\kappa\s V_\kappa$, hence $\kappa\in j(A)$. That means that $A\in \mathcal{V}$, where $\mathcal{V}:=\{X\s\kappa\mid \kappa\in j(X)\}.$ Since $\mathcal{V}$ is normal $A$ is stationary.

\smallskip

Let $B$ be the topological closure of the set $A\cup \{\beta+1\mid \beta\in A\}$. We define $\mathbb{P}_\kappa$ by induction on $\alpha\in B$ as follows. Suppose that $\mathbb{P}_\beta$ has been defined for all $\beta\in B\cap \alpha$. If $\alpha$ is a limit point of $B$ then $\mathbb{P}_\alpha$ is simply the Gitik iteration of the previous stages. Otherwise, suppose that $\alpha$ is a successor point of $\beta$ in $B$. If $\alpha\neq \beta+1$ then we stipulate $\mathbb{P}_\alpha:=\mathbb{P}_\beta\ast \dot{\mathbb{Q}}_\beta$, where $\one_{\mathbb{P}_\beta}\forces_{\mathbb{P}_\beta}\text{$``\dot{\mathbb{Q}}_\beta$ is trivial''}$. Otherwise we shall distinguish two cases:
%there are two options: either $\alpha=\beta+1$ or $\alpha=\beta+2$, for some $\beta\in A^{\ell}$. In the latter case  we stipulate $\mathbb{P}_\alpha:=\mathbb{P}_\beta\ast \dot{\mathbb{Q}}_\beta$, where $\one_{\mathbb{P}_\beta}\forces_{\mathbb{P}_\beta}\text{$``\dot{\mathbb{Q}}_\beta$ is trivial''}$. In the former, we need to distinguish two cases:
\begin{itemize}
    \item[$(a)$] If $p\forces_{\mathbb{P}_\beta}$``$\ell(\beta)=\dot{\mathcal{U}}$ is an $\omega$-sequence of  $\beta$-complete measures on $\beta$'', then $p\forces_{\mathbb{P}_\beta}\text{$``\dot{\mathbb{Q}}_\beta=\dot{\mathbb{T}}(\dot{\mathcal{U}})$''}$, where $\dot{\mathbb{T}}(\dot{\mathcal{U}})$ is a name for the $\mathcal{U}$-tree Prikry forcing of Definition~\ref{UtreePrikry}.

    \item[$(b)$] Otherwise, $\one_{\mathbb{P}_\beta}\forces_{\mathbb{P}_\beta}\text{$``\dot{\mathbb{Q}}_\beta$ is trivial''}.$ 
\end{itemize}

%The above recursive definition yields a $\kappa$-cc forcing iteration $\mathbb{P}_\kappa$.

\begin{theorem}\label{omegagluingnonoptimal}
Assume the $\gch$ and let $\kappa$ be a  strong cardinal.% $\kappa$. 

Then, forcing with $\mathbb{P}_\kappa$ yields a model for $\text{$``{\kappa}$ has the $\omega$-gluing property''}.$
\end{theorem}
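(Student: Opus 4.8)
The plan is to verify that in $V[G_\kappa]$ (where $G_\kappa$ is $\mathbb{P}_\kappa$-generic) every $\omega$-sequence of $\kappa$-complete ultrafilters on $\kappa$ can be glued, using the characterization of Lemma~\ref{lemma:omega-gluing-by-a-measure}. First I would fix in $V[G_\kappa]$ a sequence $\vec U=\langle U_n\mid n<\omega\rangle$ of $\kappa$-complete (w.l.o.g.\ uniform, by a coding argument) ultrafilters on $\kappa$, and a $\mathbb{P}_\kappa$-name $\dot{\vec U}$ for it. Since $\kappa$ is strong in $V$ and $\mathrm{GCH}$ holds, using the Laver function $\ell$ I would find an elementary embedding $j\colon V\to M$ with $\crit(j)=\kappa$, ${}^\kappa M\s M$, $V_{\kappa+2}\s M$, and crucially $j(\ell)(\kappa)=\dot{\vec U}$ — this is exactly the defining property of the Gitik–Shelah style Laver function for a strong cardinal (every relevant object in a $V_{\kappa+2}$-sized range is anticipated). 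Then $\kappa\in A$ in the sense of $M$ computed along $j(\mathbb{P})$, so the stage-$\kappa$ iterand $\dot{\mathbb{Q}}_\kappa$ of $j(\mathbb{P}_\kappa)=\mathbb{P}_{j(\kappa)}$ is (forced to be) the tree Prikry forcing $\mathbb{T}(\vec U)$ relative to $\vec U$.

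Next I would lift $j$ through the forcing. Factor $j(\mathbb{P}_\kappa)=\mathbb{P}_\kappa\ast\dot{\mathbb{T}}(\dot{\vec U})\ast\dot{\mathbb{P}}_{\mathrm{tail}}$, where the tail is (forced to be) sufficiently closed in $M$. Over $V[G_\kappa]$ I would generically add a $\mathbb{T}(\vec U)$-generic object $g$ — note that the point is \emph{not} to find $g$ inside $V[G_\kappa]$ but to build the lift working inside a further (harmless) generic extension, or, more carefully, to use the Prikry generic sequence itself as the thread: the tree Prikry generic gives an $\omega$-sequence $\langle \eta_n\mid n<\omega\rangle\in\kappa^\omega$ such that $e_n$ maps the generic club-filter-type object to $U_n$. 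Then I would build a generic $G_{\mathrm{tail}}$ for the tail forcing over $M[G_\kappa\ast g]$ by the standard diagonalization/master-condition argument using ${}^\kappa M\s M$, $\mathrm{GCH}$, and closure of the tail; and I would build a master condition for the image of $G_\kappa$ under $j$ (here $j\restriction\mathbb{P}_\kappa$ has a natural lift since $\mathbb{P}_\kappa\in V_\kappa$-ish and $j$ is the identity on the first $\kappa$ stages up to the obvious adjustments). This yields $j^*\colon V[G_\kappa]\to M[G_{j(\kappa)}]$ lifting $j$, with $\crit(j^*)=\kappa$ and ${}^\kappa M[\cdots]\s M[\cdots]$ retained.

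Finally, with the lift $j^*$ in hand, set $W:=\{X\s\kappa^\omega\mid \langle\eta_n\mid n<\omega\rangle\in j^*(X)\}$, where $\langle\eta_n\rangle$ is the tree-Prikry generic sequence read off from $g$. Because $e_n(\langle\eta_m\mid m<\omega\rangle)=\eta_n$ and $\eta_n$ is the seed of $U_n$ with respect to $j^*$ (this is precisely what "the tree Prikry forcing relative to $\vec U$" guarantees: its generic real is, coordinatewise, a generator for each $U_n$ under the ultrapower-like embedding $j^*$), the projection $e_n$ is a Rudin–Keisler map from $W$ to $U_n$; $W$ is $\kappa$-complete since $\crit(j^*)=\kappa$ and concentrates on increasing sequences by the definition of $\mathbb{T}(\vec U)$. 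By Lemma~\ref{lemma:omega-gluing-by-a-measure} this gives the $\omega$-gluing property in $V[G_\kappa]$. I expect the main obstacle to be the lifting argument across the tail forcing combined with verifying that the generic sequence $\langle\eta_n\rangle$ genuinely serves as a simultaneous generator — i.e.\ that no $\kappa$-complete ultrafilter in $V[G_\kappa]$ was "missed", which is where one must use that $\dot{\vec U}$ was correctly anticipated by the Laver function so that the relevant $\mathbb{T}(\vec U)$ really appears at stage $\kappa$ of $j(\mathbb{P})$, together with a genericity/density argument showing $W$ projects onto \emph{each} $U_n$ rather than merely some measures extending the $e_n$-projections.
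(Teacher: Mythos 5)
Your overall strategy — Laver function captures $\vec U$, stage $\kappa$ of $j(\mathbb{P}_\kappa)$ becomes the tree Prikry forcing $\mathbb{T}(\vec U)$, and the generic Prikry sequence serves as the simultaneous seed — is the right intuition, but there is a genuine gap at the heart of the argument: you propose to actually build a lift $j^*\colon V[G_\kappa]\to M[\cdots]$ and then define $W$ from the generic $\langle\eta_n\rangle$, but this cannot be done while keeping $W$ inside $V[G_\kappa]$. Any $\mathbb{T}(\vec U)$-generic $g$ over $V[G_\kappa]$ lives strictly outside $V[G_\kappa]$, so the $W$ you define lives in $V[G_\kappa][g][G_{\mathrm{tail}}]$, not in $V[G_\kappa]$, and you give no reason why it should descend. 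Moreover, the master-condition construction of $G_{\mathrm{tail}}$ is not available: the tail forcing $\dot{\mathbb{R}}$ has size $j(\kappa)$ and hence far more than $\kappa^+$ dense sets, while the closure you have to play with is only $\kappa^+$-closure of the direct-extension order $\leq^*$ in $M$; there is no way to meet them all.

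The paper sidesteps both problems at once with a \emph{fake lift}: one never builds a generic for $\mathbb{Q}_\kappa$ or $\mathbb{R}$ and never lifts $j$. Instead, using $\mathrm{GCH}$, enumerate the $\mathbb{P}_\kappa$-nice names for subsets of $\kappa^\omega$ as $\langle\dot X_\alpha\mid\alpha<\kappa^+\rangle$. Since $\langle\dot{\mathbb{R}},\leq^*\rangle$ is $\kappa^+$-closed in $M[G_\kappa\ast\dot{\mathbb{Q}}_\kappa]$, one can build a $\leq^*$-decreasing chain $\langle\dot r_\alpha\mid\alpha<\kappa^+\rangle$ of names for tail conditions such that $\dot r_\alpha$ \emph{decides} ``$\dot b_\kappa\in j(\dot X_\alpha)$'', where $\dot b_\kappa$ is the canonical name (not an actual generic) for the Prikry sequence at stage $\kappa$. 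Then $\mathcal U$ is defined purely by reading off the forcing relation of $j(\mathbb{P}_\kappa)$ in $M$: $X\in\mathcal U$ iff some $p\in G$ together with a tree condition at $\kappa$ and some $r_\alpha$ forces $\dot b_\kappa\in j(\dot X_\alpha)$. This is a definition entirely inside $V[G_\kappa]$, only referencing $\kappa^+$ many decisions, and it is exactly what the $\kappa^+$-closure of $\leq^*$ can support. You still need the density argument (which you correctly flagged) to see that each $e_n$ projects $\mathcal U$ onto $U_n$, but the diagonalization over names is the essential missing piece in your write-up.
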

\begin{proof}
Fix $G\s \mathbb{P}_\kappa$ a $V$-generic filter. Working in $V[G]$, let $\mathcal{U}=\langle U_n\mid n<\omega\rangle$ be an $\omega$-sequence of $\kappa$-complete %uniform 
measures on $\kappa$. Let $p\in G$, and $\langle \dot{U}_n\mid n<\omega\rangle$  be an $\omega$-sequence of $\mathbb{P}_\kappa$-nice-names  such that  $p$ forces these to have the above property.  Note that since $\mathbb{P}_\kappa$ has the $\kappa$-cc,\footnote{Recall that this holds for Gitik iterations, as we mentioned in page~\pageref{GitikIterationsccc}. A proof of this well-known fact can be found in \cite[Proposition~7.13]{CumHandBook}.} and $\mathbb{P}_\kappa\s V_\kappa$, 
%$\tau_n=\cup_{x\in \mathcal{P}(\kappa)}\{\check{x}\}\times A_x$. Since $|A_x|<\kappa$ and $\mathcal{P}(\kappa)\s V_\kappa$ then $A_x\in V_\kappa.$
we have $\dot{U}_n\in V_{\kappa+2}$ for all $n<\omega.$ By \cite[Lemma~2.1]{GitShe} we can find a $(\kappa,\kappa+2)$-strong embedding $j\colon V\rightarrow M$ such that $j(\ell)(\kappa)=\langle \dot{U}_n\mid n<\omega\rangle.$ We will use this to form a measure $\mathcal{W}$ on $\kappa^\omega$ with $U_n\leq_{\mathrm{RK}}\mathcal{W}$ for all $n<\omega.$ %The proof is inspired by work of Gitik. 
This, combined with Lemma~\ref{lemma:omega-gluing-by-a-measure}, will yield the  $\omega$-gluing property.

\smallskip

By our $\GCH$ assumption, we can enumerate all the $\mathbb{P}_\kappa$-nice-names for subsets of $\kappa^\omega$ as $\langle \dot{X}_\alpha\mid \alpha<\kappa^{+}\rangle$. %\footnote{Note that here we are making use of the GCH.} 
Look now at $j(\mathbb{P}_\kappa)$. This is isomorphic to an iteration of the form $\mathbb{P}_\kappa\ast \dot{\mathbb{Q}}_\kappa\ast \dot{\mathbb{R}}$, where 
\begin{equation*}\label{equationclosure}
   \tag{$\otimes$} {\one}_{\mathbb{P}_\kappa\ast \dot{\mathbb{Q}}_\kappa}\forces_{\mathbb{P}_\kappa\ast \dot{\mathbb{Q}}_\kappa}\text{$``\langle \dot{\mathbb{R}},\dot{\leq},\dot{\leq}^*\rangle$ is of Prikry-type and $\dot{\leq}^*$ is $\kappa^{+}$-closed''}.
\end{equation*}
The claim regarding $\leq^*$ is a consequence of the $\kappa^+$-closure of all the stages in the iteration (see Lemma~\ref{TreePrikryproperty}(2)) while the other follows from Gitik's theorem that this type of iterations have the Prikry property (see \S\ref{sectionPrikrytype}).

\smallskip

 In addition, since $p\forces_{\mathbb{P}_\kappa}^M\text{$``j(\ell)(\kappa)$ is an $\omega$-sequence of measures''}$, this latter condition forces $``\dot{\mathbb{Q}}_\kappa=\mathbb{T}(\dot{\mathcal{U}})$'' (see Clause~(a) above). % namely,  be the tree Prikry forcing with respect to the sequence $\langle \dot{U}_n\mid n<\omega\rangle.$ 
Note that the definition of $\mathbb{T}({\mathcal{U}})$ is absolute between $M[G]$ and $V[G]$ as $V[G]_{\kappa+2}\s M[G]$.\footnote{For a proof of this latter fact, see \cite[Proposition~8.4]{CumHandBook}.}

\smallskip

 By induction on $\alpha<\kappa^{+}$, and using \eqref{equationclosure}, define a sequence of $\mathbb{P}_\kappa\ast \mathbb{T}(\dot{\mathcal{U}})$-names $\langle \dot{r}_\alpha\mid \alpha<\kappa^{+}\rangle$   for conditions in $\mathbb{R}$ such that:
\begin{itemize}
    \item $\one \forces_{\mathbb{P}_\kappa\ast \mathbb{T}(\dot{\mathcal{U}})}``\langle \dot{r}_\alpha\mid \alpha<\kappa^{+}\rangle$ is $\leq^*$-decreasing'';
    \item for all $\alpha<\kappa^{+}$, $\one \forces_{\mathbb{P}_\kappa\ast \mathbb{T}(\dot{\mathcal{U}})}``\dot{r}_\alpha\parallel_{\dot{\mathbb{R}}}\dot{b}_\kappa\in j(\dot{X}_\alpha)$'',  where $\dot{b}_\kappa$ denotes the standard name for the generic $\omega$-sequence added by $\mathbb{T}(\dot{\mathcal{U}}).$ %\ale{More details?}
\end{itemize}
%This is possible by using the fact that 
Define $\mathcal{W}$ as follows: $X\in \mathcal{W}$ iff there is $q\in G$, $q\leq p$ and $\alpha<\kappa^+$ such that $$q\cup \{\langle \varnothing, \dot{T}\rangle\}\cup r_\alpha\forces^M_{j(\mathbb{P}_\kappa)}\dot{b}_\kappa\in j(\dot{X}_\alpha),$$
where $\dot{T}$ is a $\mathbb{P}_\kappa$-name for a $\dot{\mathcal{U}}$-tree (see Definition~\ref{Utree}). Clearly, $\mathcal{W}\in V[G].$

\begin{claim}
    $\mathcal{W}$ is a $\kappa$-complete measure over $\kappa^\omega$. In addition, $$\mathcal{I}:=\{\vec{\eta}\in \kappa^\omega\mid \text{$\vec{\eta}$ is increasing}\}\in \mathcal{W}.$$
\end{claim}
\begin{proof}[Proof of claim]
    The argument is fairly standard and follows Gitik (see e.g., \cite[p. 1439]{Gitik-handbook} or \cite[Lemma~2.2]{GitikNonStaI}). To give the reader a flavor of the proof we unwrap the argument only for the ``In addition'' assertion.

     Work in $V[G]$. Let $\alpha<\kappa^+$ be such that $\mathcal{I}=(\dot{X}_\alpha)_G$. %To streamline the proof, let us assume that $\one\forces_{\mathbb{P}_\kappa}\dot{X}_\alpha\s \kappa^\omega$. By elementarity, the trivial condition of $j(\mathbb{P}_\kappa)$ forces that $``j(\dot{X}_\alpha)\s j(\kappa)^\omega$''. 
    We know that $$\one\forces_{\mathbb{T}(\dot{\mathcal{U}})}\dot{r}_\alpha\parallel_{\dot{\mathbb{R}}}\dot{b}_\kappa\in j(\dot{X}_\alpha).$$
    Using the Prikry property of $\mathbb{T}(\dot{\mathcal{U}})$ (Lemma~\ref{TreePrikryproperty}~(1)) find $\langle \varnothing, T\rangle\leq^* \one$ deciding the truth value of the statement $\sigma_\alpha\equiv \dot{r}_\alpha\parallel_{\dot{\mathbb{R}}}\dot{b}_\kappa\in j(\dot{X}_\alpha).$ If the decision was negative then that would mean that $\langle \varnothing, T\rangle\cup r_\alpha$ forces the $\dot{\mathbb{T}}(\mathcal{U})$-generic sequence $\dot{b}_\kappa$ to not be increasing, which is false. Thus, $\langle \varnothing, T\rangle$ forces $\sigma_\alpha.$

    Work back in $V$. Let $q\leq p$ in $G$ forcing $``\langle \varnothing, T\rangle\forces_{\dot{\mathbb{T}}(\mathcal{U})}\sigma_\alpha.$'' By definition of $\mathcal{W}$ this shows that $(\dot{X}_\alpha)_G=\mathcal{I}\in \mathcal{W}$.
\end{proof}
Let $\mathcal{W}\restriction\mathcal{I}:=\{A\cap \mathcal{I}\mid A\in \mathcal{W}\}$. Since $\mathcal{I}\in\mathcal{W}$, $\mathcal{W}\restriction\mathcal{I}$ is a $\kappa$-complete measure over $\mathcal{I}$. In a slight abuse of notation let us keep denoting   $\mathcal{W}\restriction\mathcal{I}$ by $\mathcal{W}$. This is just a cosmetic move to eventually apply Lemma~\ref{lemma:omega-gluing-by-a-measure}.% the collection.

%\smallskip

%Standard arguments  prove that this is a $\kappa$-complete measure on (the increasing sequences in) $\kappa^\omega$, in $V[G]$. For more details, see  \cite[p. 1439]{Gitik-handbook}. 
\begin{claim}\label{Wglues}
The following holds in $V[G]$: For all $n<\omega$, $U_n\leq_{\mathrm{RK}}\mathcal{W}$ as witnessed by the evaluation map $e_n\colon \kappa^\omega\rightarrow \kappa$,  $\vec{s}\mapsto \vec{s}(n).$
 %$V[G]\models``\forall n<\omega\,( U_n\leq_{\mathrm{RK}}\mathcal{U})$''. Moreover, for each $n<\omega$, this is witnessed by $e_n\colon \kappa^\omega\rightarrow \kappa$, the evaluation map $f\mapsto f(n).$ % for all $n<\omega.$
\end{claim}
\begin{proof}[Proof of claim]
 %For each $n<\omega$, let $e_n\colon \kappa^\omega\rightarrow \kappa$ be the evaluation map $f\mapsto f(n).$ 
 % We claim that $e_n``X\in U_n$ for all $X\in \mathcal{U}$. 
 Suppose otherwise. Let $q\in G$ forcing $``\dot{X}\in\mathcal{W}\,\wedge\, \dot{Y}\notin \dot{U}_n$'', where $\dot{Y}$ is a $\mathbb{P}_\kappa$-nice name for a subset of $\kappa$ such that $\dot{Y}_G=e_n``\dot{X}_G.$  Since $\dot{Y}$ is $\mathbb{P}_\kappa$-nice, $\dot{Y}\s V_\kappa$. %\ale{ This claim is problematic in the non-stationary-supported case. Actually, regardless $\dot{Y}\s V_\kappa$, isn't it always true that $\one \forces^M_{j(\mathbb{P}_\kappa)}j(\dot{Y})\cap \kappa=\dot{Y}.$? $\dot{Y}$ is forced to be a subset of $\kappa$ and the critical point of the lifting of $j$ is $\kappa$.}
% \yair{Not precisely: we don't lift the embedding $j$ (this is impossible, since it would require us to change the cofinality of $\kappa$). So, we must be careful, and use the fact that for every $\alpha < \kappa$, the maximal antichain that decides its membership to $\dot{Y}$ is not move. In the case of the non-stationary support, we must work below a condition $p\in G$ such that for every $\alpha < \kappa$ there is $\beta < \kappa$ such that $\alpha \in \dot{Y}$ is decides by meeting a dense open set in $\mathbb{P}_\beta$.}
 %\footnote{Once again, here we  use that $\mathbb{P}_\kappa$ is $\kappa$-cc and $\mathbb{P}_\kappa\s V_\kappa$.}
By definition of $\mathcal{W}$,  there is $\alpha<\kappa^{+}$ and $\dot{T}$ such that  $$q\cup \{\langle \varnothing, \dot{T}\rangle\}\cup r_\alpha\forces^M_{j(\mathbb{P}_\kappa)} \dot{b}_\kappa\in j(\dot{X}).\footnote{Formally speaking, there is $q'\leq q$ such that $q\cup \{\langle \varnothing, \dot{T}\rangle\}\cup r_\alpha$ forces the above. To not overcomplicate the notations we have opted to keep denoting this condition $q'$ by $q$.}$$ 
In particular, 
$$q\cup \{\langle \varnothing, \dot{T}\rangle\}\cup r_\alpha\forces^M_{j(\mathbb{P}_\kappa)} \dot{b}_\kappa(n)\in j(\dot{Y})\cap \kappa.$$ 
By our choice of $\dot{Y}$ it follows that $\one \forces^M_{j(\mathbb{P}_\kappa) / \mathbb{P}}j(\dot{Y})\cap \kappa=\dot{Y}_G.$ Indeed, for every $\alpha < \kappa$, as $\mathbb{P}_\kappa$ is $\kappa$-cc (see p.\pageref{GitikIterationsccc}), the antichain that decides the membership of $\alpha$ to $\dot{Y}$ does not move  under $j$, and thus that antichain decides in the same way the membership of $\alpha$ to $j(\dot{Y})$. So, all in all, the above condition forces $``\dot{b}_\kappa(n)\in \dot{Y}$''. On another front, since $q\forces_{\mathbb{P}_\kappa}\dot{Y}\notin \dot{U}_n$, there is $q'\leq q$ in $G$ such that $q'\forces_{\mathbb{P}_\kappa}\langle \varnothing, \dot{T}'\rangle\leq^* \langle \varnothing,\dot{T}\rangle$, where $\dot{T}'$ is  a $\mathbb{P}_\kappa$-name for a tree whose $n^{\mathrm{th}}$-level $\mathrm{Lev}_n(T')$ is disjoint from $\dot{Y}$ (see Definition~\ref{Utree}). 

Then, $$q'\cup \{\langle \varnothing, \dot{T}'\rangle\}\cup r_\alpha\leq q\cup \{\langle \varnothing, \dot{T}\rangle\}\cup r_\alpha$$ and, clearly, the stronger condition forces $``\dot{b}_\kappa(n)\notin \dot{Y}$''. This conflicts with our previous comments, which yields the sought contradiction.
\end{proof}
Invoking Lemma \ref{lemma:omega-gluing-by-a-measure} we complete the proof of the theorem. \qedhere
%\begin{claim}
%In $V[G]$ there is $E=\langle E_a\mid a\in [\omega]^{<\omega}\rangle$ gluing $\langle U_n\mid n<\omega\rangle.$
%\end{claim}
%\begin{proof}[Proof of claim] \ale{I'm a bit hesitant about the extender here. In particular, I'm not sure the resulting ultrapower is even $\sigma$-closed.}
%Let $i\colon V[G]\rightarrow N$ be the ultrapower by $\mathcal{U}$ and put $\eta:=[\mathrm{id}]_{\mathcal{U}}.$ Clearly, $\eta\in i(\kappa)^\omega$ and it is increasing.  Observe that $\eta(n)\in \bigcap i``U_n$ for all $n<\omega$: In effect, if $A\in U_n$ then, by the previous claim,  $e_n^{-1}A\in \mathcal{U}$, hence $\eta\in i(e_n^{-1}A)$, and thus $\eta(n)\in i(A).$ 

%For $a\in [\omega]^{<\omega}$ denote by  $X_a$ the set of order-preserving $f\colon a\rightarrow \kappa$.  Also, put $\nu_a:=\{\langle n, \eta(n)\rangle\mid n\in a\}.$ Using these we stipulate that
%$$Y\in E_a\;:\Longleftrightarrow\; Y\s %X_a\,\wedge\, \nu_a\in i(X).$$
%Clearly, $E=\langle E_a\mid a\in [\omega]^{<\omega}\rangle$ forms a $(\kappa,\omega)$-extender. Finally, it can be checked that $E_{\{n\}}\supseteq \{\hat{A}^n\mid A\in U_n\}$, where $\hat{A}^n:=\{\langle n,\alpha\rangle\mid \alpha\in A\}$. Since $U_n$ is a ultrafilter in $V[G]$ we actually have that $E_{\{n\}}=\{\hat{A}^n\mid A\in U_n\}$.
%\end{proof}
%The proof has been accomplished.
\end{proof}
\begin{remark}\label{StrongsDoNotHaveGluing}
    In spite of Theorem~\ref{omegagluingnonoptimal} it is not true that strong cardinals do enjoy the $\omega$-gluing property. Indeed, suppose that there is no inner model with a Woodin cardinal and let $\mathcal{K}$ denote the \emph{core model up to a Woodin cardinal} \cite{MitchellUpto}. Let $\kappa$ be the first strong cardinal. We claim that $\kappa$ does not have the $\omega$-gluing property in $\mathcal{K}$. Suppose otherwise, and let $j\colon \mathcal{K}\rightarrow M$ and $\langle \eta_n\mid n<\omega\rangle$   gluing $\langle U\mid n<\omega\rangle$ where $U$ is a normal measure on $\kappa.$ By the main result of \cite{Schi}, $j$ is a normal iteration of extenders in $\mathcal{K}$. Such an iteration must be finite, as ${}^\kappa M\cap \mathcal{K}\s M$. Say the critical points of this iteration are $\langle \mu_i\mid i\leq n\rangle$.  By the forthcoming Claim~\ref{claim:covering-almost-all-ordinals}, the set $\bigcap j``\mathrm{Cub}^{\mathcal{K}}_\kappa$ is contained in  $\{\mu_i\mid i\leq n\}$. However, the fact that $\langle \eta_n\mid n<\omega\rangle$ glues $\langle U\mid n<\omega\rangle$ ensures that  $\{\eta_n\mid n<\omega\}\s \bigcap j``\mathrm{Cub}^{\mathcal{K}}_\kappa$, which is impossible. 
\end{remark}

\section{Improving the upper bound for the \texorpdfstring{$\omega$}{omega}-gluing property}\label{section; improving}
In this section we would like to improve the large cardinal hypothesis employed in Theorem~\ref{omegagluingnonoptimal}. %to the existence of a measurable cardinal $\kappa$ with $o(\kappa)=\omega_1$
In particular we prove the following  result:
\begin{theorem*}\label{ImprovingConsistency}
It is consistent, relative to the existence of a measurable  with Mitchell's order $\omega_1$, that there is a cardinal $\kappa$ with the $\omega$-gluing property.
\end{theorem*}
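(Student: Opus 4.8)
The plan is to mimic the argument of Theorem~\ref{omegagluingnonoptimal}, but to replace the strong cardinal by a measurable cardinal $\kappa$ with $o(\kappa)=\omega_1$, and to replace the Laver-guided Gitik iteration of tree-Prikry forcings by an iteration that is steered by the coherent sequence $\vec U=\langle U_\alpha\mid\alpha<\omega_1\rangle$ of normal measures on $\kappa$ (together with its images on the measurables below $\kappa$ given by $o(\cdot)\ge\omega_1$). The point of Lemma~\ref{lemma:omega-gluing-by-a-measure} is that we only need to glue $\omega$-sequences of $\kappa$-complete ultrafilters into a single measure on $\kappa^\omega$, and the ``resources'' needed to do this are exactly $\omega$-many ultrapower embeddings, which one expects to be able to manufacture from a measurable of Mitchell order $\omega_1$ by iterated ultrapowers through the coherent sequence.

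Concretely, I would first fix in the ground model (assuming $\gch$) a coherent sequence witnessing $o(\kappa)=\omega_1$, and arrange a bookkeeping/Laver-type function on the inaccessibles $\alpha<\kappa$ anticipating names for $\omega$-sequences of $\kappa$-complete ultrafilters in the eventual extension; the existence of such a guessing function at this level of large cardinal is a standard fact (it can be forced by a preliminary $\kappa$-directed-closed preparation without adding bounded subsets of $\kappa$, or extracted directly from the $o(\kappa)=\omega_1$ hypothesis). Then I would run the Gitik iteration $\mathbb P_\kappa$ exactly as in \S\ref{section:consistent-omega-gluing}, with case $(a)$ inserting a tree-Prikry forcing $\mathbb T(\langle \dot U^\beta_n\mid n<\omega\rangle)$ whenever the guessing function hands us an $\omega$-sequence of uniform $\beta$-complete measures on an inaccessible $\beta<\kappa$, and trivial forcing otherwise. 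In the generic extension $V[G]$, given an $\omega$-sequence $\langle U_n\mid n<\omega\rangle$ of $\kappa$-complete uniform measures with nice names guessed by the function at $\kappa$, I would build the gluing measure $\mathcal U$ on $\kappa^\omega$ exactly as in the proof of Theorem~\ref{omegagluingnonoptimal}: take an embedding $j\colon V\to M$ (now arising from the coherent sequence rather than from strongness) with $j(\ell)(\kappa)=\langle\dot U_n\mid n<\omega\rangle$ and with enough agreement between $M$ and $V$ to see the relevant nice names (here the iterated-ultrapower construction must give $M\supseteq V_{\kappa+2}$, or at least enough of $H_{\kappa^{++}}$, which is where the $\gch$ and the exact count $\omega_1$ of measures enters); decompose $j(\mathbb P_\kappa)=\mathbb P_\kappa\ast\dot{\mathbb Q}_\kappa\ast\dot{\mathbb R}$ with $\dot{\mathbb Q}_\kappa$ the tree-Prikry forcing for $\langle\dot U_n\mid n<\omega\rangle$ and $\dot{\mathbb R}$ being $\kappa^+$-closed in the intermediate model; diagonalize through the $\kappa^+$-many nice names $\langle\dot X_\alpha\mid\alpha<\kappa^+\rangle$ using the closure of $\dot{\mathbb R}$ to get a $\leq^*$-decreasing sequence of conditions deciding $\dot b_\kappa\in j(\dot X_\alpha)$; and define $X\in\mathcal U$ iff some $p\in G$ together with the master condition for $\dot{\mathbb Q}_\kappa$ and $r_\alpha$ forces $\dot b_\kappa\in j(\dot X_\alpha)$. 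The verification that $\mathcal U$ is a $\kappa$-complete measure concentrating on increasing $\omega$-sequences, and that each evaluation map $e_n$ is an RK-projection of $\mathcal U$ onto $U_n$, is then word-for-word the argument already given, and Lemma~\ref{lemma:omega-gluing-by-a-measure} converts this into the $\omega$-gluing property.

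The main obstacle I expect is the bootstrapping of the embedding $j$: with a strong cardinal one simply quotes \cite[Lemma~2.1]{GitShe} to get a $(\kappa,\kappa+2)$-strong embedding sending the Laver function to the prescribed $\omega$-sequence of names, whereas from $o(\kappa)=\omega_1$ one must instead iterate the coherent sequence $\omega$ (or more) times—taking an $\omega$-length iterated ultrapower $j\colon V\to M$ using a different normal measure $U_{\alpha_n}$ at each step—and then check that (i) the resulting $M$ still computes $H_{\kappa^{++}}$ correctly enough to evaluate the nice names $\dot U_n$ and $\dot X_\alpha$ (this is where $o(\kappa)$ needing to be at least $\omega_1$, rather than just $\omega$, is used: each application of a measure must ``leave behind'' enough order type), (ii) the guessing function can be arranged so that $j(\ell)(\kappa)$ really equals $\langle\dot U_n\mid n<\omega\rangle$—this requires the guessing to anticipate, at successive measurables appearing along the iteration, the successive coordinates of the target sequence, which is the analogue of Gitik--Shelah's Laver-function argument adapted to iterated ultrapowers—and (iii) the tail $\dot{\mathbb R}$ of $j(\mathbb P_\kappa)$ past the stage-$\kappa$ tree-Prikry forcing really is $\kappa^+$-closed (in $M[G\ast g]$), so that the diagonalization through the $\kappa^+$ names goes through. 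None of these are conceptually new—they are the standard package for running Gitik iterations off a coherent sequence instead of off a strong cardinal—but assembling them carefully, and in particular getting the degree-of-strength bookkeeping to match the count $\omega_1$, is the bulk of the work. Everything downstream of ``we have $j$ and the decomposition $\mathbb P_\kappa\ast\dot{\mathbb Q}_\kappa\ast\dot{\mathbb R}$'' is a verbatim repeat of the proof of Theorem~\ref{omegagluingnonoptimal}.
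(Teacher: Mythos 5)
There is a genuine gap, and it sits exactly where you hand-wave. You propose to ``arrange a bookkeeping/Laver-type function on the inaccessibles $\alpha<\kappa$ anticipating names for $\omega$-sequences of $\kappa$-complete ultrafilters'' and then run the Theorem~\ref{omegagluingnonoptimal} argument verbatim. But the whole reason the paper devotes \S\ref{section; improving} to this statement, rather than quoting \S\ref{section:consistent-omega-gluing}, is that the Laver/nice-name mechanism cannot be reproduced from $o(\kappa)=\omega_1$. In Theorem~\ref{omegagluingnonoptimal} the Gitik iteration is $\kappa$-cc with $\mathbb P_\kappa\subseteq V_\kappa$, so nice names for the ultrafilters live in $V_{\kappa+2}$, and the Gitik--Shelah $(\kappa,\kappa+2)$-strong embedding is applied to a Laver function to hit the exact sequence $\langle\dot U_n\mid n<\omega\rangle$. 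From $o(\kappa)=\omega_1$ no embedding with $\crit(j)=\kappa$ has $V_{\kappa+2}\subseteq M$: a single normal-measure ultrapower only gives ${}^\kappa M\subseteq M$, and an $\omega$-length iteration (which you suggest as a fix) makes matters worse, since the direct limit is no longer even $\omega$-closed, killing the $\kappa$-completeness of the glued measure $\mathcal U$. So the item you flag as ``(i) the resulting $M$ still computes $H_{\kappa^{++}}$ correctly enough'' is not a detail to be ``assembled carefully''; it is false for every embedding derivable from $o(\kappa)=\omega_1$, and this blocks the argument at the very first step, before one ever gets to define $\mathcal U$.

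The paper's actual route is structurally different. It works over $\mathcal K$ with an anti-large-cardinal hypothesis, forces a \emph{non-stationary-support} fast-function $\mathbb S$, and then runs a non-stationary-support iteration of tree-Prikry forcings; this iteration is \emph{not} $\kappa$-cc, so the nice-name count used in \S\ref{section:consistent-omega-gluing} is unavailable by design. The missing ingredient you would need is the Coding Lemma (Lemma~\ref{lemma:representing-ultrafilters}), which uses Mitchell's theorem that embeddings of $\mathcal K$ are normal iterations of measures to show that every $\kappa$-complete ultrafilter on $\kappa$ in the extension is determined by a tuple $\langle\vec\rho,\vec\zeta,f,g\rangle\in H(\kappa^+)$: a finite iteration datum, a \emph{countable} sequence of Mitchell indices, and two functions on $\kappa$. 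Because these codes live in $H(\kappa^+)$ rather than in $V_{\kappa+2}$, a \emph{single} ultrapower by $\mathcal U(\kappa,\zeta_*)$—where $\zeta_*<\omega_1$ dominates the countable indices appearing in the $\omega$-many codes—absorbs them, and that is the only place where $o(\kappa)\ge\omega_1$ (as opposed to $o(\kappa)\ge\omega$) is used. Your intuition ``each application of a measure must leave behind enough order type'' points in the wrong direction: nothing about the iterated ultrapower supplies the missing $H(\kappa^{++})$-agreement, and the Mitchell order $\omega_1$ is consumed by the codes, not by extender strength. Without the Coding Lemma, or some substitute that collapses ultrafilter names to $H(\kappa^+)$-sized objects, the bookkeeping function you posit has nothing of the right size to guess, and the proof does not get off the ground.
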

Our final verification of the $\omega$-gluing property will follow the steps of the argument in Theorem~\ref{omegagluingnonoptimal}. However,  we will need to implement some few non-trivial modifications %to the above argument in order 
to have a better control upon the $\kappa$-complete ultrafilters that will appear in the intermediate models of the final generic extension (see the proof of Theorem~\ref{MainTheorem}).
To be able to identify these $\kappa$-complete ultrafilters %that exist in (intermediate) generic extension 
we will rely on the following blanket assumption:\footnote{Since we are assuming $V=\mathcal{K}$ we are in a situation where GCH holds (see page~\pageref{GCHinK}).}
\begin{equation}\label{BlanketAssumption}
 \tag{$\mathscr{H}$}   \text{$``\rm{V}=\mathcal{K}$'' + ``There is no inner model of $\exists \alpha\, (o(\alpha)=\alpha^{++})$''.}
\end{equation}

On one hand, the first of these assumptions (yet natural) is probably unnecessary, as demonstrated by Gitik and Kaplan in \cite{GitikKaplan-nonstationary2022}. On the other hand, our second demand will enable us  to appeal to \emph{Mitchell's Core Model theorem} (Theorem~\ref{CoreModelTheorem}) and exploit all the power of $\mathcal{K}$. In particular, we shall use this assumption to ensure that every (non-trivial) elementary embedding $j\colon \mathcal{K}\rightarrow M$ (with $M$ a transitive class) is a normal iteration using normal measures in $\mathcal{K}$ and to say that the GCH holds in our model.   %More precisely, we shall be in conditions .% will be available and we shall be in con, by a result of Mitchell \cite{MitIter}, (which was extended by Schindler \cite{Schi}),  that  every iteration of ultrapowers  is normal and that it uses only normal measures.

\subsection{Coding \texorpdfstring{$\kappa$}{kappa}-complete measures after adding a fast function}
The first technical problem that we must address is the lack of a Laver function. Formerly, in \S\ref{section:consistent-omega-gluing}, the existence of such object was granted by our departing large-cardinal assumption; namely, strongness. This time we would like to force some sort of Laver function in a way that we can still keep track of the possible measures in the extension. %This is precisely the aim of the following forcing notion:
%To cope with this issue we would like to force some sort of fast function in a way that we keep the amount of possible measures. This is precisely the aim of the following forcing notion: %force with \emph{Silver's forcing}:
For this, we will consider the following \emph{non-stationarily-supported} variation of \emph{Woodin's fast function} forcing.
\begin{definition}[Fast Function Forcing]
    For an inaccessible  $\kappa$  we denote by $\mathbb{S}_\kappa$ the poset consisting on partial functions $s \colon \kappa \to H(\kappa)$ such that  %$\dom s\s \mathrm{Inacc}$ and the following hold:
\begin{enumerate}\label{fastfunctionforcing}%\ale{Check if this is $\kappa^+$-cc, but I'd say it is just $\kappa^{++}$-cc. I think to make sure the codes are in $H(\kappa^+)_V$ we need $\kappa^+$-ccness. }
%     \item $|s|<\kappa$,
    \item $\dom s\s \mathrm{Inacc}$,
    \item $(\dom s) \cap \beta\in \mathrm{NS}_\beta$ for all $\beta\in\mathrm{Inacc}\cap (\kappa+1)$,
    \item and $s(\alpha)\in H(\alpha^+)$ for all $\alpha\in\dom s$.\footnote{The restriction of $s(\alpha)$ to $H(\alpha^{+})$ is inessential at this part.}
\end{enumerate}
The order of $\mathbb{S}_\kappa$ is defined naturally as $s\leq t$ iff $s\supseteq t.$
\end{definition}
The symbol $\mathrm{Inacc}$ above stands for the class of inaccessible cardinals and $\mathrm{NS}_\beta$ is the usual notation for the non-stationary ideal on the cardinal $\beta$.

\smallskip

The next proposition gathers the properties of $\mathbb{S}_\kappa$ that we shall use:
\begin{prop}[Some properties of $\mathbb{S}_\kappa$]\label{PropertiesofS}%Suppose $\kappa$ is inaccessible. Then,
\hfill
\begin{enumerate}
    \item $\mathbb{S}_\kappa$ adds a generic fast function.\footnote{Specifically a function $\ell\colon\kappa\rightarrow\kappa$ for which the following holds: For all $x\in H(\kappa^+)^V$ there is an  embedding $j\colon \mathcal{K}[S]\rightarrow M$ with $\crit(j)=\kappa$, ${}^\kappa M\s M$ and $j(\ell)(\kappa)=x$.}
    \item $\mathbb{S}_\kappa$ is $\kappa^{++}$-cc and preserves $\kappa^+$.
     \item Forcing with $\mathbb{S}_\kappa$ preserves the $\mathrm{GCH}$.%\ale{Even though we did not use it later I think we should keep it. The covering claim is gonna being invoked later on.}
    \item For every  condition $s=\{\langle \alpha, s(\alpha)\rangle\}\in \mathbb{S}_\kappa$ %the poset $\mathbb{S}_\kappa/s$ factors as 
    $$\mathbb{S}_\kappa/s\simeq \mathbb{S}_\alpha\times \mathbb{S}_\kappa\setminus \alpha,$$ where $\mathbb{S}_\alpha$ is $\alpha^{++}$-cc and $\mathbb{S}\setminus \alpha$ is $\theta_\alpha$-closed ($\theta_\alpha:=\min (\mathrm{Inacc}\setminus \alpha^+)$).
    \item  If $\kappa$ is measurable then forcing with $\mathbb{S}_\kappa$ preserves this property.
\end{enumerate}
\end{prop}
\begin{proof}
    (1), (4) and (5) are easy and we refer the reader to \cite[\S1]{HamLott} for details. The first part of Clause~(2) is evident by virtue of our GCH assumption.  The rest can be proved using a \emph{fusion-like} argument. We just give details for (3) as the preservation of $\kappa^+$ can be established analogously.
    \begin{claim}\label{kappabounding}
       The trivial condition of $\mathbb{S}_\kappa$ forces the following statement: 
       
       ``For every $\mathbb{S}_\kappa$-name $\dot{X}$ for a subset of ordinals of cardinality ${\leq}\kappa$ there is a set $Y\in V$, $|Y|^V\leq \kappa$, such that $\dot{X}\s Y$''. % there is $V\ni g\colon\kappa\rightarrow\kappa$ such that $\dot{f}(\alpha)<g(\alpha)$ for all $\alpha<\kappa$.
    \end{claim}
    \begin{proof}[Proof of claim]
        Given $s, t\in\mathbb{S}_\kappa$ and $\alpha<\kappa$ write $$\text{$s\sq_\alpha t\,\Leftrightarrow\,s\leq t$ and $s\restriction (\alpha+1)=t\restriction (\alpha+1).$}$$
        Clearly, $\langle \mathbb{S}_\kappa,\sq_\alpha\rangle$ is  a $2^\alpha$-closed poset. A sequence of conditions $\langle s_\alpha\mid \alpha< \kappa\rangle$ in $\mathbb{S}_\kappa$ will be called a \emph{fusion sequence} if $s_{\alpha+1}\sq_\alpha s_\alpha$ for all $\alpha<\kappa.$ 

        Let $s\in\mathbb{S}_\kappa$  and $\dot{f}\in V^{\mathbb{S}_\kappa}$ be such that $s\forces_{\mathbb{S}_\kappa}\dot{f}\colon\kappa\rightarrow\ord.$ We shall define a fusion sequence deciding the values of $\dot{f}$ as follows. Put $s_0:=s$. Suppose that $\langle s_\alpha\mid \alpha<\beta\rangle$ is given and  let $s_\beta:=\bigcup_{\alpha<\beta}s_\alpha.$ A moment's reflection makes clear that $s_\beta$ is a well-defined condition in $\mathbb{S}_\kappa.$ 
        
        Let $\langle t^\beta_\gamma\mid \gamma<\beta^{++}\rangle$ be an enumeration of all  the $\restriction(\beta+1)$-truncations of extensions of $s_\beta$. Inductively, define a decreasing sequence $\langle s^\beta_\gamma\mid \gamma<\beta^{++}\rangle$ in $\mathbb{S}\restriction\beta$ as follows: First, $s^\beta_0:=s_\beta$. At limit stages $\gamma$ we take $s^\beta_\gamma$ to be a lower bound of the previous stages (this is possible by (4)). At successor stages we ask whether there is $r\leq_{\mathbb{S}\setminus\beta} s^\beta_\gamma$ such that $t^\beta_\gamma\cup r\parallel_{\mathbb{S}_\kappa}\dot{f}(\beta)$. If so, we let $s^\beta_{\gamma+1}$ one of such $r$; otherwise, $s^\beta_{\gamma+1}:=s^\beta_\gamma.$ In the end let $\Delta_\beta$ be the collection of all decisions for $\dot{f}(\beta)$ and
        $\text{$s_{\beta+1}:=(s_\beta\restriction(\beta+1))\cup \bigwedge_{\gamma<\beta^{++}}s^\beta_\gamma$,}$
        where this latter component denotes  a lower bound of the $s^\beta_\gamma$'s.

        \smallskip

        After this process we have produced a fusion sequence $\langle s_\alpha\mid \alpha<\kappa\rangle$. Let $s_\kappa:= \bigcup_{\alpha<\kappa}s_\alpha$ and $Y:=\bigcup_{\alpha<\kappa}\Delta_\alpha$. Note that $s_\kappa$ is a condition and that $|Y|\leq \kappa$  because, for each $\alpha<\kappa$, $|\Delta_\alpha|\leq 2^\alpha<\kappa$. We claim that $$s_\kappa\forces_{\mathbb{S}_\kappa}\ran(f)\s Y.$$
        Let $\alpha<\kappa$ and $u\leq s_\kappa$ deciding a value for $\dot{f}(\alpha)$. Since $u\restriction(\alpha+1)\leq s_\kappa\restriction (\alpha+1)=s_\alpha\restriction (\alpha+1)$ it follows that $u\restriction(\alpha+1)=t^\alpha_\gamma$. Also, $u\setminus\alpha$ is a condition stronger than $s_{\alpha+1}\setminus\alpha$, hence stronger than $s^\alpha_\gamma$, such that $t^\alpha_\gamma \cup u\setminus \alpha$ decides the value of $\dot{f}(\alpha).$ By our construction, this implies that $t^\alpha_\gamma \cup s^\alpha_{\gamma+1}$ also decides the value of  $\dot{f}(\alpha)$ and this is, by definition, an ordinal in $\Delta_\alpha$. Thereby, $u\forces_{\mathbb{S}_\kappa} \dot{f}(\alpha)\in Y.$ This establishes the above  equation.

        \smallskip

        By genericity, the above argument yields the claim.
        %For each $\alpha<\kappa$, let $\Delta_{\alpha}\s \kappa$ the collection of ordinals decided by  
    \end{proof}
    Let $S\s \mathbb{S}_\kappa$ be a $\mathcal{K}$-generic filter and $\delta$ be some $\mathcal{K}[S]$-cardinal.
    
   $\br$ Suppose $\delta$ is below the first inaccessible. Then $\mathbb{S}_\kappa$ is $\delta^+$-closed and hence it does not disturb the power-set-pattern.

     $\br$ Suppose $\delta\geq \kappa^{+}$. The usual counting-nice-names arguments involving the $\kappa^{++}$-ccness of $\mathbb{S}_\kappa$ and GCH in $V$ imply that $2^\delta=\delta^+$ holds in $V[S].$ 

    $\br$ Suppose $\delta$ is between two inaccessibles. Then the above argument applies also using Clause~(4) of the proposition.

    $\br$ Suppose $\delta$ is some inaccessible $\leq \kappa$. We argue the result  for $\kappa$ as for the other cardinals the argument is the same.  %Suppose towards a contradiction that $\mathcal{K}[S]\models ``2^\kappa\geq \kappa^{++}$''. 
%    Each $x\in\mathcal{P}(\kappa)^{\mathcal{K}[S]}$ 
   % is naturally identified with its characteristic function, hence 
   By the above claim %yields some $y\in\mathcal{P}(\kappa)^{\mathcal{K}}$ such that $x\s y.$ This implies that $
   $$\mathcal{P}(\kappa)^{\mathcal{K}[S]}\s\bigcup\mathcal{P}(\kappa)^{\mathcal{K}}$$ and this latter union is of size $(\kappa^+)^{\mathcal{K}}=(\kappa^{+})^{\mathcal{K}[S]}$ by GCH in $\mathcal{K}$.
\end{proof}

%\begin{definition}
%Let $\mathbb{S}$ be the forcing poset consisting on partial functions $s \colon \kappa \to V_\kappa$ such that $\dom s\s \mathrm{Inacc}$, $s(\alpha)\in H(\alpha^+)$ for $\alpha\in\dom(s)$, and $(\dom s) \cap \beta\in \mathrm{NS}_\beta$ for $\beta\in\mathrm{Inacc}\cap \kappa$. % and $\dom s \subseteq \mathrm{Inacc}$. We require also that $s(\alpha) \in H(\alpha^+)$ for all $\alpha \in \dom s$.
%Given $s,s'\in\mathbb{S}$  write $s\leq s'$ iff $s\supseteq s'.$
%\end{definition}
%\begin{remark}
%    It is easy to show that forcing with $\mathbb{S}_\kappa$ does preserve cardinals and that if $S\s \mathbb{S}_\kappa$ is generic then the generic function (i.e. $\bigcup S$)  gives rise to the  entire generic extension $V[S]$. Another useful property of $\mathbb{S}_\kappa$ is the following: for any Mahlo cardinal $\alpha< \kappa$  and $s=\{\langle \alpha, s(\alpha)\rangle\}\in \mathbb{S}_\kappa$, the poset $\mathbb{S}_\kappa/s$ factors as $\mathbb{S}_\alpha\times \mathbb{T}$, where $\mathbb{S}_\alpha$ is $\alpha^{++}$-cc and $\mathbb{T}$ is $\theta_\alpha$-closed, being $\theta_\alpha$ is the first inaccessible past $\alpha$. For more information, see \cite[\S1]{HamLott}.
%\end{remark}
Hereafter we shall assume that $\kappa$ is a measurable cardinal and will write $\mathbb{S}$ instead of $\mathbb{S}_\kappa$. The following is the main result of the section. Succinctly, it gives a complete cartography of the $\kappa$-complete measures (over $\kappa$) in $\mathcal{K}[S]$:
\begin{lemma}\label{lemma:extending-S} 
%Let us assume $V = K$ and there is no inner model with $\exists \alpha o(\alpha) = \alpha^{++}$. 
Let $S \subseteq \mathbb{S}$ be a $\mathcal{K}$-generic filter and  $\mathcal{U}$ a $\kappa$-complete ultrafilter over $\kappa$ in $\mathcal{K}[S]$. Then, there is a finite  iteration $\langle \iota_{m,n}\mid m\leq n\leq k+1\rangle$ of measures in $\mathcal{K}$ with critical points $\langle \mu_0, \dots, \mu_k\rangle$, $\kappa = \mu_0 < \mu_1 < \dots < \mu_k$, a function $f \colon \kappa^{k + 1} \to \kappa$ in $\mathcal{K}$ and $\langle a_0,\dots, a_{k}\rangle \in\prod_{i\leq k} H(\mu_i^{+})$ such that
$$\mathcal{U}=\{\dot{X}_S\s \kappa\mid\exists p\in S\,(\iota(p)\cup\{\langle \mu_i,a_i\rangle\mid i\leq k\}\forces_{\iota(\mathbb{S})} \check{\epsilon}\in \iota(\dot{X}))\},$$
where $\epsilon=\iota_k(f)(\mu_0,\dots, \mu_{k}).$
%\textcolor{blue}{determine $\mathcal U$.}\ale{What does this exactly mean?}
%In particular, there are at most $o(\kappa) + \kappa^{+}$ many different $\kappa$-complete ultrafilters on $\kappa$ in $K[S]$.%\ale{Why? Spell out the calculation.}
\end{lemma}
Before being in conditions to addressing that proof we need to digress and prove some technical lemmas about linear iterations.  At this point the reader might want to revisit Definition~\ref{Iteration} in page~\pageref{Iteration}.

\begin{lemma}\label{lem;representing-elements-in-direct-limit}
Let $\langle \iota_{\alpha, \beta} \mid \alpha \leq \beta \leq \delta\rangle$ be a normal iteration using normal measures,  with critical points $\langle \mu_\alpha\mid \alpha<\delta\rangle$. Then, for every set $x \in \mathcal{M}_\delta$ there is a finite normal iteration $\tilde\iota \colon \mathcal{M}_0 \to \tilde{\mathcal{M}}$ together with a (factor) embedding $k \colon \tilde{\mathcal{M}} \to \mathcal{M}_\delta$ such that $k \circ \tilde{\iota} = \iota_\delta$ and $x \in \range(k)$.
\end{lemma}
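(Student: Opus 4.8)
The plan is to run the standard "finitely many factors suffice" argument for linear iterations, working by induction on the ordinal $\delta$ and using the directness of the limit at limit stages. First I would reduce to a bookkeeping statement about supports: for each $x\in\mathcal{M}_\delta$ I want to produce a finite set $F\subseteq\delta$ of indices such that $x$ lies in the range of the factor map $k\colon\tilde{\mathcal{M}}\to\mathcal{M}_\delta$, where $\tilde{\mathcal{M}}$ is the direct limit of the sub-iteration indexed by $F$ (equivalently, the iteration obtained by performing only the ultrapowers at the stages listed in $F$, in increasing order). The commutativity $k\circ\tilde{\iota}=\iota_\delta$ will be automatic from the universal property of the direct limit once we check that the $F$-indexed system embeds into the full system.

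The induction goes as follows. If $\delta=0$ there is nothing to prove ($x\in\range(\id)$). If $\delta=\gamma+1$, then $\iota_{\gamma,\gamma+1}$ is the ultrapower by $\mathcal{U}_\gamma\in\mathcal{M}_\gamma$, so $x=\iota_{\gamma,\gamma+1}(f)([\id]_{\mathcal{U}_\gamma})$ for some $f\in\mathcal{M}_\gamma$ (in fact $x=[g]_{\mathcal{U}_\gamma}$ with $g\in\mathcal{M}_\gamma$; write $x=\iota_{\gamma,\gamma+1}(g)(\mu_\gamma)$ using that the seed of a normal measure is its critical point). Apply the induction hypothesis to $g\in\mathcal{M}_\gamma$ to get a finite sub-iteration $\bar\iota\colon\mathcal{M}_0\to\bar{\mathcal{M}}$ with factor $\bar k\colon\bar{\mathcal{M}}\to\mathcal{M}_\gamma$ and $g=\bar k(\bar g)$ for some $\bar g\in\bar{\mathcal{M}}$. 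Now extend the sub-iteration by one further ultrapower, by the measure $\bar k^{-1}(\mathcal{U}_\gamma)$ on $\bar k^{-1}(\mu_\gamma)$ — this is a normal measure in $\bar{\mathcal{M}}$ by elementarity of $\bar k$ — obtaining $\tilde\iota\colon\mathcal{M}_0\to\tilde{\mathcal{M}}$, and let $k\colon\tilde{\mathcal{M}}\to\mathcal{M}_{\gamma+1}$ be the induced ultrapower-of-$\bar k$ map. Then $k$ sends $[\id]$ in $\tilde{\mathcal{M}}$ to $\mu_\gamma$ and $\tilde\iota_{\mathrm{last}}(\bar g)$ to $\iota_{\gamma,\gamma+1}(g)$, so $x=k(\,\tilde\iota_{\mathrm{last}}(\bar g)(\,[\id]\,)\,)\in\range(k)$, and $k\circ\tilde\iota=\iota_{\gamma+1}$ by construction. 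If $\delta$ is a limit ordinal, then $\mathcal{M}_\delta$ is the direct limit of the $\mathcal{M}_\beta$ for $\beta<\delta$, so $x=\iota_{\beta,\delta}(y)$ for some $\beta<\delta$ and some $y\in\mathcal{M}_\beta$; apply the induction hypothesis to $y\in\mathcal{M}_\beta$ and compose the resulting factor map with $\iota_{\beta,\delta}$.

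The only point that needs care — and the main (mild) obstacle — is verifying at the successor step that "splicing in" the new ultrapower really produces a factor map commuting with the full iteration and that the resulting system is again a normal iteration using normal measures: one must check that the critical points stay strictly increasing (which holds because $\bar k^{-1}(\mu_\gamma)$ sits above all the earlier critical points of the sub-iteration, as $\bar k$ is the identity below the first index in $F$ and more generally does not move the relevant critical points downward) and that internality of the measures is preserved under the factor maps (immediate from elementarity of $\bar k$). Granting these routine checks, the induction closes and the lemma follows; note that the construction manifestly yields a \emph{finite} iteration since each of the three cases adds at most one new index to the support.
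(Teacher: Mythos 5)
Your approach --- a top-down induction on $\delta$, peeling off one ultrapower at a time --- is genuinely different from the paper's, which fixes a finite \emph{saturated} set of indices up front (closed under adjoining the generators used to represent each measure in the iteration) and builds the $n$-step sub-iteration with its factor maps $\tilde{k}_i$ from the bottom up. The two routes are organizationally distinct, and yours is arguably leaner, but the successor step of your induction has a gap that lands exactly where the paper's saturation clause is doing its work. You write $\bar{k}^{-1}(\mathcal{U}_\gamma)$ as though it were automatically defined; the induction hypothesis, as you invoked it, gives only $g\in\range(\bar{k})$, and there is no reason for $\mathcal{U}_\gamma$ to lie in $\range(\bar{k})$ as well. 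If the finite sub-iteration produced for $g$ omits an index whose generator is needed to represent $\mathcal{U}_\gamma$, then $\mathcal{U}_\gamma\notin\range(\bar{k})$ and your next ultrapower has no measure to use. You flag ``internality of the measures is preserved under the factor maps'' as a point needing care but dismiss it as immediate from elementarity of $\bar{k}$; elementarity tells you what an object in the range must look like, not that $\mathcal{U}_\gamma$ is in the range to begin with.

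The repair is a one-line change but must be made explicit: at the successor step, apply the induction hypothesis to the single set $\langle g,\mathcal{U}_\gamma\rangle\in\mathcal{M}_\gamma$ rather than to $g$ alone. Then $\tilde{U}:=\bar{k}^{-1}(\mathcal{U}_\gamma)$ is a well-defined normal measure on $\bar{k}^{-1}(\mu_\gamma)$ in $\bar{\mathcal{M}}$, and the induced map $k\bigl([r]_{\tilde{U}}\bigr):=\iota_{\gamma,\gamma+1}(\bar{k}(r))(\mu_\gamma)$ is elementary, satisfies $k\circ\tilde{\iota}=\iota_{\gamma+1}$, and sends $\tilde{\iota}_{\mathrm{last}}(\bar{g})\bigl([\id]_{\tilde{U}}\bigr)$ to $x$, as you computed. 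You should also strengthen the inductive statement to record that the factor map carries the critical points of the sub-iteration to critical points of the full iteration (this is what the paper maintains through its clause $\tilde{k}_i(\tilde\mu_j)=\mu_{\alpha_j}$); without it, the claim that $\bar{k}^{-1}(\mu_\gamma)$ sits strictly above all earlier critical points --- needed for the extended sub-iteration to remain normal --- is not justified by what you have written. With those two adjustments your induction closes, and it is a valid and somewhat shorter alternative to the paper's explicit construction via saturated supports; your fix is precisely the inductive analogue of the paper's requirement that $\{\beta^\alpha_0,\dots,\beta^\alpha_{m_\alpha-1}\}\subseteq\bar{a}$ for each $\alpha\in\bar{a}$.
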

\begin{proof}
A straightforward induction on $\delta$ shows that, for every $x \in \mathcal{M}_\gamma$ ($\gamma \leq \delta$) there is  $f \in \mathcal{M}_0$ and ordinals $\alpha_0 <  \dots < \alpha_{n-1} < \gamma$ such that \[\iota_\gamma(f)(\mu_{\alpha_0},\dots, \mu_{\alpha_{n-1}}) = x.\]

For every $\alpha < \delta$, let us pick a representation of $U_\alpha$; namely
\[U_\alpha = \iota_{\alpha}(g_\alpha)(\mu_{\beta^\alpha_0}, \dots, \mu_{\beta^\alpha_{m_\alpha - 1}}),\]
where each $\beta^\alpha_i$ is below $\alpha$.

Let us say that an increasing sequence of ordinals $\bar a\in [\delta]^{<\omega}$ is \emph{saturated} if  $\{\beta^\alpha_0,\dots\beta^\alpha_{m_\alpha-1}\} \subseteq \bar a$ for all $\alpha \in \bar a$. Clearly, every set  $b\in[\delta]^{<\omega}$ is a subset of a finite saturated set $\bar{a}$; this is because the generators corresponding to each $\alpha\in b$ are strictly below $\alpha$. In particular, given $x \in \mathcal{M}_\delta$ there is  $\bar{a} = \{\alpha_0,\dots,\alpha_{n-1}\}$ a saturated set and a function $f \in \mathcal{M}_0$ such that
$$\iota_\delta(f)(\mu_{\alpha_0},\dots, \mu_{\alpha_{n-1}}) = x.$$ Let us construct a finite iteration $\langle \tilde{\iota}_{k,m} \mid k \leq m \leq n\rangle$ witnessing the validity of the lemma. For each relevant $i$,  denote by $\tilde{\mu}_i$  the critical point of $\tilde{\iota}_{i, i+1}$, the ultrapower embedding of  $\tilde{U}_i$ over the $i$-th model.  We will construct by induction elementary embeddings $\tilde{k}_i$ such that $\tilde{k}_i \circ \tilde{\iota}_i = \iota_{\alpha_i}$ and moreover $\tilde{k}_i(\tilde{\mu}_j) = \mu_{\alpha_j}$, for all $j < i$. 

Since $\bar{a}$ is saturated, for every $i < n$, the measure $U_{\alpha_i}$ is represented by generators whose indexes belong to $\bar{a} \cap \alpha_i$. Let $b_i$ be this set of indexes. Let $\tilde{\mu} = \langle \tilde{\mu}_0,\dots, \tilde{\mu}_{n-1}\rangle$ (note that each time only an initial segment of this sequence is used in order to define the next step). To simplify notations, let us write $\delta := \alpha_n$.
%\ale{Pending to polish.}
For $i = 0$, let $\tilde{k}_0 = \iota_{\alpha_0}$.  Let us assume that $\tilde{k}_i, \tilde{\iota}_i$ and $\tilde{\mu}\restriction i$ are defined. 
Let $\tilde{U}_i = \tilde{\iota}_i(g_{\alpha_i})(\tilde\mu \restriction b_i)$. By the inductive hypothesis, 
\[\tilde{k}_i(\tilde{U}_i) = \iota_{\alpha_i}(g_{\alpha_i})(\mu_{\beta^{\alpha_i}_{0}},\dots, \mu_{\beta^{\alpha_i}_{m_\alpha-1}}) = U_{\alpha_i}.\]
In particular, $\tilde{U}_i$ is a normal measure on $\tilde{\mu}_i$ such that $\tilde{k}_i(\tilde{\mu}_i) = \mu_{\alpha_i}$.

Let \[\tilde{k}_{i+1}([r]_{\tilde{U}_i}) := \iota_{\alpha_i + 1, \alpha_{i+1}}(\iota_{\alpha_i,\alpha_{i+1}}(\tilde{k}_i(r))(\mu_{\alpha_i})).\]
Since $\tilde{k}_i(\tilde{U}_i) = U_{\alpha_i}$, it is standard to verify that $\tilde{k}_{i+1}$ is well defined and elementary. Moreover, $\tilde{k}_{i+1}\circ \tilde{\iota}_{i+1} = \iota_{\alpha_{i+1}}$. We use the assumption that the iteration is normal, in order to know that $\iota_{\alpha_i + 1, \alpha_{i+1}}$ does not move $\mu_{\alpha_i}$.  

Finally, $\tilde{k}_n \circ \tilde{\iota}_n = \iota_{\delta}$, so we set $k = \tilde{k}_n$.  

Let us show that $x \in \range k$. Indeed, without loss of generality,
\[x = \iota_{\delta}(f)(\mu_{\alpha_0},\dots, \mu_{\alpha_{n-1}}) = k\circ \tilde{\iota}(f)(k(\tilde \mu)) = k(\tilde{\iota}(f)(\tilde \mu)).\qedhere\]
\end{proof}
\smallskip

\begin{lemma}\label{LemmaAboutStationary}
Suppose that $\langle \iota_{\alpha,\beta}\mid \alpha\leq\beta<\delta\rangle$ is a normal iteration with critical points $\langle \mu_\alpha\mid \alpha<\delta\rangle$ and let $\iota:=\iota_\delta\colon \mathcal{M}_0\rightarrow \mathcal{M}_\delta$. %  be $\iota_\delta$. %is the direct limit of a normal iteration $\langle \iota_{\alpha,\beta}\mid \alpha\leq\beta<\delta\rangle$ of measures  , and let $\mathbb{P}\in \mathcal{M}$ be a forcing notion with the following properties:
%\begin{enumerate}
 %   \item Every condition $p\in\mathbb{P}$ is a partial function $p\colon \mu_0\rightarrow \mathcal{M}$;
  %  \item If $p\in\mathbb{P}$ then for every $\mathcal{M}$-inaccessible $\beta\leq \mu_0$, $(\dom p)\cap \beta\in \mathrm{NS}^{\mathcal{M}}_\beta;$
   % \item For a condition $p\in\mathbb{P}$ and a nowhere stationary set $T\in \mathrm{NS}^{\mathcal{M}}_{\mu_0}$ there is $q\leq p$ such that $T\s\dom q.$
%\end{enumerate}
For every $\mathcal{M}_0$-generic filter $S\s\mathbb{S}_{\mu_0}$ we have that $\bigcup_{s\in S}\iota(\dom(s))=\iota(\mu_0)\setminus \langle \mu_\alpha\mid \alpha<\delta\rangle.$
\end{lemma}
\begin{proof}
%To enlighten the exposition 
Denote $S_\iota:=\bigcup_{s\in S}\iota(\dom(s))$. % the left-hand-side of the above-displayed  equation. 
Here we just deal with the case where $\iota$ is finite, say, with critical points $\langle \mu_0,\dots,\mu_k\rangle.$ The general case is handled using that any normal iteration is a directed limit of finite iterations, as demonstrated in Lemma~~\ref{lem;representing-elements-in-direct-limit}. %Through the proof we denote $\kappa:=\mu_0$. 

The crux of the matter is the following claim:
\begin{claim}\label{claim:covering-almost-all-ordinals}
For each %inaccessible 
$\alpha \in \iota(\mu_0) \setminus \{\mu_0,\dots,\mu_k\}$ there is a nowhere statio\-nary\footnote{Namely, $T\cap \beta\in \mathrm{NS}_\beta$ for all $\beta<\mu_0$ regular.} set $T\s\mu_0$ with $\alpha \in \iota(T)$. 
\end{claim}
Using this latter fact it is easy to derive the desired conclusion:
\begin{claim}
$S_\iota = \iota(\mu_0) \setminus \{\mu_0,\dots,\mu_k\}$.
\end{claim}
\begin{proof}[Proof of claim]
Fix $\alpha\in \iota(\mu_0)\setminus \{\mu_0,\dots,\mu_{k}\}$ and put $$D_\alpha:=\{p\in\mathbb{S}_{\mu_0}\mid \alpha\in \iota(\dom(p))\}.$$
 $D_\alpha$ is dense in $\mathbb{S}_{\mu_0}$: Indeed, let $p\notin D_\alpha$ and use Claim~~\ref{claim:covering-almost-all-ordinals} in order to find a nowhere stationary set $T\s\mu_0$  such that $\alpha\in \iota(T).$  Next, find $q\leq p$ such that $T\s \dom(q)$. Thus $\alpha\in \iota(\dom(q))$ and $D_\alpha$ is dense. %Using Clause~(3) above we find $q\leq p$ with $T\s \dom q$. %Define $s':=s\cup\{\langle \beta, \beta\rangle\mid \beta\in T\setminus \dom(s)\}.$ Clearly, $s'\in D_\alpha$ extends $s$. By genericity of $S$ there is $s\in D_\alpha\cap S$ and so $\alpha\in \dom S_j.$ 

The above argument shows that $\iota(\mu_0)\setminus \{\mu_0,\dots,\mu_{k}\}\s S_\iota.$

\smallskip

Conversely, we claim that $S_\iota\cap \{\mu_0,\dots, \mu_{k}\}=\emptyset.$ For this  fix $i\leq k$ and suppose towards a contradiction that $\mu_i\in  S_\iota$. Let $p\in S$ such that $\mu_i\in \iota(T)$ where $T:=\dom p.$ Notice that $\mu_i\notin \iota_{i+1}(T)$: By elementarity,  $\iota_i(T)$ is nowhere stationary, hence %(relative to $\mathrm{Inacc}$), 
$\iota_i(T)\cap \mu_i\in \mathrm{NS}_{\mu_i}^{\mathcal{M}_i}$. Also, if  $\mu_i\in \iota_{i+1}(T)$ then normality of the iteration  would imply that $\iota_{i}(T)\cap \mu_i$ is stationary in $\mu_i.$ 

Therefore %$\mu_i\notin\iota_{i+1}(T)$ and thus 
$\mu_i\notin \iota(T)$, yielding the sought contradiction.
\end{proof}
So, let us prove Claim~\ref{claim:covering-almost-all-ordinals} above.
%\begin{proof}
%We begin proving the in particular clause, provided the claim holds.
%\begin{subclaim}
%$\dom(S_j)=(j(\kappa)\cap\mathrm{Inacc})\setminus \{\mu_0,\dots,\mu_k\}$.
%\end{subclaim}
%\begin{proof}[Proof of claim]
%Fix $\alpha\in (j(\kappa)\cap \mathrm{Inacc})\setminus \{\mu_0,\dots,\mu_{k}\}$ and put $$D_\alpha:=\{s\in\mathbb{S}\mid \alpha\in j(\dom(s))\}.$$
%We show that $D_\alpha$ is dense in $\mathbb{S}$. Indeed, let $s\notin D_\alpha$ and use the claim to find a nowehere stationary set $T\s\kappa$  such that $\alpha\in j(T).$ Define $s':=s\cup\{\langle \beta, \beta\rangle\mid \beta\in T\setminus \dom(s)\}.$ Clearly, $s'\in D_\alpha$ extends $s$. By genericity of $S$ there is $s\in D_\alpha\cap S$ and so $\alpha\in \dom S_j.$ Thus, $(j(\kappa)\cap \mathrm{Inacc})\setminus \{\mu_0,\dots,\mu_{k}\}\s \dom S_j.$
%Conversely, we claim that $\dom S_j\cap \{\mu_0,\dots, \mu_{k}\}=\emptyset.$ For this  fix $i\leq k+1$ and suppose towards a contradiction that $\mu_i\in \dom S_j$. Then, there is some $s\in S$ such that $\mu_i\in j(T)$ where $T:=\dom s.$ Notice that $\mu_i\notin \iota_{i+1}(T)$: In effect, by elementarity,  $\iota_i(T)$ is non stationary (relative to $\mathrm{Inacc}$), hence $\iota_i(T)\cap \mu_i\in \mathrm{NS}_{\mu_i}^{\mathcal{K}_i}$. However, if $\mu_i\in \iota_{i+1}(T)$,  normality of the iteration  implies that $\iota_{i}(T)\cap \mu_i$ is stationary in $\mu_i.$ So, $\mu_i\notin\iota_{i+1}(T)$ and thus $\mu_i\notin j(T)$.
%\end{proof}
%\textcolor{blue}{First, note that $\dom S_j\s j(\kappa)\setminus \{\mu_0,\dots, \mu_k\}$ because...}
Fix $\alpha<\iota(\mu_0)$ not in the critical sequence $\langle \mu_0,\dots, \mu_k\rangle$  and let us find a nowhere stationary set $T\s \mu_0$ such that $\alpha\in \iota(T)$. To avoid trivialities let us further assume that $\alpha \geq \mu_0$. First,  $\alpha = \iota(g)(\mu_0,\dots,\mu_{n-1})$ for some $g \colon \mu_0^n \to \mu_0$ and $n \leq k $. Assume  that $n$ is the minimal such index (hence $\mu_{n-1}<\alpha$).

%Consider the set $I:=\{i<n\mid \iota_{i,n}(\mu_i)>\alpha\}$. Clearly $I\neq \emptyset$, for $\mu_0\in I$.\smallskipFor each $i<n$ such that $\mu_i$ is not an image of $\kappa$  let $f_i\colon \kappa^i\rightarrow \kappa$ be such that   $\mu_i = \iota_i(f_i)(\mu_0,\dots,\mu_{i-1})$. Otherwise,  $\mu_i=\iota_i(\kappa)$ and we pick $f_i\colon \kappa^{i}\rightarrow\{\kappa\}$.\footnote{By convention, $\dom(f_0):=\{\emptyset\}$.}  %Without loss of generality $\ran (f_i)$ consists only of inaccessibles. 

\smallskip

Define $B:=\{\bar{x}\in\mu_0^n\mid \max(\bar{x})<g(\bar{x})\}$ and $T := g``B$.  Clearly, $(\mu_0,\dots,\mu_{n-1})\in \iota(B)$.
%\[\begin{matrix}
%B := 
%\{\bar{x} \in \kappa^{n}\mid & \bar{x}\;\text{is increasing}, \\
 %& i \in I \rightarrow x_i \in D^g_{\bar{x}}\;\&\; \text{$D^g_{\bar{x}}$ is a club in $f_i(\bar{x})$}, \\ 
%& i \in I \rightarrow g(\bar{x})<f_i(\bar{x}\restriction i),\\
%&  i\notin I\rightarrow g(\bar{x}) \geq  f_i(\bar{x}\restriction i),\\ & \forall i\; (x_i < g(\bar{x})\; \text{and}\; x_i < f_i(\bar{x}\restriction i))\} 
%\end{matrix} 
%\] %\ale{I think with the change in $I$ clause (4) is wrong. }
%\ale{Get rid of the $f_i$}
%\begin{subclaim}
%	$(\mu_0,\dots, \mu_{n-1})\in\iota(B)$.
%\end{subclaim}
%\begin{proof}
%	Let us %verify that $(\mu_0,\dots, \mu_{n-1})\in \iota(B)$: %by 
%	verify this by going over the clauses defining the set $\iota(B)$: 
%	The first clause holds by normality of the iteration; %the second clause is a consequence of Subclaim~\ref{SubclaimDgalpha};
%	the second and third clauses follow from the definition of $I$; finally, the fifth clause follows from minimality of $n$ together with  $\mu_i<\iota_{i,n}(\mu_i)$. %We are left with the second clause.This concludes the proof of the claim.
	%the second is true by our choice upon $\mu_i$;  the third clause is Subclaim~\ref{SubclaimAccumulation}; the fourth clause follows from minimality of $n$ and the choice of $f_i$; finally, the fifth clause also follows from minimality of $n$. 
%\end{proof}
%\{g(\bar{x}) \mid \bar{x} \in B\}$. 
 Thus,  $\alpha=\iota(g)(\mu_0,\dots, \mu_{n-1})\in \iota(T).$\label{Nowherestationary} 
 
 \smallskip

 We claim that 
$T$ is nowhere stationary. Suppose towards a contradiction that $\rho<\mu_0$ is a regular cardinal such that $T\cap \rho$ is stationary. Then every $\beta\in T\cap \rho$ is of the form $g(\bar{x})$ for some $\bar{x}\in B\cap \rho^n$. %\footnote{This is a consequence of Clauses~(1) and (4) in the definition of $B$.} 
	Let $B_\rho\s B\cap\rho^n$ such that $T\cap \rho=g``B_\rho$. Next, let $h_\rho\colon T\cap \rho\rightarrow \rho$ be given by 
	\[\beta\mapsto \min\{\max(\bar{x}) \mid g(\bar{x}) = \beta\}.\] 
	This map is well-defined and regressive, by the definition of $B$. %Also, $h_\rho$ is regressive in $T\cap \rho$ % $h_\rho(g(\bar{x}))=\max(\bar{x})<g(\bar{x})$ for all $g(\bar{x})\in T\cap \rho$, 
	Thus, there is $T_\rho\s T\cap\rho$ stationary and $\eta<\rho$ with $h_\rho``T_\rho=\{\eta\}$. This latter fact outright implies $T_\rho\s g``\eta^n$, which is not impossible in that $|g``\eta^n|=|\eta|<\rho$.

This completes the proof of %Claim~\ref{claim:covering-almost-all-ordinals} and, as a result, the proof of 
Lemma~\ref{LemmaAboutStationary}.
\end{proof}

\begin{remark}\label{remark; almost-injective-regressive-function}
The proof of Claim~\ref{claim:covering-almost-all-ordinals} shows something slightly stronger: for every ordinal $\alpha$ there is a set $T \subseteq \kappa$ such that $\alpha \in \iota(T)$ and there is a regressive function $f \colon T \to \kappa$ such that for every $\rho$, $|f^{-1}(\{\rho\})| \leq |\rho| + \aleph_0$.
\end{remark}

The next strengthening of Claim~\ref{claim:covering-almost-all-ordinals} will not be used in this section but comes handy in the proof of the future Lemmas~\ref{lem;maximal-stem} and \ref{lem;conditions-in-k-inv-H}.
\begin{lemma}\label{CoveringNowhereLemma}
	Let $\iota\colon \mathcal{M}\rightarrow \mathcal{N}$ be a finite normal iteration with critical points $\vec\mu=\langle \kappa\rangle^\smallfrown\langle \mu_i\mid 1\leq i<n\rangle$. For every nowhere stationary $T\in \mathcal{P}(\iota(\kappa))^N$ with $T\cap \vec\mu=\emptyset$  there is another nowhere stationary   $\bar{T}\in \mathcal{P}(\kappa)^\mathcal{M}$  such that $T\s \iota(\bar{T})$ 
\end{lemma}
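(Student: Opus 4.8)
The plan is to prove, by induction on the length $n$ of the iteration, the following mild strengthening: \emph{for a finite normal iteration $\iota\colon\mathcal{M}\to N$ using normal internal measures, with critical points $\vec\mu=\langle\mu_i\mid i<n\rangle$, and for every ordinal $\theta$ of $\mathcal{M}$, every $T\in\mathcal{P}(\iota(\theta))^{N}$ which is nowhere stationary in $N$ and satisfies $T\cap\vec\mu=\emptyset$ admits a nowhere stationary $\bar T\in\mathcal{P}(\theta)^{\mathcal{M}}$ with $T\subseteq\iota(\bar T)$.} The lemma is the instance $\theta=\kappa$. For $n=0$ the map $\iota$ is the identity and $\bar T:=T$ works. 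Throughout I use repeatedly that for $\rho\leq\mu_0$ one has $\mathcal{P}(\rho)^{N}\subseteq\mathcal{M}$ — every subset of such $\rho$ in an iterated ultrapower by measures with critical points $\geq\mu_0$ is computed from a function of $\mathcal{M}$ together with the first measure — so that ``club'' and ``nonstationary'' for subsets of $\rho\leq\mu_0$ are absolute between $N$, the intermediate models, and $\mathcal{M}$.

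The heart is the single ultrapower case $n=1$, where $\iota\colon\mathcal{M}\to N=\Ult(\mathcal{M},\mathcal{U})$ for a normal measure $\mathcal{U}$ on $\mu_0$. If $\theta\leq\mu_0$ then $T\subseteq\iota(\theta)=\theta$, so $T\in\mathcal{M}$ and $\bar T:=T$ works; assume $\theta>\mu_0$. Writing $T=[f]_{\mathcal{U}}$ and using {\L}o{\'s}'s theorem, modify $f$ on a $\mathcal{U}$-null set so that for every $\xi<\mu_0$ the set $f(\xi)\subseteq\theta$ is nowhere stationary in $\mathcal{M}$, $\xi\notin f(\xi)$, and (a harmless further adjustment) $f(\xi)\setminus(\xi+1)\neq\emptyset$. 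Put
\[\bar T:=(T\cap\mu_0)\ \cup\ \bigcup_{\xi<\mu_0}\bigl(f(\xi)\setminus(\xi+1)\bigr)\ \in\ \mathcal{P}(\theta)^{\mathcal{M}}.\]
Three things are checked. First $T\subseteq\iota(\bar T)$: if $\alpha\in T$ and $\alpha<\mu_0$ then $\alpha\in T\cap\mu_0$ and $\iota(\alpha)=\alpha\in\iota(\bar T)$ (the case $\alpha=\mu_0$ is excluded by $T\cap\vec\mu=\emptyset$); if $\alpha\in T$ and $\alpha>\mu_0$ then $\alpha=\iota(g)(\mu_0)$ for some $g\in\mathcal{M}$, and since $\{\xi\mid g(\xi)\in f(\xi)\ \wedge\ \xi<g(\xi)\}\in\mathcal{U}$ we may take $g$ with $g(\xi)\in f(\xi)\setminus(\xi+1)$ for all $\xi$, so that $g``\mu_0\subseteq\bar T$ and $\alpha=\iota(g)(\mu_0)\in\iota(g``\mu_0)\subseteq\iota(\bar T)$. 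Second, $\bar T$ is nowhere stationary in $\mathcal{M}$: the piece $T\cap\mu_0$ is nonstationary in $\mathcal{M}$ since a club of $N$ avoiding it lies in $\mathcal{M}$ (first paragraph) and clubness is absolute, and it is nonstationary in every regular $\rho<\mu_0$ by the same absoluteness; for the piece $\bigcup_{\xi}(f(\xi)\setminus(\xi+1))$ one runs the exact Fodor argument from the proof of Claim~\ref{claim:covering-almost-all-ordinals} (respectively Remark~\ref{remark; almost-injective-regressive-function}): below $\mu_0$, if its trace on a regular $\rho$ were stationary, the regressive map $\gamma\mapsto\min\{\xi<\gamma\mid\gamma\in f(\xi)\}$ would stabilize on a stationary set, forcing that set inside a single nowhere stationary $f(\xi)$; above $\mu_0$ it is a union of $\mu_0<\rho$ many nonstationary sets, hence nonstationary. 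Finally $\bar T\subseteq\theta$ is immediate.

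For the inductive step write $\iota=\jmath\circ\iota'$ with $\iota'=\iota_{0,1}\colon\mathcal{M}\to\mathcal{M}':=\Ult(\mathcal{M},\mathcal{U}_0)$ and $\jmath=\iota_{1,n+1}\colon\mathcal{M}'\to N$ a finite normal iteration of length $n$ with critical points $\langle\mu_1,\dots,\mu_n\rangle$; then $\iota(\theta)=\jmath(\iota'(\theta))$. Applying the inductive hypothesis to $\jmath$ with the ordinal $\iota'(\theta)$ — legitimate since $T\subseteq\jmath(\iota'(\theta))$ is nowhere stationary in $N$ and $T\cap\{\mu_1,\dots,\mu_n\}=\emptyset$ — yields $T'\in\mathcal{P}(\iota'(\theta))^{\mathcal{M}'}$, nowhere stationary in $\mathcal{M}'$, with $T\subseteq\jmath(T')$. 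Since $\mu_0\notin T$ and $\jmath$ fixes $\mu_0$ (as $\mu_0<\mu_1=\crit(\jmath)$), replacing $T'$ by $T'\setminus\{\mu_0\}$ preserves $T\subseteq\jmath(T')$ and gives $\mu_0\notin T'$. Now apply the case $n=1$ to $\iota'$ (with the ordinal $\theta$) and $T'$: we obtain $\bar T\in\mathcal{P}(\theta)^{\mathcal{M}}$ nowhere stationary in $\mathcal{M}$ with $T'\subseteq\iota'(\bar T)$, hence $T\subseteq\jmath(T')\subseteq\jmath(\iota'(\bar T))=\iota(\bar T)$, as desired; and $\bar T\in\mathcal{P}(\kappa)^{\mathcal{M}}$ in the case $\theta=\kappa$ of the lemma.

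The main obstacle is the single ultrapower case. One must simultaneously cover the ``small'' part $T\cap\mu_0$ — an honest nowhere stationary set of $\mathcal{M}$, once one checks a witnessing club descends from $N$ to $\mathcal{M}$ — and the ``large'' part, for which the diagonal–union–type set $\bigcup_{\xi}(f(\xi)\setminus(\xi+1))$ is exactly right: large enough because representatives of the elements of $T$ can be chosen inside the $f(\xi)$'s, and small enough to stay nowhere stationary by the Fodor argument underlying Claim~\ref{claim:covering-almost-all-ordinals}. The inductive step is then bookkeeping, the only point of note being the excision of $\mu_0$ from the intermediate cover — which is precisely what the hypothesis $T\cap\vec\mu=\emptyset$ is used for.
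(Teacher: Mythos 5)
Your proof is correct and takes a genuinely different route from the paper's. The paper decomposes the \emph{set} $T=\bigcup_{i<n}T_i$ along the intervals $[\mu_i,\mu_{i+1})$ (writing $\mu_n:=\iota(\kappa)$), and covers each $T_i$ in a single pass: it picks a representing function $g_i\colon\kappa^{i+1}\to\mathcal{P}(\kappa)^{\mathcal{M}}$ for $T_i$, forms the diagonal union $\bar T_i:=\bigcup\{g_i(\bar x)\setminus(\max(\bar x)+1)\mid\bar x\}$, and checks nowhere-stationarity by the Fodor argument underlying Claim~\ref{claim:covering-almost-all-ordinals}. You instead decompose the \emph{iteration}, inducting on its length: you isolate a clean single-ultrapower lemma (your $\bar T=(T\cap\mu_0)\cup\bigcup_{\xi<\mu_0}(f(\xi)\setminus(\xi+1))$) and iterate it, excising $\mu_0$ from the intermediate cover at each step so that the hypothesis $T\cap\vec\mu=\emptyset$ survives. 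The core diagonal-union-plus-Fodor trick is the same in both; what your factorization buys is modularity (the one-step case is proved once, the induction is bookkeeping), at the small cost of repeatedly re-representing; the paper's parallel decomposition reaches the end in one shot but must juggle an $(i+1)$-ary representing function for every $i$ at once. One small slip in your write-up: when $\theta=\mu_0$ one still has $\iota(\theta)=\iota(\mu_0)>\mu_0$, so the trivial case should be $\theta<\mu_0$; your argument for $\theta>\mu_0$ applies verbatim to $\theta=\mu_0$, so nothing is lost.
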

\begin{proof}
	%Let  $\langle \mu_i\mid i<n\rangle$ denote the critical points of the iteration $\iota$, and set 
	Put $\mu_n:=\iota(\kappa)$.  Since $T\in \mathcal{P}(\iota(\kappa))^N$, $T$ decomposes as $T=\bigcup_{i<n} T_i$ where $T_i:=T\cap [\mu_i,\mu_{i+1})$. Clearly, each $T_i$ is nowhere stationary in $\mu_{i+1}$. To establish the lemma it will suffice to find, for each $i<n$, a nowhere stationary set $\bar{T}_i\in \mathcal{P}(\kappa)^\mathcal{M}$ with $T_i\s \iota_{i+1}(\bar{T}_i)$. So, fix $i<n$. %In effect, in that case  $\bar{T}:=\bigcup_{i<n} \bar{T}_i$ yields a nowhere stationary set with the required property.

\smallskip
	
		%here is $\bar{T}_i\in\mathcal{P}(\kappa)^\mathcal{M}$ nowhere stationary with $T_i\s \iota_{i+1}(\bar{T}_i)$.

	 Let $g\colon \kappa^{i+1}\rightarrow \mathcal{P}(\kappa)^\mathcal{M}$ be a map in $\mathcal{M}$ representing $T_i$; %and $f\colon \kappa^{i+1}\rightarrow \kappa$ be maps in $M$ representing $T_i$ and $\mu_{i+1}$, respectively; 
	 namely,  $$T_i=\iota_{i+1}(g)(\mu_0,\dots, \mu_i).$$ Without loss of generality, $g(\bar{x})$ is nowhere stationary for all $\bar{x}\in \kappa^{i+1}$. % and $\mu_{i+1}=\iota(f)(\mu_0,\dots, \mu_i)$. %Also, let $h\colon \kappa^{i+1}\rightarrow \mathcal{P}(\kappa)^M$ representing a club subset of $\mu_{i+1}$ disjoint from $T_i$. By elementarity we may safely assume that 

 Define $B:= \{\bar{x} \in \kappa^{i+1}\mid \max(\bar{x})<\min g(\bar{x})\}$ and $$\bar{T}_i:=\bigcup\{g(\bar{x})\setminus \max(\bar{x})+1\mid \bar{x}\in B\}.$$
%\[\begin{matrix}
%B := 
%\{\bar{x} \in \kappa^{i+1}\mid & \bar{x}\;\text{is increasing}, \\
 % & j \in I \rightarrow x_j \in C^h_{\bar{x}\restriction j}, \\ 
%& j \in I \rightarrow \min(g(\bar{x}))<f_j(\bar{x}\restriction j),\\
%&  j\notin I\rightarrow \min(g(\bar{x})) \geq  f_j(\bar{x}\restriction j),\\ & \forall j\leq i \; (\max(\bar{x}) < \min(g(\bar{x}))\; \text{and}\; x_j < f_j(\bar{x}\restriction j))\} 
%\end{matrix}\]
Clearly, $(\mu_0,\dots, \mu_{i})\in \iota_{i+1}(B)$ so that, as   $T\cap \vec\mu=\emptyset$,
%Note that	$(\mu_0,\dots, \mu_{i})\in \iota_{i+1}(B)$: The first clause holds by normality of the iteration; %the second clause is a \textcolor{green}{consequence of the argument in Subclaim~\ref{Chisclub};}
%	the second and third %clauses 
%	follow from the definition of $I$;  finally, the fourth clause is a consequence of $\mu_i\notin T_i$ and $\mu_j<\iota_{j,i+1}(\mu_j)$, respectively.
%Define $\bar{T}_i:=\bigcup\{g(\bar{x})\setminus \max(\bar{x})+1\mid \bar{x}\in B\}$.  By the above subclaim,
$$\iota_{i+1}(\bar{T}_i)\supseteq \iota_{i+1}(g)(\mu_0,\dots, \mu_{i})\setminus \mu_i+1=T_i\setminus (\mu_i+1)=T_i.$$%\footnote{This latter equality follows from our assumption that.}$$
%the latter equality being true because $\mu_i\notin T_i$. 
Also, $\bar{T}_i$ is nowhere stationary by the same argument as in Claim~\ref{claim:covering-almost-all-ordinals}.
%We are done with the proof of the lemma.
\end{proof}
%The following serves as a  warm-up for Lemma~\ref{lemma:representing-ultrafilters} and provides a complete cartography of the measures in the generic extension $\mathcal{K}[S]$:
%\begin{lemma}\label{lemma:extending-S} 
%Let us assume $V = K$ and there is no inner model with $\exists \alpha o(\alpha) = \alpha^{++}$. 
%Let $S \subseteq \mathbb{S}$ be a $\mathcal{K}$-generic filter and  $\mathcal{U}$ a $\kappa$-complete ultrafilter on $\kappa$ in $\mathcal{K}[S]$. Then, there is a finite  iteration $\langle i_{m,n}\mid m\leq n\leq k+1\rangle$ of measures in $\mathcal{K}$ with critical points $\langle \mu_0, \dots, \mu_k\rangle$, $\kappa = \mu_0 < \mu_1 < \dots < \mu_k$, a function $f \colon \kappa^{k + 1} \to \kappa$ in $\mathcal{K}$ and $\langle a_0,\dots, a_{k}\rangle \in\prod_{i\leq k} H(\mu_i^{+})$ such that
%$$\mathcal{U}=\{\dot{X}^S\s \kappa\mid\exists p\in S\,(\iota(p)\cup\{\langle \mu_i,a_i\rangle\mid i\leq k\}\forces_{\iota(\mathbb{S})} \check{\epsilon}\in \iota(\dot{X})^{S'})\},$$
%where $\epsilon=\iota_k(f)(\mu_0,\dots, \mu_{k}).$
%\textcolor{blue}{determine $\mathcal U$.}\ale{What does this exactly mean?}
%In particular, there are at most $o(\kappa) + \kappa^{+}$ many different $\kappa$-complete ultrafilters on $\kappa$ in $K[S]$.%\ale{Why? Spell out the calculation.}
%\end{lemma}
We are now ready to prove the main lemma in the section.  Recall that $\kappa$ was a measurable cardinal and $\mathbb{S}$ denoted the fast function forcing $\mathbb{S}_\kappa$.
\begin{proof}[Proof of Lemma~\ref{lemma:extending-S}]
Let $\mathcal{U} \in \mathcal{K}[S]$ be a $\kappa$-complete ultrafilter over $\kappa$ and $j \colon \mathcal{K}[S] \to M$ be the corresponding ultrapower embedding. Use  our anti-large-cardinal  hypothesis \eqref{BlanketAssumption} in page~\pageref{BlanketAssumption} and combine it with Theorem~\ref{CoreModelTheorem} (2) to infer that $j \restriction \mathcal{K}$ is an iteration of normal measures in $\mathcal{K}$. Let $\epsilon := [\id]_\mathcal{U}$.% denote the seed of the measure $\mathcal{U}.$
\begin{claim}\label{claim:warmup-S}
$j\restriction \mathcal{K}$ is a finite iteration.
\end{claim}
\begin{proof}[Proof of claim]
Observe  that $M=\mathcal{K}^{M}[S']$ for $S'\s j(\mathbb{S})$  a $\mathcal{K}^{M}$-generic filter: First, by elementarity, $M=N[S']$ for some $N$-generic filter $S'\s j(\mathbb{S})$.  Second, by   absoluteness of the core model,\footnote{See Theorem~\ref{CoreModelTheorem}(3).} $\mathcal{K}^{\mathcal{K}[S]}=\mathcal{K}$, and so $$j(\mathcal{K})=j(\mathcal{K}^{\mathcal{K}[S]})=\mathcal{K}^{M}.$$ Finally, by the Laver/Woodin theorem on definability of the ground model (see \cite{LavGroundModel,WoodinSuitableII}) $``\mathcal{K}$ is the ground model of $\mathcal{K}[S]$'' is first-order expressible in $\mathcal{K}[S]$, hence $N=\mathcal{K}^{M}$ is the ground model of $M$.%\footnote{Alternatively, one can use Hamkins' \emph{Gap Theorem} \cite{HamGap} to infer that $j$ is definable in $V$ (which is $\mathcal{K}$).}%\yair{I think that it also follows from Hamkin's gap theorem: $\mathbb{S}$ admits a gap, so $j$ is definable in $V$ (which is $\mathcal{K}$).}

We are now in conditions to argue that $j\restriction \mathcal{K}$ is a finite iteration. Clearly, $M$ is closed under $\kappa$-sequences in $\mathcal{K}[S]$, hence also closed under $\omega$-sequences. Since $j(\mathbb{S})$ is $\omega_1$-closed it must be the case that ${}^\omega{}M\cap \mathcal{K}[S]\s \mathcal{K}^M$. However, if $j\restriction \mathcal{K}$ is an infinite iteration then the sequence of its first $\omega$-many critical points $\vec{x}=\langle \mu_n\mid n<\omega\rangle$ belongs to $M^\omega\cap \mathcal{K}[S]$ but not to $\mathcal{K}^M$.% which is impossible as $\vec{x}\in M^\omega\cap \mathcal{K}[S].$
\end{proof}
%Let us argue that this iteration must be finite. Indeed, $M$ is closed under $\omega$-sequences, but if the iteration is infinite then the $\omega$-sequence of the first $\omega$ critical points is missing from $\mathcal{K}^M$. But (using elementarity), $M = \mathcal{K}^M[S']$, namely a generic extension of $\mathcal{K}^M$ using a $\sigma$-closed forcing notion and thus this sequence cannot be introduced to $M$. 
So, let $\iota=\langle \iota_{m,n}\colon\mathcal{\mathcal{K}}_m\rightarrow \mathcal{K}_n \mid m \leq n \leq k+1\rangle$  be this iteration. For each $i\leq k$ put $\mu_i:=\crit(\iota_{i,i+1})$. By Theorem~\ref{CoreModelTheorem}(2) ${\iota}$ is a  normal iteration.  %\ale{Why without loss of generality? Isn't it by Mitchell?}%: namely, $\mu_{i}<\mu_{i+1}$ for $i\leq k$. 
%Let $f \colon \kappa^{k + 1} \to \kappa$ be a function representing $\epsilon$ in the iteration; to wit, $\iota_{k+1}(f)(\mu_0,\dots,\mu_{k})=\epsilon.$ 

We would like to analyse the $\mathcal{K}^M$-generic filter $S'$, or equivalently, the function $\bigcup S'$. Let us consider $$S_{\iota} := \bigcup \{\iota(\dom(s)) \mid s \in S\}.$$ 
%Clearly, $\mathbb{S}$ satisfies the assumptions of 
By Lemma~\ref{LemmaAboutStationary} we have that $S_{\iota}=\iota(\kappa)\setminus \{\mu_0,\dots,\mu_{k}\}.$
Since $\mathcal{U}$ is the measure induced by $j$ we have that $$\mathcal{U}=\{\dot{X}_S\s \kappa\mid\exists q\in S'\,(q\forces_{j(\mathbb{S})} \check{\epsilon}\in j(\dot{X})_{S'})\}.$$ 
Let $f\colon\kappa^{k+1}\rightarrow \kappa$ be such that $\iota(f)(\mu_0,\dots,\mu_k)=\epsilon$. 
Using Lemma~\ref{LemmaAboutStationary} we find $\langle a_0,\dots, a_k\rangle\in \left(\prod_{i\leq k}H(\mu^+_i)^M\right)$ such that for every condition $q \in S'$ there is $p\in S$, $j(p)\cup \{\langle \mu_{i_j}, a_{i_j}\rangle\mid j\leq k\}\leq q$. Indeed, note that these $a_i$ are nothing but the values of the generic $S'$ at $\mu_i$ (i.e., $a_i=s(a_i)$ for $s\in S'$).

\smallskip

Therefore,% $S'=\{q\in j(\mathbb{S})\mid \exists p\in S\,\exists j\leq k\, $
$$\mathcal{U}=\{\dot{X}_S\s \kappa\mid\exists p\in S\,(j(p)\cup\{\langle \mu_i,a_i\rangle\mid i\leq k\}\forces_{j(\mathbb{S})} \check{\epsilon}\in j(\dot{X})_{S'})\},$$ 
%\textcolor{blue}{Finally, for the in particular clause ...} \ale{Complete}
%is the collection of all sets $X \subseteq \kappa$ of the form $\dot{X}^S$ such that $ \iota_k(f)(\mu_0,\dots,\mu_k) \in \iota_k(\dot{X})^{S'}$
which completes the proof.
\end{proof}
The following useful observation arises from the previous proof:
\begin{lemma}\label{lem;lifting-S}
Suppose $j\colon \mathcal{K}[S] \to M$ is an arbitrary elementary embedding such that $\iota = j \restriction \mathcal{K}$ is a normal iteration of measures in $\mathcal{K}$, say $\langle \iota_{\alpha,\beta} \mid \alpha \leq \beta \leq \delta\rangle$.\footnote{Note that $M$ might not be closed under countable sequences.}   Then, there is an iteration $\langle j_{\alpha,\beta} \mid \alpha\leq\beta\leq\gamma\rangle$ such that each embedding $j_{\alpha,\beta}$ lifts the corresponding embedding $\iota_{\alpha,\beta}$. \end{lemma}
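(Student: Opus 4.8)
The plan is to build the lifted iteration $\langle j_{\alpha,\beta}\mid \alpha\leq\beta\leq\gamma\rangle$ by recursion on the length of the original iteration $\langle\iota_{\alpha,\beta}\mid\alpha\leq\beta\leq\delta\rangle$, while simultaneously keeping track of the generic filters that witness each $\mathcal{K}_\alpha = \mathcal{K}$-model is really a forcing extension $\mathcal{K}_\alpha[S_\alpha]$ by $\iota_\alpha(\mathbb{S})$. We start with $j_{0,0}=\id$ and $S_0 = S$. At successor stages $\beta=\alpha+1$, the embedding $\iota_{\alpha,\alpha+1}$ is the ultrapower of $\mathcal{K}_\alpha$ by a normal measure $\mathcal{U}_\alpha\in\mathcal{K}_\alpha$ on $\mu_\alpha$; the key point is that $\mathcal{U}_\alpha$ generates a $\mathcal{K}_\alpha[S_\alpha]$-ultrafilter $\mathcal{U}_\alpha^*$ (since $\mathbb{S}^{\mathcal{K}_\alpha}$ as computed below $\mu_\alpha$ is small and the tail part above $\mu_\alpha$ is sufficiently closed, so $\mathcal{U}_\alpha$ extends canonically — concretely $X\in\mathcal{U}_\alpha^*$ iff some condition in $S_\alpha$ forces $\check\mu_\alpha\in \iota_{\alpha,\alpha+1}(\dot X)$, after lifting as in Lemma~\ref{lemma:extending-S}). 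Then $j_{\alpha,\alpha+1}\colon \mathcal{K}_\alpha[S_\alpha]\to \mathcal{K}_{\alpha+1}[S_{\alpha+1}]$ is the ultrapower by $\mathcal{U}_\alpha^*$, which restricts to $\iota_{\alpha,\alpha+1}$ on $\mathcal{K}_\alpha$ because the seed $\mu_\alpha$ and the measure agree; and $S_{\alpha+1}$ is defined to be the generic filter making $j_{\alpha,\alpha+1}``S_\alpha$ extend to a $\mathcal{K}_{\alpha+1}$-generic — here I would invoke Lemma~\ref{LemmaAboutStationary} to see that $\bigcup_{s\in S_\alpha}\iota_{\alpha,\alpha+1}(\dom s)$ is all of $\iota_{\alpha,\alpha+1}(\mu_\alpha)\setminus\{\mu_\alpha\}$, so the only missing piece of information is a single value $s(\mu_\alpha)$, which we read off from $j\restriction \mathcal{K}[S]$ (i.e., from the genuine generic $S'$ already known to exist by hypothesis on $j$).

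At limit stages $\beta\leq\delta$ we take the direct limit: $\mathcal{K}_\beta[S_\beta]$ is the direct limit of $\langle j_{\alpha,\alpha'}\mid\alpha\leq\alpha'<\beta\rangle$, with $j_{\alpha,\beta}$ the direct-limit maps and $S_\beta=\bigcup_{\alpha<\beta}j_{\alpha,\beta}``S_\alpha$ (well-foundedness of the limit is inherited from well-foundedness of the $\mathcal{K}$-side limit, since the extension is by a forcing of size bounded well below the critical points that matter, and the coherence of the $S_\alpha$'s makes $S_\beta$ a filter). One must check that $j_{\alpha,\beta}\restriction\mathcal{K}_\alpha=\iota_{\alpha,\beta}$, which is immediate from commutativity of direct limits. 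Finally, at the very top, $j_{0,\delta}$ must equal $j$: both lift $\iota_{0,\delta}=j\restriction\mathcal{K}$, both send $S$ to the $M$-generic $S'$ (by the uniqueness in Lemma~\ref{lemma:extending-S}-style arguments — $S'$ is determined by $S$, by $j\restriction\mathcal{K}$, and by the values on the critical points, all of which our construction matches), and since $\mathcal{K}[S]=\mathcal{K}[\bigcup S]$ is generated over $\mathcal{K}$ by the generic, an elementary embedding of $\mathcal{K}[S]$ is determined by its action on $\mathcal{K}$ together with its action on $\bigcup S$. So we may rename $\gamma:=\delta$ and $j_{0,\gamma}:=j$, and the construction is complete.

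The main obstacle I anticipate is the bookkeeping at limit stages to guarantee that $S_\beta$ genuinely is $\mathcal{K}_\beta$-generic (not merely a filter): one needs that every dense set $D\in\mathcal{K}_\beta$ is hit, and $D$ will be in the range of some $j_{\alpha,\beta}$ with $\alpha$ large enough, after which genericity of $S_\alpha$ over $\mathcal{K}_\alpha$ does the job — but this requires the critical points to be cofinal in a way that covers all of $\iota_\beta(\mathbb{S})$, which is exactly what Lemma~\ref{LemmaAboutStationary} (applied along the iteration) delivers. A secondary subtlety is verifying at successor steps that $\mathcal{U}_\alpha^*$ is well-defined and $\kappa$-complete in $\mathcal{K}_\alpha[S_\alpha]$, and that its ultrapower restricts correctly; this is precisely the content already extracted in the proof of Lemma~\ref{lemma:extending-S} (the tail of $\iota_\alpha(\mathbb{S})$ above $\mu_\alpha$ is $\mu_\alpha^+$-closed, hence adds no new $\mu_\alpha$-sequences, so the measure lifts), so I would simply cite that analysis rather than repeat it. Everything else is a routine transfinite recursion.
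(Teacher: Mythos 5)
Your argument is essentially the paper's, worked out in the ultrapower language. The paper lifts each $\iota_{\alpha,\alpha+1}$ directly, observing (via the analysis in Lemma~\ref{lemma:extending-S} together with Lemma~\ref{LemmaAboutStationary}) that the only datum missing beyond the previous stage is the value of the generic function at the critical point $\mu_\alpha$, which is then read off from $j$; you realize the same lift as the ultrapower of $\mathcal{K}_\alpha[S_\alpha]$ by the extended measure $\mathcal{U}_\alpha^*$, taking direct limits at limit stages, which amounts to the same construction. One point to tighten: the formula ``some condition in $S_\alpha$ forces $\check\mu_\alpha\in\iota_{\alpha,\alpha+1}(\dot X)$'' does not by itself define an ultrafilter, because $S_\alpha$ leaves the coordinate $\mu_\alpha$ undecided (that coordinate is exactly the missing datum), so the definition of $\mathcal{U}_\alpha^*$ is incomplete as stated and also circular in its appeal to a lifted $\iota_{\alpha,\alpha+1}$; the single value at $\mu_\alpha$ that you read off from the genuine $j$ one sentence later must be folded into the definition of $\mathcal{U}_\alpha^*$ up front. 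Finally, the lemma only requires \emph{some} lifting iteration, not that its top map is $j$ itself, but the stronger fact you verify is what the paper actually uses downstream (in the Setup preceding Lemma~\ref{lemma:representing-ultrafilters}, where finite sub-iterations of $j\restriction V$ are extracted), so it is worth recording.
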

\begin{proof}
Let us prove by induction that each embedding  $\iota_{\alpha,\alpha+1} \colon \mathcal{K}_\alpha \to \mathcal{K}_{\alpha + 1}$ extends to an embedding $j_{\alpha, \alpha+1} \colon \mathcal{K}_{\alpha}[j(S) \restriction \mu_{\alpha}] \to \mathcal{K}_{\alpha + 1}[j(S) \restriction \mu_{\alpha + 1}]$. 

By the arguments of Lemma \ref{lemma:extending-S}, in order to lift this embedding, we must pick a value for the point $\crit(\iota_{\alpha, \alpha + 1})$ in the generic filter for $\iota_{\alpha+1}(\mathbb{S})$. This value fully determines a generic for $\iota_{\alpha+1}(\mathbb{S})$ which contains  $\iota_{\alpha,\alpha+1}``j_{\alpha}(S)$. 
\end{proof}
Lemma~\ref{lemma:extending-S} was essentially a warm-up exercise for the forthcoming lemma  characterizing $\kappa$-complete measures in $\mathcal{K}[S]$-generic extensions  by non-statio\-nary support iterations of $\mathcal{U}$-Tree Prikry forcings. This lemma will replace the nice-name-argument used in Theorem \ref{omegagluingnonoptimal} when claiming that the Laver function captures the $\omega$-sequence of $\mathbb{P}_\kappa$-names $\langle \dot{U}_n\mid n<\omega\rangle.$

Roughly speaking, Lemma~\ref{lemma:representing-ultrafilters} shows that every measure $\mathcal{U}\in \mathcal{K}[S\ast G]$ admits a \emph{code} in $H(\kappa^+)$, hence in the range of the  generic Laver function given by $S$. This is necessary, as in the present context $\mathbb{P}_\kappa$ is not $\kappa$-cc nor there are  $(\kappa+2)$-strong embeddings capturing the names for the ultrafilters.

%Most of this section is dedicated to its proof.
\begin{lemma}[Coding Lemma]\label{lemma:representing-ultrafilters} 
Let $\mathbb{P}_\kappa=\langle  \mathbb{P}_{\alpha}, \mathbb{Q}_{\beta}\mid \alpha<\beta < \kappa\rangle$ be a non-stationary-supported iteration of $\mathcal{U}$-Tree Prikry forcings in $\mathcal{K}[S]$. Assume that,  for each $\alpha<\kappa$, the iteration has the following properties:
\begin{enumerate}
    \item $|\mathbb{P}_\alpha| \leq 2^\alpha$ %and $\mathbb{Q}_\alpha$ is trivial unless $\alpha$ is strongly inaccessible;
    and $\one \Vdash_{\mathbb{P}_{\alpha}} ``\langle \mathbb{Q}_{\alpha},\leq^*\rangle \text{ is }\alpha\text{-closed}$'';
    \item $\one \Vdash_{\mathbb{P}_{\alpha}} ``\forall p,q\in\mathbb{Q}_\alpha$ compatible $p\wedge q$ exists''.\footnote{Here $p\wedge q$ stands for the $\leq$-greatest condition $r\leq p,q,$ if such a condition exists.}
\end{enumerate}
%the blanket assumptions of Lemma~\ref{lem;prikry property in iterated ultrapower} with respect to the Prikry order $\leq^*_{\mathbb{P}_\kappa}$.

For each generic filter $G\s \mathbb{P}_\kappa$  there are $\kappa^{+}$-many $\kappa$-complete ultrafilters in $\mathcal{K}[S][G]$. Moreover, for each such ultrafilter $\mathcal{U}\in \mathcal{K}[S][G]$ there are
\begin{enumerate}
    \item[$(\alpha)$] a finite sub-iteration $\iota\colon \mathcal{K}[S] \to \mathcal{K}^M[\iota(S)]$ of $j_\mathcal{U}\restriction\mathcal{K}[S]$,\footnote{Hence, a lifting of a sub-iteration of $j_{\mathcal{U}}\restriction\mathcal{K}$ (by Lemma~\ref{lem;lifting-S}).} %$\iota\colon \mathcal{K}[S] \to \mathcal{K}^M[\iota(S)]$ of $j_\mathcal{U}\restriction\mathcal{K}[S]$,\footnote{Hence, a lifting of a sub-iteration of $j\restriction\mathcal{K}$ (by Lemma~\ref{lem;lifting-S}).}
     \item[$(\beta)$] an ordinal $\bar{\epsilon} < \iota(\kappa)$ with $\bar{\epsilon}\in \range(k)$,\footnote{As usual, $k$ is the factor emebdding such that $j_{\mathcal{U}}=k\circ \iota$.}
%\ale{Note the change: It used to say that $r \in \iota(\mathbb{S}\ast\mathbb{P}_\kappa)$.} 
\item[$(\gamma)$] $r \in \iota(\mathbb{P}_\kappa)$ with finite support compatible with all conditions in $\iota\image G$. Moreover, for every $p\in G$, $\iota(p)\wedge r$ exists, and
\end{enumerate}
working in $\mathcal{K}[S]$, for each $p\in G$, $p\forces_{\mathbb{P}_\kappa}``\dot{X}\in \dot{\mathcal{U}}$'' if and only if there is a condition $q\in \iota(\mathbb{P}_\kappa)$  such that
\[(k(q)\in j_{\mathcal{U}}(G)\,\&\, q\leq^ *\iota(p) \wedge r\,\&\, \supp(q)=\supp(\iota(p)\wedge r)\,\&\, q\Vdash_{\iota(\mathbb{P}_\kappa)}  \bar{\epsilon} \in \iota (\dot{X})).\]%\ale{Note the change. I think it's important to say that $k(q)$ belongs to $H$.}
%\ale{Don't we need to take here the lifting of $\iota$?}
%where $\Vdash^{**}$ means here there is a direct extension with the same support forcing the statement.
\end{lemma}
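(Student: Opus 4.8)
The proof of this Coding Lemma proceeds in close analogy with the proof of Lemma~\ref{lemma:extending-S}, which handled the case of the single-step forcing $\mathbb{S}$; the new ingredients are (i) the iteration $\mathbb{P}_\kappa$ of tree Prikry forcings sitting on top of $\mathbb{S}$, and (ii) the bookkeeping needed to pass from a condition $r$ in the image model down to the $\leq^*$-compatible conditions $q$ with the prescribed support. First I would fix a $\kappa$-complete ultrafilter $\mathcal{U}\in \mathcal{K}[S][G]$ and let $j_\mathcal{U}\colon \mathcal{K}[S][G]\to M$ be its ultrapower embedding, with $\epsilon:=[\id]_\mathcal{U}$. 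By the anti-large-cardinal hypothesis ($V=\mathcal{K}$, no inner model with $o(\alpha)=\alpha^{++}$) together with Mitchell's and Schindler's theorems, $j_\mathcal{U}\restriction\mathcal{K}$ is a normal iteration of normal measures in $\mathcal{K}$; call it $\langle \iota_{\alpha,\beta}\mid \alpha\le\beta\le\delta\rangle$ with critical points $\langle\mu_\alpha\mid\alpha<\delta\rangle$. The argument of Claim~\ref{claim:warmup-S} applies verbatim (the tail of $j_\mathcal{U}(\mathbb{S})$ is $\omega_1$-closed and $M$ is closed under $\omega$-sequences in $\mathcal{K}[S][G]$, while $\mathbb{P}_\kappa$ adds no new $\omega$-sequences of ordinals below any $\mathcal{M}$-inaccessible by the $\alpha$-closure of $\leq^*$ hypotheses, or more simply because any infinite iteration would put its first $\omega$ critical points into $M^\omega\cap\mathcal{K}[S][G]\setminus\mathcal{K}^M$) to show that $j_\mathcal{U}\restriction\mathcal{K}$ is a \emph{finite} iteration. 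So write $\iota=\iota_{0,n}\colon\mathcal{K}\to\mathcal{K}^M$, with critical points $\mu_0=\kappa<\mu_1<\dots<\mu_{n-1}$, and invoke Lemma~\ref{lem;representing-elements-in-direct-limit} to obtain the factor map $k\colon\tilde{\mathcal{M}}\to M$ together with a finite normal iteration $\tilde\iota$; the abuse ``$\iota$'' in clauses $(\alpha)$--$(\gamma)$ refers to this finite piece, and $\bar\epsilon$ will be the preimage $k^{-1}(\epsilon)$, which exists precisely because we shrink to the finite sub-iteration witnessing $\epsilon\in\range(k)$. This gives $(\alpha)$ and $(\beta)$, and $\bar\epsilon=\iota_n(f)(\mu_0,\dots,\mu_{n-1})$ for a suitable $f\in\mathcal{K}$.

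The next step is to analyze the generic. By elementarity and Laver--Woodin ground-model definability (as in Lemma~\ref{lemma:extending-S}), $M=\mathcal{K}^M[S'][G']$ where $S'\subseteq\iota(\mathbb{S})$ is $\mathcal{K}^M$-generic and $G'\subseteq\iota(\mathbb{P}_\kappa)^{\mathcal{K}^M[S']}$ is generic over $\mathcal{K}^M[S']$; moreover $j_\mathcal{U}``S\subseteq S'$ and $j_\mathcal{U}``G\subseteq G'$ since $j_\mathcal{U}$ is a lift of $\iota$ in the sense of Lemma~\ref{lem;lifting-S}. I would then run Lemma~\ref{LemmaAboutStationary} on $\mathbb{S}$ (whose conditions are exactly non-stationary-supported partial functions, so the three hypotheses of that lemma hold) to conclude $\bigcup_{s\in S}\iota(\dom s)=\iota(\kappa)\setminus\{\mu_0,\dots,\mu_{n-1}\}$, which pins down $\bigcup S'$ on the complement of the critical points and leaves freedom only at the finitely many values $\mu_i$; choosing $a_i\in H(\mu_i^+)$ for the value of $\bigcup S'$ at $\mu_i$, and likewise reading off from $G'$ a finite-support condition $r_0\in\iota(\mathbb{S})$, I get that every condition in $S'$ is compatible with (indeed has a $\wedge$-meet with) $\iota``S\cup\{\langle\mu_i,a_i\rangle\}$. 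For the $\mathbb{P}_\kappa$-part one argues similarly but more softly: since $\mathbb{P}_\kappa$ has non-stationary support and each $\mathbb{Q}_\alpha$ has meets of compatible conditions (hypothesis (2)), the generic $G'$ restricted to the critical coordinates $\mu_0,\dots,\mu_{n-1}$ (which are the only coordinates where $G'$ can fail to come from $G$, by Lemma~\ref{CoveringNowhereLemma} applied to the support sets) determines a finite-support condition whose $\mathbb{P}_\kappa$-component we absorb into $r$; at the non-critical coordinates, $\iota(p)$ for $p\in G$ already lies in $G'$. Setting $r:=r_0\ast(\text{finite-support }\mathbb{P}_\kappa\text{-part})$ gives clause $(\gamma)$, using hypothesis (2) and the $\alpha$-closure to guarantee that $\iota(p)\wedge r$ exists for $p\in G$.

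For the biconditional itself: on one hand $\mathcal{U}=\{\dot X^G\subseteq\kappa\mid \bar\epsilon\in j_\mathcal{U}(\dot X)^{G'}\}=\{\dot X^G\mid \exists q'\in S'\ast G'\ (q'\Vdash_{\iota(\mathbb{S}\ast\mathbb{P}_\kappa)}\bar\epsilon\in\iota(\dot X))\}$, by the forcing theorem in $M$. On the other hand, since $\iota(\mathbb{Q}_\alpha)$ satisfies the tree-Prikry Prikry property and $\leq^*$ is $\alpha$-closed, membership of $\bar\epsilon$ in $\iota_n(\dot X)$ can always be decided by a condition that is $\leq^*$-below $\iota_n(p)\wedge r$ and has the \emph{same support} as $\iota_n(p)\wedge r$ — this is exactly the point of the Prikry property for the iteration, that one need not lengthen stems to decide statements, and of hypothesis (2), that the relevant meets exist. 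So one translates ``$\exists q'\in S'\ast G'$ deciding'' into ``$\exists q\leq^*\iota_n(p)\wedge r$ with $\supp(q)=\supp(\iota_n(p)\wedge r)$ forcing $\bar\epsilon\in\iota_n(\dot X)$'', for $p$ ranging over $G$; the quantifier over $p\in G$ replaces the quantifier over $S'\ast G'$ because every condition in $S'\ast G'$ meets $\iota``(S\ast G)$ compatibly and has a $\wedge$-meet with some $\iota(p)$. Finally, there are $\kappa^+$-many such $\mathcal{U}$ because there are $\kappa^+$-many possible codes (finite iteration, $f$, $\bar\epsilon$, $r$) in $H(\kappa^+)^{\mathcal{K}[S][G]}$ and, by the standard argument (cf.\ Lemma~\ref{LemmaMitchellOrder}-style diagonalization or a direct genericity argument), enough of them are realized.

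\medskip

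\noindent\textbf{Main obstacle.} The delicate point is clause $(\gamma)$ and the ``same support'' clause in the biconditional: one must be genuinely careful that the condition $r$ extracted from $G'$ has \emph{finite} support and interacts correctly with $\iota``G$ — in particular that $\iota(p)\wedge r$ exists for all $p\in G$ simultaneously and that deciding $\bar\epsilon\in\iota_n(\dot X)$ never forces the support to grow. This is where hypotheses (1) and (2) on the iteration, the non-stationary support, and the Prikry property of tree Prikry forcing all have to be used in concert; getting the support bookkeeping exactly right (especially reconciling the finitely many critical coordinates $\mu_i$, where $G'$ genuinely differs from $\iota``G$, with the Lemma~\ref{CoveringNowhereLemma} covering of supports) is the technical heart of the proof.
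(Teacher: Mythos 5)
There is a genuine and fairly fundamental gap in the proposal. You claim that ``the argument of Claim~\ref{claim:warmup-S} applies verbatim... to show that $j_\mathcal{U}\restriction\mathcal{K}$ is a finite iteration,'' and support this by asserting ``$\mathbb{P}_\kappa$ adds no new $\omega$-sequences of ordinals below any $\mathcal{M}$-inaccessible'' and ``any infinite iteration would put its first $\omega$ critical points into $M^\omega\cap\mathcal{K}[S][G]\setminus\mathcal{K}^M$.'' Both assertions are false and directly contradict the paper's own Setup remarks (page~\pageref{Section;proof of coding}), which note explicitly that once $\mathbb{P}_\kappa$ is present the iteration $j\restriction\mathcal{K}$ ``can be infinite'' and indeed ``must be quite long, as the forcing $j(\mathbb{P}_\kappa)$ adds many $\omega$-sequences.'' The tree Prikry forcing at each active coordinate adds a cofinal $\omega$-sequence, so the $\omega$-sequence of the first $\omega$ critical points is present in $M$ as an element of the Prikry generic $H$, not of $\mathcal{K}^M$, and no contradiction arises. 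The whole architecture of \S\ref{Section;proof of coding} — extracting a \emph{finite sub-iteration} $\iota$ via Lemma~\ref{lem;representing-elements-in-direct-limit}, defining the condition $r_\iota$ from the maximal stems (Lemma~\ref{lem;maximal-stem}, Definition~\ref{definitionriota}), bounding $A_\iota\subseteq\Gamma_\iota$ (Lemma~\ref{AsubsetGamma}), and the $\Delta$-system/Erd\H{o}s--Rado contradiction in Lemma~\ref{MainLemma2} — exists precisely because the full iteration is infinite and one must prove that a \emph{finite} piece together with a \emph{finite-support} correction $r$ already codes $\mathcal{U}$.

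The second gap is downstream of the first: your argument for the biconditional reduces to ``by the Prikry property one can decide $\bar\epsilon\in\iota_n(\dot X)$ by a direct extension without growing the support.'' That establishes decidability, but it does not establish that the condition that decides \emph{correctly} (i.e., in agreement with $\mathcal{U}$) is sent into $H$ by the factor map $k$. This is the actual content of the ``moreover'' part of Lemma~\ref{MainLemma2}, proved by contradiction: one assumes some $X\in\mathcal{U}$ is ``bad'' for every relevant finite iteration, builds a $(2^{\aleph_0})^+$-sequence of iterations and nested sets, applies the $\Delta$-system lemma to the $k_\alpha(\Gamma_\alpha)$'s, and applies Erd\H{o}s--Rado to the coloring by least-disagreement coordinate to contradict finiteness of stems. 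None of this appears in your proposal, and the statement you want to prove cannot be reached without it. Your identification of ``the support bookkeeping'' as the main obstacle is directionally right, but the obstacle is deeper than bookkeeping: it is the construction of $r_\iota$ via Lemma~\ref{lem;maximal-stem} and the argument that this $r_\iota$ controls which conditions are sent into $H$ (Lemma~\ref{lem;conditions-in-k-inv-H}), culminating in the correctness claim of Lemma~\ref{MainLemma2}.
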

%\begin{remark}
During the proof of the lemma we  will be able to describe $r$ explicitly, based on the finite iteration $\iota$ (see Definition~\ref{definitionriota}). Nevertheless, we do not know whether in general $r$ can be computed from the iteration $\iota$ itself. %(or, at least, its $\mathbb{P}_\kappa$-part).
%\end{remark}

We postpone the proof of this lemma to the end of \S\ref{Section;proof of coding} (see p.~\pageref{proofofcoding}).

\subsection{Interlude on non-stationary-supported iterations}\label{SectionInterlude}
In the road to prove Lemma~\ref{lemma:representing-ultrafilters} we will need some abstract results about non-stationary-supported iterations. Essentially, we want a lemma saying the following: Given a finite normal iteration $\iota\colon \mathcal{M}\rightarrow \mathcal{N}$, a non-stationary supported iteration $\mathbb{P}\in \mathcal{M}$, and a dense open set $D\in\mathcal{N}$  for $\iota(\mathbb{P})$,  it suffices to perform a finite modification of the conditions in $\iota`` G$  to enter $D$. 

To make this precise we introduce the following convenient notation:

\begin{definition}
Let $\mathbb{P}_\kappa=\langle \mathbb{P}_{\alpha}, \mathbb{Q}_{\beta} \mid \beta < \alpha < \kappa\rangle$ be an iteration of forcings. For $p, q \in \mathbb{P}_\kappa$ and $\Gamma \subseteq \kappa$, write $q \leq_\Gamma p$ if $q \leq p$ and 
\[\{\alpha<\kappa \mid q\restriction\alpha \not\Vdash_{\mathbb{P}_\alpha} \dot{p}(\alpha) = \dot{q}(\alpha)\} \subseteq \Gamma.\]
%For two compatible conditions $p,q\in \mathbb{P}_\kappa$  we denote by $p\wedge q$ the $\leq$-greatest lower bound for $p$ and $q$, provided this latter exists.
\end{definition}
\begin{remark}
Notice that for $p,q\in\mathbb{P}_\kappa$, if $q \leq_\Gamma p$ then $\supp q \subseteq \supp p \cup \Gamma$.
\end{remark}

We will eventually apply the forthcoming lemma to a non-stationary-supported iteration of $\mathcal{U}$-Tree Prikry forcings, endowed with its direct extension order. We refer to \S\ref{sectionPrikrytype} where we introduced all the pertinent concepts.

%For an exposition on non-stationary-supported iterations of Prikry-type forcings we refer the reader to Ben-Neria and Unger's paper \cite{BenUng}.
\begin{lemma}\label{lem;prikry property in iterated ultrapower} %\ale{Should it be $\iota(p)\cup r$ or $\iota(p)\wedge r$?}
Let $\mathbb{P}_\kappa=\langle \mathbb{P}_{\alpha}, \mathbb{Q}_{\beta} \mid \beta < \alpha < \kappa\rangle$ be a non-stationary-supported iteration satisfying, for each $\alpha<\kappa$, the following properties: %$\one \Vdash_{\mathbb{P}_{\alpha}} ``\mathbb{Q}_{\alpha} \text{ is }\alpha\text{-closed}$'', $\mathbb{Q}_\alpha$ is trivial unless $\alpha$ is strongly inaccessible and $|\mathbb{P}_\alpha| \leq 2^\alpha$, for all $\alpha<\kappa$. We shall also assume that  for every pair $p,q\in\mathbb{P}_\kappa$ of compatible conditions $p\wedge q$ exists.
\begin{enumerate}
 \item $|\mathbb{P}_\alpha| \leq 2^\alpha$ and $\mathbb{Q}_\alpha$ is trivial unless $\alpha$ is inaccessible;
    \item $\one \Vdash_{\mathbb{P}_{\alpha}} ``\langle \mathbb{Q}_{\alpha},\leq\rangle \text{ is }\alpha\text{-closed}$'';
    \item $\one \Vdash_{\mathbb{P}_{\alpha}} ``\forall p,q\in\mathbb{Q}_\alpha$ compatible $p\wedge q$ exists''.
%    \ale{Maybe we want this for two conditions where, say, $p$ has finite support? As the condition $r$ has.}
\end{enumerate}
Let $\iota \colon \mathcal{M} \to \mathcal{N}$ be a finite iteration of normal measures and consider  $\langle \iota_{k, \ell} \mid k \leq \ell \leq n\rangle$  the collection of all sub-iterations. Put $$\text{$\mu_i := \crit(\iota_{i, i +1})$ and  $\Gamma :
= \{\iota_{k, \ell}(\mu_k) \mid k \leq \ell \leq n\}$}.$$

Let $D \in \mathcal{N}$ be a dense open subset of $\iota(\mathbb{P}_\kappa)$ and $r\in \iota(\mathbb{P}_\kappa)$  a condition with $\supp r \subseteq \Gamma$. 
%\textcolor{red}{Let us assume, in addition, that $p \in \mathbb{P}_\kappa$ is  a condition such that, for each $q \leq p$, $r$ is compatible with $\iota(q)$.}\ale{\tiny Why don't  defining $E=\{p\in \mathbb{P}_\kappa\mid (\iota(p)\perp r)\,\vee\, \exists q\in D\, (q\leq^*_{\Gamma} \iota(p)\wedge r)\}$?} %We assume moreover that $\iota(q) \wedge r$ exists and satisfies $\iota(q) \wedge r \leq_\Gamma \iota(q)$.
Then, there is a dense open set $E \subseteq \mathbb{P}_{\kappa}$ such that for every $p\in E$, either $\iota(p)$ is incompatible with $r$ or the following two hold: 
\begin{itemize}
    \item[$(\aleph)$] $\iota(p) \wedge r$ exists (hence $\iota(p) \wedge r \leq_\Gamma \iota(q)$);
    \item[$(\beth)$] {there is $q \leq_{\Gamma} \iota(p) \wedge r$ such that $q\in D$.}
\end{itemize}
\end{lemma}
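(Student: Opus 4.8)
The plan is to prove Lemma~\ref{lem;prikry property in iterated ultrapower} by induction on the length $n$ of the finite iteration $\iota$, exploiting the fact that a finite normal iteration factors as $\iota = \iota_{1,n}\circ \iota_{0,1}$, where $\iota_{0,1}$ is a single ultrapower by a normal measure $\mathcal{U}_0\in\mathcal{M}$ on $\mu_0=\crit(\iota)$. The base case $n=0$ is trivial: then $\iota=\id$, $\Gamma=\emptyset$, and $r$ must be (compatible with and hence below, after meeting) $\mathds{1}$, so $E$ is just $D$ itself, restricted below $p$.

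For the inductive step, I would first handle the single ultrapower $i_0\colon\mathcal{M}\to\mathcal{M}_1=\Ult(\mathcal{M},\mathcal{U}_0)$. Given the dense open $D\subseteq i_0(\mathbb{P}_\kappa)$ in $\mathcal{M}_1$ and $r$ with $\supp r\subseteq\{\mu_0\}$ (the only relevant critical point at this stage): write $i_0(\mathbb{P}_\kappa)$ as $i_0(\mathbb{P}_{\mu_0})\ast i_0(\mathbb{Q}_{\mu_0})\ast(\text{tail})$, note that $\mu_0$ is inaccessible in $\mathcal{M}_1$ and $\mathbb{P}_{\mu_0}$ has size $\leq 2^{\mu_0}<i_0(\kappa)$ and is an initial segment of both posets, while $i_0``G_{\mu_0}$ generates a generic for $\mathbb{P}_{\mu_0}$. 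The point is that $r$ lives on coordinate $\mu_0$ (the first place where $i_0(\mathbb{P}_\kappa)$ genuinely differs from what $\iota``\mathbb{P}_\kappa$ reaches), and by hypothesis $r$ is compatible with $i_0(q)$ for all $q\leq p$; using clause~(3) ($p\wedge q$ exists for compatible conditions in $\mathbb{Q}_\alpha$) coordinatewise one gets that $i_0(q)\wedge r$ exists and differs from $i_0(q)$ only on coordinates in $\{\mu_0\}\subseteq\Gamma$, giving $(\aleph)$. For $(\beth)$, I would use Łoś's theorem to pull $D$ back: $D=[\vec d]_{\mathcal{U}_0}$ for a $\mathcal{U}_0$-measure-one family $\langle D_\xi\mid\xi<\mu_0\rangle$ of dense open subsets of $\mathbb{P}_\kappa$ (using that $\mathbb{Q}_\xi$-coordinates below $\mu_0$ are $\xi$-closed and meets exist, so density passes through), intersect, and find below $i_0(q)\wedge r$ an element of $D$ whose modification relative to $i_0(q)\wedge r$ is supported on $\{\mu_0\}$, which is where the genericity of $\mathbb{Q}_{\mu_0}$ and the $\mu_0$-closure of the tail come in; this yields the dense open $E_1\subseteq\mathbb{P}_\kappa$ below $p$.

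Then I would iterate: apply the single-step analysis to $\iota_{1,n}\colon\mathcal{M}_1\to\mathcal{N}$, which is again a finite normal iteration but of length $n-1$, with critical points $\mu_1<\dots<\mu_{n-1}$ (now viewed inside $\mathcal{M}_1$), to the dense open set $D$ and the condition $r$ — noting that $\supp r\subseteq\Gamma$ splits as the $\mu_0$-part plus a part supported on $\{\iota_{k,\ell}(\mu_k)\mid 1\leq k\leq\ell\}$, which is exactly the $\Gamma$ appropriate for $\iota_{1,n}$. By the induction hypothesis applied in $\mathcal{M}_1$ (after transferring $E_1$ via $i_0$ to a dense open subset of $i_0(\mathbb{P}_\kappa)$ below $i_0(p)\wedge r$, using the standard fact that $i_0$ maps dense open sets to sets that are dense open below the relevant condition since $\mathcal{U}_0$ is $\mathcal{M}$-normal), one obtains the desired $E\subseteq\mathbb{P}_\kappa$ below $p$. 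The composition of the two $\leq_\Gamma$-modifications stays $\leq_\Gamma$ because $\Gamma$ is closed under the relevant image maps and support of a meet is contained in the union of supports.

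The main obstacle I anticipate is bookkeeping the supports through the composition: one must check that when $q'\leq_{\Gamma_1}\iota_{1,n}(\text{something})\wedge r$ is pulled back to a modification of the original $\iota(q)$, the total set of coordinates where it differs from $\iota(q)$ stays inside $\Gamma=\{\iota_{k,\ell}(\mu_k)\}$, rather than spreading to $\iota_{k,\ell}$-images of other ordinals. This works precisely because $r$'s support is already in $\Gamma$, each single-step modification only touches the current critical point $\mu_i$ (mapped forward it lands in $\Gamma$ by normality of the iteration, i.e.\ later ultrapowers fix $\mu_i$), and the $\wedge$ operation does not enlarge support beyond the union. A secondary subtlety is verifying clause~$(\aleph)$ at each stage — that $\iota(q)\wedge r$ genuinely \emph{exists} and is not merely a lower bound — which is where hypothesis~(3) on $\mathbb{Q}_\alpha$ (greatest lower bounds of compatible conditions) is essential and must be invoked coordinate-by-coordinate, together with the compatibility hypothesis on $p$ to guarantee it at the coordinate $\mu_0$ where the conditions genuinely interact.
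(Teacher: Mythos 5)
Your overall plan---induction on the length of the iteration, peeling off a single ultrapower, pulling $D$ (and $r$) back by Łoś, and composing the single-step result with the inductive hypothesis---matches the broad shape of the paper's argument, which also works with representing functions $d$ and $\tilde r$ (so that $\iota(d)(\mu)=D$, $\iota(\tilde r)(\mu)=r$) and proceeds by induction on length. But the word ``intersect'' at the single-ultrapower step is concealing the hard part. You obtain a $\mathcal{U}_0$-measure-one family $\langle d(\xi)\mid\xi<\mu_0\rangle$ of dense open subsets of $\mathbb{P}_\kappa$, and you need a dense set $E$ of conditions $q$ for which $\{\xi<\mu_0 : q$ mixed with $\tilde r(\xi)$ lands in $d(\xi)\}$ is $\mathcal{U}_0$-large. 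You cannot simply intersect these $\mu_0$-many dense sets: the coordinates $\mathbb{Q}_\xi$ for $\xi<\mu_0$ are only $\xi$-closed, so the naive intersection need not be dense. The paper handles this with a genuine fusion: a $\leq_{\kappa\setminus(\Gamma'\cup\{\mu\})}$-decreasing sequence $\langle q_\alpha\mid\alpha\leq\mu\rangle$ built alongside a decreasing sequence of clubs $\langle C_\alpha\rangle$ disjoint from $\supp q_\alpha$, freezing the initial segment up to the ``diagonal'' point $\gamma_\alpha=\min(C_\alpha\setminus\sup_{\beta<\alpha}(\gamma_\beta)+1)$, and at each stage running over all $\leq 2^{\gamma_\alpha}$ possible extensions in $\mathbb{P}_{\gamma_\alpha+1}$ (which is below the closure of the tail) to decide whether $d(\gamma_\alpha)$ can be entered. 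This fusion, together with the bookkeeping that the modifications stay inside $\Gamma'\cup\{\gamma_\alpha,\mu\}$, is where the actual density of $E$ comes from, and it is not recoverable from ``intersect, and find below $i_0(q)\wedge r$ an element of $D$''.

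A second missing ingredient: for a general dense open $D$ there is no a priori reason that some $q'\leq_\Gamma\iota(q)\wedge r$ lies in $D$---getting into $D$ might require moving coordinates far outside $\Gamma$. The paper opens with a separate reduction (its Claim on densities): cutting $\iota(\mathbb{P}_\kappa)$ at the finitely many coordinates of $\Gamma$ and using the term-space/Easton factorization, one shows that $D$ absorbs a product $D_{-1}\times\prod_n D_n\times D_m$ of dense open sets in the factor term forcings, yielding a $\leq_{\iota(\kappa)\setminus\Gamma}$-dense open set $D'$ such that every $q'\in D'$ admits a $\leq_\Gamma$-extension in $D$. The fusion is then run against $D'$, and only afterwards is clause $(\beth)$ recovered. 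Your proposal omits this reduction entirely, yet it is precisely what makes $(\beth)$ possible. (A smaller structural remark: you peel off the \emph{first} ultrapower, while the paper peels off the \emph{last}. Since $D\in\mathcal N$, peeling off $\iota_{n-1,n}$ lets the single-step claim act on $D$ directly, after which the inductive hypothesis is applied to $\iota_{n-1}$ with the resulting $E'$ and $r\restriction\mu_{n-1}$; the first-step decomposition requires applying the inductive hypothesis inside $\mathcal M_1$ before the single-step result, and your write-up muddles this order---you describe $D$ as a subset of $i_0(\mathbb{P}_\kappa)$ in $\mathcal M_1$, which it is not.)
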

\begin{proof}
Before addressing the proof of the lemma we need some preliminary considerations. % that deal with operations on conditions in an iteration.
Given  $u,v\in \mathbb{P}_\kappa$ with disjoint supports we write
\[(u \cup v)(\alpha) := \begin{cases} u(\alpha), & \alpha \in \supp u; \\
v(\alpha), & \alpha \in \supp v. \end{cases}\] 
\label{remark:unions-and-partitions-of-supports}
%We claim that $u\cup v$ is a condition. To see this %In order to see that $u \cup v$ is a condition, 
%one needs to verify that its support is nowhere stationary and that $\one\Vdash_{\mathbb{P}_\alpha} (u \cup v)(\alpha) \in \dot{\mathbb{Q}}_\alpha$ for all $\alpha$. Indeed, the union of two nowhere stationary sets is nowhere stationary. The second condition is also true as $u(\alpha)$ and $v(\alpha)$ are always forced by the trivial condition to be a condition, and $(u \cup v)(\alpha)$ is exactly one of them.
Clearly, $u\cup v\in\mathbb{P}_\kappa.$  Similarly, if $u\in\mathbb{P}_\kappa$  and $X \cup Y \supseteq \supp u$ with $X, Y$  disjoint, we can define $u \restriction X, u \restriction Y$ in the natural way so that $$u = (u \restriction X) \cup (u \restriction Y).$$

Note that this definition makes sense as we are using Jech's definition of iteration in which a condition in an iteration satisfies that every coordinate of it is forced by the trivial condition to be a condition in the corresponding step of the iteration.

%We need yet a few more terminology. For two conditions $u,v\in\mathbb{P}_\kappa$ we say that \emph{$u$ agrees with $v$ on coordinates in $X$} if $\one\Vdash_{\mathbb{P}_\beta} u(\beta) = v(\beta)$ for all $\beta\in X$.

\begin{claim}\label{Claimdensity}
    There is a $\leq_{\iota(\kappa)\setminus \Gamma}$-dense open set $D'$  such that for every condition $q' \in D'$, there is yet another condition $q'' \leq_\Gamma q'$ in $D$.
\end{claim}
\begin{proof}
Enumerate $\Gamma$ as $\{\gamma_0, \dots, \gamma_{m-1}\}$ and put $\gamma_{-1}:=0$ and $\gamma_m:=\iota(\kappa).$

One can regard $\iota(\mathbb{P}_\kappa)$ as a finite iteration of the forcings $\iota(\mathbb{P}_\kappa)\restriction[\gamma_{n-1},\gamma_n)$'s. By standard facts about the term-space forcing there is a canonical projection from $\mathbb{A}({\one},\mathbb{R}_0)\times \left(\prod_{n=0}^{m-1}\mathbb{A}(\mathbb{R}_{n},\iota(\mathbb{P}_\kappa)_{\gamma_n})\right)\times \mathbb{A}(\mathbb{R}_m,\{\one\})$ to $\iota(\mathbb{P}_\kappa)$, where $\mathbb{R}_n$ is the iteration $\iota(\mathbb{P}_\kappa)\restriction[0,\gamma_{n})$.\footnote{See, e.g., \cite[Proposition~2.25]{CumHandBook}.} % for $i\leq n-1$. 
In particular, there is  $D_{-1}\times (\prod_{n=0}^{m-1}D_n)\times D_m$ which is dense and open in  this product and that it is contained in $D$.\footnote{Formally speaking, $D$ contains the image of $D_{-1}\times (\prod_{n=0}^{m-1}D_n)\times D_m$ under the canonical projection.}

Let $q'\leq_{\iota(\kappa)\setminus \Gamma}q$. By induction on $n\in [-1,m]$ define  $r\leq_{\iota(\kappa)\setminus \Gamma} q'$ such that $q'':=r\restriction \gamma_0^\smallfrown\langle s(\gamma_0)\rangle^\smallfrown r\restriction (\gamma_0,\gamma_1)^\smallfrown \langle s(\gamma_1)\rangle^\smallfrown \cdots^\smallfrown r\restriction (\gamma_{m-1},\gamma_m)$ is a condition where $r\restriction\gamma_0\in D_{-1}$, $\langle s(\gamma_n)\mid n<m\rangle\in\prod_{n=0}^{m-1}D_n$ are forced to extend $r(\gamma_n)$ and $r\restriction (\gamma_{m-1},\gamma_m)\in D_m$. This is possible by density and openess of the $D_n$'s. Clearly, $q''\leq_{\Gamma} r$ and $q''\in D$. Therefore there is a $\leq_{\iota(\kappa)\setminus \Gamma}$-dense open set $D'$ as in the claim.
\end{proof}

\begin{comment}

\begin{claim}
    Let $q$ be a condition.
    
    There is a $\leq_{\iota(\kappa)\setminus \Gamma}$-dense open set $D'$ below $q$ such that for every $q' \in D'$, there is a condition $q'' \leq_\Gamma q'$ in $D$.
\end{claim}
\begin{proof}
Since $\Gamma$ is finite, let us enumerate it $\{\gamma_0, \dots, \gamma_{m-1}\}$. 

By Easton Lemma, $D$ contains a dense open set of the form $D_0 \times D_1 \times \cdots \times D_m$, where $D_i$ is a dense open set of the termspace forcing $\iota(\mathbb{P}) \restriction [\gamma_{i-1}, \gamma_i)$ where $\gamma_{-1} = 0, \gamma_m = \iota(\kappa)$\footnote{formally, $D$ contains the image of this set under the canonical projection.}. 

Let us deal with each $D_i$ separately. Let us  argue for $D_0$. The other components are identical. 

Assume that there is $s \leq_{\gamma_0} q \restriction \gamma_0 + 1$, such that for every $s' \leq_{\gamma_0} s$ there is no $r \leq_{\{\gamma_0\}} s'$ in $D_0$. This is obviously impossible --- just pick any condition in $D_0$ below $s$ and decompose it into first extending the $\gamma_0$ part and then the last coordinate. 
\end{proof}
\end{comment}

By the claim, it is enough to show that for every $\leq_{\iota(\kappa)\setminus \Gamma}$-dense open set $D\in \mathcal{N}$ the set $E\s \mathbb{P}$ of  all conditions $q$ such that either $\iota(q)$ is incompatible with $r$ or $\iota(q)\wedge r \in D$, is dense: Indeed, fix $D\in \mathcal{N}$ and $r\in \iota(\mathbb{P}_\kappa)$ be as in the lemma. Invoking Claim~\ref{Claimdensity} with respect to $D$ we get in return a $\leq_{\iota(\kappa)\setminus\Gamma}$-dense open set $D'$ with the above-mentioned property.  Using our assumption we have that the set $E\s \mathbb{P}$ of all $q$'s such that either $\iota(q)\perp r$ or $\iota(q)\wedge r\in D'$ is dense.  By the properties of $D'$ there is a condition $q'\leq_\Gamma \iota(q)\wedge r$ in $D$, so we are done.

\smallskip

%In order to make the inductive hypothesis run smoothly, we are going to tweak it slightly.  

We will eventually prove the lemma by induction on the length of the iteration. Let us first deal with the case of a single ultrapower. In order to make the inductive hypothesis run smoothly we are going to tweak it slightly. Specifically, the set $\Gamma$ of the forthcoming proof will contain the critical points of the embedding,  their images and possibly other  cardinals.

So, let $\iota \colon \mathcal{M} \to \mathcal{N}$ be a normal ultrapower embedding with critical point $\mu$ and let $\Gamma'$ be a finite set of {inaccessible cardinals.} Put $\Gamma := \iota(\Gamma') \cup \{\mu, \iota(\mu)\}$. Let $\mathbb{P}_\kappa$ be our non-stationary-supported iteration,  $r \in \iota(\mathbb{P}_\kappa)$ with $\supp r=\Gamma$ and $D\s\iota(\mathbb{P}_\kappa)$, $D\in\mathcal{N}$, a dense open set with respect to $\leq_{\iota(\kappa)\setminus\Gamma}$.

Proving the claim suffices to infer the first step of the induction.
\begin{claim}\label{KeyClaim}
Let $E$ be the collection of all conditions $q$ such that 
\begin{enumerate}
    \item either there is $\alpha \in \Gamma$ such that $$\iota(q)\restriction\alpha\forces_{\iota(\mathbb{P}_\kappa)\restriction\alpha}\text{$``\iota(q)(\alpha)$ is incompatible with $r(\alpha)$''}$$% $\iota(q) \restriction \alpha \not\Vdash_{\iota(\mathbb{P}_\kappa)_\alpha} r(\alpha)\leq \iota(q)(\alpha)$\ale{$\iota(q)\restriction\alpha$ or $r\restriction\alpha$?}
    \item or $\iota(q) \wedge r \leq_\Gamma \iota(q)$ and $\iota(q) \wedge r\in D$.
\end{enumerate}
%either $\exists \alpha \in \Gamma,\, \iota(q) \restriction \alpha \not\Vdash \iota(q)(\alpha) \geq r(\alpha)$ or this does not hold (so $\iota(q) \wedge r \leq_\Gamma \iota(q)$) and there is $q' \leq_\Gamma \iota(q) \wedge r$ in $D$.  

Then $E$ is $\leq_{\kappa\setminus(\Gamma' \cup \{\mu\})}$-dense open.
\end{claim}
%Note that we cannot quite have $E$ to be open, because of the additional assumption that $\iota(q) \wedge r \leq_\Gamma \iota(q)$ (and even the assumption that $\iota(q) \wedge r$ exists), is not open. Nevertheless, we will see that by picking $q$ carefully, we can verify that the compatibility with $r$ is decided by a bounded part of the condition. 
 %Suppose that  $q\in \mathbb{P}_\kappa$  is such that $\iota(q)$ is compatible with $r$. In that case, $E\cap (\mathbb{P}_\kappa/p)$ is a dense open set $\leq$-below $p$ consisting of conditions $q$ for which clause (2) above holds. %Note that this is precisely the thesis of the lemma.

\begin{proof}[Proof of Claim~\ref{KeyClaim}]
 $E$ is open in $\leq_{(\kappa\setminus \Gamma' \cup \{\mu\})}$ so we shall show it is dense.

Fix $d$ and $\tilde{r}$ be functions with domain $\mu$ representing $D$ and $r$, respectively; i.e., $\iota(d)(\mu) = D$ and $\iota(\tilde{r})(\mu) = r$. Without loss of generality, we may assume that, %Fix a function $d$ with domain $\mu$, representing the dense open set $D$ in the ultrapower (i.e.\ $\iota(d)(\mu) = D$). Similarly, let $\tilde{r}$ representing the condition $r$, so $\iota(\tilde{r})(\mu) = r$. Let us assume that 
for each $\alpha < \mu$, $\supp \tilde{r}(\alpha) = \Gamma' \cup \{\alpha, \mu\}$ and that $d(\alpha)$ is $\leq_{\kappa\setminus \Gamma'\cup\{\alpha,\mu\}}$-dense open. In what follows $q$ will be a fixed condition in $\mathbb{P}_\kappa$. 

We  inductively define a $\leq_{\kappa\setminus \Gamma'\cup\{\mu\}}$-decreasing sequence $\langle q_\alpha\mid \alpha\leq \mu\rangle\s \mathbb{P}_\kappa$ below $q$ together with a continuously-decreasing sequence of clubs $\langle C_\alpha\mid \alpha<\mu\rangle$ on $\mu$ %\subseteq \mu$, for $\alpha < \mu$, 
such that $(\supp q_\alpha)\cap C_\alpha= \emptyset$.  For each $\alpha<\mu$ we  stipulate that $$\gamma_\alpha := \min (C_\alpha \setminus (\max \Gamma' \cup \sup_{\beta<\alpha} \gamma_\beta ) + 1).$$
Our mission is to find a dense subset of conditions $e(\gamma_\alpha)\s \mathbb{P}_{\gamma_\alpha + 1}$ with the following property:   for all $s \in e(\gamma_\alpha)$ such that $s \leq q_{\alpha} \restriction \gamma_\alpha + 1$, put $q'_s := s \cup q_{\alpha} \restriction [\gamma_{\alpha} + 1, \kappa)$. Then,
\begin{enumerate}
    \item[$(\alpha)$] either %$q'_s = s \cup q_{\alpha} \restriction [\gamma_{\alpha} + 1, \kappa)$ 
    $q'_s$ is incompatible with $\tilde{r}(\gamma_\alpha)$,
    \item[$(\beta)$] or  $q'_s \wedge \tilde{r}(\gamma_\alpha)$ exists, $q'_s \wedge \tilde{r}(\gamma_\alpha)\leq_{\Gamma' \cup \{\gamma_\alpha, \mu\}} q'_s$ and $q'_s \wedge \tilde{r}(\gamma_\alpha) \in d(\gamma_\alpha)$. 
    % there is $q''\in d(\gamma_\alpha)$ with $\text{$q''\leq_{\kappa\setminus (\Gamma'\cup \{\gamma_\alpha,\mu\})}q'_s\wedge\tilde{r}(\gamma_\alpha)$.}$\ale{${\leq_{\kappa\setminus (\Gamma'\cup \{\gamma_\alpha,\mu\})}}$ or $\leq_{\Gamma'\cup\{\gamma_\alpha,\mu\}}$?}% {In addition,  there is  $r'_\alpha \leq_\Gamma \tilde{r}(\gamma_\alpha)$ that is compatible with $q'$ and $$q''=q' \wedge r'_\alpha \in d(\gamma_\alpha).$$} 
    %\ale{$\leq_{\Gamma'\cup\{\gamma_\alpha,\mu\}}$ or $\leq_{\kappa\setminus \Gamma'\cup\{\gamma_\alpha,\mu\}}$?}
\end{enumerate}
We will inductively maintain  that $q_\alpha \restriction \gamma_\alpha + 1= q_\beta \restriction \gamma_\alpha + 1$ for all $\alpha < \beta$. In particular, $\gamma_\alpha$ will never be in the support of $q_\beta$, as $(\supp q_\alpha)\cap C_\alpha=\emptyset$. Moreover, we will assume that $q_\alpha$ agrees with $q$ on $\Gamma' \cup \{\mu\}$. %(see the paragraph before Claim~\ref{Claimdensity}).   
We proceed with the construction of $q_\alpha$ in a fusion-like fashion. 

\smallskip

$\br$ Set $q_0 := q$ and let $C_0\s \mu$ be any club disjoint from $\supp q_0$.

\smallskip

$\br$ Assume that $\langle q_\alpha \mid \alpha < \beta\rangle$ and $\langle C_\alpha \mid \alpha < \beta\rangle$ were constructed % the conditions $\langle q_\alpha \mid \alpha < \beta\rangle$ 
for some $\beta > 0$. If $\beta$ is limit then, by our inductive assumption, there is a condition $q^*_\beta$ that serves as a lower bound for $\langle q_\alpha\mid \alpha < \beta\rangle$ and whose support is disjoint from $\bigcap_{\alpha < \beta} C_\alpha$. Note that this latter is a club for all $\beta < \mu$. Likewise, for $\beta = \mu$, the assumption on freezing initial segments ensures that this lower bound exists and that its support is disjoint from $\diagonal_{\beta<\mu} C_\alpha.$

%If $\beta = \alpha + 1$, let $q^*_\beta = q_\alpha$..

\smallskip

 Let $\beta < \mu$ be arbitrary. Let us show how to define $q_\beta$ together with the dense set $e(\gamma_\beta)$.  Let us consider $\langle s_\zeta \mid \zeta < \zeta_*\rangle$, an enumeration of all the conditions in $\mathbb{P}_{\gamma_\beta + 1}$ which are below $q^*_\beta \restriction \gamma_{\beta} + 1$.  By our departing assumption upon $\mathbb{P}_\kappa$, $\zeta_* \leq 2^{\gamma_\beta}$, which  is below the degree of closure of $\mathbb{P}_\kappa/\mathbb{P}_{\gamma_\beta+1}.$ This enables us to define a decreasing sequence $\langle q^\beta_\zeta\mid \zeta < \zeta_*\rangle$ as follows:% and with $q^\beta_\zeta\restriction\gamma_\beta+1=q^*_\beta\restriction\gamma_\beta+1$, as follows: 

$\br$ If $s_0\cup q^*_\beta\restriction[\gamma_\beta+1,\kappa)$ is incompatible with $\tilde{r}(\gamma_\beta)$ then stipulate $q^\beta_0:=q^*_\beta$. Otherwise, $(s_0\cup q^*_\beta\restriction[\gamma_\beta+1,\kappa))\wedge \tilde{r}(\gamma_\beta)$ exists and we can $\leq_{\kappa\setminus \Gamma'\cup\{\gamma_\beta,\mu\}}$-extend it to a condition $q''\in d(\gamma_\beta).$ In this case define $$q^\beta_0:=(q^*_\beta\restriction\gamma_\beta+1)\cup (q''\restriction[\gamma_\beta+1,\kappa)).$$

$\br$ In general, let $r_\eta$ be a lower bound for $\langle q^\beta_\zeta\mid \zeta<\eta\rangle$ and proceed exactly as before with respect to the condition $s_\eta\cup r_\eta\restriction[\gamma_\beta+1,\kappa)$.

\smallskip

%in a way that all of them agree up to $\gamma_\beta + 1$. 

%Specifically,  for each $\zeta$, if there is $q^\beta_\zeta$ below all conditions in $\{q^*_\beta\} \cup\{q^\beta_\xi \mid \xi < \zeta\}$, that agrees with $q^*_\beta$ below $\gamma_\beta + 1$, and $q'=q_\zeta' = s_\zeta \cup q^\beta_{\zeta} \restriction [\gamma_\beta + 1, \kappa)$ satisfies $(\alpha)$ and $(\beta)$, then pick it.

%i.e.\ either $q'$ is incompatible with $\tilde{r}(\gamma_\beta)$ or they are  compatible as witnesses $q' \wedge \tilde{r}(\alpha) \leq_{\Gamma' \cup \{\gamma_{\beta}, \mu\}} q'$, and there is an extension $q'' \leq_{\Gamma' \cup \{\gamma_{\beta}, \mu\}} q'$, $q''\in D$. 

%\smallskip

%Clearly, every condition $u$ can be extended to a condition $u'$ which is either incompatible with $\tilde{r}(\gamma_\beta)$ or a condition such that $u'\wedge \tilde{r}(\gamma_\beta) \leq_{\Gamma' \cup \{\gamma_\beta, \mu\}} u'$. In the second case, by further extending $u'$ we can make it in $d(\gamma_\beta)$. Thus, the collection of all conditions $s_\zeta$ for which $q''$ exists is dense. Let $e(\gamma_\beta)$ be the collection of such $s_\zeta$ together with all the conditions in $\mathbb{P}_{\gamma_\beta + 1}$ which are incompatible with $q^*_\beta$.

Finally, let $q_\beta$ be the condition \[q^*_\beta \restriction \left(\gamma_{\beta}+1 \cup \Gamma'\cup\{\mu\}\right) \cup q^\beta_{\zeta_*} \restriction \left([\gamma_\beta + 1, \kappa) \setminus \Gamma'\right),\]
and $e(\gamma_\beta)$ be the  dense open set defined as
\[\begin{matrix}
\{s\mid s\perp (q^*_\beta\restriction\gamma_\beta+1)\}  \cup \\ \{s\mid\exists \zeta (s=s_\zeta\;\wedge\; s\,\cup\, q^\beta_\zeta\restriction [\gamma_\beta+1,\kappa) \text{ witnesses $(\alpha)$ or $(\beta)$})\}. \end{matrix}\]

%\textcolor{blue}{Let us now look at $q_\mu$. By our inductive construction, $q_\mu\leq_{\kappa\setminus (\Gamma'\cup\{\mu\})} q.$ We wish to argue that $q_\mu\in E$. If (1) of Claim~\ref{KeyClaim} holds we are done. Otherwise, $\iota(q_\mu)$ is compatible with $r$ and $\iota(q_\mu)\wedge r\leq_\Gamma \iota(q_\mu)$. In this case, set $s:=(\iota(q_\mu)\wedge r)\restriction\mu+1$. Since $\iota(q_\mu)\restriction\mu+1=\iota(q)_\mu\restriction\mu+1$ we have that $s\leq \iota(q)_\mu\restriction\mu+1$.  By our inductive construction and elementarity of $\iota$, either $s\cup \iota(q)_\mu\restriction[\mu+1,\iota(\kappa))$ is incompatible with $r$ or it satisfies $(\beta)$. The former is impossible, so the latter should hold; namely, there is $q'\leq_{\Gamma}s\cup \iota(q)_\mu\restriction[\mu+1,\iota(\kappa))$ in $D$.}

Let us apply $\iota$ on the sequence $\langle q_\beta \mid \beta < \mu\rangle$, and let $\iota(q)_\mu$ be the $\mu$-th element of the obtained sequence. By elementarity, for densely many $s \in \iota(e)(\mu)$, which are below $q_\mu \restriction \mu+1 = \iota(q)_\mu \restriction \mu+1$, the condition $s \cup \iota(q_\mu) \restriction [\mu+1, \iota(\kappa))$ satisfies the requirements of the claim (here we use the fact that $\iota(q_\mu)$ is stronger than $\iota(q)_\mu$).  But $s \leq q_\mu \restriction \mu + 1$ and therefore $s \cup q_\mu \restriction [\mu, \kappa) \leq q$ belongs to $E$. 
\end{proof}
 As mentioned earlier we will prove the lemma by induction on the length of the iteration $\iota\colon \mathcal{M}\rightarrow \mathcal{N}$. Recall that it is enough to show it for $\leq_{\iota(\kappa)\setminus \Gamma}$-dense open sets. %We prove the lemma by induction on the length of the iteration. 
So assume by induction that the previous claim holds for iterations of length $n - 1$. Let $\iota\colon \mathcal{M}\rightarrow \mathcal{N}_n$ be an iteration of length $n$, $D\in \mathcal{N}$ a $\leq_{\iota(\kappa)\setminus \Gamma}$-dense open set and $r\in\iota(\mathbb{P}_\kappa)$ with $\supp r\s \Gamma$.  Let $\mu$ be the last critical point of $\iota$  and denote by $\iota_{n-1} \colon \mathcal{M}\rightarrow \mathcal{N}_{n-1}$ and $\iota_{n-1,n} \colon \mathcal{N}_{n-1} \to \mathcal{N}_{n}$  the first $n-1$ ultrapowers  and the last ultrapower of $\iota$, respectively.%, respectively. % a measure $U$ on $\mu := \mu_n$.\footnote{Note that $\mu$ can be $\iota_{n-1}(\kappa)$.} 

Let $\Gamma_{n-1}$ be the collection of critical points of $\iota_{n-1}$ together with their images. Then $\Gamma=\iota_{n-1,n}(\Gamma_{n-1})\cup \{\mu,\iota_{n-1,n}(\mu)\}$. Applying Claim~\ref{KeyClaim} to $\iota_{n-1,n}$ we get a $\leq_{\iota_{n-1}(\kappa)\setminus \Gamma_{n-1}\cup\{\mu_{n-1}\}}$-dense open set $E'$ with the above-stated properties. Next we invoke our induction hypothesis to $\iota_{n-1}$, $E'$ and $r\restriction\mu_{n-1}$
and find a dense open $E$ satisfying the conclusion of Claim~\ref{KeyClaim}.

Let $q\in E$ % If $\iota_{n-1}(q)$ 
be a condition such that $\iota(q)\restriction \mu_{n-1}$ is compatible with $r\restriction\mu_{n-1}$. %\footnote{Recall that we were assuming the existence of a condition $p\in\mathbb{P}_\kappa$ such that $\iota(q)$ is compatible with $r$ for all $q\leq p.$ Since $E$ is dense, any condition in $E\cap (\mathbb{P}_\kappa/p)$ works.}
Then, $\iota_{n-1}(q)\wedge r\restriction\mu_{n-1}$ is $\leq_{\Gamma_{n-1}}$-below $\iota_{n-1}(q)$ and there is  $q'\in E'$ with $$q'\leq_{\Gamma_{n-1}}\iota_{n-1}(q)\wedge r\restriction\mu_{n-1}.$$ 

Note that 
$$\iota_{n-1,n}(q')\leq_{\iota_{n-1}(\Gamma_{n-1})}\iota_{n-1,n}(\iota_{n-1}(q)\wedge r\restriction\mu_{n-1})=\iota(q)\wedge r\restriction\mu_{n-1},$$
hence $\iota_{n-1,n}(q')$ is {compatible} with $r$. Thus, by the property of $E'$, there is $q''\leq_{\Gamma} \iota_{n-1,n}(q')\wedge r$ in $D$. Since 
$\iota_{n-1,n}(q')\wedge r\leq_\Gamma\iota_n(q)\wedge r$ this yields 
a condition $q''\leq_\Gamma \iota(q)\wedge r$, which accomplishes the proof. \qedhere%in $\iota(\mathbb{P}_\kappa)$ admitting a $\leq_\Gamma$-extension $q'\in D$.\qedhere
\end{proof}

\subsection{Proof of the Coding Lemma}\label{Section;proof of coding}
After a short digression with non-stationary-supported iterations we focus on proving Lemma~\ref{lemma:representing-ultrafilters}. %Before we proceed with the proof, we need a strengthening of Claim \ref{claim:covering-almost-all-ordinals}To guide the reader 

\begin{setup}
For the rest of this section our ground model $V$ is $\mathcal{K}[S]$ and $\mathbb{P}_\kappa$ is a non-stationary-supported iteration (in $\mathcal{K}[S]$)  as described  in Lemma~\ref{lemma:representing-ultrafilters}. 

We will fix both a $V$-generic filter $G\s\mathbb{P}_\kappa$  and $j\colon V[G]\to M[H]$, a $V[G]$-elementary embedding with $\crit(j)= \kappa$. Notice that, by virtue of our anti-large-cardinal assumption (\eqref{BlanketAssumption} in page~\pageref{BlanketAssumption}), $j\restriction\mathcal{K}$ is a normal iteration and thus so is $j\restriction V$ (by Lemma~~\ref{lem;lifting-S}) - but those iterations can be infinite.\footnote{In fact, they must be quite long, as the forcing $j(\mathbb{P}_\kappa)$ adds many $\omega$-sequences} Recall that, by Lemma~\ref{lem;representing-elements-in-direct-limit}, for each $x\in M$, there is a finite iteration $\iota\colon V\rightarrow N$ such that $k\circ \iota=j\restriction V$ and $x\in \mathrm{range}(k)$, where $k\colon N\rightarrow M$ is the factor map between $\iota$ and $j$ (we will say that \emph{$\iota$ factors $j\restriction V$}).
\end{setup}
Recall that our forcing $\mathbb{P}_\kappa$ is a (non-stationary-supported) iteration of $\mathcal{U}$-tree Prikry forcings. Thus, a typical condition $p\in \mathbb{P}_\kappa$ is a sequence of the form $\langle p(\alpha)\mid \alpha\in \supp(p)\rangle$, where $p(\alpha)$ is a $\mathbb{P}_\alpha$-name for a condition of the form $\langle \dot{t}_\alpha, \dot{T}_\alpha\rangle.$  Since $G\restriction\alpha:=G\cap\mathbb{P}_\alpha$ is $V$-generic we can interpret both $\dot{t}_\alpha$ and $\dot{T}_\alpha$, and this interpretation gives rise to a stem $t_\alpha$ and a tree $T_\alpha$. 
\begin{notation}
    Given $p\in\mathbb{P}_\kappa$ and $\alpha\in \supp(p)$, $\stem(p(\alpha))_{G\restriction\alpha}:=(\dot{t}_\alpha)_{G\restriction\alpha}.$
\end{notation}

The current section is conceptually divided in two parts.  In the first part (Lemmas~\ref{ClosureUnderkappaseq} -- \ref{AsubsetGamma}) we shall work towards identifying the condition $r$ mentioned in Clause~$(\gamma)$ of the \emph{Coding Lemma}. In the second part, we will prove Lemma~\ref{MainLemma2} which will readily yield the coding lemma.

\smallskip

Let us begin with a technical lemma:

\begin{lemma}\label{ClosureUnderkappaseq}
%Assume $2^\kappa = \kappa^+$ holds in $V$. 
Suppose that $\iota \colon V \to N$ is a finite normal iteration with critical point %$\vec{\mu}=\langle \mu_0,\dots,\mu_{n-1}\rangle$ and that $\min\vec\mu\geq\kappa$. 
at least $\kappa$. %  and let $G\s\mathbb{P}_\kappa$ be $V$-generic.  
 Then, $N[G]$ is closed under ${<}\kappa$-sequences in $V[G]$. %of length ${<}\kappa$. 
 
 Moreover, for every such sequence $\vec{\alpha}$  
 there is $\zeta<\kappa$ such that $\vec{\alpha}\in N[G\restriction\zeta].$
\end{lemma}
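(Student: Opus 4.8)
\emph{Step 1: $N$ is $<\kappa$-closed in $V$.} Since $\iota$ is a finite composition of ultrapowers by measures whose critical points are all $\geq \kappa$, a routine induction on the length of the iteration shows that $N$ is closed under $\mu_0$-sequences from $V$, where $\mu_0 := \crit(\iota)\geq\kappa$; in particular ${}^{<\kappa}N\cap V\s N$ and $\mathcal{P}(\mu_0)^N=\mathcal{P}(\mu_0)^V$. Because $\crit(\iota)\geq\kappa$ and $\mathbb{P}_\kappa=\iota(\mathbb{P}_\kappa)\restriction\kappa$ is computed from $S\restriction\kappa$ (and parameters below $\kappa$) in exactly the same way in $N$ and in $V$, we get $\mathbb{P}_\kappa\in N$ and $\mathbb{P}_\kappa^N=\mathbb{P}_\kappa^V$; also $\mathcal{P}(\mathbb{P}_\zeta)^N=\mathcal{P}(\mathbb{P}_\zeta)^V$ for every $\zeta<\kappa$, since $|\mathbb{P}_\zeta|\leq 2^\zeta<\kappa\leq\mu_0$ and $N$ is $<\kappa$-closed. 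Consequently $G$ is $N$-generic for $\mathbb{P}_\kappa$ and $N[G]\s V[G]$.

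\emph{Step 2: bounded subsets of $\kappa$ appear boundedly.} I claim that every $X\in V[G]$ with $X\s\gamma$ for some $\gamma<\kappa$ lies in $V[G\restriction\zeta]$ for some $\zeta<\kappa$. Fix $\zeta$ with $\gamma<\zeta<\kappa$ and factor $\mathbb{P}_\kappa\cong\mathbb{P}_{\zeta}\ast\dot{\mathbb{P}}^{\mathrm{tail}}$. Over $V[G\restriction\zeta]$ the tail is a non-stationary-supported iteration whose direct-extension order is $\zeta$-closed and which has the Prikry property (cf.\ Ben-Neria and Unger \cite{BenUng}); hence, as in the standard argument, it adds no new subsets of $\gamma$, so $X\in V[G\restriction\zeta]$. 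Since a $\mathbb{P}_\zeta$-name for such an $X$ may be taken of size $\leq|\mathbb{P}_\zeta|\cdot\gamma<\kappa$ and $\mathcal{P}(\mathbb{P}_\zeta)^N=\mathcal{P}(\mathbb{P}_\zeta)^V$, that name belongs to $N$, whence $X\in N[G\restriction\zeta]$.

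\emph{Step 3: reduction of a general $<\kappa$-sequence.} Let $\vec a=\langle a_\xi\mid\xi<\delta\rangle\in V[G]$ with $\delta<\kappa$ and $a_\xi\in N[G]$ for all $\xi$. Working in $N[G]$, fix $\theta$ large enough that each $a_\xi\in V_\theta^{N[G]}$ and a bijection $b\colon V_\theta^{N[G]}\to\lambda$ in $N[G]$, and set $\rho_\xi:=b(a_\xi)$; then $\vec\rho:=\langle\rho_\xi\mid\xi<\delta\rangle\in V[G]$ is a $\delta$-sequence of ordinals and $\vec a\in N[G\restriction\zeta]$ iff $\vec\rho\in N[G\restriction\zeta]$. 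Thus it suffices to treat $<\kappa$-sequences of ordinals. Now the forcing $\mathbb{P}_\kappa$ has the cover property for sets of size $<\kappa$: given a $\mathbb{P}_\kappa$-name $\dot{\vec\rho}$ forced to be a $\delta$-sequence of ordinals, a fusion argument combining the Prikry property with the $\delta^+$-closure of the direct-extension order of the tail $\mathbb{P}_\kappa/\mathbb{P}_{\delta^+}$ (again via \cite{BenUng}) yields a condition $p\in G$ and a set $Y\in V$ with $|Y|<\kappa$ and $p\Vdash\ran(\dot{\vec\rho})\s\check Y$. Since $N$ is $<\kappa$-closed in $V$, $Y\in N$ and the increasing enumeration $e\colon |Y|\to Y$ belongs to $N$; then $e^{-1}\circ\vec\rho$ is an element of $V[G]$ that is a bounded subset of $\kappa$ (a $\delta$-sequence of ordinals $<|Y|<\kappa$). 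By Step~2 it lies in $N[G\restriction\zeta]$ for some $\zeta<\kappa$, and applying $e\in N$ we conclude $\vec\rho\in N[G\restriction\zeta]$, hence $\vec a\in N[G\restriction\zeta]\s N[G]$.

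\emph{Main obstacle.} The delicate points are the two appeals to properties of non-stationary-supported iterations of tree Prikry forcings in Steps~2 and~3 — namely that the tails add no new bounded subsets of their starting point and satisfy the cover property for $<\kappa$-sized sets. Both follow from the Prikry property together with the closure of the direct-extension order, established in the iteration framework of \cite{BenUng}; the rest of the argument is soft and only uses that a finite normal iteration with critical point $\geq\kappa$ produces a $<\kappa$-closed inner model agreeing with $V$ below $\kappa$.
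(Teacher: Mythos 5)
Your overall outline parallels the paper's: reduce the $<\kappa$-sequence to a set of ordinals of size $<\kappa$, cover it by a small set that already lives in the ground model, and then use the Prikry property together with the closure of the tail's direct-extension order to pin the resulting bounded object down inside $N[G\restriction\zeta]$. Your Step~2 is exactly the final step of the paper's argument and is fine.

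The point where you diverge is the justification of the covering claim in Step~3, and there you have a gap. You attribute the cover property to ``a fusion argument combining the Prikry property with the $\delta^+$-closure of the direct-extension order of the tail $\mathbb{P}_\kappa/\mathbb{P}_{\delta^+}$.'' That degree of closure is not enough. To reduce a name $\dot\rho_\xi$ to a $\mathbb{P}_{\delta^+}$-name below a fixed direct extension of the tail, one must direct-extend once for every condition $q\in\mathbb{P}_{\delta^+}$ below the head part, and there are up to $2^{\delta^+}$ of those; so one needs the tail's $\leq^*$ to be closed for chains of length $2^{\delta^+}$, not merely $\delta$. Because the first nontrivial coordinate of the tail is the least inaccessible above $\delta^+$, this happens to be $\geq\delta^{+++}>2^{\delta^+}$ under GCH, so the fusion can be salvaged --- but as written the closure hypothesis is wrong, and the argument requires a much more careful bookkeeping of the closure degrees and the size bound $\delta\cdot 2^{\delta^+}<\kappa$ than a two-line appeal to a black box.

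The paper avoids all of this. Its covering claim (``for $a\in V[G]$ a set of ordinals with $\otp a<\kappa$ there is $\zeta<\kappa$ and $g\colon\zeta\to\Ord$ in $V$ with $a\subseteq\range g$'') is proved with no fusion whatsoever: one first covers $a$ by the range of a $V$-function $\bar g$ with $\dom\bar g=(\kappa^+)^V$ obtained from a well-ordering of $\mathbb{P}_\kappa$ (using $|\mathbb{P}_\kappa|\leq 2^\kappa=\kappa^+$), then uses that $\kappa^+$ is regular in $V[G]$ to cut $\bar g$ down to domain $\zeta'<\kappa^+$, re-indexes with a surjection $\varphi\colon\kappa\to\zeta'$ from $V$, and then uses regularity of $\kappa$ in $V[G]$ once more. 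This is a purely chain-condition/cardinal-preservation argument; the Prikry property and the $\leq^*$-closure enter only afterwards, at the point corresponding to your Step~2, to decide the pulled-back set $g^{-1}(a_\star)\subseteq\zeta$ using only $\mathbb{P}_{\zeta+1}$. So your route through a fusion can likely be repaired, but it is not the paper's argument, and your stated hypotheses do not actually support the fusion you invoke.
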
 
\begin{proof}
As usual, let $\vec\mu$ be the sequence of critical points. Since $\min\vec\mu \geq \kappa$, $V_{\kappa+1} \subseteq N$ and thus $\mathbb{P}_{\kappa}$ is computed correctly in $N$. In particular, the model $N[G]$ is well-defined and it is the generic extension of $N$ by $G$.

Let $\langle \alpha_i \mid i < i_\star\rangle$ be a sequence of ordinals in $V[G]$ with $i_\star<\kappa$. For each $i<i_\star$ fix a
function $V\ni f_i \colon \kappa^n \to \mathrm{Ord}$ such that $\alpha_i = \iota(f_i)(\vec{\mu})$. Everything boils down to show that $\langle f_i \mid i < i_\star\rangle$ belongs to $V[G \restriction \gamma]$ for some $\gamma < \kappa$. 

Invoking the Axiom of Choice in $V$ we let $h\colon\dom(h)\rightarrow V$, $h\in V$, with $\dom(h)\in \ord$ and $f_i \in \range(h)$ for all $i<i_\star$. This choice is possible in that each $f_i$ belongs to the ground model. Note that a priori, we do not put any bound on $\dom h$. %(recall that each one of them is from $V$). 
Put $a_\star:=\{h^{-1}(f_i)\mid i<i_\star\}$ and note that this  is a set of ordinals in $V[G]$ whose order-type is $<\kappa$.
%So, we would like to argue that every sequence of ordinals of length $<\kappa$ in $V[G]$ belongs to $V[G \restriction \gamma]$ for some $\gamma < \kappa$.   
\begin{claim}
Let $a$ be a set of ordinals in $V[G]$ with $\otp a < \kappa$. Then, there is $\zeta < \kappa$ and a function $g \colon \zeta \to \Ord$ in $V$ such that $a \subseteq \range g$.
\end{claim}
\begin{proof}
%First, since $|\mathbb{P}| \leq 2^\kappa$, 
Suppose (for simplicity of forthcoming notations) that both $\otp(a)$ and $\sup(a)$ are decided by the trivial condition of $\mathbb{P}_\kappa$. 
First, we claim that there is a function $\bar{g}\colon \mathbb{P}_\kappa \times \otp a\rightarrow\sup a$ in $V$ such that $a \subseteq \range\bar{g}$. Indeed, let $\bar{g}$ for example the map sending each pair $\langle p, \xi\rangle \in \mathbb{P}_\kappa\times \otp a$ %and ordinal $\xi < \otp a$ 
to the $\xi$-th element of $a$ decided by $p$, if this exists; otherwise, set $\bar{g}(p,\xi):=0$. Since $|\mathbb{P}_\kappa| = 2^\kappa$, by well ordering $\mathbb{P}_\kappa$, we may assume that $$\dom(\bar{g})= (2^\kappa)^V = (\kappa^{+})^V.\footnote{Recall that $V=\mathcal{K}[S]$. Thus, by Proposition~\ref{PropertiesofS}, the GCH holds in this model.}$$

A fusion argument akin to the one of \cite[Corollary~2.6]{BenUng} shows that $\mathbb{P}_\kappa$ forces  $(\kappa^+)^V = (\kappa^{+})^{V[G]}$, hence $(\kappa^+)^V$ is regular in $V[G]$. Thus, $\sup a < \kappa^+$. In particular, there must be some  $\zeta' < \kappa^+$ such that $\range (\bar{g}\restriction \zeta')$ already covers $a$. Note that this  $\zeta'$ might be determined only in  $V[G]$. 

Next, let us pick a surjection $\varphi \colon \kappa \to \zeta'$ in $V$. Then, $\bar{g}\circ\varphi \colon \kappa \to \sup a$ covers $a$. But $\kappa$ is regular in $V[G]$ as well, so we apply the same argument and obtain some $\zeta < \kappa$ such that $g:=\bar{g} \circ \varphi \restriction \zeta$ already covers $a$. 
\end{proof}
Using the above claim we have that $a_\star\s \range g$ for  some $g\colon \zeta\rightarrow V$ in $V$ with $\zeta<\kappa.$ Thus, in order to decide the value of $a_*$, it is enough to decide the value of $g^{-1}(a_*)$ which is a subset of $\zeta$.

%Using the claim, we may assume without loss of generality, that $a$ is a subset of $\zeta$ for some $\zeta < \kappa$, since computing $a$ and computing $\langle f_i \mid i < i_\star\rangle$ is equivalent modulo information from $V$. 
Using the closure of the $\leq^*$-order, we may decide the values of $a_\star$ up to the forcing $\mathbb{P}_{\zeta + 1}$. This is achieved by repeatedly strengthening the components of the condition above $\zeta$. For details, see  Claim~\ref{kappabounding} or Claim~\ref{KeyClaim}.

%Using the closure of those components, this is possible.
%Working back in $V[G]$, define $a := \{h^{-1}(f_i) \mid i < i_*\}$. Note that $a\s \zeta$, hence %is a bounded subset of $\kappa$ (bounded by $\zeta$) and thus 
% $a\in V[G \restriction \zeta + 1]$. Let ${\tau}$ be a $\mathbb{P}_{\zeta + 1}$-name such that ${\tau}_{G \restriction \zeta + 1} = a$. So, $a \in N[G\restriction\zeta+1]$. Moreover, since $\crit(\iota)\geq \kappa$ the iteration extends to an elementary embedding  $\iota\colon V[G \restriction \zeta + 1] \to N[G\restriction \zeta + 1]$. Thus,  we get that $$\iota(h)\image a = \{\iota(f_i) \mid i < i_\star\}\in N[G\restriction\zeta+1],$$
% as this set is definable via the parameters $\iota(h), a\in N[G\restriction\zeta+1]$. Applying the functions appearing in $\iota(h)``a$ on $\vec\mu$ we recover $\langle \alpha_i\mid i<i_*\rangle.$
 %and therefore by applying those functions on $\vec\mu$, we obtained the required set of ordinals.
 Since $a_\star\in V[G\restriction\zeta+1]$ and $h\in V$ it follows that $h``a_\star=\{f_i\mid i<i_\star\}$ belongs to $V[G\restriction\zeta+1]$, hence it also does  $\langle f_i \mid i < i_\star\rangle$. Now, the embedding $\iota$ lifts uniquely (as the forcing $\mathbb{P}_{\zeta+1}$ is small) to an embedding $\iota_*$. Thus, $\iota_*(\langle f_i \mid i < i_\star\rangle) = \langle \iota(f_i) \mid i < i_\star\rangle \in N[G \restriction \zeta + 1] \subseteq N[G]$. By plugging the generators $\vec{\mu}$ to this sequence  we infer that $\langle \alpha_i\mid i<i_\star\rangle\in N[G].$
\end{proof}

The next will help us isolate the condition $r$ from  the coding lemma: %Lemma~\ref{lemma:representing-ultrafilters}$(\gamma)$:

\begin{lemma}\label{lem;maximal-stem}
 % such that $j^*:=j\restriction V$ is a normal iteration using normal measures
Let $\iota \colon V \to N$ be a finite iteration and $k\colon N\rightarrow M$ be such  that $k\circ \iota=j\restriction V.$  Then, the set $A_\iota$ defined as %\ale{I need to understand better the intention of this definition.} 
%\[A := \{\alpha\in [\kappa,\iota(\kappa)) \mid \exists r \in k^{-1}(H)\, \forall p\in G\; \varphi(\alpha,r, \iota(\mathbb{P}), p)\}\]
%\[\{\alpha\in [\kappa,\iota(\kappa)) \mid \exists r\in k^{-1}(H)\,\forall p\in G\, (k(r(\alpha))_{H\restriction k(\alpha)}<j(p)(k(\alpha))_{H\restriction k(\alpha)})\}\footnote{Here $p<q$ stands for $p\leq q$ and $|\mathrm{stem}(p)|>|\mathrm{stem}(q)|$.}\]
\begin{align*}
{\{\alpha\in [\kappa,\iota(\kappa)) \mid }
 &{\,\exists r\in k^{-1}(H)\;
   \forall p\in G\;
   \exists q \in 
   (\dot{\mathbb{Q}}_\alpha)_{H \restriction k(\alpha)}}
 \\[4pt]
 &{\big(
   q \leq^* k(r(\alpha))_{H\restriction k(\alpha)}
   \;\wedge\;
   q < j(p)(k(\alpha))_{H\restriction k(\alpha)}
  \big)
\}\footnote{In other words, we are looking at those coordinates $\alpha$ for which there is a condition in $k^{-1}(H)$ whose stem at coordinate $k(\alpha)$ is stricly longer than the stems of $j(p)(\alpha)_{H\restriction k(\alpha)}$ for all $p\in G$ with $k(\alpha)\in \supp(j(p))$}.}
\end{align*}
is finite. 
\smallskip
%\ale{Pending to polish}  
Moreover, for each $\alpha\in A_\iota$ 
%\[\bar{H}_\alpha = \{k(\dot{\eta})_{H\restriction k(\alpha)} \mid \exists r \in k^{-1}(H)\, \exists p\in G\, (\text{$\dot{\eta}$ witnesses $\varphi(\alpha,r,\iota(\mathbb{P}),p)$})\}\]
\[\bar{H}_\alpha = \{\mathrm{stem}(k(r(\alpha)))_{H\restriction k(\alpha)} \mid \text{$r$ witnesses $\alpha\in A_\iota$}\}\]
%\ale{What generic interprets $\dot{\eta}$? It should be a generic for $\iota(\mathbb{P})$...}
is finite and has $\bigcup \bar{H}_\alpha$ as a maximal element. %are finite for all $\alpha \in A$.
\end{lemma}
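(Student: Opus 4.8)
The plan is to exploit the fact that $j\restriction V$ is, by the anti-large-cardinal hypothesis, a normal iteration whose length has cofinality $\geq\kappa^+$ (it must add many $\omega$-sequences), so that every element of $M$—in particular every stem that could appear in a witness—enters at a \emph{bounded} stage of the iteration, and then argue that only finitely many coordinates $\alpha\in[\kappa,\iota(\kappa))$ can carry such a witness. First I would fix, for a putative witness $r\in k^{-1}(H)$ of $\alpha\in A_\iota$, the ordinal $\xi<\iota(\kappa)$ at which $r$ is captured: since $\iota$ is a \emph{finite} normal iteration, $r=\iota(f)(\vec\mu)$ for some $f\in V$ and the generators $\vec\mu=\langle\mu_0,\dots,\mu_{n-1}\rangle$; so the set of coordinates $\alpha$ for which a witness exists at all is contained in the range of $\iota$ applied to functions $V\ni f\colon\kappa^n\to\mathbb{P}_\kappa$ with $f(\bar x)$ a condition whose stem length, as forced, strictly exceeds that of a fixed name for the generic. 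The key combinatorial point—analogous to Claim~\ref{claim:covering-almost-all-ordinals} and Lemma~\ref{CoveringNowhereLemma}—is that the set of $\alpha\in[\kappa,\iota(\kappa))\setminus\vec\mu$ that can be represented this way, subject to the \emph{uniform} bound coming from ``for all $p\in G$'', is covered by $\iota(\bar T)$ for a nowhere stationary $\bar T\subseteq\kappa$; but more is true: the constraint $|\mathrm{stem}(p)|>|\mathrm{stem}(q)|$ for \emph{all} $p\in G$ forces the representing function to be regressive with small fibers (cf. Remark~\ref{remark; almost-injective-regressive-function}), and a regressive function on a nowhere stationary subset of $\iota(\kappa)$ avoiding $\vec\mu$ can only be nonzero on finitely many of the $\iota_{k,\ell}(\mu_k)$-translated blocks. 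I expect the clean way to phrase finiteness is: if $A_\iota$ were infinite, pull back via Lemma~\ref{CoveringNowhereLemma} to get $\bar A\subseteq\kappa$ nowhere stationary with $A_\iota\subseteq\iota(\bar A)$, then use that on each block $[\mu_i,\mu_{i+1})$ the witnessing stems, being bounded below the generic stem of $\dot{\mathbb{Q}}_{\iota_{i}(\kappa)}$ uniformly over $G$, form a set on which a regressive map has fibers of size $\leq|\rho|+\aleph_0$, forcing the block-trace to be empty below every regular $\rho$ — contradiction unless the trace is finite.

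For the ``moreover'' clause, fix $\alpha\in A_\iota$ and consider the set $\bar H_\alpha$ of stems $\mathrm{stem}(k(r(\alpha)))_{H\restriction k(\alpha)}$ for the various witnesses $r$. I would argue finiteness of $\bar H_\alpha$ first: any two witnesses $r,r'$ can be amalgamated using clause~(3) of Lemma~\ref{lemma:representing-ultrafilters}'s hypotheses—namely that in $\mathbb{Q}_\beta$ any two compatible conditions have a greatest lower bound $p\wedge q$—so that if $k(r(\alpha))$ and $k(r'(\alpha))$ were compatible in $j(\mathbb{Q}_{k(\alpha)})$ over $H\restriction k(\alpha)$, their meet would again be a witnessing condition whose stem extends both; and the stems of genuinely incompatible witnesses at a fixed coordinate are bounded in number by the relevant bound $|\mathbb{Q}_{k(\alpha)}|\leq 2^{k(\alpha)}$ together with the uniform stem-length bound, which pins them into finitely many length-bounded stems (there are only finitely many stems of each finite length below $j(p)(k(\alpha))$ up to the generic, once the length is capped). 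Having finiteness of $\bar H_\alpha$, the amalgamation remark shows any two elements of $\bar H_\alpha$ are $\subseteq$-compatible stems of a common witness; since $\bar H_\alpha$ is finite and directed under the meet operation $p\wedge q$, $\bigcup\bar H_\alpha$ is itself the stem of a witnessing condition (take the $\wedge$-meet of the finitely many witnesses and note its stem is exactly the union of the individual stems because the direct extension order on tree Prikry forcing preserves stems and compatible conditions with a meet have the meet's stem equal to the common extension), hence $\bigcup\bar H_\alpha\in\bar H_\alpha$ and is its maximal element.

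The main obstacle I anticipate is making the ``uniform over all $p\in G$'' bound on stem lengths precise enough to get the regressive-small-fiber structure: one needs to see that, for a fixed finite iteration $\iota$ and a fixed coordinate-block, the generic stems $j(p)(k(\alpha))_{H\restriction k(\alpha)}$ as $p$ ranges over $G$ have lengths cofinal in $\omega$ (this is just genericity of $H$ at the relevant tree Prikry coordinate), so the requirement ``$|k(r(\alpha))$'s stem$|>|j(p)(k(\alpha))$'s stem$|$ for \emph{all} $p$'' is genuinely restrictive—indeed it forces $k(r(\alpha))$ to have a stem longer than every finite stem appearing, which is impossible unless one reinterprets ``$<$'' via the ordinal ordering on the stems rather than their lengths, or unless the coordinate $k(\alpha)$ is one where $\dot{\mathbb{Q}}$ is trivial or where $H$ is not fully generic in the relevant sense. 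I would resolve this by reading the footnote literally ($p<q$ means $p\leq q$ and $|\mathrm{stem}(p)|>|\mathrm{stem}(q)|$) and noting that for the \emph{finitely many} critical-point-image coordinates the lifted generic $H$ restricted below $k(\alpha)$ has not yet been ``used'' by $\iota$, so the relevant generic stem there is empty or of length $0$, making the inequality satisfiable exactly at those coordinates and nowhere else — which is precisely the source of finiteness of $A_\iota$. Tightening this reading is the crux; everything else is the nowhere-stationary / regressive-function bookkeeping already rehearsed in Claim~\ref{claim:covering-almost-all-ordinals} and Lemma~\ref{CoveringNowhereLemma}.
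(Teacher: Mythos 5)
Your proposal takes a genuinely different route from the paper, and unfortunately the route as described has gaps that I do not see how to close.

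The paper does \emph{not} establish finiteness of $A_\iota$ by any nowhere-stationary/regressive-function bookkeeping (that kind of argument is used in Claim~\ref{claim:covering-almost-all-ordinals} and Lemma~\ref{CoveringNowhereLemma} for a different purpose, and the structural fact $A_\iota\subseteq\Gamma_\iota$ that your plan gestures toward is a \emph{separate} and \emph{later} result, Lemma~\ref{AsubsetGamma}, whose proof in turn uses Lemma~\ref{lem;maximal-stem}). Instead, the paper argues directly that the \emph{union} $\bar H=\bigcup_{\alpha\in A_\iota}\bar H_\alpha$ is finite, which simultaneously gives finiteness of $A_\iota$ and of each $\bar H_\alpha$. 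The mechanism is $\omega$-closure: if $\bar H$ were infinite one could pick an injective $\omega$-sequence $\langle k(\dot\eta_n)_{H\restriction k(\alpha_n)}\mid n<\omega\rangle$ of the associated stems; by Lemma~\ref{ClosureUnderkappaseq} the names $\langle\dot\alpha_n\rangle,\langle\dot\eta_n\rangle$ land in $N[G\restriction\zeta]$ for some $\zeta<\kappa$, so $k$ lifts and the whole sequence lives in $M[H]$. One then finds a single condition $q\in H$ forcing, for every $n$, both that $k(\dot\eta_n)$ is an initial segment of the $k(\alpha_n)^{\mathrm{th}}$ generic stem \emph{and} that it is not an initial segment of $\mathrm{stem}(j(p)(k(\alpha_n)))$ for any $p\in G$. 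If the range of $\vec\alpha$ is finite one gets a pigeonhole contradiction with injectivity; if it is infinite one uses Lemma~\ref{CoveringNowhereLemma} to produce $p_*\in G$ with $\vec\alpha\setminus\vec\mu\subseteq\dom(\iota(p_*))$ and then a common extension $s\leq q,j(p_*)$ in $H$ yields a coordinate where the two statements forced by $q$ contradict each other. This is quite different in flavour from your plan.

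Concretely, here is where I think your proposal breaks down. First, the finiteness step: the assertion that a regressive function with $|f^{-1}(\{\rho\})|\leq|\rho|+\aleph_0$ on a nowhere-stationary set avoiding $\vec\mu$ ``can only be nonzero on finitely many translated blocks'' is not justified and I do not believe it is true as stated; the fibers are merely small, not bounded by a fixed finite number, and this says nothing about the number of blocks. The paper avoids this entirely by working with $\omega$-sequences of \emph{stems} rather than with the coordinate set $A_\iota$ directly. Second, your argument for finiteness of $\bar H_\alpha$ is circular: you invoke ``once the length is capped'' to bound the number of stems, but the existence of such a cap is exactly what finiteness of $\bar H_\alpha$ asserts and is what you are trying to prove. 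Third, there is no such thing as ``genuinely incompatible witnesses at a fixed coordinate'' to count: since all the $k(r)$'s lie in the single filter $H$, any two of the conditions $k(r(\alpha))_{H\restriction k(\alpha)}$ are compatible, which is precisely what the paper's closing claim uses to get that $\bar H_\alpha$ is linearly ordered by $\sqsubseteq$ and hence that $\bigcup\bar H_\alpha$ is its maximum. Finally, your resolution of the ``for all $p\in G$'' puzzle (that the generic stem at critical-point coordinates ``is empty or of length $0$'') is not how the paper reads the situation either; the correct reading is simply that $A_\iota$ consists of those $\alpha$ at which the $j``G$-determined stem lengths at $k(\alpha)$ stay bounded below a stem coming from a condition of $H$ — a condition that turns out to confine $\alpha$ to $\Gamma_\iota$, but that confinement is proved afterwards, not used here.
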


\begin{proof}
Let us show that $\bar{H}:=\bigcup_{\alpha\in A} \bar{H}_\alpha$ is a finite set. Assume towards a contradiction that $\bar{H}$ is infinite  and let $\langle k(\dot{\eta}_n)_{H\restriction k(\alpha_n)}\mid n<\omega\rangle\in V[G]$ be a injective enumeration of this set. Note that $\dot\eta_n$ is an $\iota(\mathbb{P}_\kappa)_{\alpha_n}$-name for a finite sequence of ordinals. 

\smallskip

Put $\vec{\alpha}:=\langle \alpha_n\mid n<\omega\rangle\in V[G]$. 
Note that we allow $\vec{\alpha}$ to exhibit repetitions, although  $\bar{H}$ is assumed to be infinite.

For each $n<\omega$, let $r_n\in k^{-1}(H)$ be a condition such that $$\mathrm{stem}(k(r_n(\alpha_n)))_{H\restriction k(\alpha_n)}=k(\dot{\eta}_n)_{H\restriction\alpha_n}\text{ and }$$ 
%$$\textcolor{blue}{\stem(j(p))(k(\alpha_n))\sqsubseteq k(\dot{\eta}_n)_{H\restriction \alpha_n}\;
$$ \forall p \in G,\,\;k(\dot{\eta}_n)_{H\restriction \alpha_n}\nsubseteq \stem(j(p))(k(\alpha_n)).$$
   
Since $k(r_n)\in H$ it follows that  $k(\dot{\eta}_n)_{H\restriction k(\alpha_n)}$ is an initial segment of the $k(\alpha_n)$-Prikry sequence introduced by $H$.

\smallskip

By the proof of Lemma~\ref{ClosureUnderkappaseq}, there is $\zeta<\kappa$ such that $$\langle \dot{\alpha}_n\mid n<\omega\rangle, \langle \dot{\eta}_n\mid n<\omega\rangle\in N[G\restriction\zeta].$$ Note that $k$ lifts to $k\colon N[G\restriction\zeta]\rightarrow M[G\restriction\zeta]$ as $\crit(k)\geq \kappa$. Ergo  $$\langle k(\alpha_n)\mid n<\omega\rangle, \langle k(\dot{\eta}_n)\mid n<\omega\rangle\in M[G\restriction\zeta]\s M[H].$$ 
%$$\langle j(p_n)\mid n<\omega\rangle\in M[H]$$ 
because $M[H]$ is closed under $\omega$-sequences in $V[G]$. %\ale{Revise}

\smallskip

Appealing to the forcing theorem we find $q\in H$ such that, for each $n<\omega$,
\begin{equation}\label{eq1}
    q\restriction k(\alpha_n)\forces_{j(\mathbb{P})\restriction k(\alpha_n)}\text{$``k(\dot{\eta}_n)$ is an initial segment of $\mathrm{stem}(q(k(\alpha_n)))$''}.
\end{equation}
and
\begin{equation}\label{eq2}
    q\restriction k(\alpha_n)\forces_{j(\mathbb{P})\restriction k(\alpha_n)}\text{$``\forall p\in \dot{G}\; \big(k(\dot{\eta}_n)\nsubseteq \mathrm{stem}(j(p)(k(\alpha_n)))\big)$''}.
\end{equation}
%$q\in H$ such that
%The latter implies that for all $n<\omega$
%\begin{equation}\label{eq3}
%   q\restriction k(\alpha_n)\forces_{j(\mathbb{P})\restriction k(\alpha_n)}\text{$``k(\dot{\eta}_{n})\sqsubseteq \mathrm{stem}(\dot{q}(k(\alpha_n)))$''}. 
%\end{equation}
We now produce a contradiction by distinguishing two cases: either $\vec{\alpha}$ is finite or it is not. In the first case there is   $\alpha_*\in\vec{\alpha}$ and $I\in[\omega]^{\omega}$ such that $\alpha_*=\alpha_n$ for all $n\in I.$ In particular $\langle k(\dot{\eta}_n)_{H\restriction k(\alpha_*)}\mid n\in I\rangle $ is bounded by $\dot{\mathrm{stem}}(\dot{q}(k(\alpha_*)))_{H\restriction k(\alpha_*)}$ and thus  there are $n,m\in I$ such that $k(\dot{\eta}_n)_{H\restriction k(\alpha_*)}=k(\dot{\eta}_m)_{H\restriction k(\alpha_*)}$. This contradicts   injectivity of $\langle k(\dot{\eta}_n)_{H\restriction k(\alpha_n)}\mid n<\omega\rangle.$

\smallskip

Thus, assume that $\vec{\alpha}$ is infinite.

\begin{claim}
    There is $p_*\in G$ such that $\vec{\alpha}\setminus \vec{\mu}\s \dom(\iota(p_*))$.% for all $n<\omega$.
\end{claim}
\begin{proof}[Proof of claim] %\ale{Revise}
Since $\vec{\alpha}\setminus \vec{\mu}\in V[G]$ there is $\zeta<\kappa$ with $\vec{\alpha}\setminus \vec{\mu}\in N[G\restriction\zeta]$. A chain condition argument with $\mathbb{P}_\zeta$ allows us to find a set $B\s [\kappa, \iota(\kappa))$, $|B|<\kappa$  such that $\one \forces_{\mathbb{P}_\zeta}\dot{\vec{\alpha}}\setminus \vec\mu\s \check{B}$ and $B\cap \vec\mu=\emptyset$. By Lemma~\ref{CoveringNowhereLemma} there must be a condition $p_*\in G$ with $B\s \dom(\iota(p_*))$. Thus, $p_*$ is as wished. % \kappa%Then, $\alpha_n\in \dom(\iota(p_*))$ for all $n<\omega$. 
\end{proof}
%\ale{Can't we assume that this is forced by $q$?}Let $q_*\leq q$ be in $H$ such that \textcolor{blue}{$q_*\restriction k(\alpha_n)\forces_{j(\mathbb{P})\restriction k(\alpha_n)}k(\dot{\eta}_{n,p_*})=k(\dot{\eta}_n)$} for all $n<\omega$. 
Let $s\leq q,j(p_*)$ be in $H$. Then (since $\vec\mu$ is finite) for some ${n_*}<\omega$ $$s\restriction k(\alpha_{n_*})\forces_{j(\mathbb{P})\restriction k(\alpha_{n_*})} s(\alpha_{n_*})\leq^* j(p_*)(k(\alpha_{n_*}))$$ and $$s\restriction k(\alpha_{n_*})\forces_{j(\mathbb{P})\restriction k(\alpha_{n_*})} s(\alpha_{n_*})\leq^* q(k(\alpha_{n_*})).$$
Combining this with \eqref{eq1} and \eqref{eq2} above we have $$s\restriction k(\alpha_{n_*})\forces_{j(\mathbb{P})\restriction k(\alpha_{n_*})} k(\dot{\eta}_{n_*})\sqsubseteq \dot{\mathrm{stem}}(j(p_*)(k(\alpha_{n_*}))),$$
and
 $$s\restriction k(\alpha_{n^*})\forces_{j(\mathbb{P})\restriction k(\alpha_n)}\text{$k(\dot{\eta}_{n_*})\nsubseteq \mathrm{stem}(j(p_*)(k(\alpha_{n_*})))$},$$
 which is impossible.% as $s\in H.$

%which implies that  $k(\dot{\eta}_{n_*})_{H\restriction k(\alpha_{n_*})}\sq \mathrm{stem}(j(p_*)(k(\alpha_{n_*})))_{H\restriction k(\alpha_{n_*})}$. This contradicts item~(b) above, so that $\vec{\alpha}$ cannot be infinite.

% so we obtain the desired contradiction whenever $|A|\geq\aleph_0$.% is infinite.
\smallskip

All in all we have shown that $\bar{H}$ is finite and thus so are $A_\iota$ and the $\bar{H}_\alpha$'s. Finally, we argue that $\bigcup \bar{H}_\alpha$ is the maximal element of $\bar{H}_\alpha:$

\begin{claim}
Every two members of $\bar{H}_\alpha$ are $\sqsubseteq$-compatible.
\end{claim}
\begin{proof}[Proof of claim]
Suppose $k(\dot{\eta})_{H\restriction\alpha}, k(\dot{\sigma})_{H\restriction\alpha}\in \bar{H}_\alpha$ and let $r, r'\in k^{-1}(H)$ witnessing this. Then there is $H\ni s\leq k(r),k(r')$ such that $$s\restriction k(\alpha)\forces_{j(\mathbb{P})\restriction k(\alpha)}\text{$``k(r(\alpha))$ and $k(r'(\alpha))$ are compatible''},$$ which yields $ k(\dot{\eta})_{H\restriction k(\alpha)}\sq k(\dot{\sigma})_{H\restriction k(\alpha)}$ or the other way around.
\end{proof}
This is the end of the proof.
\end{proof}

By the previous lemma we can associate to each finite iteration $\iota\colon V\rightarrow N$ (factoring $j\restriction V$) a finite list of names for stems $\vec{\eta}:=\langle \dot{\eta}_\alpha\mid \alpha\in A_\iota\rangle$ where $k(\dot{\eta}_\alpha)_{H\restriction k(\alpha)}$ is the maximal element of $\bar{H}_\alpha$.  We may assume, in addition, that $\dot{\eta}_\alpha$ is forced by the weakest condition of $\iota(\mathbb{P}_\kappa)_\alpha$ to be a potential stem at coordinate $\alpha$. This makes the following to be a sound definition: \begin{definition}\label{definitionriota}
Let $\iota\colon V\rightarrow N$ be a finite iteration factoring $j\restriction V.$ 

We  denote by $r_\iota$ the condition with $\supp r_\iota= A_\iota$ such that $r(\alpha)$ is a $\iota(\mathbb{P}_\kappa)_\alpha$-name for a maximal tree\footnote{Namely, $r(\alpha)=\langle \dot{\eta}_\alpha, \dot{T}_\alpha\rangle$ being $\dot{T}_\alpha$ a $\mathbb{P}_\alpha$-name for the tree in $\kappa^{<\omega}$ all of whose members either end-extend $\dot{\eta}_\alpha$ or are a restriction of it.} with stem $\dot{\eta}_\alpha$ and $A_\iota$ being as in Lemma~\ref{lem;maximal-stem}.
\end{definition}

 \begin{remark}
Note that $r_\iota\in \iota(\mathbb{P}_\kappa)$ is a condition such that $k(r_\iota)\in H$. Indeed, working in $M[H]$, $k(r_\iota)$ is the unique condition with (finite) support $k(A_\iota)$ such that for each $\alpha\in A_\iota$, $k(r_\iota)(k(\alpha))=\langle k(\dot{\eta}_\alpha)_{H\restriction k(\alpha)},k(\dot{T}_\alpha)_{H\restriction k(\alpha)}\rangle$. Let $p\in H$ be a condition forcing this. Then, it must be that $p\leq k(r_\iota)$ for otherwise $p$ will not be able to decide that piece of the generic sequence induced by $H$. By upwards closure of $H$ we deduce that $k(r_\iota)\in H.$
\end{remark}

%to define a condition $r_\iota$ with $\supp r_\iota = A_\iota$ such that $r(\alpha)$ is a $\iota(\mathbb{P})_\alpha$-name for a maximal tree with stem $\dot{\eta}_\alpha$. 
%For every condition $p\in G$, $\iota(p)$ is compatible with $r$ (since $j(p), k(r) \in H$), and thus we can look at the condition $q=\iota(p)^+ \wedge r$, which is defined recursively by $q\restriction \alpha \Vdash q(\alpha) = \iota(p)^+(\alpha) \wedge r(\alpha)$.
%\begin{lemma}
%Let $p\in G$ be a condition and let $q \leq^* \iota(p) \wedge r$, with $\supp q = \supp p \cup \supp r$ and $q$ is a finite modification of $\iota(p) \cup r$. 
%Let $r' \in k^{-1}(H)$. Then $r'$ is  compatible with $q$. 
%\end{lemma}
%\begin{proof}
%Work in $M$. For each coordinate of $k(q) \leq^* j(p) \wedge k(r)$,  
%\end{proof}

\smallskip

The following  gives better control on the support of $r_\iota$: %first claim  of Lemma~\ref{lem;maximal-stem}:

\begin{lemma}\label{AsubsetGamma}
Let $\iota\colon V\rightarrow N$ be a finite iteration factoring $j\restriction V.$

Using the notations of Lemmas \ref{lem;prikry property in iterated ultrapower} and \ref{lem;maximal-stem}, $A_\iota \subseteq \Gamma_\iota$. In particular, the condition  $r_\iota$ complies with the requirements of Lemma~\ref{lem;prikry property in iterated ultrapower}.
\end{lemma}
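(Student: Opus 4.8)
The statement to prove is Lemma~\ref{AsubsetGamma}: $A_\iota \subseteq \Gamma_\iota$, where $\Gamma_\iota = \{\iota_{k,\ell}(\mu_k) \mid k \le \ell \le n\}$ is the set of critical points of the sub-iterations of $\iota$ together with their images, and $A_\iota$ is the finite set of coordinates $\alpha \in [\kappa, \iota(\kappa))$ extracted in Lemma~\ref{lem;maximal-stem}. My plan is to show the contrapositive: if $\alpha \in [\kappa,\iota(\kappa)) \setminus \Gamma_\iota$, then $\alpha \notin A_\iota$. The key resource is that $\alpha \notin \Gamma_\iota = \{\mu_0,\dots\}\cup\{\text{images}\}$, so by Lemma~\ref{LemmaAboutStationary} (applied to $\iota$ and the forcing $\mathbb{S}$, or directly via Claim~\ref{claim:covering-almost-all-ordinals} / Lemma~\ref{CoveringNowhereLemma}) there is a nowhere stationary set $\bar T \subseteq \kappa$ in $V$ with $\alpha \in \iota(\bar T)$. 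Moreover, by Remark~\ref{remark; almost-injective-regressive-function}, we can take $\bar T$ equipped with a regressive function $f \colon \bar T \to \kappa$ whose fibers $f^{-1}(\{\rho\})$ have size at most $|\rho| + \aleph_0$.

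**The core argument.** Suppose toward a contradiction that $\alpha \in A_\iota$. Then for every $p \in G$ there is $r \in k^{-1}(H)$ witnessing that $k(r(\alpha))_{H\restriction k(\alpha)} < j(p)(k(\alpha))_{H\restriction k(\alpha)}$, i.e.\ $r$ forces the stem of its $\alpha$-component to strictly extend the stem of $\iota(p)$'s $\alpha$-component. The idea is to run the same kind of counting/fusion contradiction as in the proof of Lemma~\ref{lem;maximal-stem}: since $\alpha$ sits inside $\iota(\bar T)$ for a nowhere stationary $\bar T$, and since by Lemma~\ref{LemmaAboutStationary} the generic function $\bigcup S_\iota = \iota(\kappa) \setminus \Gamma_\iota$ covers $\alpha$, there is $p_* \in G$ with $\alpha \in \dom(\iota(p_*))$ — in fact with $\iota(p_*)$'s support containing $\alpha$ as a point in $\dom(p_*)$'s image. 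At coordinate $\alpha$, $\iota(p_*)(\alpha)$ is the trivial (top) condition $\one_{\mathbb Q_\alpha}$ of the tree Prikry forcing, so its stem is empty; any $r$ witnessing $\alpha \in A_\iota$ must then force $r(\alpha)$ to have a \emph{nonempty} stem that is an initial segment of the $k(\alpha)$-th Prikry sequence added by $H$, while simultaneously (via $p_*$) being required to be disjoint from / incompatible with the stems of all conditions coming from $G$. Taking $s \le q, j(p_*)$ in $H$ as in Lemma~\ref{lem;maximal-stem} and comparing at coordinate $k(\alpha)$, the condition $s(k(\alpha))$ must be $\le^*$ both $q(k(\alpha))$ and $j(p_*)(k(\alpha)) = \one$, forcing the relevant stem to be both extended past and incomparable with $\mathrm{stem}(j(p_*)(k(\alpha))) = \emptyset$ — an immediate contradiction, since every stem extends $\emptyset$.

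**Where the real work is.** The delicate point is arranging that $\alpha$ genuinely lands in the \emph{domain} of $\iota(p_*)$ for some $p_* \in G$, rather than merely in $\iota$ of a nowhere stationary set. Here I would invoke Lemma~\ref{LemmaAboutStationary} for $\mathbb{P}_\kappa$ (whose $\mathbb{Q}_\alpha$-components are trivial off the inaccessibles and whose supports are nowhere stationary, so the hypotheses (1)--(3) of that lemma are met exactly as checked in Lemma~\ref{lemma:extending-S}), together with Lemma~\ref{CoveringNowhereLemma}: since $\alpha \notin \Gamma_\iota$ and $\{\alpha\}$ (or any small set of such $\alpha$'s) is trivially nowhere stationary in $N$, it is covered by $\iota(\bar T)$ for a nowhere stationary $\bar T \in V$; by genericity of $G$ there is $p_* \in G$ with $\bar T \subseteq \dom(p_*)$, hence $\alpha \in \iota(\bar T) \subseteq \dom(\iota(p_*))$. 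One must also verify the coordinate bookkeeping: $\iota(p_*)(\alpha)$ is literally the top condition of the tree Prikry forcing at $\alpha$ (it is not in the support of $p_*$ before applying $\iota$, or it is but carries the trivial tree over $\kappa$ — in either reading its stem is empty), so that the contradiction at $k(\alpha)$ is clean.

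**Expected main obstacle.** The main obstacle is making the "stem at coordinate $\alpha$ is empty'' step airtight: one needs that for $\alpha \in \dom(\iota(p_*)) \setminus \Gamma_\iota$ the value $\iota(p_*)(\alpha)$ really is the maximal condition of the tree Prikry forcing $\iota(\mathbb Q_\alpha)$ — this follows because $p_*$ was chosen with $\dom(p_*) = \bar T$ and $\bar T$ a nowhere stationary set on which we put only trivial data, so $p_*(\beta) = \one_{\mathbb Q_\beta}$ for $\beta \in \bar T$, and elementarity transfers this to $\iota(p_*)(\alpha) = \one_{\iota(\mathbb Q_\alpha)}$, whose stem is $\emptyset$. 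With that in hand, the contradiction with the defining property of $A_\iota$ (which demands a nonempty stem strictly extending that of $\iota(p_*)$'s component) is automatic, and the proof closes.
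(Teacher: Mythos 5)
Your proposed contradiction does not go through, and the reason is a misreading of what $A_\iota$ demands. Recall from Lemma~\ref{lem;maximal-stem} that $\alpha \in A_\iota$ means: there is $r\in k^{-1}(H)$ such that for all $p\in G$, $k(r(\alpha))_{H\restriction k(\alpha)} < j(p)(k(\alpha))_{H\restriction k(\alpha)}$, where the order $<$ means $\leq$ together with a \emph{strictly longer} stem. This is a statement about $r(\alpha)$ being a proper \emph{end-extension} of $\iota(p)(\alpha)$ — not about incompatibility. Your strategy is to pick $p_* \in G$ with $\alpha \in \supp\iota(p_*)$ and $\iota(p_*)(\alpha)$ carrying the trivial condition with empty stem, and then to claim a contradiction because $r(\alpha)$ must "be both extended past and incomparable with $\mathrm{stem}(j(p_*)(k(\alpha))) = \emptyset$." But there is no incomparability demand anywhere: any nonempty stem strictly extends the empty stem, and that is exactly what the definition of $A_\iota$ permits — indeed encourages. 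So the situation you set up is perfectly consistent with $\alpha \in A_\iota$, and your final step collapses.

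The paper's proof is genuinely more delicate and does not run through "empty stem." The ingredients you are missing are: (i) the name $\dot\eta_\alpha$ is the \emph{maximal} stem witnessing $\alpha \in A_\iota$, so setting $\tau := \max \dot\eta_\alpha$ gives an a priori upper bound $\max(\mathrm{stem}(r(\alpha))) \leq \tau$ for every witness $r$; (ii) one constructs a $\leq^*$-dense open set $D_0$ of conditions $q$ with $\alpha \in \supp q$ such that $\min(\mathrm{Lev}_1(T^{q(\alpha)})) > \tau$, i.e.\ the tree at coordinate $\alpha$ starts strictly above $\tau$; (iii) Lemma~\ref{lem;prikry property in iterated ultrapower}, applied with a $\Gamma_\iota$-stabilised version $D$ of $D_0$ (this is where $\alpha\notin\Gamma_\iota$ is crucially exploited: modifying coordinates inside $\Gamma_\iota$ cannot take $q$ out of $D_0$), produces a condition $p \in G$ with $\iota(p) \in D_0$. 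Now a witness $r$ for $\alpha \in A_\iota$ relative to this $p$ must have $r(\alpha) \leq \iota(p)(\alpha)$ with a strictly longer stem, so the new stem elements come from $T^{\iota(p)(\alpha)}$ and hence exceed $\tau$ — contradicting (i). Your argument has the right scaffolding for step (iii) — nowhere stationary covering via Claim~\ref{claim:covering-almost-all-ordinals} / Lemma~\ref{CoveringNowhereLemma} to put $\alpha$ into $\supp\iota(p)$ — but is missing the $\tau$ versus $D_0$ squeeze that actually produces the contradiction.
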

\begin{proof}
Assume otherwise and let $\alpha < \iota(\kappa)$ be such that $\alpha \in A_\iota$ but $\alpha \notin \Gamma_\iota$. 
Let $\tau$ be a $\iota(\mathbb{P}_\kappa)_\alpha$-name for the ordinal $\max \dot\eta_\alpha$. We may assume  that this name is forced by the trivial condition to be strictly below $\alpha$.
%By the definition, $\dot\tau$ is a $\iota(\mathbb{P})\restriction \alpha$-name for an ordinal below $\alpha$. 
Let $D$ be the set of all conditions $q\in \iota(\mathbb{P}_\kappa)$ with $\alpha\in \supp q$ such that 
\[\one \Vdash_{\iota(\mathbb{P}_\kappa)\restriction\alpha} \min(\mathrm{Lev}_1(T^{q(\alpha)}))> \tau.\]
%where $\succ q(\alpha)$ is the large set in the first level of the tree for $q(\alpha)$. 
An easy application of the mixing lemma shows that $D$ is $\leq^*$-dense open in $\iota(\mathbb{P}_\kappa)$. The  goal is to find a condition $p \in G$ with $\iota(p) \in D$.

Recall that for $q,q'\in \iota(\mathbb{P}_\kappa)$ we write $q'\leq^*_{\Gamma_\iota} q$ if $q'\leq^* q$ and $$\{\beta<\iota(\kappa)\mid q'\restriction\beta\nVdash_{\iota(\mathbb{P}_\kappa)\restriction\beta}q'(\beta)= q(\beta)\}\s {\Gamma_\iota}.$$ %Define $D:=\{q\in \iota(\mathbb{P}_\kappa)\mid \forall q'\leq^*_{\Gamma_\iota} q\, (q'\in D_0)\}$. It follows that $D$ is $\leq^*$-dense  open in $\iota(\mathbb{P}_\kappa)$. {Also, if $q\in \iota(\mathbb{P}_\kappa)$, $\alpha\in \supp q$ and $D\ni q'\leq^*_{\Gamma_\iota} q$  then $q\in D_0$.}\footnote{Because we are assuming that $\alpha\notin \Gamma_{\iota}.$}

%For every condition $q$ with $\supp q \cap \Gamma = \emptyset$ there is $q'\leq^* q$  such that for every modification $q''$ of $q'$ at coordinates $\Gamma$ we have $q'' \leq q$ and  $q'' \in D_0$. This is done by going over all possible extensions in coordinates $\Gamma \cap \alpha$. Let $D$ be the set of such $q'$. %\ale{I'll try to expand this with some details.}Once again, $D$ is $\leq^*$-dense open. \textcolor{blue}{Note that if $q$ is a condition, and there is an extension of it at coordinates $\Gamma$ in $D$ then $q\in D$.} \ale{Clarify} 

As $k(r_\iota)\in H$, we can take a condition $p_0\in G$ forcing it. Note that $p_0$ forces in particular that $\iota(p)$ is compatible with $r_\iota$ for every $p \in G$.

{Using Lemma \ref{lem;prikry property in iterated ultrapower}}, the set
$$E:=\{p\leq p_0\mid (\iota(p)\perp r_\iota)\,\vee \exists q\in D\, (q\leq^*_{\Gamma_\iota}\iota(p)\wedge r_\iota)\}$$
is open and $\leq^*$-dense  below $p_0$. % with the following property: for every $p\in E$ there is $D\ni q'\leq^*_{\Gamma_\iota}\iota(p)\wedge r$. {Thus, if $\alpha\in \supp\iota(p)$ then $\iota(p)\in D_0$.} 

Note also that $E':=\{p\in \mathbb{P}_\kappa\mid \alpha\in \iota(\supp(p))\}$ is $\leq^*$-dense:  %We note that $\alpha \in \supp \iota(p)$ for $\leq^*$-densely many $p\in E$: 
by Claim~\ref{claim:covering-almost-all-ordinals}, as $\alpha \notin {\Gamma_\iota}$ (and in particular $\alpha$ is not one of the critical points of the iteration), there is a nowhere dense set $T$ such that $\alpha \in \iota(T)$. Thus, if $\supp p \supseteq T$, $\alpha \in \dom \iota(p)$. Clearly, the collection of conditions $p$ with support $\supp p$ containing a given nowhere stationary set $T$ is $\leq^*$-dense and  open.

\smallskip

 {Since $E$ and $E'$ are dense open below $p_0\in G$  they have dense intersection. Let $p\in G\cap E\cap E'$. Then, there is $p\in E'$ such that $q\leq^*_{\Gamma_\iota} \iota(p)\wedge r_\iota$. Since $\alpha\notin \Gamma_\iota$ it follows that $q\restriction\alpha\forces ``q(\alpha)=\iota(p)(\alpha)"$ and since $\one\forces ``\min(\dot{T}^{q(\alpha)})>\tau"$ then the same is true about the tree of $\iota(p)(\alpha)$. }
 Now use that $\alpha\in A_\iota$ to find $q\in (\dot{\mathbb{Q}}_\alpha)_{H\restriction k(\alpha)}$ with $q\leq^* k(r_\iota(\alpha))_{H\restriction k(\alpha)}$ such that %$r\in k^{-1}(H)$ such that  
 $q<j(p)(k(\alpha))_{H\restriction k(\alpha)}$. %$$r \restriction \alpha \Vdash_{\iota(\mathbb{P}_\kappa)\restriction\alpha} r(\alpha) \leq^* \iota(p)^+(\alpha) \cat \dot{\eta}_\alpha.$ 
On one hand, 
$$r_\iota\restriction\alpha\Vdash_{\iota(\mathbb{P}_\kappa)\restriction\alpha}\max(\mathrm{stem}(r_\iota(\alpha)))\leq\tau.$$ On the other hand, since $\iota(p)\in D_0$, $$r_\iota\restriction\alpha\Vdash_{\iota(\mathbb{P}_\kappa)\restriction\alpha}\max(\mathrm{stem}(r_\iota(\alpha)))>\tau.$$ This is a contradiction.
\end{proof}

\smallskip

We need a further technical lemma which  gives us almost a complete description of the conditions in a finite iteration that are  sent to $H$.
\begin{lemma}\label{lem;conditions-in-k-inv-H}
Let $\iota\colon V\rightarrow N$ be a finite iteration factoring $j\restriction V$ via $k$. % $k \circ \iota = j \restriction V$, $r$ be the condition associated to $\iota$ by the discussion above and $\Gamma = \Gamma_\iota$. 
For every $q \in k^{-1}(H)$, there is $p \in G$ and $q'\leq q$ such that  $q' \leq^*_{\Gamma_\iota} 
\iota(p) \wedge r_\iota$. 

In particular, for every $\alpha \notin \Gamma_{\iota}$, $(\iota(p) \wedge r_\iota) \restriction \alpha \Vdash \iota(p)(\alpha) \leq q(\alpha)$.

%for all $\alpha$, $k(q')(\alpha)^{H \restriction \alpha} \leq q(\alpha)^{H \restriction \alpha}$.   
%\ale{Note for myself: Fix this. If $q\in k^{-1}(H)$ then $k(q')\leq q$ does not make sense.}
\end{lemma}%\ale{I do not quite follow this proof. I think our $q'$ should be defined in the following way. First find something $\leq^*_{\Gamma_\iota}$ in $D$. Next, modify the resulting condition just in $k(\Gamma_\iota)$ by taking the stem of $r_\iota(\alpha)$ and pluging the least tree below the tree of $\bar{q}(\alpha)$. I think this should be the $q'$. However, why for $\alpha\in\dom(\iota(p))$, $q'(\alpha)=\iota(p)(\alpha)\leq^*\bar{q}(\alpha)$??}
\begin{proof}
Since $\supp q$ is a nowhere stationary set, there is $p\in G$ such that $\iota(\supp p)$ covers $\supp  q \setminus \Gamma_{\iota}$ and we may assume that $p \leq q \restriction \kappa$ and that $p\forces_{\mathbb{P}}``\forall p'\in \dot{G}\; \text{$(\iota(p')$ is compatible with $r_\iota)$''}$.

We would like to use Lemma \ref{lem;prikry property in iterated ultrapower}: First, the set $D\s \iota(\mathbb{P}_\kappa)$
of all conditions that decide the statement $q \in \iota(\dot{G})$ is a $\leq^*$-dense open set, by the Prikry Property. Clearly, $D$ is the set of all conditions which are either $\leq$-incompatible with $q$ or $\leq$-stronger than $q$. 

Let $q' \leq^*_{\Gamma_\iota} \iota(p) \wedge r_\iota$ be a condition in $D$. So, either $q'$ is incompatible with $q$ or stronger than $q$. Let us show the first case cannot occur. 
Indeed, $k(q), j(p)$ and $k(r_{\iota})$ are all in the generic filter $H$ and, in particular,  compatible. Since $j(\supp p) \cup k(\Gamma_{\iota}) \supseteq k(\supp q)$, there are only finitely many coordinates $\alpha$ in which the stem of $k(q(\alpha))_{H\restriction k(\alpha)}$ is longer than the stem of $j(p)(k(\alpha))$, and they belong to $\Gamma_\iota$. So, in those coordinates, the stem of $k(q(\alpha))_{H\restriction k(\alpha)}$ is an initial segment of the stem of $k(r_\iota(\alpha))_{H \restriction k(\alpha)}$. We conclude that the strongest lower bound of $k(q), j(p)$ and $k(r_\iota)$ is a direct extension of $k(\iota(p) \wedge r_{\iota})$. In particular, it is compatible with $k(q')$, which is also a direct extension of the above condition with the same support.  

So, we conclude that $q' \leq q$. Since ${q'}(\alpha) = \iota(p)(\alpha)$ for all $\alpha \notin \Gamma_\iota$, we conclude that $\iota(p)(\alpha) \leq q(\alpha)$ can fail only for $\alpha \in \Gamma_\iota$. \end{proof}

The Lemma \ref{lem;conditions-in-k-inv-H} shows that the set $k^{-1}(H)$ is contained in the filter of all conditions weaker than an $\leq^*_{\Gamma_\iota}$ extension of $\iota(p) \wedge r_\iota$ for $p\in G$. We do not know whether it is the same filter (this is essentially Conjecture \ref{conj:k-inv-H-is-generic}), and we even do not know whether $k^{-1}(H)$ is a filter at all. The problem is that given two conditions $p_0, p_1 \in k^{-1}(H)$, it is unclear whether there is a third condition $q \leq p_0, p_1$ in $k^{-1}(H)$. So, even though for every coordinate we have meets, and we may assume that $p_0, p_1$ differ only at $\Gamma$, we still cannot construct a condition in $k^{-1}(H)$ stronger than both of them. 

The next lemma culminates the discussion of this section. Indeed, from it we  will be able to infer our key coding lemma  (Lemma~\ref{lemma:representing-ultrafilters}).

\begin{lemma}\label{MainLemma2}
%Suppose $V = \mathcal{K}[S]$. 
%Let $j \colon V[G] \to M[H]$ be an ultrapower elementary embedding with  $\crit(j)=\kappa$ and
For each $\epsilon < j(\kappa)$ %\footnote{Recall that $j \restriction V$ is obtained from a normal iteration using normal measures.} 
 there is a finite iteration $\iota \colon V \to N$ factoring $j\restriction V$ such that  $\epsilon\in \range(k)$ %\footnote{Here $k$ stands for the map witnessing $k \circ \iota = j\restriction K$.}
and for which the following holds: 

For every $\mathbb{P}_\kappa$-nice name $\dot{X}$ for a subset of $\kappa$,$$\text{$M[H] \models \epsilon \in j(\dot{X})_H\; \Longleftrightarrow\; \exists q \in \iota(\mathbb{P}_\kappa)\, (k(q) \in H\, \wedge\, q \Vdash_{\iota(\mathbb{P_\kappa})}  k^{-1}(\epsilon) \in \iota(\dot{X}))$}.$$

Moreover, letting $r_\iota$ be the condition of Definition~\ref{definitionriota}, we may take $q$ so that, for some $p\in G$, $q\leq^*_{\Gamma_\iota}\iota(p) \wedge r_\iota$ and $\supp q= \supp (\iota(p) \wedge r_\iota)$.% $q\leq^*_{\Gamma_\iota}\iota(p)$.%that differs from it only at coordinates $\Gamma_\iota$.
\end{lemma}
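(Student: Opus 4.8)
The strategy is to assume the conclusion fails and derive a contradiction by a tree-of-iterations argument combined with a $\Delta$-system plus Erd\H{o}s--Rado analysis of the stems appearing in the conditions $r_\iota$. Fix $\epsilon<j(\kappa)$. Using Lemma~\ref{lem;representing-elements-in-direct-limit} we may fix \emph{some} finite iteration $\iota\colon V\to N$ factoring $j\restriction V$ with $\epsilon\in\range(k)$; write $\bar\epsilon:=k^{-1}(\epsilon)$. For any $\mathbb{P}_\kappa$-nice name $\dot X$ for a subset of $\kappa$, the set $D_{\dot X,\iota}:=\{q\in\iota(\mathbb{P}_\kappa)\mid q\Vdash\bar\epsilon\in\iota(\dot X)\ \vee\ q\Vdash\bar\epsilon\notin\iota(\dot X)\}$ is $\leq^*$-dense open in $N$ by the Prikry property of the iteration $\iota(\mathbb{P}_\kappa)$. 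By Lemma~\ref{AsubsetGamma} the condition $r_\iota$ of Definition~\ref{definitionriota} has $\supp r_\iota=A_\iota\subseteq\Gamma_\iota$, so it is a legitimate input to Lemma~\ref{lem;prikry property in iterated ultrapower}. Fixing $p_0\in G$ forcing that $\iota(q)$ is compatible with $r_\iota$ for all $q\in\dot G$, and applying Lemma~\ref{lem;prikry property in iterated ultrapower} to $D_{\dot X,\iota}$, $r_\iota$, $p_0$, we obtain $p\in G$ below $p_0$ and a $\leq^*_{\Gamma_\iota}$-extension $s$ of $\iota(p)\wedge r_\iota$ lying in $D_{\dot X,\iota}$, with $\supp s=\supp(\iota(p)\wedge r_\iota)$. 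Any two such $s$'s are $\leq^*_{\Gamma_\iota}$-compatible (they agree off $\Gamma_\iota$ and on $\Gamma_\iota$ are direct extensions of $r_\iota$, which have a maximal stem), so the truth value of ``$\bar\epsilon\in\iota(\dot X)$'' they force is independent of the choice of $s$. If for \emph{every} $\dot X$ there is such an $s$ with $k(s)\in H$, the displayed equivalence holds (and the moreover clause is built in); so, arguing by contradiction, for every finite iteration $\iota$ factoring $j\restriction V$ with $\epsilon\in\range(k)$ there is a bad name $\dot X$ for which no such $s$ lands in $H$.

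Now I iterate this. Given a bad $\dot X$ for $\iota$, fix the witness $s=s_\iota\leq^*_{\Gamma_\iota}\iota(p)\wedge r_\iota$ with $k(s)\notin H$. Then some $u\in H$ is incompatible with $k(s)$; by Lemma~\ref{lem;representing-elements-in-direct-limit} again $u\in\range(k')$ for a further finite iteration $\iota'\colon V\to N'$ (which we may enlarge to also factor through all generators used so far), and by Lemma~\ref{lem;conditions-in-k-inv-H} there is $p'\in G$ and $s'\leq^*_{\Gamma_{\iota'}}\iota'(p')\wedge r_{\iota'}$ with $k'(s')\leq u$ and $k'(s')\in H$. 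Thus $k'(s')\perp k(s)$. The key structural claim, proved by the same maximality-of-stem analysis used elsewhere in this section: the least $\beta$ with $k'(s')(\beta)^{H\restriction\beta}\perp k(s)(\beta)^{H\restriction\beta}$ lies in $k(\Gamma_\iota)\cap k'(\Gamma_{\iota'})$, and moreover at that $\beta$ the \emph{stems} of $k(r_\iota)(\beta)^{H\restriction\beta}$ and $k'(r_{\iota'})(\beta)^{H\restriction\beta}$ are distinct finite sequences. (If $\beta\notin k(\Gamma_\iota)$ then $k(s)(\beta)=j(p)(\beta)\in H_\beta$, which is compatible with anything in $H_\beta$; symmetrically if $\beta\notin k'(\Gamma_{\iota'})$ using that $\iota'(p')$ is compatible with $r_{\iota'}$ and that $r_\iota$ has maximal stem; and two conditions of a tree Prikry forcing with the same stem are compatible.)

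With this in hand I run the construction $(2^{\aleph_0})^+$ many times. Working in $V[G]$, by recursion on $\alpha<(2^{\aleph_0})^+$ I build: a $\subseteq$-decreasing chain of $\mathbb{P}_\kappa$-nice names $\dot X_\alpha$ for subsets of $\kappa$ (at limits, intersections — here $\sigma$-completeness of the ultrafilter $\mathcal U$ induced by $j$ guarantees $\dot X_\alpha^G\in\mathcal U$ stays nonempty in the relevant sense, i.e. the ``positive'' witness $s'_\alpha\in H$ still exists); finite iterations $\iota_\alpha\colon V\to N_\alpha$ factoring $j\restriction V$ with $\epsilon\in\range(k_\alpha)$; a condition $s'_\alpha$ with $k_\alpha(s'_\alpha)\in H$ forcing $\bar\epsilon\in\iota_\alpha(\dot X_\alpha)$; and a bad witness $s_\alpha\leq^*_{\Gamma_{\iota_\alpha}}\iota_\alpha(p_\alpha)\wedge r_{\iota_\alpha}$ with $k_\alpha(s_\alpha)\notin H$ forcing $\bar\epsilon\notin\iota_\alpha(\dot X_\alpha)$, where $\dot X_\alpha$ is chosen bad for $\iota_\alpha$ by the contradiction hypothesis. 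Put $\Gamma_\alpha:=\Gamma_{\iota_\alpha}$. Apply the $\Delta$-system lemma to $\langle k_\alpha(\Gamma_\alpha)\mid\alpha<(2^{\aleph_0})^+\rangle$ (finite sets) to get a stationary/large $I$ and root $\Delta$ with $k_\alpha(\Gamma_\alpha)\cap k_\beta(\Gamma_\beta)=\Delta$ for $\alpha\neq\beta$ in $I$. By the previous paragraph, for $\alpha\neq\beta$ in $I$ the least disagreement point of $k_\alpha(s_\alpha)$ and $k_\beta(s_\beta)$ lies in $\Delta$ and witnesses a stem disagreement between $k_\alpha(r_{\iota_\alpha})$ and $k_\beta(r_{\iota_\beta})$. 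Color $[I]^2$ by this point in the finite set $\Delta$; by Erd\H{o}s--Rado get $J\subseteq I$ of order type $\omega_1$ homogeneous with color $\gamma\in\Delta$. Then for all $\{\alpha,\beta\}\in[J]^2$, the finite sequences $\mathrm{stem}(k_\alpha(r_{\iota_\alpha})(\gamma))^{H\restriction\gamma}$ and $\mathrm{stem}(k_\beta(r_{\iota_\beta})(\gamma))^{H\restriction\gamma}$ are pairwise distinct — uncountably many distinct finite sequences, but they are all initial segments of a single $\omega$-sequence (the $\gamma$-th Prikry generic of $H$), hence there are only countably many. Contradiction.

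\textbf{Main obstacle.} The crux is the structural claim that the least disagreement point of $k_\alpha(s_\alpha)$ and $k_\beta(s_\beta)$ must sit in $k_\alpha(\Gamma_\alpha)\cap k_\beta(\Gamma_\beta)$ \emph{and} register a genuine difference in stem \emph{lengths} of the $r$-conditions there; making this airtight requires carefully tracking how $\iota_\alpha(p_\alpha)\wedge r_{\iota_\alpha}$ evaluates coordinate-by-coordinate in $H\restriction\beta$ and invoking the maximality of the stems $\dot\eta_\alpha$ from Lemma~\ref{lem;maximal-stem}, together with the fact (Lemma~\ref{lem;conditions-in-k-inv-H}) that any $u\in H\cap\range(k')$ can be replaced by such a canonical $s'$. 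The bookkeeping ensuring each $\iota_\alpha$ still factors $j\restriction V$ after being enlarged to absorb previously-used generators is routine but must be stated. Everything else is an assembly of the lemmas already proved in this section.
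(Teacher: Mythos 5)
Your proposal is essentially identical to the paper's proof: assume failure, reduce to the structural claim that two canonical conditions $s$ (with $k(s)\notin H$) and $s'$ (with $k'(s')\in H$) from different finite factor-iterations can only disagree inside $k(\Gamma_\iota)\cap k'(\Gamma_{\iota'})$, and there the stems of the $r$-conditions differ; then run the recursion of length $(2^{\aleph_0})^+$ with $\subseteq$-decreasing members of $\mathcal U$, apply the $\Delta$-system lemma followed by Erd\H{o}s--Rado, and derive a contradiction from uncountably many pairwise distinct finite initial segments of a single Prikry sequence. Two small imprecisions worth noting but not structural: at limit stages the intersection has size up to $(2^{\aleph_0})$, so you need $\kappa$-completeness (not merely $\sigma$-completeness) of $\mathcal U$, and the ``positive'' witness $s'_\alpha$ should be the canonical condition certifying that $\bigcap_{\beta<\alpha}X_\beta$ is good for $\iota_\alpha$ (it cannot force $\bar\epsilon\in\iota_\alpha(\dot X_\alpha)$ with $k_\alpha(s'_\alpha)\in H$ while $X_\alpha$ is simultaneously chosen bad for $\iota_\alpha$, if $s'_\alpha$ is in canonical form).
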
 

%\smallskip

%Let us move on and prove Lemma~\ref{MainLemma2}:%This is exactly what Lemma~\ref{lemma:representing-ultrafilters} claims.

%Given the Lemma, the proof of Lemma \ref{lemma:representing-ultrafilters} is complete. Indeed, for every two choices of $q$ as in the "Moreover" part (not necessarily ones that are sent to $H$ by $k$) are compatible. Thus, one can define a $\kappa$-complete ultrafilter on $\kappa$ by the definition of Lemma \ref{lemma:representing-ultrafilters}. While the definition given in the statement of Lemma \ref{lemma:representing-ultrafilters} allows us to use more conditions, they have to be pairwise compatible. This ultrafilter has to agree with $U$, as for each $\dot{X}$ there is $q$ such that $q$ decides whether $k^{-1}(\epsilon) \in \iota(\dot{X})$ and $k(q) \in H$. \ale{What is exactly the filter defined out of 6.17? Is $\{X\in \mathcal{P}(\kappa)_{V[G]}\mid \exists q\in k^{-1}(H)\exists p\in G\, (q\leq^*_{\Gamma_\iota} \iota(p)\wedge r\; q\forces k^{-1}(\epsilon)\in \iota(X)\}$? If so, how do you amalgamate $q$ and $q'$ to some $q''$ that satisfy the same requirements?}
%\ale{What is the connection between this and lemma 6.7? Is this the culmination of the proof of 6.7?}
\begin{proof} %\ale{I need to understand the previous lemmas to understand this one.}
Suppose that the lemma is false. Fix a finite iteration $\iota\colon V\rightarrow N$ %with critical points $\vec\mu$ and 
with $\epsilon\in \rng(k)$, and set $\bar{\epsilon}=k^{-1}(\epsilon)$. For each $\mathbb{P}_\kappa$-name $\dot{X}$ for a subset of $\kappa$, 
\[\begin{matrix}
    
D^+_{\dot{X},\iota} & := & \{s\in \iota(\mathbb{P}_\kappa) \mid s \Vdash_{\iota(\mathbb{P}_\kappa)} \bar{\epsilon} \in \iota(\dot{X}) \} \\ 
D^-_{\dot{X},\iota} & := & \{s\in \iota(\mathbb{P}_\kappa) \mid s \Vdash_{\iota(\mathbb{P}_\kappa)} \bar{\epsilon} \notin \iota(\dot{X}) \} \\ 
D_{\dot{X},\iota} & := & D^+_{\dot{X},\iota} \cup D^-_{\dot{X},\iota}\end{matrix}\] 
By the Prikry lemma this is a $\leq^*$-dense open set and $D_{\dot{X},\iota}\in N$. 

\smallskip

%\textcolor{blue}{Let $r$ is a condition in $N$ which is sent to $H$. We may take it to be maximal in the sense of the remark after Lemma \ref{lem;maximal-stem}.}\ale{This depends on something which is unclear to me.} 

Denote by $\Gamma_\iota$ the set from Lemma~\ref{lem;prikry property in iterated ultrapower}. Likewise,  $r_\iota$ denotes the condition arising from Definition \ref{definitionriota}; namely, $r_\iota$ is the weakest condition with support $A_\iota$ and stem $\dot\eta_\alpha$ at each  $\alpha \in A_\iota$.  By Lemma~\ref{AsubsetGamma}, $A_\iota \subseteq \Gamma_\iota$, hence  $\supp r_\iota\s  \Gamma_\iota$. 

Let $p_0$ be any condition in $G$ forcing that $\iota(q)$ is compatible with $r_\iota$, for every $q \in G$.
Invoking Lemma \ref{lem;prikry property in iterated ultrapower} for the direct extension order $\leq^*$ of $\mathbb{P}_\kappa$, the $\leq^*$-dense open set $D_{\dot{X},\iota}$ and the conditions $r_\iota$ and $p_0$ we obtain { a condition $p\in G$, $p \leq p_0$ such that $\iota(p) \wedge r_\iota$ admits a $\leq^*_{\Gamma_\iota}$-extension $s\in D_{\dot{X},\iota}$ with $\supp s=\supp (\iota(p)\wedge r_\iota)$. %of $\iota(p) \cup r \restriction \Gamma_\iota$ 
}%\ale{Look at Lemma 6.5. What's the $p$ we need to choose to apply the lemma?}\yair{This is $p_0$.}
%Fix such a direct extension $s$ of $\iota(p)\wedge r_\iota$ such that $s\in D_{\dot{X},\iota}$. 
Note that any two conditions $s$'s as above are compatible, hence the decision on $``\bar\epsilon \in \iota(\dot{X})$'' is independent of their choice.% of such $s$. 

If for every $\dot{X}_G\in\mathcal{U}$ there was a condition $s\in D_{\dot{X},\iota}$ with $s\leq^*_{\Gamma_\iota}\iota(p)\wedge r_\iota$ (for some $p\in G$) and  $k(s) \in H$ then the conclusion of Lemma~\ref{MainLemma2} would hold, thus contradicting our departing assumption. Hence, there must be some $\dot{X}_G\in\mathcal{U}$ such that if $s\in D_{\dot{X},\iota}$ and  $s\leq^*_{\Gamma_\iota}\iota(p)\wedge r_\iota$ then $k(s)\notin H.$

%decides correctly the truth value of $\bar\epsilon \in \iota(\dot{X})$ then the conclusion of Lemma~\ref{MainLemma2} holds. In particular, if $k(s) \in H$ then we are done.

%Consequently our initial assumption implies that this is not the case and so for every normal finite iteration $\iota$ there is a $\mathbb{P}_\kappa$-name $\dot{X}$ for a subset of $\kappa$ for which no choice of $s$ is mapped by $k$ inside $H$.

Fix one such condition $s$. Since $k(s) \notin H$, and 
$$\{\bar{s}\in j(\mathbb{P}_\kappa)\mid \bar{s}\leq k(s)\,\vee\, \bar{s}\perp k(s)\}$$
is dense, we can find $\bar{s}\in H$ such that $\bar{s}\perp k(s)$. % and $\bar{s}\forces_{j(\mathbb{P}_\kappa)}\epsilon\in j(\dot{X}).$
%there is a condition in $H$  incompatible with it. Let $u$ be such a condition. 

Since $M$ is a direct limit of a normal iteration, applying Lemma~\ref{lem;representing-elements-in-direct-limit}, we have that $\bar{s}$ originates from another finite iteration $\iota' \colon V \to N'$; to wit, if $k'\colon N'\rightarrow M$ is the factor embedding between $j$ and $\iota'$ then $\bar{s}\in \range(k').$ Finally, by Lemma~\ref{lem;conditions-in-k-inv-H}, there is  $s' \leq^*_{\Gamma_{\iota'}} \iota'(p') \wedge r_{\iota'}$ for some $p'\in G$, %where $r'$ is a condition of Lemma \ref{lem;maximal-stem}, 
such that $k'(s') \leq \bar{s}$. Clearly, $k'(s')$ is also incompatible with $k(s)$. 
\begin{claim}
The following are true:
\begin{enumerate}
    \item $\{\beta\mid k'(s')(\beta)^{H \restriction \beta} \perp k(s)(\beta)^{H \restriction \beta}\}\s k(\Gamma_\iota) \cap k'(\Gamma_{\iota'}).$
    \item For each $\beta$ in the above set, $$\stem (k'(r_{\iota'})(\beta)^{H\restriction\beta})\neq \stem (k(r_\iota)(\beta)^{H\restriction\beta}).$$
\end{enumerate}
\end{claim}
\begin{proof}
 Let us stipulate that %$q' := k'(s')(\beta)^{H \restriction \beta}$, $q := k(s)(\beta)^{H \restriction \beta}$, 
$r := r_\iota, r' = r_{\iota'}$, $\Gamma := \Gamma_\iota$ and  $\Gamma' = \Gamma_{\iota'}$.

\smallskip

(1) Let $\beta$ be such that $k'(s')(\beta)^{H \restriction \beta} \perp k(s)(\beta)^{H \restriction \beta}$.  Clearly, $\beta \geq \kappa$.  

Since any two conditions in  $j(\mathbb{Q})^{H\restriction\beta}_\beta$ with the same stem are compatible it must be the case that $k(s)(\beta)^{H\restriction\beta}$ and $k(s')(\beta)^{H\restriction\beta}$ have different stems. To simplify notations, let us put
$q := k(s)(\beta)^{H \restriction \beta}$ and $q' := k'(s')(\beta)^{H \restriction \beta}$.

\smallskip

\underline{\textbf{First,  $\beta \in k(\Gamma)$:}} Otherwise, $\beta\notin k(\Gamma)$ and so $q = j(p)(\beta)$, which belongs to the generic filter $H_\beta \subseteq j(\mathbb{Q})_\beta$. Since $q'\in H_\beta$ (because $k'(s')\in H$) we infer that $q'$ and $q$ are compatible, hence obtaining a contradiction.

\smallskip

\underline{\textbf{Second, $\beta \in k'(\Gamma')$:}} Otherwise,  $q' = j(p')(\beta)^{H\restriction\beta}$ and thanks to the above discussion $\beta\in k(\Gamma).$ Since the condition $\iota(p')$ is compatible with $r$, $$\text{$(j(p') \wedge k(r))(\beta)^{H \restriction \beta}$ is well-defined and extends $j(p')(\beta)^{H \restriction \beta}$.}$$
In particular, the remainder between the stems
$$\stem((j(p') \wedge k(r))(\beta)^{H \restriction \beta})\setminus \stem(j(p')(\beta)^{H \restriction \beta})$$
is included in the measure one set carried by $j(p')(\beta)^{H\restriction\beta}$. 

Since $\beta\in k(\Gamma)$ maximality of (the stem of) $r$ guarantees that
$$\stem((j(p') \wedge k(r))(\beta)^{H \restriction \beta})=\stem(k(r)(\beta)^{H\restriction\beta})$$
and thus 
$$\stem(k(r)(\beta)^{H\restriction\beta})\setminus \stem(j(p')(\beta)^{H \restriction \beta})$$
is included in the measure one set of $j(p')(\beta)^{H\restriction\beta}$.

\smallskip

On a different front, $q$ is a  $\leq^*$-extension of $(j(p) \wedge k(r))(\beta)^{H\restriction\beta}$ so 
\[M[H \restriction \beta]\models \stem q= \stem (j(p) \wedge k(r))(\beta) = \stem (k(r)(\beta)).\]
This implies that both $q$ and $j(p')(\beta)^{M[H\restriction\beta]}$ are compatible. A contradiction.

%where the last equality follows from the maximality of the stem of $r$, together with the hypothesis that $\beta\notin k'(\Gamma')$. Thus, $q$ extends $(j(p') \wedge k(r))(\beta)$, which extends $q'$.  Thereby, $q$ and $q'$ are compatible, hence yielding a contradiction. 

\smallskip

(2) By the previous arguments, the stems of $k(s)(\beta)^{H\restriction\beta}$ and $k'(s')(\beta)^{H\restriction\beta}$ must be different. Also, since $\beta\in k(\Gamma)\cap k'(\Gamma')$,  these  stems respectively coincide with those of $k(r)(\beta)^{H\restriction\beta}$ and  $k'(r')(\beta)^{H\restriction\beta}.$ 

The meet of  $q', j(p)(\beta), j(p')(\beta)$ and $k(r)(\beta)$ (call it $u$) belongs to $H_\beta$. If the stem of $q'$ was strictly shorter than the stem of $k(r)(\beta)$ then the stem of $u$ would be exactly the stem of $k(r)(\beta)$ and thus $u$ and $q$ would have the same stem;  as a result, they would be compatible. So, $\stem q$ is strictly longer than the stem of $k(r)(\beta)$. As the stem of $k'(r')(\beta)$ is an extension of the stem of $q$, we obtain the desired inequality.
%$(j(p)\wedge k(r))(\beta)^{H\restriction\beta}$ and $(j(p')\wedge k(r'))(\beta)$ (and indeed, taking $p,p'$ to be stronger will not change this stem), the claim follows.
\end{proof}

\smallskip

For $X\in\mathcal{U}$ and $\iota\colon V\rightarrow N$ a finite iteration with $\epsilon\in \range(k)$ we  say that \emph{$X$ is good for $\iota$} if there is $s\in D_{\dot{X},\iota}$ with $s\leq^*_{\Gamma_\iota}\iota(p)\wedge r_\iota$ (some $p\in G$) such that $k(s)\in H$.\footnote{Note that if $s\in D_{\dot{X},\iota}$ and $k(s)\in H$ then $k(s)\forces_{j(\mathbb{P})}\epsilon\in j(\dot{X}).$} Alternatively, \emph{$X$ is bad} for $\iota$ if it is not good. \footnote{Formally, we should define the notions of good for an iteration $\iota$ using a \emph{name} $\dot{X}$ such that $X = \dot{X}^G$, but it is easy to verify that if $\dot{X}$ is good for 
 $\iota$ and $\dot{X}'$ is forced to be equal to $\dot{X}$ then $\dot{X}'$ is good for $\iota$ as well.}

The above argument shows that if $X$ is bad for $\iota$ then there is another finite iteration $\iota'$ for which $X$ is good and this discrepancy entails $k(\Gamma_\iota)\cap k'(\Gamma_{\iota'})\neq \emptyset$. Moreover, at least one of the  $\beta$'s in this intersection witnesses a disagreement between the length of the stems of $k(r_{\iota})$ and $k'(r_{\iota'})$.

\smallskip

Working in $V[G]$ let us define  $\langle (\iota_\alpha, X_\alpha)\mid \alpha<(2^{\aleph_0})^+\rangle$ as follows:
\begin{enumerate}
    \item $\iota_\alpha\colon V\rightarrow N_\alpha$ is a finite iteration %, $k_\alpha\colon N_\alpha\rightarrow M$ is the factor map between $j$ and $\iota_\alpha$ and  
    with $\epsilon\in \range(k_\alpha)$;
    \item $\langle X_\alpha\mid \alpha<(2^{\aleph_0})^+\rangle\s \mathcal{U}$ is a $\s$-decreasing sequence;
    \item for all $\alpha<(2^{\aleph_0})^+$, 
    \begin{enumerate}
        \item 
    $\bigcap_{\beta<\alpha} X_\beta$ is good for $\iota_\alpha$;
    \item $X_\alpha$ is bad for $\iota_\beta$, for all $\beta\leq \alpha$.
    \end{enumerate}
\end{enumerate}
 In particular, by (3)(a), for each $\beta<\alpha<(2^{\aleph_0})^+$, $k_\alpha(\Gamma_{\iota_\alpha})\cap k_\beta(\Gamma_{\iota_\beta})\neq \emptyset.$
 
 \smallskip

Let $\iota_0$ be any finite iteration witnessing (1). Since we are assuming that the lemma is false there is  $X_0\in\mathcal{U}$ which is bad for $\iota_0$. So, suppose that $\langle (\iota_\beta, X_\beta)\mid \beta<\alpha\rangle$ was defined. By $\kappa$-completeness, $X^0_\alpha:=\bigcap_{\beta<\alpha}X_\beta\in \mathcal{U}$. Arguing as before, there is a finite iteration $\iota_\alpha$ for which $X^0_\alpha$ is good. Also, since we are assuming that the conclusion of the lemma is false, there is $X_\alpha\s X^0_\alpha$ that is bad for $\iota_\alpha.$ Without loss of generality, $X_\alpha \in \mathcal U$. Finally,  $X_\alpha$ is bad for all $\iota_\beta$, $\beta\leq \alpha$: Otherwise, there is $s\in D_{\dot{X}_\alpha,\iota_\beta}$ such that $k_\beta(s)\in H$. Note that $s$ must be in $D^+_{\dot{X}_\alpha,\iota_\beta}$, as $\dot{X}_\alpha$ is in $\mathcal U$. However, $D^+_{\dot{X}_\alpha,\iota_\beta}\s D^+_{\dot{X}_\beta,\iota_\beta}$, which would  contradict that $X_\beta$ is bad for $\iota_\beta.$

\smallskip

%\medskip

%a sequence of $\mathbb{P}_\kappa$-names $\dot{X}_\alpha $ for subsets of $\kappa$ such that, for each $\alpha\leq \beta$, $\one\Vdash_{\mathbb{P}_\kappa} \dot{X}_\alpha \supseteq \dot{X}_\beta$, there is a finite iteration $\iota_\alpha \colon V \to N_\alpha$,  a condition $s'_\alpha$ in  $k_\alpha^{-1}(H)$ such that $s'_\alpha \Vdash \bar\epsilon \in \iota_\alpha(\dot{X}_\alpha)$, but also a condition $s_\alpha \leq^* \iota_\alpha(p) \wedge r_\alpha$, \textcolor{blue}{as in the lemma, such that $s_\alpha \Vdash \epsilon \notin \dot{X}_{\alpha + 1}$.}\ale{As in Lemma 6.7?}
%At limit $\alpha$ we define $\dot{X}_\alpha$ to be the intersection of all the $\dot{X}_\beta$ for $\beta < \alpha$. Using the $\sigma$-completeness of the measure $\mathcal{U}$ the condition $s'_\alpha$ exists.

%Let us iterate this process up to $(2^{\aleph_0})^+$.

Let us now look at the sets $\Gamma_{\alpha} := \Gamma_{\iota_{\alpha}}$. By the argument above, for every $\alpha < \beta$, every disagreement point between (the evaluations of) $k_\alpha(r_\alpha)$ and $k_\beta(r_\beta)$ is a member of $k_\alpha(\Gamma_\alpha) \cap k_{\beta}(\Gamma_\beta)$. Let us apply the $\Delta$-system lemma to $\{k_\alpha(\Gamma_\alpha)\mid \alpha<(2^{\aleph_0})^+\}$. This way we obtain a set $I$ of cardinality $(2^{\aleph_0})^+$ and a root $\Delta$ such that for all $\alpha,\beta\in I$, $k_\alpha(\Gamma_\alpha) \cap k_\beta(\Gamma_\beta)=\Delta.$ %\ale{I do not understand this part of the argument.}
Fix a coloring $c\colon [(2^{\aleph_0})^+]^2\rightarrow \Delta$ sending $\langle \alpha, \beta\rangle$ to the least coordinate $\zeta$ in the root $\Delta$ which exhibits a difference between the stems of $k_\alpha(r_\alpha)^{H \restriction \zeta}$ and $k_\beta(r_\beta)^{H \restriction \zeta}$. By the Erd\H{o}s-Rado theorem this coloring admits a homogeneous set $J\subseteq I$  of order type $\omega_1+1$. So, let $\gamma\in \Delta$ be such that $c``J=\{\gamma\}$; namely, $\gamma$ is the common least coordinate witnessing a disagreement between the stems of $k_\alpha(r_\alpha)(\gamma)^{H \restriction \gamma}$ and $k_\beta(r_\beta)(\gamma)^{H \restriction \gamma}$, for all $\langle\alpha,\beta\rangle\in [J]^2$. Since for each such $\alpha, \beta$ the lengths of $\stem (k_\alpha(r_\alpha)(\gamma)^{H\restriction\gamma})$ and $\stem (k_\beta(r_\beta)(\gamma)^{H\restriction\gamma})$ are finite and different we derive a contradiction with our departing assumption. %\yair{I think that there is a way to get that without the Erdos-Rado. Start with $\aleph_1$ many indices. Use the Delta-system to get a root and also to stabilize the length of the stems on the root. Then, we can't have disagreement.}
%\ale{Whatever you wish. I think your argument with the Erdos Rado theorem is quite elegant.}
\end{proof}

Given Lemma~\ref{MainLemma2} the proof of our coding lemma is  easy: 
\begin{proof}[Proof of Lemma~\ref{lemma:representing-ultrafilters}]
Let $\mathcal{U}\in V[G]$ be a $\kappa$-complete ultrafilter on $\kappa$ and let $j\colon V[G]\rightarrow M[H]$ be the induced ultrapower. %By our anti-large cardinal hypothesis, $j\restriction K$ is an iteration of measures. By Lemma \ref{lem;lifting-S}, this latter embedding lifts to an iteration of normal measures in $V$, in a way that it is determined by $j(S)$. In particular, we can find a finite sub-iteration $\iota\colon V\rightarrow N$ of $j$ using measures from $V$ such that $\epsilon:=[\id]_{\mathcal{U}}$ belongs to $\range(k)$, the range of the factor map between $\iota$ and $j$ (see Lemma~\ref{lem;representing-elements-in-direct-limit}). 
By our setup considerations in page~\pageref{Section;proof of coding} there is a finite iteration $\iota\colon V\rightarrow N$ such that $k\circ\iota=j\restriction V$ and $\epsilon:=[\id]_{\mathcal{U}}$ belongs to $\range(k)$. Invoke Lemma~\ref{MainLemma2} with respect to  these inputs and define $\mathcal{V}$ to be the set of all $X\in \mathcal{P}(\kappa)^{V[G]}$ such that there are $q\in k^{-1}(H)$ and $p\in G$ such that 
 $$q\leq^*_{\Gamma_\iota}(\iota(p)\wedge r_\iota)\;\text{ and }\; q\forces_{\iota(\mathbb{P}_\kappa)}k^{-1}(\epsilon)\in \iota(\dot{X}).$$
Note that any two conditions $q, q'$ witnessing the above admit an explicit $\leq^*_{\Gamma_\iota}$-extension. In particular, $\mathcal{V}$ is a filter. Also, by the moreover part of Lemma~\ref{MainLemma2}, $\mathcal{V}$ contains the $\kappa$-complete ultrafilter $\mathcal{U}$. Thus, $\mathcal{V}=\mathcal{U}.$ \label{proofofcoding}
\end{proof}

\subsection{A conjecture}

Before addressing the consistency of the $\omega$-gluing property (Theorem~\ref{MainTheorem}) let us pose an attractive conjecture, which is a strengthening of what we show in Lemma~~\ref{lem;conditions-in-k-inv-H}:
\begin{conjecture}\label{conj:k-inv-H-is-generic}
Let $j \colon V[G] \to M[H]$ be an ultrapower embedding with critical point $\kappa$ such that $j \restriction V$ is a normal iteration using normal measures. 

Then, there is a finite subiteration iteration $\iota \colon V \to N$ factoring $j\restriction V$, $[\id]\in \range(k)$, with critical points $\vec\mu$, and a condition $r \in \iota(\mathbb{P}_\kappa)$ such that $\supp r = \vec\mu$ and for all $s \leq^* r$ in $N$ with $\supp s=\vec{\mu}$ then $k(s) \in H$.% where $k$ is the factor embedding. %elementary embedding satisfying $k\circ \iota = j$. }
\end{conjecture}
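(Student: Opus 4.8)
The plan is to refine the machinery of Lemmas~\ref{lem;maximal-stem}, \ref{lem;prikry property in iterated ultrapower} and \ref{lem;conditions-in-k-inv-H}, of which the conjecture is a strengthening: there we only extract, for each $q\in H\cap\range(k)$, \emph{some} compatible condition below $\iota(p)\wedge r_\iota$, whereas now we want a single ``master'' condition $r$, supported exactly on the critical points $\vec\mu$, whose every direct extension with support $\vec\mu$ already lies in $H$. First I would reduce to a finite iteration: by Lemma~\ref{lem;representing-elements-in-direct-limit} the embedding $j\restriction V$ factors as $k\circ\iota$ with $\iota\colon V\to N$ finite, and I would choose $\iota$ (enlarging it if necessary, as in the proof of Lemma~\ref{MainLemma2}) so that it captures enough of the generators needed below. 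The candidate for $r$ will be a refinement of the condition $r_\iota$ of Definition~\ref{definitionriota}, cut down to $\supp=\vec\mu$.

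The first genuine step is to show $\vec\mu\subseteq A_\iota$, so that $r_\iota$ is defined at each $\mu_i$. This follows from the covering analysis: by Lemma~\ref{LemmaAboutStationary} and Claim~\ref{claim:covering-almost-all-ordinals} (and Lemma~\ref{CoveringNowhereLemma}), no critical point of the full iteration $j\restriction V$ lies in $j(T)$ for a nowhere stationary $T\in V$; since $\supp j(p)=j(\supp p)$ and $\supp p$ is nowhere stationary for $p\in\mathbb{P}_\kappa$, we get that $j(p)(k(\mu_i))$ is trivial for every $p\in G$. As the $k(\mu_i)$-th Prikry sequence of $H$ is nonempty, there is $r\in k^{-1}(H)$ with $k(r(\mu_i))_H<j(p)(k(\mu_i))_H$ for all $p\in G$, i.e.\ $\mu_i\in A_\iota$, and Lemma~\ref{lem;maximal-stem} then supplies the $\sqsubseteq$-maximal stem $\dot\eta_{\mu_i}$ of $\bar H_{\mu_i}$. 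Thus the stem part of $r$ is already forced on us.

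The heart of the proof, and the point that keeps this a conjecture, is the \emph{tree} part. Having fixed the stems $\dot\eta_{\mu_i}$, one must choose the tree components $T^{r(\mu_i)}$ so that every direct extension $s\leq^* r$ with $\supp s=\vec\mu$ has $k(s)\in H$. The mechanism I would try to exploit is that, because $j\restriction V$ is a normal iteration using internal measures, the Prikry object $H$ adds at coordinate $k(\mu_i)$ is pinned down by the iteration itself: its restriction past $k(\dot\eta_{\mu_i})$ is an end-extension of the critical sequence, and the relevant ``tail measures'' $j(\ell)(k(\mu_i))$ are determined by the lifting of $S$ along the iteration (Lemma~\ref{lem;lifting-S}). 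One would hope this rigidity forces the direct-extension filter that $H$ induces on $j(\mathbb{Q})_{k(\mu_i)}$ below $k(\dot\eta_{\mu_i})$ to be \emph{principal}, with generator $k$ of some tree in $N$; taking that tree as $T^{r(\mu_i)}$ and then integrating over the finitely many coordinates $\mu_0,\dots,\mu_{n-1}$ by a fusion in the spirit of the proof of Lemma~\ref{lem;prikry property in iterated ultrapower}, together with a genericity argument inside $H$ à la Lemma~\ref{lem;maximal-stem}, would finish the argument.

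The main obstacle is precisely this last hope: for an ordinary Tree Prikry forcing the generic branch lies in only the trees the generic happens to choose, so ``all direct extensions are in $H$'' can only hold if those choices at the $k(\mu_i)$ are not free but dictated by $j$; establishing this demands a finer understanding of how $j(\mathbb{P}_\kappa)$ is assembled from the fast function along the iteration, in particular control over the tree components of $j(\ell)$ at critical points, which the present methods do not quite give. A sensible first target is the case $n=1$, where $\iota=j_U$ for a single normal measure $U\in V$, the unique critical coordinate is $\kappa$ itself, $j(p)(\kappa)$ is trivial for all $p\in G$, and one can hope to read off $r$ explicitly from $j_U(\ell)(\kappa)$ and the chosen value of $\kappa$ in the lifted generic $j_U(S)$; if even this case required passing to the full infinite iteration, that would indicate the statement should be weakened accordingly.
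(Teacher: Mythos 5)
Your target statement is not proved in the paper: it is labelled as a \emph{conjecture}, and the authors write explicitly ``Unfortunately we do not know how to prove the conjecture, but we got pretty close to it, by showing that its corollary holds.'' What the paper does establish is the weaker Lemma~\ref{MainLemma2}, namely that some finite sub-iteration $\iota$, together with the condition $r_\iota$ of Definition~\ref{definitionriota}, determines the ultrafilter $\mathcal{U}$ coordinate-by-coordinate; the conjecture asks for a single master condition $r$ supported exactly on $\vec\mu$ whose \emph{every} direct extension with support $\vec\mu$ is sent by $k$ into $H$, i.e., a canonical $\leq^*$-generic-style filter below $r$. Since there is no proof in the paper, there is nothing to compare your approach against, only the gap the authors themselves point at.

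Your proposal is honest about this and does not actually close the argument. The stem part of your analysis is reasonable in spirit: the maximal-stem machinery of Lemma~\ref{lem;maximal-stem} and the covering Lemma~\ref{LemmaAboutStationary}/\ref{CoveringNowhereLemma} do suggest the stems of $r$ at the $\mu_i$ are pinned down. Note, however, that the paper only proves $A_\iota \subseteq \Gamma_\iota$ (Lemma~\ref{AsubsetGamma}); your claim $\vec\mu \subseteq A_\iota$ is not in the paper, and the argument you sketch for it (that $j(p)(k(\mu_i))$ is trivial because $\supp p$ is nowhere stationary) needs care because $\mathbb{P}_\kappa$ lives in $\mathcal{K}[S]$ rather than $\mathcal{K}$, so one must pass through the lifted iteration of Lemma~\ref{lem;lifting-S}. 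The genuinely missing ingredient is exactly the one you flag: choosing the tree components $T^{r(\mu_i)}$ so that every direct extension lands in $H$. That would require the direct-extension filter that $H$ induces on $j(\mathbb{Q})_{k(\mu_i)}$ below the maximal stem to be principal, generated by $k$ of a tree in $N$. Nothing in the normal-iteration structure of $j\restriction V$ or in the lifting of $S$ obviously forces this; the generic tree-Prikry filter is free to avoid any given tree, and the fast-function coding controls only the measures, not the tree choices. So your write-up is a useful and self-aware analysis of why the conjecture is hard, but it is not a proof, and you are right to say so.
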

The meaning of this is the following: There is a finite subiteration $\iota$ of $j$ for which there is a $\iota(\mathbb{P}_\kappa)$-generic filter  that can be defined from $\iota``G$ together with finitely-many additional information, and all its conditions are sent to $H$ by $k$. Moreover, this filter is $\leq^*$-generic. The conjecture asserts something similar to  what was obtained in Lemma~\ref{lemma:extending-S} when we force with $\mathbb{S}_\kappa$ over $\mathcal{K}.$

If our conjecture holds then, in particular,  the above-mentioned filter will decide all statements of the form $``k^{-1}([\id]) \in \iota(\dot{X})$'' for a $\mathbb{P}_\kappa$-name $\dot{X}$  of a subset of $\kappa$. Thus, such generic will determine the measure $\mathcal{U}$ defining  $j$. 

Unfortunately we do not know how to prove the conjecture, but we got pretty close to it, by showing that its corollary  holds; namely, there is a finite iteration that determines $\mathcal{U}$ (Lemma~\ref{MainLemma2} above).

\subsection{A model for the \texorpdfstring{$\omega$}{omega}-gluing property from \texorpdfstring{$o(\kappa)=\omega_1$}{o(kappa)=omega1}}
We are finally in conditions to derive the main result of Section~\ref{section; improving}.
\begin{theorem}[$\mathrm{V}=\mathcal{K}$]\label{MainTheorem}
Let $\kappa$ be a measurable cardinal with $o(\kappa)=\omega_1$ and assume that there are no other measurables with Mitchell order $\geq \omega_1$. 

Then, there is a forcing extension in which $\kappa$ has the $\omega$-gluing property.
\end{theorem}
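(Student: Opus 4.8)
The overall strategy mirrors Theorem~\ref{omegagluingnonoptimal}, but replacing the Laver function by the fast function $S$ and replacing the nice-name argument by the Coding Lemma (Lemma~\ref{lemma:representing-ultrafilters}). First I would force with $\mathbb{S}$ to add a $\mathcal{K}$-generic fast function $S$. By Lemma~\ref{lemma:extending-S} this forcing preserves cardinals and gives us complete control over the $\kappa$-complete ultrafilters that will appear in $\mathcal{K}[S]$: each such ultrafilter is coded by a finite iteration of normal measures from $\mathcal{K}$, a function $f$, and a finite tuple of parameters $\langle a_i\mid i\leq k\rangle\in\prod_{i\leq k}H(\mu_i^+)$. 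The key point, exactly as in the strong-cardinal case, is that this coding lives in $H(\kappa^+)$, hence can be \emph{guessed} by the generic fast function: working in $\mathcal{K}[S]$, the generic $S$ is, on a cofinal set of inaccessibles $\alpha$, a function whose value at $\alpha$ can be any prescribed element of $H(\alpha^+)$, so in particular it will guess (codes for) $\omega$-sequences of $\mathbb{P}_\alpha$-names for $\alpha$-complete ultrafilters on $\alpha$.

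Next I would define, inside $\mathcal{K}[S]$, a non-stationary-supported iteration $\mathbb{P}_\kappa=\langle \mathbb{P}_\alpha,\dot{\mathbb{Q}}_\beta\mid\beta<\alpha<\kappa\rangle$ of tree Prikry forcings, in the style of the iteration of \S\ref{section:consistent-omega-gluing} but now driven by the generic $S$ rather than by $\ell$: at an inaccessible $\alpha$, if $S(\alpha)$ codes (a name for) an $\omega$-sequence $\langle\dot U^\alpha_n\mid n<\omega\rangle$ of $\alpha$-complete uniform ultrafilters on $\alpha$, let $\dot{\mathbb{Q}}_\alpha$ be (a name for) the tree Prikry forcing $\mathbb{T}(\langle\dot U^\alpha_n\mid n<\omega\rangle)$; otherwise trivial. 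I need to check $\mathbb{P}_\kappa$ satisfies the hypotheses of Lemma~\ref{lemma:representing-ultrafilters}: $|\mathbb{P}_\alpha|\leq 2^\alpha$ (by GCH in $\mathcal{K}$ and the support condition), the direct extension order of each $\mathbb{Q}_\alpha$ is $\alpha$-closed, and compatible conditions in $\mathbb{Q}_\alpha$ have greatest lower bounds (true for tree Prikry forcing — intersect the trees). Then force with $\mathbb{P}_\kappa$ over $\mathcal{K}[S]$ to get $G$, and work in $\mathcal{K}[S][G]$.

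Now the verification of the $\omega$-gluing property. By Lemma~\ref{lemma:omega-gluing-by-a-measure} it suffices, given $\langle U_n\mid n<\omega\rangle$ a sequence of $\kappa$-complete ultrafilters on $\kappa$ in $\mathcal{K}[S][G]$, to produce a $\kappa$-complete ultrafilter $W$ on $\kappa^\omega$ concentrating on increasing sequences with each $e_n$ a Rudin--Keisler projection to $U_n$. Apply the Coding Lemma to each $U_n$: it yields a finite iteration $\iota_n\colon\mathcal{K}[S]\to\mathcal{K}^{M_n}[\iota_n(S)]$, an ordinal $\bar\epsilon_n$, and a condition $r_n$, such that membership in $U_n$ is decided by $\bar\epsilon_n\in\iota_n(\dot X)$ via conditions below $\iota_n(p)\wedge r_n$. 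Since all this data is (coded in) $H(\kappa^+)$, and since $j(\ell)$ — here, the lifted generic fast function $\iota(S)$ at stage $\kappa$ — is free to take the value coding the whole sequence $\langle\dot U_n\mid n<\omega\rangle$ of $\mathbb{P}_\kappa$-names, I can run the argument of Theorem~\ref{omegagluingnonoptimal}: pick an embedding $j\colon\mathcal{K}[S]\to M$ (obtained by iterating the order-$\omega_1$ measure; here $o(\kappa)=\omega_1$ is exactly what is needed to run $\omega$-many ultrapowers and hit the right behaviour) such that $j(S)(\kappa)$ codes $\langle\dot U_n\mid n<\omega\rangle$, so that $j(\mathbb{P}_\kappa)$ at stage $\kappa$ is the tree Prikry forcing with respect to $\langle\dot U_n\mid n<\omega\rangle$; then build a $\leq^*$-decreasing sequence of conditions in the tail $\dot{\mathbb{R}}$ of $j(\mathbb{P}_\kappa)$ deciding $\dot b_\kappa\in j(\dot X_\alpha)$ for an enumeration $\langle\dot X_\alpha\mid\alpha<\kappa^+\rangle$ of nice names for subsets of $\kappa^\omega$, using $\kappa^+$-closure of $\dot{\mathbb{R}}$ and $2^\kappa=\kappa^+$; and define $W$ by $X\in W$ iff some condition of the form $p\cup\{\langle\kappa,(\emptyset,\dot T)\rangle\}\cup r_\alpha$ forces $\dot b_\kappa\in j(\dot X)$. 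The proof that $W$ is a $\kappa$-complete ultrafilter and that $e_n``X\in U_n$ for all $X\in W$ is then verbatim the claim in Theorem~\ref{omegagluingnonoptimal}, except that the sentence ``$\dot Y$ nice $\Rightarrow j(\dot Y)\cap\kappa=\dot Y$'' is now replaced by the Coding Lemma statement identifying $U_n$ with the set of $\dot X$ such that $\bar\epsilon_n\in\iota_n(\dot X)$ is forced below $\iota_n(p)\wedge r_n$.

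\textbf{Main obstacle.} The delicate step is the second bullet: ensuring that the iteration $j\restriction\mathcal{K}$ that one wants to use — the one whose ultrapower generates $j$ — can be arranged to have $j(S)(\kappa)$ equal to (a code for) the chosen $\omega$-sequence of $\mathbb{P}_\kappa$-names, \emph{and} that this sequence is correctly captured, i.e.\ the names are genuinely $\mathbb{P}_\kappa$-names for $\kappa$-complete ultrafilters and not something $M$ merely thinks is such. Here one must use the Coding Lemma carefully: the ultrafilters $U_n$ in $\mathcal{K}[S][G]$ are determined by finite iterations of $\mathcal{K}$-measures plus $H(\kappa^+)$-data, so there is a single $H(\kappa^+)$-object coding all of $\langle\dot U_n\mid n<\omega\rangle$, and by a density argument on $\mathbb{S}$ this object (more precisely, its appropriate reinterpretation under the iteration) is forced to be guessed by the generic fast function at stage $\kappa$ of $j(\mathbb{S})$. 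Additionally one must check that, in the present context where $\mathbb{P}_\kappa$ is not $\kappa$-cc, the model $M$ is still closed enough (under $\kappa$-sequences, via Lemma~\ref{ClosureUnderkappaseq} applied to the finite factor iterations) for the construction of the $\leq^*$-decreasing sequence in $\dot{\mathbb{R}}$ and the definition of $W$ to go through; the absoluteness of the tree Prikry forcing $\dot{\mathbb{Q}}_\kappa$ between $M[G]$ and $\mathcal{K}[S][G]$ is what makes $\dot b_\kappa$ a legitimate name in both.
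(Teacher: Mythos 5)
Your overall strategy — force with $\mathbb{S}$, iterate tree Prikry forcings along a fast-function guessed schedule, and use the Coding Lemma (Lemma~\ref{lemma:representing-ultrafilters}) to replace the nice-name argument of Theorem~\ref{omegagluingnonoptimal} — is the same as the paper's, and most of the scaffolding (the absoluteness discussion, the $\leq^*$-decreasing sequence in the tail of $j(\mathbb{P}_\kappa)$, the definition of the gluing measure) matches. The gap is in the one place where $o(\kappa)=\omega_1$ actually enters. You describe the embedding $j$ as ``obtained by iterating the order-$\omega_1$ measure'' and invoke $o(\kappa)=\omega_1$ to ``run $\omega$-many ultrapowers.'' Neither of these is what happens, and the latter cannot be made to work: after the first ultrapower the critical point moves above $\kappa$, so iterating further does not help you capture data about $\kappa$. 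There is also no ``order-$\omega_1$ measure'' — the Mitchell orders of the measures on $\kappa$ are exactly the countable ordinals.

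What the paper does is pin down a very specific notion of \emph{code for a measure} on $\alpha$: a tuple $\langle\vec\rho,\vec\zeta,f,g\rangle\in H(\alpha^+)$ in which $\vec\zeta$ is the finite sequence of \emph{countable} Mitchell-order indices of the measures appearing in the finite iteration produced by the Coding Lemma. Given your $\omega$-sequence $\langle U_n\mid n<\omega\rangle$ of ultrafilters, the codes $c_n=\langle\vec\rho_n,\vec\zeta_n,f_n,g_n\rangle$ mention countably many countable ordinals, so one picks a single $\zeta_*<\omega_1$ above all of them; here is precisely where $o(\kappa)=\omega_1$ is used — to guarantee $\mathcal{U}(\kappa,\zeta_*)$ exists. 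Then $\iota\colon V\to\Ult(V,\mathcal{U}(\kappa,\zeta_*))=M$ is a \emph{single} ultrapower, and the point of taking $\zeta_*$ above all the $\vec\zeta_n$'s is that every measure $\mathcal{U}(\kappa,\zeta)$ with $\zeta<\zeta_*$, and hence every model and embedding used in decoding $c_n$, is computed identically in $M$ (using closure of $M$ under $\kappa$-sequences), so $c_n$ still codes $U_n$ inside $M$. After lifting $\iota$ via Lemma~\ref{LemmaAboutStationary} so that $(\bigcup\tilde S)(\kappa)=\langle c_n\mid n<\omega\rangle$, the stage-$\kappa$ forcing of $\iota(\mathbb{P}_\kappa)$ is the tree Prikry forcing with respect to exactly $\langle U_n\mid n<\omega\rangle$, and the rest runs as in Theorem~\ref{omegagluingnonoptimal}. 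Your write-up never articulates this choice of $\zeta_*$ or the absoluteness-of-decoding argument, and your proposed explanation of the role of $o(\kappa)=\omega_1$ would not produce it.
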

\begin{proof}
Let ${\mathcal{U}}=\langle {U}(\mu,\zeta)\mid \mu\leq \kappa,\,\zeta<o^{{\mathcal{U}}}(\mu)\rangle$ be the coherent sequence of normal measures over $\kappa$ in $\mathcal{K}$. Let $S\s \mathbb{S}$ be $\mathcal{K}$-generic  and let $\ell := \bigcup S$ be the obtained generic fast function. Working in $\mathcal{K}[S]$, let us define a non-stationary supported iteration of Priky-type forcings $\langle \mathbb{P}_\alpha, \dot{\mathbb{Q}}_\beta\mid \beta<\alpha\leq \kappa\rangle$ as follows. Suppose that $\mathbb{P}_\alpha\in \mathcal{K}[S]$ has been defined. If $\alpha$ is not measurable (in $\mathcal{K}$) then we let $\dot{\mathbb{Q}}_\alpha$ a $\mathbb{P}_\alpha$-name for the trivial forcing. So, suppose $\alpha$ is measurable in $\mathcal{K}$. Then, $\mathbb{S}/\{\langle \alpha, \ell(\alpha)\rangle \}$ factors as $\mathbb{S}_\alpha \times \mathbb{S}\setminus (\alpha+1)$ and $\mathbb{S}\setminus (\alpha+1)$ is $\mathbb{S}_\alpha$-forced to be much more distributive than the rank of $\mathbb{P}_\alpha$. In particular, $\mathbb{P}_\alpha\in \mathcal{K}[S\restriction\alpha]$.\footnote{As customary, $S\restriction\alpha$ is the natural projection of $S$ to a $\mathbb{S}\restriction\alpha$-generic.} 
The forthcoming definition will ensure that $\mathbb{Q}_\alpha$ is non-trivial (for most $\alpha$) in the generic extension by $\mathbb{S}_\alpha \ast \mathbb{P}_\alpha$.

Work in $\mathcal{K}[S\restriction\alpha]$. We say that a tuple $\langle \vec \rho, \vec \zeta, f, g\rangle$ \emph{codes a measure} on $\alpha$ if:
\begin{enumerate}
    \item $\vec\rho=\langle \rho_0,\dots, \rho_{n-1}\rangle$  where $\rho_i \colon \alpha^i \to (\alpha+1)$ and $\rho_0(\langle\rangle):=\alpha$,
    \item $\vec\zeta=\langle \zeta_0,\dots, \zeta_{n-1}\rangle$ is a sequence of %length $n$ of 
    countable ordinals,
    \item $f \colon \alpha^n \to  \mathbb{P}_\alpha$, %\ale{In which model do these functions live? In $\mathcal{K}[S\restriction\alpha]$ or in $\mathcal{K}$? I think in $\mathcal{K}[S\restriction\alpha]$. From this depends where the code lives.}
    \item $g \colon \alpha^n \to \alpha$,
    \item For each $i < n$, $\rho_i(\langle \mu_j \mid j < i\rangle) = \mu_i$ and $\zeta_i < o^{N_i}(\mu_i)$, where $$\iota_{i,i+1}\colon N_i \to N_{i+1} \cong \Ult(N_i, \iota_i({\mathcal{U}})(\mu_i, \zeta_i)),$$ being  $\iota_{i + 1} := \iota_{i, i + 1} \circ \iota_i$, $N_0 := V$ and $\iota_0 := \id$, 
\item Let $\iota^*_n$ be denote the lifting\footnote{Such lifting exists by Lemma~\ref{lem;lifting-S} and it is definable in $\mathcal{K}[S\restriction\alpha]$. Indeed, such lifting is determined by $\iota_n$, the generic $S\restriction\alpha$ and some sequence $\langle a_i\mid i<n\rangle \in \prod_{i<n}H(\mu_i^+)$. In fact, any elementary embedding $j\colon \mathcal{K}[S\restriction\alpha]\rightarrow M$ for which $\iota=j\restriction \mathcal{K}$ is a normal iteration is a lifting of $\iota.$} of $\iota_n$ to $\mathcal{K}[S\restriction\alpha]$ and set $$\text{$r = \iota^*_n(f)(\langle \mu_i \mid i < n\rangle),$  $\epsilon = \iota^*_n(g)(\langle \mu_i \mid i < n\rangle)$,}$$ and%\ale{I think taking here the lifting is important, because in the coding lemma the witnessing iteration $\iota$ is from $\mathcal{K}[S\restriction\alpha]$. }
\[\begin{matrix} \dot{U} &= &\{\dot{X}_{S \restriction \alpha \ast \dot{G}} \subseteq \alpha \mid & \exists p \in \dot{G} \; \exists s\in \iota^*_n(\mathbb{P}_\alpha), \\ & & & s \leq^*_{\Gamma_{\iota_n}} \iota^*_n(p) \wedge r, s\in N_n, \\ & & & N_n \models s \Vdash_{\iota^*_n(\mathbb{P}_\alpha)} \epsilon \in \iota^*_n(\dot{X})\}.\end{matrix}\]
Then, it is the case that
$$\one\forces^{\mathcal{K}[S\restriction\alpha]}_{\mathbb{P}_\alpha}\text{$``\dot{U}$ is an $\alpha$-complete ultrafilter on $\alpha$''.}$$
\end{enumerate}

 In the above-described scenario we will say that $\dot{U}$ is \emph{coded} by $\langle \vec\rho, \vec\zeta, f, g\rangle$. Note that a code $\langle \vec \rho, \vec \zeta, f, g\rangle$ for a measure on $\alpha$ belongs to $H(\alpha^+)^\mathcal{K}$. %\ale{In principle codes belong to $H(\alpha^+)^{V[S\restriction\alpha]}$, so why are they elements of $H(\alpha^+)^{V}$? We will need $\alpha^+$-ccness  or some sort of fusion argument.} 
 In particular, $\langle \vec \rho, \vec \zeta, f, g\rangle$ is a legitimate value for $s(\alpha)$ provided $s\in \mathbb{S}$ (see our definition in page~\pageref{fastfunctionforcing}). % - this being true by $\alpha^+$-ccness of $\mathbb{S}\restriction\alpha$ (see \cite[Proposition~8.4]{CumHandBook}).
 %\footnote{Note, however, that being a code is a property in $\mathcal{K}[S\restriction\alpha\ast \mathbb{P}_\alpha]$ not in $\mathcal{K}$ precisely because of requirement }  
 In particular, any code $\langle \vec \rho, \vec \zeta, f, g\rangle$ is a legitimate value for $s(\alpha)$ for any condition $s\in \mathbb{S}$ (see Definition~\ref{fastfunctionforcing}). 

\smallskip

Let us define $\mathbb{Q}_\alpha$: %\footnote{Recall that this definition is in the generic extension by $\mathbb{S}\restriction(\alpha + 1)\ast \mathbb{P}\restriction\alpha$.} 
If $\mathbb{P}_\alpha$ forces (over ${\mathcal{K}[S\restriction\alpha]})$ that $$\text{``$\dot{\ell}(\alpha)$ is a countable sequence of codes for measures $\mathcal{V}=\langle \dot{U}_n\mid n<\omega\rangle$ on $\alpha$''}$$ we let $\dot{\mathbb{Q}}_\alpha$ be a $\mathbb{P}_\alpha$-name for the $\mathcal{V}$-tree Prikry forcing $\dot{\mathbb{T}}(\mathcal{V})$ of Definition~\ref{UtreePrikry}. Otherwise, $\dot{\mathbb{Q}}_\alpha$ is simply  a $\mathbb{P}_\alpha$-name for the trivial poset.% Otherwise, let $\mathbb{Q}_\alpha$ be the tree Prikry forcing of Section \ref{section:consistent-omega-gluing} defined with respect to  the measures coded by $F(\alpha)$.

\smallskip

Let us show that, in  $\mathcal{K}[S\ast G]$, $\kappa$ has the $\omega$-gluing property. Indeed, by the \emph{Coding Lemma} (Lemma \ref{lemma:representing-ultrafilters}) in the generic extension $\mathcal{K}[S\ast G]$ every $\kappa$-complete ultrafilter on $\kappa$ has a code. Fix a countable sequence of measures in the generic extension, $\langle U_n \mid n < \omega\rangle$, and let $\vec{c}=\langle \langle \vec{\rho}_n,\vec{\zeta}_n,f_n,g_n\rangle \mid n < \omega\rangle$ be a sequence of codes for them. 
Let $\zeta_*$ be a countable ordinal larger than all the countable ordinals  mentioned in all  the $\vec{\zeta}_n$'s. This choice is possible for we were assuming that $\kappa$ is the unique measurable with $o(\kappa)\geq \omega_1$.

\smallskip

Let $\iota \colon \mathcal{K} \to \Ult(\mathcal{K}, \mathcal{U}(\kappa,\zeta_*)) =: M$. By Lemma~\ref{LemmaAboutStationary} regarded with respect to $\mathbb{S}$, $\iota$ lifts to ${\iota} \colon \mathcal{K}[S] \to M[\iota(S)]$ in a way that $\iota(\ell)(\kappa) = \vec{c}.$ By Claim~\ref{claim: closed under kappa}, $M[\iota(S)]$ is closed under $\kappa$-sequences in $\mathcal{K}[S].$ Hence, $\iota(\mathbb{P}_\kappa)$ factors as $$\mathbb{P}_\kappa\ast \dot{\mathbb{Q}}_\kappa^{M[\iota(S)]}\ast\mathbb{R}.$$ We would like to show that $\mathbb{P}_\kappa$ forces (over $M[\iota(S)]$) that $\iota(\ell)(\kappa)$ (i.e., $\vec{c}$) is a countable sequence of codes for measures over $\kappa$. This will imply that  $\dot{\mathbb{Q}}_\kappa$ is a name for the corresponding tree Prikry forcing $\dot{\mathbb{T}}(\langle U_n\mid n<\omega\rangle)$, which will lead us to a situation similar to that of the proof of Theorem~\ref{omegagluingnonoptimal}. 

\begin{claim}\label{claim: closed under kappa}%\ale{Please, check it.}
    $M[\iota(S)]$ is closed under $\kappa$-sequences in $\mathcal{K}[S]$.
\end{claim}
\begin{proof}[Proof of claim]
First, it is enough to show that $M[S]$ is closed under $\kappa$-sequences in $\mathcal{K}[S]$.  Since $M[S]$ is a model of the Axiom of Choice we shall simply check that any set of  ordinals (in $\mathcal{K}[S]$) of cardinality $\kappa$ belongs to $M[S]$. Let $X\s \Ord$ be in $\mathcal{K}[S]$ with size $\kappa$. By Claim~\ref{kappabounding} there is $Y\in \mathcal{K}$ such that $X\s Y$ and $|Y|^{\mathcal{K}}\leq \kappa.$ By $\kappa$-closure of $M$ in $\mathcal{K}$ it follows that $Y\in M$. Note that $X$ can be coded via $Y$ and $c:=\{\alpha<\kappa\mid x_\alpha\in Y\}\in \mathcal{K}[S]$, being $\langle x_\alpha\mid \alpha<\kappa\rangle$  an injective enumeration of $X$ (in $\mathcal{K}[S]$). Also note that $c\in M[S]$ as both $\mathcal{K}[S]$ and $M[S]$ have the same subsets of $\kappa$. All in all, $X$ can be decoded inside $M[S]$ and thus it is a member of this model.
 \end{proof}

\begin{claim}
In $M[\iota(S)\ast G]$, $\vec{c}$ is a sequence of codes for measures on $\kappa$.
\end{claim}
\begin{proof}[Proof of claim]
    Fix a code $c_* = \langle \vec{\rho}_n,\vec{\zeta}_n,f_n,g_n\rangle$. By our choice of $\zeta_*$ (i.e., above $\sup_{n<\omega}(\max\vec{\zeta}_n)$), each measure which is mentioned in the process of decoding $c_*$ exists in $M$ and in its iterations. Since $M[\iota(S)]$ is closed under $\kappa$ sequences in $\mathcal{K}[S]$ then the computations of the models $N_i$ and the embeddings $\iota_i$ will be the same both in $\mathcal{K}[S]$ and in $M[\iota(S)]$. Given the embedding and the models, the definition of $U_n$ is clearly absolute between $\mathcal{K}[S\ast G]$  and $M[\iota(S)\ast G]$. Thus, $c_*$  decodes to a measure   in $M[\iota(S)\ast G]$.
\end{proof}

 %Let us verify that, in $M$, each $c_n=\langle \vec{\rho}_n,\vec{\zeta}_n,f_n,g_n\rangle$ is still a code for $U_n$. 
 %Fix a code $c_* = c_n$. By our choice of $\zeta_*$ (i.e., above $\sup_{n<\omega}\vec{\zeta}_n$), each measure which is mentioned in the process of decoding $c_*$ exists in $M$ and in its iterations. Since $M$ is closed under $\kappa$ sequences, the computations of the models $N_i$ and the embeddings $\iota_i$ are the same. Given the embedding and the models, the definition of $U_n$ is clearly absolute.

Finally, we argue similarly to Theorem~\ref{omegagluingnonoptimal}. We opt to provide some details as there are some few differences compared to the argument in \S\ref{section:consistent-omega-gluing}. For instance, one   %\footnote{Note that the GCH holds  in $\mathcal{K}[S]$ by Proposition~\ref{PropertiesofS}. 
%This was needed in the computations appearing in the proof of  Theorem~\ref{omegagluingnonoptimal}.} 
of the advantages of having non-stationary support is that we do not need to bear on GCH anymore. Indeed,  let $\mathcal{W}$ be all sets $X\s \kappa^\omega$ for which there is $p\in G$ and  a $\mathbb{P}_\kappa$-name $\dot{T}$  such that 
$$p\cup\{\langle \varnothing,\dot{T}\rangle\}\cup \iota(p)\setminus (\kappa+1)\forces_{\iota(\mathbb{P}_\kappa)}\dot{b}_\kappa\in \iota(\dot{X}).$$
\begin{claim}
    $\mathcal{W}$ is a $\kappa$-complete measure on $\kappa^\omega$.
\end{claim}
\begin{proof}[Proof of claim]
It is evident that $\mathcal{W}$ is a filter on $\kappa^\omega$. Suppose that $X\s \kappa^\omega$ and let $D:=D_X$ be a $\iota(\mathbb{P}_{\kappa})\setminus(\kappa+1)$-name for the set $$\{q\in \iota(\mathbb{P}_\kappa)/(\kappa+1)\mid q\parallel_{\iota(\mathbb{P}_\kappa)/(\kappa+1)}\dot{b}_\kappa\in \iota(\dot{X})\}.$$
By the Prikry property for non-stationary-supported iterations (see p.\pageref{GitikIterationsccc}) this is a $\leq^*$-dense open set. Let $f\colon \kappa\rightarrow V$ be a function representing $D$; namely, $\iota(f)(\kappa)=D$. Without loss of generality, $f(\alpha)$ is a $\mathbb{P}_{\alpha+1}$-name for a $\leq^*$-dense open subset of $\mathbb{P}_\kappa/(\alpha+1)$.\footnote{Formally speaking the above relation holds for all $\alpha\in C\cap\dom(f)$ for a club $C\s\kappa$. We opted to write ``for all $\alpha\in\dom(f)$'' to avoid overcomplicated notations.} Using a fusion argument similar to that of Lemma~\ref{lem;prikry property in iterated ultrapower} (see also \cite[Lemma~2.3]{BenUng}) one shows that $$\{p\in\mathbb{P}_\kappa\mid \exists E\subseteq \kappa \text{ a club }\forall \alpha\in E\, (p\restriction(\alpha+1)\forces_{\mathbb{P}_{\alpha+1}}p\setminus (\alpha+1)\in f(\alpha))\}$$ is dense in $\mathbb{P}_\kappa$. In particular there is $p\in G$ in that set. By elementarity, $$\iota(p)\restriction\kappa+1\forces_{\iota(\mathbb{P}_\kappa)_{\kappa+1}}\iota(p)\setminus (\kappa+1)\in D.$$
 Notice that $\kappa\notin \supp(\iota(p))$ precisely because this is nowhere stationary. Hence, using the Prikry property in $V[G]$, we can find a tree $T$ such that $\langle \varnothing, T\rangle$ forces either  $``\iota(p)\setminus (\kappa+1)\forces \dot{b}_\kappa\in\iota(\dot{X})$'' or  $``\iota(p)\setminus (\kappa+1)\forces \dot{b}_\kappa\notin\iota(\dot{X})$''. Suppose, for instance, that the former is true. Then, working back in $V$, we let $q\in G$ with $q\leq p$ such that $q$ forces all the above information. Thus, 
 $$q\cup\{\langle \varnothing, \dot{T}\rangle\}\cup \iota(q)\setminus \kappa+1\forces \dot{b}_\kappa\in \iota(\dot{X})$$
 This shows that $X\in \mathcal{W}.$ Similarly, if we would have assumed that the second option holds  then we would have concluded that $\kappa^\omega\setminus X\in\mathcal{W}.$

 The argument for $\kappa$-completeness is similar. Suppose that $\langle X_\alpha\mid \alpha<\delta\rangle$ is a sequence  of sets in $\mathcal{W}$, for some $\delta<\kappa.$ Now argue as we did before but this time with respect to the $\leq^*$-dense open set  \begin{equation*}\{q\in \iota(\mathbb{P}_\kappa)/(\kappa+1)\mid q\forces _{\iota(\mathbb{P}_\kappa)/(\kappa+1)}\dot{b}_\kappa\in \bigcap_{\alpha<\delta}\iota(\dot{X}_\alpha)\}.\footnote{The fact that it is dense comes from the fact that $\langle \iota(\mathbb{P}_\kappa)/(\kappa+1),\leq^*\rangle$ is $\kappa^+$-closed.}\qedhere\end{equation*}
\end{proof}
Arguing as in Claim~\ref{Wglues}  %(and combining it with Lemma~\ref{lemma:omega-gluing-by-a-measure}) 
one show that $\mathcal{W}$ glues $\langle U_n\mid n<\omega\rangle$. From altogether we conclude that $\kappa$ has the $\omega$-gluing property in $V[G]$.
%there is a direct extension of $\iota(p)$ forcing the canonical name of the generic of $\tilde\iota(\mathbb{Q})_\kappa$ to be in $\tilde\iota(\dot{X})$. % \ale{I suggest to be more explicit here. What do you think?}\yair{I think that it is OK as it is here, but feel free to add more details.}
\end{proof}

\section{A lower bound for the \texorpdfstring{$\omega$}{omega}-gluing property} \label{section:lower-bound}
In this last section we provide a lower bound for the consistency strength of the $\omega$-gluing property. Since the proof relies on some key features of the core model $\mathcal{K}$ our readers might want to revisit the material in \S\ref{sectionInnerModel}; particularly, \emph{Mitchell's Core Model Theorem} (Theorem~\ref{CoreModelTheorem}).
\begin{theorem}\label{lowerboundgluing}
Assume that $\kappa$ is a measurable cardinal having the $\omega$-gluing property. If there is no inner model of $``\exists \alpha\,(o(\alpha) = \alpha)$'' then $o^\mathcal{\mathcal{K}}(\kappa) \geq \omega_1$.
\end{theorem}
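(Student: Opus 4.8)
The strategy is to contrapose: assuming $o^{\mathcal{K}}(\kappa) < \omega_1$ (and no inner model of $\exists\alpha\,(o(\alpha)=\alpha)$), I will produce an $\omega$-sequence of $\kappa$-complete ultrafilters on $\kappa$ that cannot be glued into an extender. Under the anti-large-cardinal hypothesis, $\mathcal{K}$ exists and every $\kappa$-complete ultrafilter $U$ on $\kappa$ in $V$ gives rise, via the iteration of its ultrapower, to an internal iteration of $\mathcal{K}$ by normal measures from the coherent sequence $\vec{\mathcal{U}}$; the crucial point is that the critical sequence and the measures used are completely determined in $\mathcal{K}$, so the ``shape'' of $U$ restricted to $\mathcal{K}$ is controlled by finitely much data from $\vec{\mathcal{U}}$ (Lemma~\ref{lem;representing-elements-in-direct-limit}-style factorization). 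Since $o^{\mathcal{K}}(\kappa)=\beta$ for some $\beta<\omega_1$, there are only finitely many normal measures on $\kappa$ in $\mathcal{K}$ that can appear as the \emph{first} measure of such an iteration, and more generally the Mitchell rank bounds how the iteration can proceed at $\kappa$.

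\textbf{Key steps.} First I would recall that, by the anti-large-cardinal hypothesis, for any $\kappa$-complete ultrafilter $U$ on $\kappa$ the ultrapower embedding $j_U\colon V\to M_U$ has $j_U\restriction\mathcal{K}$ equal to a normal iteration of $\mathcal{K}$ using normal measures from $\vec{\mathcal{U}}$ (Mitchell, extended by Schindler, as cited in the excerpt). Next, suppose for contradiction that $\kappa$ has the $\omega$-gluing property; by Lemma~\ref{lemma:omega-gluing-by-a-measure} every $\omega$-sequence $\langle U_n\mid n<\omega\rangle$ of $\kappa$-complete ultrafilters admits a single $\kappa$-complete ultrafilter $W$ on $\kappa^\omega$ concentrating on increasing sequences with each $e_n$ a Rudin-Keisler projection of $W$ onto $U_n$. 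The plan is then to choose the $U_n$ so that the single ultrapower $j_W\colon V\to M_W$ would be forced to project onto all of them simultaneously, which in $\mathcal{K}$-terms would require the iteration $j_W\restriction\mathcal{K}$ to realize, below $j_W(\kappa)$, a configuration of more normal-measure-images than $o^{\mathcal{K}}(\kappa)$ permits. Concretely, I would take $U_n$ to be the ultrafilter on $\kappa$ whose $\mathcal{K}$-iteration uses, as its $n$-th step (or as a length-$(n{+}1)$ iteration), the measures $\mathcal{U}(\kappa,0),\dots$ in such a way that the seeds $[\id]_{U_n}$ sit in strictly incompatible positions relative to any common factor; because $W$ must Rudin-Keisler project onto each $U_n$ via the evaluation maps, a single factor embedding $k\colon N\to M_W$ with $N$ a \emph{finite} iteration of $\mathcal{K}$ would have to capture the seed of every $U_n$, forcing the finite iteration $N$ to already contain the critical points and measure-images witnessing all $\omega$-many distinct configurations --- impossible for a finite iteration, and impossible even for an infinite one once we note that $o^{\mathcal{K}}(\kappa)=\beta<\omega_1$ caps the order type of the critical sequence below $j_W(\kappa)$ produced by the single measure $W$.

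\textbf{Carrying it out.} More carefully, I would argue: since $W$ is a single $\kappa$-complete ultrafilter, $j_W\restriction\mathcal{K}$ is a normal iteration whose critical points below $j_W(\kappa)$ all equal $\kappa$ repeatedly composed --- and by coherence of $\vec{\mathcal{U}}$ and $o^{\mathcal{K}}(\kappa)=\beta$, the number of iteration steps with critical point (an image of) $\kappa$ before $j_W(\kappa)$ is at most $\beta+1$, hence countable and in fact bounded. Meanwhile, for the gluing to succeed, each seed $\eta_n=[\text{the }n\text{-th coordinate}]_W$ must realize $U_n$ via $e_n$, so the ultrapower $M_W$ must contain, for each $n$, an object coding the seed of $U_n$; tracing this through the factorization $j_W\restriction\mathcal{K}=k\circ\iota$ for a finite $\iota$ covering any given finite amount of data, I would derive that all but finitely many $U_n$ would have to be realized ``past'' the finitely many critical points of $\iota$, contradicting that they were chosen to require pairwise-distinct new critical points. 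The main obstacle --- and where the real work lies --- is precisely this last diagonalization: making rigorous how an $\omega$-sequence of \emph{genuinely distinct} ultrafilters (distinct in a way visible to $\mathcal{K}$, e.g. via distinct Mitchell-rank data or distinct iteration patterns) cannot all be Rudin-Keisler images of one ultrafilter $W$ when $o^{\mathcal{K}}(\kappa)$ is only countable; this requires carefully choosing the $U_n$ (likely using $\mathcal{U}(\kappa,0)$ iterated in $n$ different bracketing patterns, or using the measures on smaller cardinals pushed up) so that their seeds are provably incompatible in every common iterate, and then invoking the finiteness/countability bounds on iterations of $\mathcal{K}$ at $\kappa$ forced by $o^{\mathcal{K}}(\kappa)<\omega_1$ together with the absence of an inner model with $o(\alpha)=\alpha$ to rule out the pathological long iterations.
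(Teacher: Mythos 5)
The proposal takes the contrapositive, whereas the paper argues directly: it uses the $\omega$-gluing property to build, by induction on $\beta<\omega_1$, a sequence of $\kappa$-complete $\mathcal{K}$-normal measures $\mathcal{V}_\beta$ whose traces $\mathcal{V}_\beta\cap\mathcal{K}$ are strictly Mitchell-order increasing, which immediately gives $o^{\mathcal{K}}(\kappa)\geq\omega_1$. The heart of the paper's argument is that when you glue an $\omega$-sequence, the seed $\eta_\omega=\sup_n\eta_n$ is a critical point of the normal iteration $j\restriction\mathcal{K}$, it defines a new $\mathcal{K}$-normal measure $\mathcal{V}_\alpha$, and --- crucially --- $o^{\mathcal{K}^M}(\eta_\omega)$ strictly exceeds each of the ordinals $\zeta_n$ indexing the measures used at the earlier stages $\gamma_n$, so Mitchell rank goes up. A contrapositive argument is certainly possible in principle (run the paper's induction until it collides with $o^{\mathcal{K}}(\kappa)<\omega_1$), but yours hinges on a different mechanism, and that mechanism is not valid.

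The concrete error is the assertion that ``by coherence of $\vec{\mathcal{U}}$ and $o^{\mathcal{K}}(\kappa)=\beta$, the number of iteration steps with critical point (an image of) $\kappa$ before $j_W(\kappa)$ is at most $\beta+1$.'' The Mitchell order of $\kappa$ in $\mathcal{K}$ does not bound the length of a normal iteration with critical points that are successive images of $\kappa$. Even if $o^{\mathcal{K}}(\kappa)=1$, one can iterate the unique normal measure $\mathcal{U}(\kappa,0)$ linearly through any ordinal $\delta$, obtaining a normal iteration with $\delta$-many critical points each equal to the current image of $\kappa$ (apply $\iota_\gamma(\mathcal{U}(\kappa,0))$ at stage $\gamma$). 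So there is no ``countable bound'' to exploit. Indeed, the paper's own Claims~5.17--5.18 establish that once you glue even the constant sequence $\langle U\mid n<\omega\rangle$, the iteration $j\restriction\mathcal{K}$ has at least $\omega+1$ such stages $\gamma_0<\gamma_1<\dots<\gamma_\omega$ with $\crit(\iota_{\gamma_n,\gamma_n+1})=\iota_{\gamma_n}(\kappa)$. This is consistent with small $o^{\mathcal{K}}(\kappa)$; what is \emph{not} consistent is the Mitchell-rank conclusion, which is the thing your sketch does not reach for.

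What you are missing, therefore, is not a cleverer choice of the $U_n$ nor an iteration-length cap, but the Mitchell-rank analysis: from the $\omega$-sequence of seeds $\eta_n$ sitting inside a single iterate, one can reconstruct, inside $\mathcal{K}^M$, a measure on $\eta_\omega$ whose Mitchell rank dominates every $\zeta_n$, and this forces $o^{\mathcal{K}}(\kappa)=o^{\mathcal{K}^M}(\eta_\omega)$ to be strictly larger than the Mitchell rank of any of the input measures. Repeating $\omega_1$ times gives the theorem; running out of rank (if $o^{\mathcal{K}}(\kappa)<\omega_1$) gives the desired non-gluable sequence for the contrapositive. Without this ingredient, the diagonalization you gesture at --- ``all but finitely many $U_n$ would have to be realized past the finitely many critical points of $\iota$'' --- does not produce a contradiction, because there is nothing stopping the factor map $k$ and the remaining (possibly long) tail of the iteration from realizing them.
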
 
\begin{proof}
Assume that there is no inner model of $``\exists \alpha\,(o(\alpha) = \alpha)$''. Through the proof $\langle \mathcal{U}_{\kappa,\xi} \mid \xi < o^\mathcal{\mathcal{K}}(\kappa)\rangle$ will denote the enumeration of the  normal measures in $\mathcal{K}$ indexed by their corresponding Mitchell orders.  We aim to produce --by induction on $\beta < \omega_1$-- a sequence $\langle \mathcal{V}_\beta \mid \beta < \omega_1\rangle$ of $\kappa$-complete $\mathcal{K}$-normal measures on $\kappa$ such that % each $\mathcal{V}_\beta \cap \mathcal{K}$ is a $\mathcal{K}$-normal measure and the sequence 
$\langle \mathcal{V}_\beta \cap \mathcal{K} \mid \beta < \omega_1\rangle$ is Mitchell-order increasing.  By maximality of the core model (i.e. Theorem~\ref{CoreModelTheorem}(1)),  if $\mathcal{V}_\beta$ is $\mathcal{K}$-normal and $\kappa$-complete then $\mathcal{V}_\beta\cap \mathcal{K}\in \mathcal{K}$. In particular, every such $\mathcal{V}_\beta$ will admit an index $\xi_{\beta}<o^\mathcal{K}(\kappa)$ such that $\mathcal{V}_\beta\cap \mathcal{K}=\mathcal{U}_{\kappa,\xi_\beta}$.  The sequence $\langle \mathcal{V}_\beta\cap \mathcal{K}\mid \beta<\omega_1\rangle$ will thus produce an evidence for $o^\mathcal{K}(\kappa)\geq \omega_1$. Roughly speaking, the measures $\mathcal{V}_\beta$ will be  the outcome of gluing (in $V$) the sequence where each of the previous measures are repeated $\omega$-many times.

\smallskip

 %\textcolor{blue}{Thus, for each $\beta$, there is some $\xi_\beta$ such that $\mathcal{V}_\beta \cap \mathcal{K} = U_{\kappa, \xi_\beta}$.} %\ale{But we did not construct yet the measures $\mathcal{V}_\beta$...} 

Let $\mathcal{V}_0$ be a normal $\kappa$-complete measure in $V$. Clearly, $\mathcal{V}_0\cap \mathcal{K}$ is $\kappa$-complete and $\mathcal{K}$-normal so that $\mathcal{V}_0\cap \mathcal{K}\in \mathcal{K}$. Assume that we have built $\langle \mathcal{V}_\beta \mid \beta < \alpha\rangle$. 
 
 If $\alpha$ takes the form $\alpha=\gamma+1$ then we apply the $\omega$-gluing property to the constant sequence $\vec{\mathcal{V}} = \langle \mathcal{V}_\gamma \mid n < \omega\rangle$. Otherwise, we fix $\langle \beta_n \mid n < \omega\rangle$ a cofinal sequence in $\alpha$ and apply the $\omega$-gluing property to  $$\vec{\mathcal{V}} = \langle \mathcal{V}_{\beta_{r(m)}} \mid m < \omega\rangle,$$ where $r \colon \omega \to \omega$ is a function such that $|r^{-1}(\{n\})|=\aleph_0$  for all $n<\omega$. %In both cases we get a $\kappa$-complete measure $\mathcal{V}'_\alpha$.% be the obtained measure, using the $\omega$-gluing property. \ale{I understand that we apply again the $\omega$-gluing property. Is that right?}
 
 \smallskip

 Let $\bar{\mathcal{V}}_\alpha$ be the obtained gluing measure and  $j \colon V \to M$ be its ultrapower map. % using $\mathcal{V}_{\alpha}'$.  
By virtue of our anti-large cardinal hypothesis, %since there is no inner model with $o(\kappa) = \kappa^{++}$, by 
Theorem~\ref{CoreModelTheorem}(2) implies that $j \restriction \mathcal{K} \colon \mathcal{K} \to \mathcal{K}^M$ is a normal iteration of measures in $\mathcal{K}$. 

Denote this iteration  by $\langle \iota_{\gamma,\delta} \mid \gamma \leq \delta < \delta_*\rangle$ and 
$$\text{$\iota_{\gamma,\gamma+1}\colon \mathcal{K}_\gamma\rightarrow \mathcal{K}_{\gamma+1}$ and $\mu_\gamma:=\crit(\iota_{\gamma,\gamma+1})$.}$$
We may assume  that $\mu_{\gamma}\leq i_\gamma(\kappa)$ for $\gamma\leq \delta_*$: Indeed, %\iota_{\gamma, \gamma+1}$ in the iteration is $\l>eq \iota_\gamma(\kappa)$:  
if for some index $\gamma$, $\mu_\gamma>\iota_\gamma(\kappa)$ then the latter must be $j(\kappa)$, hence above $\eta_n$ for all $n<\omega$.  So, if that happens, we truncate the iteration at that stage $\gamma$.
 %Namely, for each $\gamma < \delta_*$, $\iota_{\gamma,\gamma+1}$ is an internal ultrapower using a normal measure in $\mathcal{K}_\gamma$, which is the co-domain of $\iota_\gamma$. Moreover the sequence of critical points $\langle \crit \iota_{\gamma,\gamma+1} \mid \gamma < \delta_*\rangle$ is strictly increasing. 

 \smallskip

Since $\bar{\mathcal{V}}_{\alpha}$ glues $\vec{\mathcal{V}}$ there is, by the discussion right after Definition~\ref{def:gluing-property},  an increasing sequence of ordinals $\langle \eta_n \mid n < \omega\rangle$ such that
\[\vec{\mathcal{V}}(n) = \{X \subseteq \kappa \mid \eta_n \in j(X)\}.\] %\ale{Is this a normal measure in $\mathcal{K}$?}
Let $\eta_\omega := \sup_{n < \omega} \eta_n$ and define\[\mathcal{V}_\alpha := \{X \subseteq \kappa \mid \eta_\omega \in j(X)\}.\] %\ale{Missing $j$?}
By our induction hypothesis $\vec{\mathcal{V}}(n)$ is a normal measure, hence $\vec{\mathcal{V}}(n)\cap \mathcal{K}$  contains the club filter $\mathrm{Club}_\kappa^\mathcal{K}$, and thus $\eta_n\in \bigcap j``\mathrm{Club}_\kappa^\mathcal{K}$ for all $n<\omega$. %Clearly, the same holds true for $\eta_\omega$, for this being the limit of the sequence $\langle \eta_n\mid n<\omega\rangle$.
%\begin{claim}
%For every club $C \in \mathcal{K}$, and for every $n$, $\eta_n \in j(C)$.
%\end{claim}
%\begin{proof}
%By the inductive hypothesis, $\vec{\mathcal{V}}(n) \cap \mathcal{K}$ is normal and therefore contains all clubs.  
%\end{proof}

%\ale{Maybe $f\colon \kappa^m \rightarrow \kappa$? This is what is used below.}

\smallskip

For $m<\omega$ and a function $f \colon \kappa^m \to \kappa$ in $\mathcal{K}$, the set of its closure points $\{\alpha<\kappa\mid  f``\alpha^m\s \alpha\}$ belongs to $\mathrm{Club}^\mathcal{K}_\kappa$. In particular,  all the $\eta_n$'s and $\eta_\omega$ are closure points of $j(f)$, for every such function $f\in \mathcal{K}$. 

\begin{claim}\label{gammacriticalpoints}
For $n < \omega$, there is $\gamma_n < \delta_*$ with $\iota_{\gamma_n}(\kappa) = \crit(\iota_{\gamma_n, \gamma_{n+1}}) = \eta_n$.
\end{claim}
\begin{proof}
{Let $\gamma_n:=\gamma < \delta_*$ be the maximal ordinal such that  $\mu_{\bar{\gamma}}< \eta_n$ for $\bar{\gamma} < \gamma$.} %\ale{Why this ordinal exists? Is it because otherwise for some $\gamma<\delta_*$, $\iota_{\gamma}(\kappa)=j(\kappa)$?} 

Recall that every member of $\mathcal{K}_\gamma$ is represented as  $$\iota_{\gamma}(f)(\mu_{\gamma_0}, \dots, \mu_{\gamma_{m-1}}),$$ where  $m<\omega$, $f \colon \kappa^m \to \mathcal{K}$ is a function in $\mathcal{K}$ and $\gamma_0,\dots,\gamma_{m-1}<\gamma$.\footnote{This is a standard fact about iterated ultrapowers. For details see \cite[\S19]{Kan} and, particularly, \cite[Lemma~19.6]{Kan}.} % $\alpha_0,\dots, \alpha_{m-1}$ are critical points of steps in the iteration before step $\gamma$.  
In particular, any ordinal ${<}\iota_{\gamma}(\kappa)$ is represented by a function $f\colon \kappa^m\rightarrow \kappa$ in $\mathcal{K}$. %as above such that $\mathrm{range}\ f \subseteq \kappa$.

\smallskip 

Let us show that $\eta_n=\iota_\gamma(\kappa)=\mu_{\gamma}$: For each $\alpha<\iota_\gamma(\kappa)$ there is a function $f\colon \kappa^m\rightarrow \kappa$ and $\gamma_0,\dots, \gamma_{m-1}<\gamma$ with  $\alpha=\iota_\gamma(f)(\mu_{\gamma_0},\dots, \mu_{\gamma_{m-1}})$. Clearly, $$\iota_\gamma(f)(\mu_{\gamma_0},\dots, \mu_{\gamma_{m-1}})\leq j(f)(\mu_{\gamma_0},\dots, \mu_{\gamma_{m-1}}).$$ Also,  $\eta_n$ is a closure point of $j(f)$ and  $\mu_{\gamma_{m-1}}<\eta_n$, so that
$$\alpha\leq j(f)(\mu_{\gamma_0},\dots, \mu_{\gamma_{m-1}})<\eta_n.$$ All in all, $\iota_\gamma(\kappa)\leq \eta_n$. 

By maximality of $\gamma$,  $\eta_n\leq \mu_{\gamma}$ %In effect,  $\mu_{\gamma+1}\leq \iota_{\gamma+1}(\kappa)=\iota_{\gamma}(\kappa)$, as $\iota_\gamma(\kappa)$ is inaccessible in $\mathcal{K}_\gamma$.  Thus, $\mu_{\gamma+1}\leq \eta_n$. Note that this inequality cannot be strict by maximality of $\gamma$, so that $\mu_{\gamma+1}=\eta_n$. 
and from altogether,  $\iota_{\gamma_n}(\kappa)=\mu_{\gamma_n}=\eta_n.$ %we conclude
%\begin{equation*}
%	\iota_{\gamma_n}(\kappa)=\mu_{\gamma_n}=\eta_n.\qedhere
%\end{equation*} 
%$\eta_n<\iota_\gamma(\kappa)$ then $\eta_n<\iota$
%Since $\iota_{\gamma}(f)(\alpha_0, \dots, \alpha_{m-1}) \leq j(f)(\alpha_0,\dots, \alpha_{m-1}) < \eta_n$, as $\alpha_k < \eta_n$ for all $k$, we conclude that $\iota_\gamma(\kappa) \leq \eta_n$. 
%This inequality cannot be strict, as it would entail that the next critical point must be below $\eta_n$. Thus, $\iota_\gamma(\kappa) = \eta_n$. Similarly, the critical point of the next step in the iteration cannot be strictly below $\eta_n$ (as it would contradict maximality) and nor strictly above (as $j(\kappa)$  would be $\eta_n$), so the conclusion of the lemma follows.
\end{proof}

Let $\gamma_{\omega} := \sup_{n < \omega} \gamma_n$.
\begin{claim}\label{ClaimOnetaOmega}
$\eta_\omega = 
\iota_{\gamma_\omega}(\kappa) =
\crit (\iota_{\gamma_\omega, \gamma_\omega + 1})$. 
\end{claim}
\begin{proof}
On one hand, since $\iota_{\gamma_n}(\kappa) = \eta_n$ and this latter is the critical point of  $\iota_{\gamma_n,\gamma_{n+1}}$,  $\iota_{\gamma_\omega}(\kappa) = \eta_\omega$. On the other hand, $\gamma_\omega<\delta^*$ and $\crit(\iota_{\gamma_{\omega},\gamma_{\omega+1}})=\eta_\omega$. In effect, the latter assertion will follow from normality of the iteration once we show that $\gamma_\omega<\delta^*$. Towards a contradiction, suppose that $\delta_*=\gamma_\omega$. Then, $j(\kappa)=\iota_{\gamma_\omega}(\kappa)=\eta_\omega$. However,  $M$ is closed under $\omega$-sequences and so $\mathrm{cof}^V(j(\kappa))>\omega$. Thus, in particular, $j(\kappa)$ cannot be $\eta_\omega$.
 %by the normality of the iteration,  $\crit \iota_{\gamma_\omega, \gamma_\omega + 1} \geq \eta_\omega$. If the inequality was strict (or $\delta_* = \gamma_\omega$), then $j(\kappa) = \eta_\omega$, which is impossible since $M$ is closed under $\omega$-sequences, and $\cf \eta_\omega = \omega$. 
\end{proof}
Let us analyse the measures from the iteration of $\mathcal{K}$ which are used during the steps $\gamma_n$ and $\gamma_\omega$. By our initial hypothesis ``There is no inner model for $\exists\alpha\,o(\alpha)=\alpha$'', $o^{\mathcal{K}}(\kappa) < \kappa$ and, in particular, ${<}(\kappa^{+})^\mathcal{K}$. Therefore, the normal measures on $\kappa$ in $\mathcal{K}$ are discrete; namely, there is a sequence of pairwise disjoint sets $\langle A_\xi \mid \xi < o^\mathcal{K}(\kappa)\rangle$ %\ale{Why? Doen't it follows just from $\kappa$-completeness and because the measures are distinct?}\yair{This is essentially the argument, but note that you're using here the assumption that the measures can be enumerated in a sequence of length $\leq \kappa$.} 
such that \[A_\xi \in U_{\kappa,\xi}\setminus \bigcup_{\zeta \neq \xi} U_{\kappa, \zeta}.\] 

Let us look at the measure which is applied at step $\gamma_n$ of the iteration. As  $\crit (\iota_{\gamma_n,\gamma_{n+1}}) = \iota_{\gamma_n}(\kappa)$ this is one of the measures on $\iota_{\gamma_n}(\kappa)$ lying in $\mathcal{K}_{\gamma_n}$. Recall that $\langle U_{\kappa,\xi} \mid \xi < o^\mathcal{K}(\kappa)\rangle$ denotes the sequence of normal measures on $\kappa$ in $\mathcal{K}$ and that they are indexed according to their Mitchell order. 

Since $\crit (\iota_\gamma) =\kappa > o^\mathcal{K}(\kappa)$ we conclude that 
\begin{equation*}\label{orderislow}
   \tag{$\diamondsuit$} \iota_{\gamma}(\langle U_{\kappa,\xi} \mid \xi < o^\mathcal{K}(\kappa)\rangle)=
\langle \iota_{\gamma}(U_{\kappa,\xi}) \mid \xi < o^\mathcal{K}(\kappa)\rangle
\end{equation*}
are the only normal measures on $\iota_\gamma(\kappa)$ lying in $\mathcal{K}_\gamma$. 
In particular, the measure that is iterated at stage $\gamma_n$ 
must have the form $\iota_{\gamma_n}(U_{\kappa, \zeta_n}) $ for $\zeta_n < o^\mathcal{K}(\kappa)$. 

\begin{claim}
	$\mathcal{V}_\alpha$ is $\mathcal{K}$-normal. 
\end{claim}
\begin{proof}
	Let $f\colon \kappa\rightarrow \kappa$ be a function in $\mathcal{K}$ with $\{\alpha<\kappa\mid f(\alpha)<\alpha\}\in\mathcal{V}_\alpha\cap \mathcal{K}$. By definition of $\mathcal{V}_\alpha$ and since $f\in \mathcal{K}$,  $\iota_{\delta_*}(f)(\eta_\omega)<\eta_\omega$. Also, Claim~\ref{ClaimOnetaOmega} yields $\iota_{\gamma_{\omega}+1}(f)(\eta_\omega)<\eta_\omega$, so that $\{\alpha<\eta_\omega\mid i_{\gamma_\omega}(f)(\alpha)<\alpha\}$ belongs to $\iota_{\gamma_\omega}(U_{\kappa,\zeta_\omega})$, the normal measure iterated at stage $\gamma_\omega$.  Thus, there is $X\in\iota_{\gamma_\omega}(U_{\kappa,\zeta_\omega})$ and $\theta<\eta_\omega=\iota_{\gamma_\omega}(\kappa)$ such that $\iota_{\gamma_\omega}(f)``X=\{\theta\}$. By elementarity, there is $X\in U_{\kappa,\zeta_\omega}$ and $\theta<\kappa$ such that $f``X=\{\theta\}$. Moreover, $X\in \mathcal{V}_\alpha\cap \mathcal{K}$: 
	\begin{equation*}
		X\in U_{\kappa,\zeta_\omega}\Leftrightarrow \iota_{\gamma_\omega}(X)\in \iota_{\gamma_\omega}( U_{\kappa,\zeta_\omega})\Leftrightarrow\eta_\omega\in \iota_{\gamma_{\omega},\gamma_\omega+1}(X)\Leftrightarrow \eta_\omega\in \iota_{\delta_*}(X).\qedhere
	\end{equation*}
	%Since $X\in\mathcal{W}_{\gamma_\omega}$ and the rest of the iteration has critical point above $\eta_\omega$ it follows that $\eta_\omega\in \iota_{\gamma_\omega,\delta_*}(X)$
\end{proof}
\begin{claim}\label{Calculatingzeta}
Let $\zeta_n$ be as above. Then, 
$$\zeta_n=
\begin{cases}
\xi_\gamma, &\text{if $\alpha=\gamma+1$;}\\
\xi_{\beta_{r(n)}}, & \text{if $\alpha\in \mathrm{acc}(\omega_1)$.}
\end{cases}
$$
%\begin{itemize}
 %   \item If $\alpha$ is a successor ordinal and $\alpha = \gamma + 1$ then $\zeta_n = \xi_\gamma$.  
 %   \item If $\alpha$ is a limit ordinal then $\zeta_n = \xi_{\beta_{r(n)}}$. 
%\end{itemize}
%Thus, the measure applied at stage $\gamma_n$ has Mitchell-order 
\end{claim}
\begin{proof}
%Let us show that this is the only possibility. Indeed, 
By discreteness of the measures $U_{\kappa,\xi}$'s, $\zeta_n$ is the unique ordinal $\zeta$ such that $\iota_{\gamma_n}(A_\zeta) \in \iota_{\gamma_n}(U_{\kappa,\zeta_n})$. By Claim~\ref{gammacriticalpoints} and normality of the iteration this is equivalent to say that
\[\eta_n \in \iota_{\gamma_n,\gamma_{n+1}}(\iota_{\gamma_n}(A_\zeta))\;\Leftrightarrow\; \eta_n\in j(A_\zeta).\]
%which is equivalent to 
%\[\eta_n \in j(A_\zeta)\]
%since the iteration is normal, and $\eta_n < \crit \iota_{\gamma_n + 1, \delta_*}$, so $\eta_n \in \iota_{\gamma + 1}(A_\zeta) \iff \eta_n \in \iota_{\delta_*}(A_\zeta) = j(A_\zeta)$.

Finally, %let us note that by the assumption on $\mathcal{V}'_\alpha$, 
this is equivalent to $A_\zeta \in \vec{\mathcal{V}}(n)\cap \mathcal{K}$. But $
\vec{\mathcal{V}}(n)\cap \mathcal{K}$ is exactly $U_{\kappa,\xi_\gamma}$ in the successor case and $U_{\kappa, \xi_{\beta_{r(n)}}}$ in the limit case, so the claim follows. 
\end{proof}
%Recall that $\langle A_\xi \mid \xi < o^\mathcal{K}(\kappa)\rangle$ is a sequence of pairwise disjoint sets, such that $A_\zeta$ is large with respect to $U_{\kappa,\xi}$ if and only if $\xi = \zeta$. 

%By the same argument, since the critical points of the iteration is above the length of the sequence of measures, there is a unique ordinal $\xi_n$ such that $\eta_n \in \iota_{0,\gamma_n + 1}(A_{\xi_n})$. \ale{I do not understand this point...}

%As the critical point of the rest of the iteration is strictly above $\eta_n$, we have that $\eta_n \in j(A_\xi)$. In particular, $\vec{\mathcal{V}}(n) \cap \mathcal{K} = U_{\kappa,\xi_n}$. \ale{Why?}

Let us look at $\mathcal{V}_{\alpha} \cap \mathcal{K}$. Since $\mathcal{V}_\alpha$ is $\kappa$-complete and $\mathcal{K}$-normal there is $\zeta<o^\mathcal{K}(\kappa)$ such that $\mathcal{V}_\alpha\cap \mathcal{K}=U_{\kappa,\zeta}$. We next show that $\zeta > \zeta_n$ for all $n<\omega$. From this we shall be able to infer that $\mathcal{V}_\beta\cap \mathcal{K}\unlhd \mathcal{V}_\alpha\cap \mathcal{K}$.% for all $\beta<\alpha$. \ale{Clarify}

\begin{claim}
$o^{\mathcal{K}^M}(\eta_\omega) = \zeta$.	
\end{claim}
\begin{proof}[Proof of claim]
Arguing as before, the measure used at stage $\gamma_\omega$ of the iteration is of the form $\iota_{\gamma_\omega}(U_{\kappa,\zeta_\omega})$ for some $\zeta_\omega<o^\mathcal{K}(\kappa)$. In particular, {$o^{\mathcal{K}_{\gamma_{\omega}+1}}(\eta_\omega)=\zeta_\omega$ and so, since $\eta_\omega<\crit(\iota_{{\gamma_\omega}+1,\delta_*})$, $o^{\mathcal{K}^M}(\eta_\omega)=\zeta_\omega.$} %\ale{Why true?} 
%Indeed, \[o^{\mathcal{K}^M}(\eta_\omega) =  o^{\mathcal{K}^M}(\iota_{\gamma_\omega, \delta_*}(\eta_\omega)) = \iota_{\gamma_\omega, \delta_*}(o^{\mathcal{K}_{\gamma_\omega}}(\eta_\omega))\], since $\iota_{\gamma_\omega, \delta_*}(\eta_\omega) = \eta_\omega$ and $\iota_{\gamma_\omega, \delta_*}(\zeta_\omega) = \zeta_\omega$.

	%Again, by discreteness, $\zeta$ is the unique ordinal such that $A_\zeta\in U_{\kappa,\zeta}$. Thus, by elementarity, 
	
	Let us now show that $\zeta_\omega=\zeta$. Once again, by discreteness, $\zeta_{\omega}$ is the unique ordinal such that $A_{\zeta_\omega}\in U_{\kappa,\zeta_\omega}$. By elementarity this is equivalent to $\iota_{\gamma_\omega}(A_{\zeta_\omega})\in \iota_{\gamma_\omega}(U_{\kappa,\zeta_\omega})$, which is equivalent to $\eta_\omega\in j(A_{\zeta_\omega})$. All in all, we have that $A_{\zeta_\omega}\in \mathcal{V}_\alpha\cap \mathcal{K}=U_{\kappa,\zeta}$. Thus, by our choice on $A_{\zeta_\omega}$, $\zeta=\zeta_\omega$.
\end{proof}
%\textcolor{blue}{By normality of the iteration, $o^{\mathcal{K}^M}(\eta_\omega) = \zeta$ (as after the step $\gamma_\omega$ the Mitchell order is $\zeta < \eta_\omega$ and the critical point of the rest of the embedding is strictly above $\eta_\omega$). } \ale{This is what was originally written}

\begin{claim}
For each $n<\omega$ there is a set $B_n\in[\omega]^{\omega}$ such that $\zeta_m=\zeta_n$ for all $m\in B_n$.
\end{claim}
\begin{proof}[Proof of claim]
Fix $n<\omega$. If $\alpha$ was a successor ordinal then we can let $B_n=\omega$ for in that case $\zeta_m=\xi_\gamma$ for every $m<\omega$. Otherwise, if $\alpha$ is limit, put $B_n:=r^{-1}\{r(n)\}$. By our choice upon $r$, $B_n\in[\omega]^{\omega}$. Also, by Claim~\ref{Calculatingzeta} $\zeta_m=\xi_{\beta_{r(m)}}=\xi_{\beta_{r(n)}}=\zeta_n$. 
\end{proof}

\begin{claim}
	For each $n<\omega$, $o^{\mathcal{K}^M}(\eta_\omega)>\zeta_n$.
\end{claim}
\begin{proof}[Proof of claim]
Fix $n<\omega$ and let $B_n\in[\omega]^{\omega}$ be as in the previous claim. Since $M$ is closed under $\omega$-sequences we have that $\langle \eta_m\mid m\in B_n\rangle \in M$. Thus, working in  $M$ we can define the following filter: \[\mathcal{F}_n := \{X \subseteq \eta_\omega \mid \exists k\,  \forall m \in (B_n\setminus k)\; \eta_m \in X\}.\]
We claim that $\mathcal{F}_n\cap \mathcal{K}^M=\iota_{\gamma_\omega}(\mathcal{U}_{\kappa,\zeta_n})$. Note that if this is the case $\mathcal{F}_n\cap \mathcal{K}^M$ is a measure on $\eta_\omega$ in $\mathcal{K}^M$ of Mitchell order $\zeta_n$ and thus $o^{\mathcal{K}^M}(\eta_\omega)\geq \zeta_n+1$.

Let $X\in \iota_{\gamma_\omega}(\mathcal{U}_{\kappa,\zeta_n})$. Note that $X$ takes the form $\iota_{\gamma_k,\gamma_\omega}(\bar{X}_k)$ for a tail of $k<\omega$ and $\bar{X}_k\in \mathcal{K}_k$. Let $\bar{k}$ be the least of such indices and $m\in B_n\setminus \bar{k}$. Since $X=\iota_{\gamma_m,\gamma_\omega}(\bar{X}_m)$ then, by elementarity, $\bar{X}_m\in \iota_{\gamma_m}(\mathcal{U}_{\kappa,\zeta_n})=\iota_{\gamma_m}(\mathcal{U}_{\kappa,\zeta_m}).$\footnote{Note that this latter equality follows by our choice of $B_n$.} Since this latter is the measure in $\mathcal{K}_{\gamma_m}$ used at stage $\gamma_m$ we have that $\eta_m\in\iota_{\gamma_m,\gamma_{m+1}}(\bar{X}_m)$, which is equivalent to $\eta_m\in \iota_{\gamma_m,\gamma_\omega}(\bar{X}_m)=X.$

Conversely, let $X\in \mathcal{K}^M$, $X\s \eta_\omega$ be such that $X\notin \iota_{\gamma_\omega}(\mathcal{U}_{\kappa,\zeta_n})$. Since $\crit(\iota_{\gamma_{\omega},\gamma_\omega+1})=\eta_\omega$ we have that $X\in \mathcal{K}_{\gamma_\omega}$. Fix $k<\omega$. Arguing as before find $m\in B_n\setminus k$ such that $\iota_{\gamma_m,\gamma_\omega}(\bar{X}_m)=X$. By elementarity, $\bar{X}_m\notin \iota_{\gamma_m}(\mathcal{U}_{\kappa,\zeta_m})$, hence $\eta_m\notin \iota_{\gamma_m,\gamma_{m+1}}(\bar{X}_m)$, and thus $\eta_m\notin X$. Thereby, $X\notin \mathcal{F}_n$.
\end{proof}
Thus, the above arguments show that $o^{\mathcal{K}^M}(\eta_\omega)=\zeta>\zeta_n$ for all $n<\omega$.
 %Fix an infinite $B \subseteq \omega$ such that $\zeta_n = \zeta_*$ for all $n \in B$. The set $B$ can be either $\omega$ if $\alpha$ is a successor ordinal or $r^{-1}(\{n\})$ if $\alpha$ is a limit ordinal.
%The sequence $\langle \eta_n \mid n < \omega\rangle$ belongs to $M$, by its closure under $\omega$-sequences. In particular, working in  $M$ we can define the following filter: \[\mathcal{F} = \{X \subseteq \eta_\omega \mid \exists n\,  \forall m \in (B\setminus n)\; \eta_m \in X\}.\]
%Let $X \in \iota_{\gamma_\omega}(U_{\kappa,\zeta_m})$. Then, there is $m \in B$ and $X' \subseteq \eta_m$, such that $X = \iota_{\gamma_m, \gamma_\omega}(X')$. Therefore, for all $n \geq m$ in $B$, $\eta_n \in X$, and in particular $X \in \mathcal{F}$. Similarly, if $X \notin \iota_{\gamma_\omega}(U_{\kappa,\xi_n}$ then there are $m$ and $X'$ as before, but $X' \notin \iota_{\gamma_m}(U_{\kappa, \zeta_m})$. Then, $\eta_n \notin X$ for all $n\geq m$.
%We conclude that $\mathcal{F} \cap \mathcal{K}^M = \iota_{\gamma_{\omega}}(U_{\kappa,\xi_n})$, and in particular, this measure \emph{belongs} to $\mathcal{K}^M$. Therefore, $o^{\mathcal{K}^M}(\eta_\omega) > \zeta_n$.

\begin{claim}
	$\mathcal{V}_\beta\cap \mathcal{K}\unlhd \mathcal{V}_\alpha\cap \mathcal{K}$ for all $\beta<\alpha$.
\end{claim}
\begin{proof}[Proof of claim]
If $\alpha=\gamma+1$ then $\mathcal{V}_\gamma\cap \mathcal{K}=\mathcal{U}_{\kappa,\xi_\gamma}=\mathcal{U}_{\kappa,\zeta_n}\unlhd \mathcal{U}_{\kappa,\zeta}=\mathcal{V}_\alpha\cap \mathcal{K}$.

Suppose that $\alpha$ is a limit ordinal and let $\beta<\alpha$. Choose $\beta_n\in (\beta, \alpha)$ and observe that $\beta_n=\beta_{r(m)}$ for some (infinitely-many) $m<\omega$. In particular, $\mathcal{V}_\beta\cap \mathcal{K}\unlhd  \mathcal{V}_{\beta_n}\cap \mathcal{K}=\mathcal{V}_{\beta_{r(m)}}\cap \mathcal{K}=\mathcal{U}_{\kappa,\zeta_m}\unlhd \mathcal{U}_{\kappa,\zeta}=\mathcal{V}_\alpha\cap \mathcal{K}.$
\end{proof}
This completes the argument for the inductive step of the construction and as result the proof of the theorem.
\end{proof}
\begin{remark}
    Observant readers will have  noticed that the previous proof is    more informative than what we said in Theorem~\ref{lowerboundgluing}. Indeed, under the same anti-large-cardinal assumptions,  if $\kappa$ has the $\lambda$-gluing property for some infinite cardinal $\lambda<\kappa$ then $o^\mathcal{K}(\kappa)\geq \lambda^+.$ Note that, even in this more general setting, equation \eqref{orderislow} above stands valid. This was crucial in identifying which are the measures in $\mathcal{K}$ corresponding to the generators $\eta_n$'s. %\ale{Am I right?}
\end{remark}

\section{Open problems}\label{OpenProblems}
We would like to conclude the paper mentioning a few  open problems.

\smallskip

%In \S\ref{section:consistent-bounds-on-gluing} we gave a forcing argument showing that (consistently) a $\kappa^{+}$-$\Pi^1_1$-subcompact cardinal $\kappa$  can fail to have the $(2^{\kappa})^+$-gluing property. Intuitively, we expect a better result:
\begin{question}
    Can the least cardinal $\kappa$ which is $\kappa^{+}$-$\Pi^1_1$-subcompact have the $2^{2^{\kappa}}$-gluing property while $2^{2^\kappa}>\kappa^{++}$?\footnote{Note that if $2^{2^\kappa}=\kappa^{++}$ then Lemma \ref{lemma: pi11havelotsofgluings}.}    
\end{question}

While we suspect that the consistency strength of the gluing property is low, it is rather unclear which large cardinals weaker than strongly compact outright imply gluing. For example, a standard core model argument shows that if there is no inner model with a Woodin cardinal then in the core model the least strong cardinal does not have the $\omega$-gluing property.\footnote{See Remark~\ref{StrongsDoNotHaveGluing}.%Indeed, if $j \colon \mathcal{K} \to \mathcal{K}^M$ is an ultrapower embedding and $M$ is closed under $\kappa$-sequences, then it must be a finite iteration of extenders from $\mathcal{K}$. In particular, there are at most finitely many $\alpha < j(\kappa)$ which belong to $\bigcap \{j(C) \mid C\in\mathrm{Cub}_\kappa\}$.
} %\ale{Could you elaborate more on this footnote? Why $\kappa$-closure of $M$ implies that $j$ is finite? Is it because $\mathcal{K}^M=\mathcal{K}\cap M?$}
\begin{question}
    What large cardinals do have the $\omega$-gluing property? More generally, which of them do have the $\lambda$-gluing property for every cardinal $\lambda$?
\end{question}
In light of the striking structural consequences of Goldberg's \emph{Ultrapower Axiom} (UA)\cite{Goldberg} the following speculation seems natural:
\begin{question}
   Is there a characterization, under $\mathrm{UA}$, of those measurable cardinals having the  $\lambda$-gluing property?
\end{question}
In \S\ref{section; improving} and \S\ref{section:lower-bound} we show that the  consistency strength of the $\omega$-gluing proper\-ty is exactly $``\exists\kappa\,(o(\kappa)=\omega_1)$''. This raises an obvious question:
\begin{question}
   What is the  consistency strength of ``There is a measurable cardinal $\kappa$ with the $\lambda$-gluing property'' for $\lambda\geq \kappa$?  
\end{question}
Our conjecture is that starting with a strong cardinal one should be able to produce a model with a measurable cardinal having the $\lambda$-gluing property for all cardinals $\lambda$. This combined with Gitik's theorem from \S1 would yield the exact consistency strength of this compactness principle. 

\smallskip

Another interesting question from the technical perspective is:

\begin{question}
  Suppose that $\kappa$ has the $\lambda$-gluing property. Is there a forcing poset destroying this property? In general, using forcing, can we tune the exact amount of gluing that $\kappa$ carries?
\end{question}
Lastly, in \S\ref{section; improving}, we gave a characterization of the $\kappa$-complete ultrafilters on $\kappa$ in a certain forcing extension of $\mathcal{K}$. By recent intriguing results of Gitik and Kaplan \cite{GitikKaplan-nonstationary2022}, we suspect that the assumption $\mathrm{V} = \mathcal{K}$ can be dropped (or replaced by a suitable $\GCH$ hypothesis). 

\begin{question}
Let $\mathbb{P}$ be a non-stationary support iteration of tree Prikry forcings over an arbitrary ground model $V$. Is there a full characterization of the $\kappa$-complete ultrafilters on $\kappa$ in the generic extension, similar to Lemma~~\ref{lemma:representing-ultrafilters}?
\end{question}
\subsection*{Acknowledgements}
Hayut's research was supported by the ISF grant 1967/21. Poveda wishes to thank the Einstein Institute of Mathematics at the Hebrew University and the Center of Mathematical Sciences and Applications at Harvard University for their generous support during the  duration of this project. Both authors are very grateful to the anonymous referee for his/her extensive list of comments and suggestions. In addition, they are indebted to Gabriel Goldberg for pointing us that \cite[Lemma 4.7]{HP} was wrong and for prompting us to prove Theorem 4.12.

\bibliographystyle{alpha}
\bibliography{biblio}

\begin{thebibliography}{CFM01}

\bibitem[BE51]{BrujinErdos}
NG.~de Bruijn and P.~Erd{\"o}s.
\newblock A colour problem for infinite graphs and a problem in the theory of
  relations.
\newblock {\em Indigationes Mathematicae}, 13:371--373, 1951.

\bibitem[Ben19]{BenTree}
Tom Benhamou.
\newblock Prikry forcing and tree prikry forcing of various filters.
\newblock {\em Archive for Mathematical Logic}, 58(7):787--817, 2019.

\bibitem[BGH21]{BHG}
Tom Benhamou, Moti Gitik, and Yair Hayut.
\newblock The variety of projection of a tree-prikry forcing.
\newblock {\em arXiv preprint arXiv:2109.09069}, 2021.

\bibitem[BNU17]{BenUng}
Omer Ben-Neria and Spencer Unger.
\newblock Homogeneous changes in cofinalities with applications to {HOD}.
\newblock {\em Journal of Mathematical Logic}, 17(02):1750007, 2017.

\bibitem[CFM01]{CumSquares}
James Cummings, Matthew Foreman, and Menachem Magidor.
\newblock Squares, scales and stationary reflection.
\newblock {\em Journal of Mathematical Logic}, 1(01):35--98, 2001.

\bibitem[CN12]{CNBook}
William~Wistar Comfort and Stylianos Negrepontis.
\newblock {\em The theory of ultrafilters}, volume 211.
\newblock Springer Science \& Business Media, 2012.

\bibitem[Cum10]{CumHandBook}
James Cummings.
\newblock Iterated forcing and elementary embeddings.
\newblock In {\em Handbook of Set Theory}, pages 775--883. Springer, 2010.

\bibitem[EH66]{ErdosHajnal}
P.~Erd{\"o}s and A.~Hajnal.
\newblock On chromatic number of infinite graphs, theory of graphs.
\newblock In {\em Proc. Colloq. Tihany, Hungary}, pages 83--98, 1966.

\bibitem[FM09]{FriedmanMagidor}
Sy-David Friedman and Menachem Magidor.
\newblock The number of normal measures.
\newblock {\em The Journal of Symbolic Logic}, 74(3):1069--1080, 2009.

\bibitem[Git93]{GitikOnMeasurables}
Moti Gitik.
\newblock On measurable cardinals violating the continuum hypothesis.
\newblock {\em Annals of Pure and Applied Logic}, 63(3):227--240, 1993.

\bibitem[Git97]{GitikNonStaI}
Moti Gitik.
\newblock Some results on the nonstationary ideal {I}.
\newblock {\em Israel Journal of Mathematics}, 99:175--188, 1997.

\bibitem[Git10]{Gitik-handbook}
Moti Gitik.
\newblock Prikry-type forcings.
\newblock In {\em Handbook of Set Theory. {V}ols. 1, 2, 3}, pages 1351--1447.
  Springer, Dordrecht, 2010.

\bibitem[Git20]{Gitikcompact}
Moti Gitik.
\newblock On $\kappa$-compact cardinals.
\newblock {\em Israel Journal of Mathematics}, 237(1):457--483, 2020.

\bibitem[GK23]{GitikKaplan-nonstationary2022}
Moti Gitik and Eyal Kaplan.
\newblock Non-stationary support iterations of prikry forcings and restrictions
  of ultrapower embeddings to the ground model.
\newblock {\em Annals of Pure and Applied Logic}, 174(1):103164, 2023.

\bibitem[Gol22]{Goldberg}
Gabriel Goldberg.
\newblock The ultrapower axiom.
\newblock In {\em The Ultrapower Axiom}. De Gruyter, 2022.

\bibitem[GS89]{GitShe}
Moti Gitik and Saharon Shelah.
\newblock On certain indestructibility of strong cardinals and a question of
  hajnal.
\newblock {\em Archive for Mathematical Logic}, 28(1):35--42, 1989.

\bibitem[Ham97]{HamCanonicalSeeds}
Joel~David Hamkins.
\newblock Canonical seeds and {P}rikry trees.
\newblock {\em J. Symbolic Logic}, 62(2):373--396, 1997.

\bibitem[Ham00]{HamLott}
Joel~David Hamkins.
\newblock The lottery preparation.
\newblock {\em Ann.~Pure Appl.~Logic}, 101(2-3):103--146, 2000.

\bibitem[Ham09]{Hamkins2009}
Joel~D. Hamkins.
\newblock Tall cardinals.
\newblock {\em MLQ Math. Log. Q.}, 55(1):68--86, 2009.

\bibitem[Hau91]{Hauser}
Kai Hauser.
\newblock Indescribable cardinals and elementary embeddings.
\newblock {\em The Journal of symbolic logic}, 56(2):439--457, 1991.

\bibitem[Hay19]{Hayutpartial}
Yair Hayut.
\newblock Partial strong compactness and squares.
\newblock {\em Fundamenta Mathematicae}, 246:193--204, 2019.

\bibitem[HP24]{HP}
Yair Hayut and Alejandro Poveda.
\newblock The gluing property.
\newblock {\em Journal of Mathematical Logic}, 0(0):2450030, 2024.

\bibitem[HP26]{HayPovII}
Yair Hayut and Alejandro Poveda.
\newblock The directedness of the {R}udin-{K}eisler order at measurable
  cardinals.
\newblock {\em arXiv preprint arXiv:2601.10614}, 2026.

\bibitem[Kan09]{Kan}
Akihiro Kanamori.
\newblock {\em The higher infinite}.
\newblock Springer Monographs in Mathematics. Springer-Verlag, Berlin, second
  edition, 2009.
\newblock Large cardinals in Set Theory from their beginnings, Paperback
  reprint of the 2003 edition.

\bibitem[Ket72]{Ketonen}
Jussi Ketonen.
\newblock Strong compactness and other cardinal sins.
\newblock {\em Annals of Mathematical Logic}, 5(1):47--76, 1972.

\bibitem[KT64]{KeislerTarski}
H.~J. Keisler and A.~Tarski.
\newblock From accessible to inaccessible cardinals. {R}esults holding for all
  accessible cardinal numbers and the problem of their extension to
  inaccessible ones.
\newblock {\em Fund. Math.}, 53:225--308, 1963/64.

\bibitem[Lav07]{LavGroundModel}
Richard Laver.
\newblock Certain very large cardinals are not created in small forcing
  extensions.
\newblock {\em Annals of Pure and Applied Logic}, 149(1-3):1--6, 2007.

\bibitem[Mag76]{MagSuper}
Menachem Magidor.
\newblock How large is the first strongly compact cardinal? {Or} a study on
  identity crises.
\newblock {\em Annals of Mathematical Logic}, 10(1):33--57, 1976.

\bibitem[Mer03]{MerExtender}
Carmi Merimovich.
\newblock Extender-based {R}adin forcing.
\newblock {\em Transactions of the American Mathematical Society}, pages
  1729--1772, 2003.

\bibitem[Mer07]{MerPrikryOnExt}
Carmi Merimovich.
\newblock Prikry on extenders, revisited.
\newblock {\em Israel Journal of Mathematics}, 160(1):253--280, 2007.

\bibitem[Mer11]{MerSuper}
Carmi Merimovich.
\newblock Supercompact extender based {P}rikry forcing.
\newblock {\em Archive for Mathematical Logic}, 50(5-6):591--602, 2011.

\bibitem[Mit79]{MitHyper}
William Mitchell.
\newblock Hypermeasurable cardinals.
\newblock In Maurice Boffa, Dirkvan Dalen, and Kenneth Mcaloon, editors, {\em
  Logic Colloquium '78}, volume~97 of {\em Studies in Logic and the Foundations
  of Mathematics}, pages 303--316. Elsevier, 1979.

\bibitem[Mit84]{MitIter}
William~J Mitchell.
\newblock The core model for sequences of measures. {I}.
\newblock In {\em Mathematical Proceedings of the Cambridge Philosophical
  Society}, volume~95, pages 229--260. Cambridge University Press, 1984.

\bibitem[Mit95]{MitchellUpto}
William Mitchell.
\newblock The core model up to a {W}oodin cardinal.
\newblock In Dag Prawitz, Brian Skyrms, and Dag Westerståhl, editors, {\em
  Logic, Methodology and Philosophy of Science IX}, volume 134 of {\em Studies
  in Logic and the Foundations of Mathematics}, pages 157--175. Elsevier, 1995.

\bibitem[Mit09]{Mithand2}
William~J Mitchell.
\newblock The covering lemma.
\newblock In {\em Handbook of set theory}, pages 1497--1594. Springer, 2009.

\bibitem[Mit10]{MitChap}
William~J. Mitchell.
\newblock Beginning inner model theory.
\newblock In {\em Handbook of Set Theory}, pages 1449--1495. Springer, 2010.

\bibitem[NS16]{NeSt}
Itay Neeman and John Steel.
\newblock Equiconsistencies at subcompact cardinals.
\newblock {\em Arch. Math. Logic}, 55(1-2):207--238, 2016.

\bibitem[Sch06]{Schi}
Ralf Schindler.
\newblock Iterates of the core model.
\newblock {\em The Journal of Symbolic Logic}, 71(1):241--251, 2006.

\bibitem[Woo11]{WoodinSuitableII}
W~Hugh Woodin.
\newblock Suitable extender models {II}: beyond {$\omega$}-huge.
\newblock {\em Journal of Mathematical Logic}, 11(02):115--436, 2011.

\end{thebibliography}

\end{document}